\definecolor{dmagenta}{rgb}{.4,.1,.5}
\definecolor{dblue}{rgb}{.0,.0,.6}
\definecolor{ddblue}{rgb}{.0,.0,.4}
\definecolor{dred}{rgb}{.5,.0,.0}
\definecolor{dgreen}{rgb}{.0,.4,.0}
\definecolor{Eeom}{rgb}{.0,.0,.5}
\numberwithin{equation}{section}
\newtheorem{theorem}{Theorem}[section]
\newtheorem{lemma}[theorem]{Lemma}
\newtheorem{proposition}[theorem]{Proposition}
\newtheorem{corollary}[theorem]{Corollary}
\newdefinition{definition}{Definition}[section]
\newdefinition{assumption}{Assumption}[section]
\newdefinition{example}{Example}[section]
\newdefinition{hypothesis}{Hypothesis}[section]
\newdefinition{notation}{Notation}[section]
\newdefinition{remark}{Remark}[section]
\newdefinition{parenthesis}{Parenthetical Comment}[section]
\newdefinition{fact}{Fact}
\newproof{proof}{Proof}
\newproof{pP1.3}{Proof of Proposition~\ref{P1.3}}
\newproof{pT1.2}{Proof of Theorem~\ref{T1.2}}
\newproof{pT1.4}{Proof of Theorem~\ref{T1.4}}
\newproof{pT1.8}{Proof of Theorem~\ref{T1.8}}
\newcommand{\df}{:=}
\DeclareMathOperator{\Exp}{\mathbb{E}}
\DeclareMathOperator{\Prob}{\mathbb{P}}
\newcommand{\D}{\mathrm{d}}   %differential
\newcommand{\E}{\mathrm{e}}   %exponent
\newcommand{\RR}{\mathbb{R}}  %Real numbers
\newcommand{\Rd}{{\mathbb{R}^d}}
\newcommand{\NN}{\mathbb{N}}   %Natural numbers
\newcommand{\Ind}{\mathds{1}}        % indicator function
\newcommand{\Act}{\mathbb{U}}        %Action Set
\newcommand{\Uadm}{\mathfrak{U}}     %Admissible Controls
\newcommand{\Usm}{\mathfrak{U}_{\mathrm{SM}}}  %Stationary Markov controls
\newcommand{\Ussm}{\mathfrak{U}_{\mathrm{SSM}}}  %Stable stationary Markov controls
\newcommand{\Ustab}{\mathfrak{U}_{\mathrm{stab}}}  %Stabilizing controls
\newcommand{\Sob}{\mathscr{W}} %Sobolev space
\newcommand{\Sobl}{\mathscr{W}_{\mathrm{loc}}}  %Sobolev space (local)
\newcommand{\transp}{^{\mathsf{T}}}  %transpose
\newcommand{\order}{{\mathscr{O}}} % Order of
\newcommand{\sorder}{{\mathfrak{o}}} % small Order of
\newcommand{\Lg}{\mathscr{L}}                % Controlled extended generator
\newcommand{\uuptau}{\Breve{\uptau}}
\newcommand{\grad}{\nabla}
\newcommand{\lm}{\Lambda_{\mathrm{m}}^{*}}
\newcommand{\lmx}{\Lambda_{\mathrm{m},x}^{*}}
\newcommand{\sB}{\mathscr{B}}     % Ball for near-monotone cost dominance
\newcommand{\Cc}{\mathcal{C}}     % Set of continuous functions
\newcommand{\sCo}{\mathscr{C}_{\circ}}
\newcommand{\sF}{\mathfrak{F}}    % sigma field
\newcommand{\sG}{\mathscr{G}}    % logtransform
\newcommand{\sK}{\mathscr{K}}     % Set of minima for near-monotone
\newcommand{\Lp}{L}   %Lp
\newcommand{\Lpl}{L_{\mathrm{loc}}}   %Lploc
\newcommand{\sM}{\mathscr{M}}     % Exponential Martingale
\newcommand{\cP}{\mathcal{P}}    % Probability Measures
\newcommand{\cT}{\mathcal{T}}     % Semigroup
\newcommand{\Lyap}{\mathcal{V}}   % Lyapunov function
\newcommand{\cX}{\mathcal{X}}     % Space
\newcommand{\abs}[1]{\lvert#1\rvert}
\newcommand{\norm}[1]{\lVert#1\rVert}
\newcommand{\babs}[1]{\bigl\lvert#1\bigr\rvert}
\newcommand{\bnorm}[1]{\bigl\lVert#1\bigr\rVert}
\DeclareMathOperator*{\argmin}{arg\,min}
\DeclareMathOperator*{\esssup}{ess\,sup}
\DeclareMathOperator*{\essinf}{ess\,inf}
\DeclareMathOperator{\trace}{trace}
\newlength{\dhatheight}
\newcommand{\doublehat}[1]{%
    \settoheight{\dhatheight}{\ensuremath{\hat{#1}}}%
    \addtolength{\dhatheight}{-0.30ex}%
    \hat{\vphantom{\rule{1pt}{\dhatheight}}%
    \smash{\hat{#1}}}}
\def\ps@pprintTitle{%
   \let\@oddhead\@empty
   \let\@evenhead\@empty
   \def\@oddfoot{\reset@font\hfil\thepage\hfil}
   \let\@evenfoot\@oddfoot
}
\begin{document}
\begin{frontmatter}

\title
{Infinite horizon risk-sensitive control of diffusions\\[2pt]
without any blanket stability assumptions}

\author[ut]{Ari Arapostathis}%\corref{cor1}}%\fnref{ONR}}
%\cortext[cor1]{Corresponding author}
\ead{ari@ece.utexas.edu}

\author[iiser]{Anup Biswas}%\fnref{Inspire}}
\ead{anup@iiserpune.ac.in}

\address[ut]{Department of Electrical and Computer Engineering,
The University of Texas at Austin,\\
2501 Speedway St., EER 7.824, Austin, TX 78712}

\address[iiser]{Department of Mathematics,
Indian Institute of Science Education and Research,\\ 
Dr. Homi Bhabha Road, Pune 411008, India}

\begin{abstract}
We consider the infinite horizon risk-sensitive problem for
nondegenerate diffusions with a compact action space,
and controlled through the drift.
We only impose a structural assumption on the running cost function,
namely near-monotonicity, and show that there always exists a solution
to the risk-sensitive Hamilton--Jacobi--Bellman (HJB) equation, and that
any minimizer in the Hamiltonian is optimal in the class of stationary
Markov controls.
Under the additional hypothesis that the coefficients of the diffusion
are bounded, and satisfy a condition that limits (even though it still allows)
transient behavior, we show that any minimizer in the Hamiltonian is optimal in
the class of all admissible controls.
In addition, we present a sufficient condition, under which
the solution of the HJB is unique (up to a multiplicative constant),
and establish the usual verification result.
We also present some new results concerning the multiplicative Poisson equation
for elliptic operators in $\Rd$.
\end{abstract}

\begin{keyword}
Risk-sensitive control \sep multiplicative Poisson equation
\sep controlled diffusions \sep
nonlinear eigenvalue problems \sep Hamilton--Jacobi--Bellman equation
\sep monotonicity of principal eigenvalue
\MSC[2010] Primary: 35R60\sep  93E20\sep Secondary: 
\end{keyword}

\end{frontmatter}

%%%%%%%%%%%%%%%%%%%%%%%%%%%%%%%%%%%%%%%%%%%%%%%%%%%%%%%%%%%%%%%%%%%%%%%%%%%%%%%%
\section{Introduction}
%%%%%%%%%%%%%%%%%%%%%%%%%%%%%%%%%%%%%%%%%%%%%%%%%%%%%%%%%%%%%%%%%%%%%%%%%%%%%%%%
Optimal control under a risk-sensitive criterion has been an active
area of research for the past 30 years.
It has found applications in finance \cite{Bie-Pli, Fleming-02, Nagai-12},
missile guidance \cite{Speyer}, cognitive neuroscience \cite{Nagen-10},
and many more.
There are many situations which dictate the use of
a risk-sensitive penalty.
For example, if one considers the \emph{risk parameter} to be small then
it approximates the standard mean-variance type cost structure.
Another reason that the risk-sensitive criterion is often desirable is
that it captures the effects of higher order moments of
the running cost in addition to its expectation.
To the best of our knowledge, the risk-sensitive criterion was first considered
in \cite{Howard-71}.
We also refer the reader to \cite{Whittle-90, Whittle-81} for an early account of
risk-sensitive optimal controls.
For discrete state space controlled Markov chains,
the risk-sensitive optimal control problem is studied in
\cite{Cavazos-10, Cavazos-05, Cavazos-09, diMassi-99, diMassi-01, diMassi-07,
Jaskiewicz-07, BorMeyn-02, Suresh-15}.
For optimal control problems where the dynamics are modeled
by controlled diffusions, we refer the reader to
\cite{BenEll-95, BenNag-98, BenNag-00, Biswas-11, Biswas-11a, Biswas-10,
BorSur-14, Menaldi-05, Fleming-95, Fleming-06, Fleming-97, Nagai-96, Kaise-06}.

In this paper we deal with nondegenerate diffusions,
controlled through the drift, 
with the control taking values  in a compact metric space
(see \eqref{E-sde}).
The goal is to minimize an infinite horizon
average risk-sensitive penalty,
where the running cost is assumed to satisfy a \emph{near-monotonicity}
hypothesis (Definition~\ref{D1.1}).
We study the associated Hamilton--Jacobi--Bellman (HJB)
equation and characterize the class of optimal stationary Markov controls.
In \cite{Fleming-95} a similar control problem is studied under the assumption
of asymptotic flatness, and existence of a unique solution to the HJB is established. 
This work is generalized in \cite{Nagai-96}, where the authors impose some 
structural assumptions on the drift and cost
(e.g., the cost necessarily grows to infinity, the action set is
a Euclidean space, etc).
Risk-sensitive control problems with periodic coefficients are studied in
\cite{Menaldi-05}.
Risk-sensitive control for a general class of controlled diffusions is considered in
\cite{Biswas-10, Biswas-11a, Biswas-11},  under the assumption
that all stationary Markov controls are stable.
However, the studies in \cite{Biswas-10, Biswas-11a, Biswas-11} neither
establish uniqueness of the solution to the HJB,
nor do they fully characterize the optimal stationary Markov controls. 
One of our main contributions in this article is the development
of a basic theory that parallels
existing results for the ergodic control problem.

The dynamics are modeled by a
controlled diffusion process $X = \{X_{t},\;t\ge0\}$
which takes values in the $d$-dimensional Euclidean space $\RR^{d}$, and
is governed by the It\^o stochastic differential equation
\begin{equation}\label{E-sde}
\D{X}_{t} \;=\;b(X_{t},U_{t})\,\D{t} + \upsigma(X_{t})\,\D{W}_{t}\,.
\end{equation}
All random processes in \eqref{E-sde} live in a complete
probability space $(\Omega,\sF,\Prob)$.
The process $W$ is a $d$-dimensional standard Wiener process independent
of the initial condition $X_{0}$.
The control process $U$ takes values in a compact, metrizable set $\Act$, and
$U_{t}(\omega)$ is jointly measurable in
$(t,\omega)\in[0,\infty)\times\Omega$.
The set $\Uadm$ of \emph{admissible controls} consists of the
control processes $U$ that are \emph{non-anticipative}:
for $s < t$, $W_{t} - W_{s}$ is independent of
\begin{equation*}
\sF_{s} \;\df\;\text{the completion of~} \sigma\{X_{0},U_{r},W_{r},\;r\le s\}
\text{~relative to~}(\sF,\Prob)\,.
\end{equation*}
We impose the standard assumptions on the drift $b$
and the diffusion matrix $\upsigma$ to guarantee existence
and uniqueness of solutions.
For more details on the model see Section~\ref{S1.2}.

Let $c:\Rd\times\Act\to [1,\infty)$ be continuous,
and locally 
Lipschitz in its first argument uniformly with respect to the second.
For $U\in\Uadm$ we define the risk-sensitive penalty by
\begin{equation*}
\Lambda_x^U=\Lambda_x^U(c)\;\df\; \limsup_{T\to\infty}\;\frac{1}{T}\;
\log\Exp^{U}_{x}\Bigl[\E^{\int_{0}^{T} c(X_{t},U_{t})\,\D{t}}\Bigr]\,,
\end{equation*}
and the risk-sensitive optimal values by
\begin{equation}\label{E-Lambda}
\begin{aligned}
\Lambda^*_x&\;\df\; \inf_{U\in\,\Uadm}\;\Lambda_x^U\,,&\quad
\Lambda^* &\;\df\; \inf_{x\in\,\Rd}\;\Lambda^*_x\,,\\[5pt]
\lmx &\;\df\; \inf_{U\in\,\Usm}\;\Lambda_x^U\,,&\quad
\lm &\;\df\; \inf_{x\in\,\Rd}\;\lmx\,,
\end{aligned}
\end{equation}
where $\Usm$ is the class of stationary Markov controls.
For $v\in\Usm$ we also let $\Lambda^v=\Lambda^v(c) =
\inf_{x\in\,\Rd}\;\Lambda^v_x(c)$.
A stationary Markov control $v$ which satisfies $\Lambda^v<\infty$,
is called \emph{stabilizing}, and we let $\Ustab$ denote
this class of controls.

Unless $\Lambda^*$ is finite, the optimal control problem, is of course ill-posed.
For nonlinear models as in the current paper, standard Foster--Lyapunov conditions
are usually imposed to guarantee that $\Lambda^*<\infty$.
However, the objective of this paper is different.
Rather, we impose a structural assumption on the running cost function $c$,
and investigate whether this is sufficient for characterization of
optimality via the risk-sensitive HJB equation.
We need the following definition.

\begin{definition}[Near-Monotone]\label{D1.1}
A continuous map $g\colon \Rd\times\Act\to\RR$
is said to be \emph{near-monotone relative
to $\lambda\in\RR$} if there exists $\epsilon>0$ such
that the set
\begin{equation*}
\sK_\epsilon\df\bigl\{x\in\Rd\,\colon \min_{u\in\Act}\,g(x,u)
\le\lambda+\epsilon\bigr\}
\end{equation*}
is compact (or empty).
We extend the same notion
to a Borel measurable $f\colon\cX\to\RR$,
by requiring that for some $\epsilon>0$,
and a compact set $\sK_\epsilon\subset\Rd$ it holds that
$\essinf_{\sK^c_\epsilon} (f-\lambda-\epsilon)\ge0$.
We let $\sK \df \cap_{\epsilon>0} \sK_\epsilon$.
We also say that a function $g$ or $f$
as above is \emph{inf-compact}
if it is near-monotone relative to all $\lambda\in\RR$.
\end{definition}

Note that the concept of near-monotonicity in the literature is often
stricter---a function $f$ is sometimes called near-monotone if it is near-monotone
relative to all $\lambda<\norm{f}_{\infty}$ \cite{Balaji-00}.

For an inf-compact running cost $c$, which is what we most often see
in applications, near-monotonicity is of course equivalent to the
statement that $\Lambda^*<\infty$.
Therefore, for a inf-compact running cost, near-monotonicity is also
necessary for the optimal control problem to be well posed.
There are clearly two tasks for this class of models.
First, establish that the class of stabilizing Markov controls $\Ustab$
is nonempty, and then solve the optimal control problem.
This paper addresses the second task.

The main results of the paper can be divided into two groups.
Those concerning the risk-sensitive control problem,
and those concerning the multiplicative Poisson equation (MPE)
for (uncontrolled) diffusions.

For the risk sensitive control problem, there are two sets of results.
First, under the hypothesis that the running cost $c$
is near-monotone relative to $\Lambda^*$, and an assumption on the
drift that limits but not precludes
transience of the controlled process (see Assumption~\ref{A1.1}),
we establish existence of a solution to the risk-sensitive HJB equation,
and also existence of a stationary Markov control
which is optimal over the class of all admissible controls
(see Proposition~\ref{P1.1}).
We wish to point out the optimality over nonstationary controls is
very hard to obtain for the risk-sensitive problem without
blanket geometric ergodicity hypotheses.
For this reason, the optimal control problem is often restricted
to stationary Markov controls (see the analogous study
in the case of denumerable controlled Markov chains in \cite{BorMeyn-02}).

If the running cost is near-monotone relative to
$\lm$, then, without any additional assumptions on the drift,
we show in Proposition~\ref{P1.3} that there exists a pair
$(V^*, \lm)\in\Cc^{2}(\RR^d)\times\RR$ solving the HJB equation and any
measurable selector of the HJB is a stable control, 
and is optimal in the class of stationary Markov controls.
Under the same near-monotonicity hypothesis, together
with the assumption that $c$ is inf-compact,
the risk-sensitive problem for denumerable Markov decision processes
is treated in \cite{BorMeyn-02}, where 
a dynamic programming inequality is established.

Concerning uniqueness of the solution to the HJB we identify
a rather generic sufficient condition which amounts to strict monotonicity
on the right for $\lm$ with respect to the running cost $c$, i.e., that
$c\lneqq c'$ implies $\lm(c)<\lm(c')$.
Under this condition, we show in Theorem~\ref{T1.2}
that there exists a unique solution to
the HJB equation (up to a multiplicative constant), and we have
the usual verification result that states that a stationary Markov
control is optimal only if it is an a.e.\ measurable selector from the minimizer
of the HJB.
In addition, this condition is necessary and sufficient for the solution of the HJB
to be the minimal solution of the MPE over the class of
optimal stationary Markov controls.

The second set of results, which comprises a significant portion of the paper,
concerns the MPE.
Here, the running cost takes the role of a potential $f$ which satisfies
the near-monotonicity hypothesis in Definition~\ref{D1.1} relative
to the principal eigenvalue $\Lambda^*(f)$.
We present a comprehensive study of the relationship between the
solutions of the MPE and their stochastic representations,
and the recurrence properties of the so called
\emph{twisted process} (see \cite{Kontoyiannis-02})
which is associated with eigenfunctions of the principal
eigenvalue $\Lambda^*(f)$.
In the quantum mechanics literature this eigenfunction is called
the \emph{ground state}, and the twisted process is described
by a diffusion which we refer to as the ground state diffusion
(see \eqref{E-sde*}).
The key results are in Theorems~\ref{T1.5}--\ref{T1.6}.
These should be compared with the results
for countable Markov chains in \cite{Balaji-00}.
An important contribution of this paper is the sharp characterization
of the recurrence properties of the ground state diffusion
in terms of the monotonicity of the principal eigenvalue as a function
of the potential.

The notation used in the paper is summarized in Section~\ref{S1.1}.
The assumptions on the model are in Section~\ref{S1.2}, followed
by a summary of the main results in Section~\ref{S1.3}.
followed by a description of the model in 
Section~\ref{S2} contains various results on the multiplicative Poisson equation,
which lead to the proof of Theorems~\ref{T1.5}--\ref{T1.8}.
In Section~\ref{S1.4} we summarize some basic results
from the theory of second order elliptic partial differential equations (pde)
used in this paper.
The proofs of the results concerning the risk-sensitive control problem
are in Section~\ref{S3}.

%%%%%%%%%%%%%%%%%%%%%%%%%%%%%%%%%%%%%%%%%%%%%%%%%%%%%%%%%%%%%%%%%%%%%%%%%%%%%%%%
\subsection{Notation}\label{S1.1}
%%%%%%%%%%%%%%%%%%%%%%%%%%%%%%%%%%%%%%%%%%%%%%%%%%%%%%%%%%%%%%%%%%%%%%%%%%%%%%%%

The standard Euclidean norm in $\RR^{d}$ is denoted by $\abs{\,\cdot\,}$,
and $\langle\,\cdot\,,\cdot\,\rangle$ denotes the inner product.
The set of nonnegative real numbers is denoted by $\RR_{+}$,
$\NN$ stands for the set of natural numbers, and $\Ind$ denotes
the indicator function.
Given two real numbers $a$ and $b$, the minimum (maximum) is denoted by $a\wedge b$ 
($a\vee b$), respectively.
The closure, boundary, and the complement
of a set $A\subset\Rd$ are denoted
by $\Bar{A}$, $\partial{A}$, and $A^{c}$, respectively.
We denote by $\uptau(A)$ the \emph{first exit time} of the process
$\{X_{t}\}$ from the set $A\subset\RR^{d}$, defined by
\begin{equation*}
\uptau(A) \;\df\; \inf\;\{t>0\;\colon\, X_{t}\not\in A\}\,.
\end{equation*}
The open ball of radius $r$ in $\RR^{d}$, centered at the origin,
is denoted by $B_{r}$, and we let $\uptau_{r}\df \uptau(B_{r})$,
and $\uuptau_{r}\df \uptau(B^{c}_{r})$.

The term \emph{domain} in $\RR^{d}$
refers to a nonempty, connected open subset of the Euclidean space $\RR^{d}$. 
For a domain $D\subset\RR^{d}$,
the space $\Cc^{k}(D)$ ($\Cc^{\infty}(D)$)
refers to the class of all real-valued functions on $D$ whose partial
derivatives up to order $k$ (of any order) exist and are continuous,
and $\Cc_{b}(D)$ denotes the set of all bounded continuous
real-valued functions on $D$.
In addition $\Cc_c(D)$ denotes the class of functions in $\Cc(D)$ that
have compact support, and $\Cc_0(\Rd)$ the class of continuous
functions on $\Rd$ that vanish at infinity.
By a slight abuse of notation,
whenever the whole space $\RR^d$ is concerned, we write
$f\in\Cc^{k}(\RR^{d})$ 
whenever $f\in\Cc^{k}(D)$ for all bounded domains $D\subset\RR^{d}$.
The space $\Lp^{p}(D)$, $p\in[1,\infty)$, stands for the Banach space
of (equivalence classes of) measurable functions $f$ satisfying
$\int_{D} \abs{f(x)}^{p}\,\D{x}<\infty$, and $\Lp^{\infty}(D)$ is the
Banach space of functions that are essentially bounded in $D$.
The standard Sobolev space of functions on $D$ whose generalized
derivatives up to order $k$ are in $\Lp^{p}(D)$, equipped with its natural
norm, is denoted by $\Sob^{k,p}(D)$, $k\ge0$, $p\ge1$.

In general, if $\mathcal{X}$ is a space of real-valued functions on $Q$,
$\mathcal{X}_{\mathrm{loc}}$ consists of all functions $f$ such that
$f\varphi\in\mathcal{X}$ for every $\varphi\in\Cc_{c}^{\infty}(Q)$,
the space of smooth functions on $Q$ with compact support.
In this manner we obtain for example the space $\Sobl^{2,p}(Q)$.

For a continuous function $g\,\colon\,\Rd\to[1,\infty)$ 
we let $L^{\infty}_g$ (or $\order(g)$) denote the space of Borel measurable functions
$f\colon\Rd\to\RR$ satisfying
\begin{equation*}
\esssup_{x\in\Rd}\;\frac{\abs{f(x)}}{g(x)}\;<\;\infty\,,
\end{equation*}
and
by $\sorder(g)$ the subspace of functions $f\in L^{\infty}_g$ such that
\begin{equation*}
\limsup_{R\to\infty}\;\esssup_{x\in B_{R}^{c}}\,\frac{\abs{f(x)}}{g(x)}\;=\;0\,.
\end{equation*}
We also let 
$\Cc_{g}(\Rd)$ denote the Banach space of continuous functions under the norm
\begin{equation*}
\norm{f}_{g} \;\df\;
\sup_{x\in\Rd}\;\frac{\abs{f(x)}}{g(x)}\,.
\end{equation*}

We adopt the notation
$\partial_{i}\df\tfrac{\partial~}{\partial{x}_{i}}$ and
$\partial_{ij}\df\tfrac{\partial^{2}~}{\partial{x}_{i}\partial{x}_{j}}$
for $i,j\in\NN$.
We often use the standard summation rule that
repeated subscripts and superscripts are summed from $1$ through $d$.
For example,
\begin{equation*}
\tfrac{1}{2}\, a^{ij}\partial_{ij}\varphi
+ b^{i} \partial_{i}\varphi \;\df\; \tfrac{1}{2}\, \sum_{i,j=1}^{d}a^{ij}
\frac{\partial^{2}\varphi}{\partial{x}_{i}\partial{x}_{j}}
+\sum_{i=1}^{d} b^{i} \frac{\partial\varphi}{\partial{x}_{i}}\,.
\end{equation*}

%%%%%%%%%%%%%%%%%%%%%%%%%%%%%%%%%%%%%%%%%%%%%%%%%%%%%%%%%%%%%%%%%%%%%%%%%%%%%%%%
\subsection{The model}\label{S1.2}
%%%%%%%%%%%%%%%%%%%%%%%%%%%%%%%%%%%%%%%%%%%%%%%%%%%%%%%%%%%%%%%%%%%%%%%%%%%%%%%%

The following assumptions on the diffusion
\eqref{E-sde} are in effect throughout the paper
unless otherwise mentioned.
\begin{itemize}
\item[(A1)]
\emph{Local Lipschitz continuity:\/}
The functions
\begin{equation*}
b\;=\;\bigl[b^{1},\dotsc,b^{d}\bigr]\transp\,\colon\,\RR^{d}\times\Act\to\RR^{d}\,,
\quad\text{and}\quad
\upsigma\;=\;\bigl[\upsigma^{ij}\bigr]\,\colon\,\RR^{d}\to\RR^{d\times d}
\end{equation*}
are locally Lipschitz in $x$ with a Lipschitz constant $C_{R}>0$
depending on $R>0$.
In other words, we have
\begin{equation*}
\abs{b(x,u) - b(y,u)} + \norm{\upsigma(x) - \upsigma(y)}
\;\le\;C_{R}\,\abs{x-y}\qquad\forall\,x,y\in B_R\,,\text{\ \ and\ }u\in\Act\,.
\end{equation*}
We also assume that $b$ is continuous.
\smallskip
\item[(A2)]
\emph{Affine growth condition:\/}
$b$ and $\upsigma$ satisfy a global growth condition of the form
\begin{equation*}
\abs{b(x,u)}^2 + \norm{\upsigma(x)}^{2}\;\le\;C_0
\bigl(1 + \abs{x}^{2}\bigr) \qquad \forall\, (x,u)\in\RR^{d}\times\Act
\end{equation*}
for some constant $C_0>0$,
where $\norm{\upsigma}^{2}\;\df\;
\mathrm{trace}\left(\upsigma\upsigma\transp\right)$.
\smallskip
\item[(A3)]
\emph{Nondegeneracy:\/}
For each $R>0$, it holds that
\begin{equation*}
\sum_{i,j=1}^{d} a^{ij}(x)\xi_{i}\xi_{j}
\;\ge\;C^{-1}_{R} \abs{\xi}^{2} \qquad\forall\, x\in B_{R}\,,
\end{equation*}
and for all $\xi=(\xi_{1},\dotsc,\xi_{d})\transp\in\RR^{d}$,
where $a\df \upsigma \upsigma\transp$.
\end{itemize}

In integral form, \eqref{E-sde} is written as
\begin{equation}\label{E2.2}
X_{t} \;=\;X_{0} + \int_{0}^{t} b(X_{s},U_{s})\,\D{s}
+ \int_{0}^{t} \upsigma(X_{s})\,\D{W}_{s}\,.
\end{equation}
The third term on the right hand side of \eqref{E2.2} is an It\^o
stochastic integral.
We say that a process $X=\{X_{t}(\omega)\}$ is a solution of \eqref{E-sde},
if it is $\sF_{t}$-adapted, continuous in $t$, defined for all
$\omega\in\Omega$ and $t\in[0,\infty)$, and satisfies \eqref{E2.2} for
all $t\in[0,\infty)$ a.s.
It is well known that under (A1)--(A3), for any admissible control
there exists a unique solution of \eqref{E-sde}
\cite[Theorem~2.2.4]{book}.
We define the family of operators $\Lg^{u}:\Cc^{2}(\RR^{d})\mapsto\Cc(\RR^{d})$,
where $u\in\Act$ plays the role of a parameter, by
\begin{equation}\label{E-Lgu}
\Lg^{u} f(x) \;=\; \tfrac{1}{2}\, a^{ij}(x)\,\partial_{ij} f(x)
+ b^{i}(x,u)\, \partial_{i} f(x)\,,\quad u\in\Act\,.
\end{equation}

Let $\Usm$ denote the set of stationary Markov controls.
It is well known that under $v\in\Usm$
\eqref{E-sde} has a unique strong solution \cite{Gyongy-96}.
Moreover, under $v\in\Usm$, the process $X$ is strong Markov,
and we denote its transition kernel by $P^{t}_{v}(x,\cdot\,)$.
It also follows from the work in \cite{Bogachev-01} that under
$v\in\Usm$, the transition probabilities of $X$
have densities which are locally H\"older continuous.
Thus $\Lg_{v}$ defined by
\begin{equation*}
\Lg_{v} f(x) \;=\; \tfrac{1}{2}\, a^{ij}(x)\,\partial_{ij} f(x)
+ b^{i} \bigl(x,v(x)\bigr)\, \partial_{i} f(x)\,,\quad v\in\Usm\,,
\end{equation*}
for $f\in\Cc^{2}(\RR^{d})$,
is the generator of a strongly-continuous
semigroup on $\Cc_{b}(\RR^{d})$, which is strong Feller.
When $v\in\Usm$ we use $v$ as subscript in $\Lg_v$ to distinguish
it from $\Lg^u$, $u\in\Act$, defined in the preceding paragraph.
We let $\Prob_{x}^{v}$ denote the probability measure and
$\Exp_{x}^{v}$ the expectation operator on the canonical space of the
process under the control $v\in\Usm$, conditioned on the
process $X$ starting from $x\in\RR^{d}$ at $t=0$.
We denote by $\Ussm$ the subset of $\Usm$ that consists
of \emph{stable controls}, i.e.,
under which the controlled process is positive recurrent,
and by $\mu_v$ the invariant probability measure of the process
under the control $v\in\Ussm$.

%%%%%%%%%%%%%%%%%%%%%%%%%%%%%%%%%%%%%%%%%%%%%%%%%%%%%%%%%%%%%%%%%%%%%%%%%%%%%%%%
\subsection{Main results}\label{S1.3}
%%%%%%%%%%%%%%%%%%%%%%%%%%%%%%%%%%%%%%%%%%%%%%%%%%%%%%%%%%%%%%%%%%%%%%%%%%%%%%%%

Consider the following assumption on the drift of \eqref{E-sde}.

%%%%%%%%%%%%%%%%%%%%%%%%%%%%%%%%%%%%%%%%%%%%%%%%%%%%%%%%%%%%%%%%%%%%%%%%%%%%%%%%
\begin{assumption}\label{A1.1}
The coefficients $b$ and $\upsigma$ of $\Lg^u$ in \eqref{E-Lgu} are bounded,
$\upsigma$ is Lipschitz continuous,
and for some constant $C$ we have
\begin{equation*}
\sum_{i,j=1}^{d} a^{ij}(x)\xi_{i}\xi_{j}
\;\ge\;C^{-1} \abs{\xi}^{2} \qquad\forall\, x\in \Rd\,.
\end{equation*}
In addition we assume that
\begin{equation}\label{EA1.2}
\max_{u\in\Act}\;\frac{\bigl\langle b(x,u),\, x\bigr\rangle^{+}}{\abs{x}}
\;\xrightarrow[\abs{x}\to\infty]{}\;0\,.
\end{equation}
\end{assumption}

Recall the definitions in \eqref{E-Lambda}.
We let $\sCo$ denote the class of nonnegative functions in $\Cc_0(\Rd)$ which
are not identically equal to $0$.
We also define
\begin{align*}
\mathfrak{V}&\;\df\;\bigl\{(V,\Lambda)\in\Cc^{2}(\RR^{d})\times\RR\,\colon\,
V(0)=1\,,~V>0\,,~\Lambda\le\Lambda^*\bigr\}\,,
\shortintertext{and}
\mathfrak{V}_{\circ}&\;\df\;\bigl\{(V,\Lambda)\in\Cc^{2}(\RR^{d})\times\RR\,\colon\,
V(0)=1\,,~\inf_{\Rd}\,V>0\,,~\Lambda\le\Lambda^*\bigr\}\,.
\end{align*}

%%%%%%%%%%%%%%%%%%%%%%%%%%%%%%%%%%%%%%%%%%%%%%%%%%%%%%%%%%%%%%%%%%%%%%%%%%%%%%%%
\begin{proposition}\label{P1.1}
Let $a$ and $b$ satisfy Assumption~\ref{A1.1} and
$c$ be near-monotone relative to $\Lambda^*$ and bounded.
Then the HJB equation
\begin{equation}\label{E1-HJB}
\min_{u\in\Act}\;
\bigl[\Lg^{u} V^*(x) + c(x,u)\,V^*(x)\bigr] \;=\; \Lambda^*\,V^*(x)
\qquad\forall\,x\in\Rd
\end{equation}
has a  solution $V^*\in\Cc^{2}(\RR^{d})$,
satisfying $V^*(0)=1$ and $\inf_{\Rd}\,V^*>0$, and the following hold:
\begin{enumerate}
\item[{\upshape(}i\/{\upshape)}]
$\Lambda^*_x=\Lambda^*$ for all $x\in\Rd$.

\smallskip
\item[{\upshape(}ii\/{\upshape)}]
Any $v\in\Usm$ that satisfies
\begin{equation}\label{EP1.1A}
\Lg_v V^*(x) + c\bigl(x,v(x)\bigr)\,V^*(x)\;=\;
\min_{u\in\Act}\; \bigl[\Lg^{u} V^*(x) + c(x,u)\,V^*(x)\bigr]
\quad \text{a.e.\ }x\in\Rd
\end{equation}
is stable, and is optimal, i.e., $\Lambda^v_x=\Lambda^*$ for all $x\in\Rd$.
In particular, $\lm=\Lambda^*$.

\smallskip
\item[{\upshape(}iii\/{\upshape)}]
It holds that
\begin{equation*}
V^*(x) \;=\;\Exp_{x}^{v}\Bigl[\E^{\int_{0}^{T}
[c(X_{t},v(X_{t}))-\Lambda^*]\,\D{t}}\,V^*(X_T)\Bigr]
\qquad\forall\, (T,x)\in\RR_+\times\Rd\,,
\end{equation*}
for any $v\in\Usm$ that satisfies \eqref{EP1.1A}.
\end{enumerate}
\end{proposition}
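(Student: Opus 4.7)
The plan is to construct $(V^*, \Lambda^*)$ by a principal Dirichlet eigenvalue approximation on expanding balls, derive the representation~(iii) via It\^o's formula, and then use Assumption~\ref{A1.1} together with near-monotonicity of $c$ to obtain two-sided pointwise bounds on $V^*$ from which (i) and (ii) follow. The main obstacle is a delicate coupled Foster--Lyapunov / HJB argument needed to bootstrap these bounds simultaneously with the identity $\lim_R\lambda_R=\Lambda^*$.

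First, for each sufficiently large $R$ I would solve the principal Dirichlet eigenvalue problem
\begin{equation*}
\min_{u\in\Act}\bigl[\Lg^u V_R+c\, V_R\bigr]\;=\;\lambda_R V_R\quad\text{in }B_R,
\end{equation*}
with $V_R>0$ in $B_R$, $V_R=0$ on $\partial B_R$, $V_R(0)=1$. Existence and uniqueness of $(V_R,\lambda_R)$ are standard (Krein--Rutman applied to the linear equation obtained from a measurable HJB selector $v_R$). The Feynman--Kac identity $V_R(x)=\Exp_x^{v_R}\!\bigl[\E^{\int_0^T(c-\lambda_R)\D s}V_R(X_T)\Ind_{\{T<\uptau_R\}}\bigr]$, derived by It\^o together with the Dirichlet condition, yields $\lambda_R\le\Lambda^*$, and monotonicity of $\lambda_R$ in $R$ follows from a comparison / maximum principle argument. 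Harnack's inequality and interior $\Sob^{2,p}$ estimates provide $R$-uniform local bounds on $V_R$, so along a subsequence $V_R\to V^*$ in $\Cc^{2,\alpha}_{\mathrm{loc}}$ and $\lambda_R\to\Lambda^\dagger\le\Lambda^*$, with $V^*\in\Cc^2(\Rd)$ satisfying $V^*(0)=1$, $V^*>0$ (strong maximum principle), and the HJB~\eqref{E1-HJB} with $\Lambda^\dagger$ in place of $\Lambda^*$. Applying It\^o to $\E^{\int_0^t(c-\Lambda^\dagger)\D s}V^*(X_t)$ under an HJB selector $v^*$ produces a nonnegative local martingale; optional stopping at $\uptau_n\wedge T$ followed by $n\to\infty$ delivers the representation~(iii) with $\Lambda^\dagger$ in place of $\Lambda^*$, provided the uniform bounds of the next step are in hand.

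The crux is Assumption~\ref{A1.1}. The drift hypothesis~\eqref{EA1.2} together with bounded, uniformly elliptic $a$ lets one construct a Lyapunov function $\Phi(x)\to\infty$ as $\abs{x}\to\infty$ (for instance $\Phi(x)=(1+\abs{x}^2)^\alpha$ with $\alpha>0$ small) satisfying $\sup_{u\in\Act}\Lg^u\Phi(x)/\Phi(x)\to0$. In particular, the controlled process is non-explosive under every admissible control, and the first hitting time of any fixed compact set has exponential moments that are bounded uniformly in the control. Combined with near-monotonicity ($c\ge\Lambda^*+\epsilon$ on $\sK_\epsilon^c$, and hence also $c-\Lambda^\dagger\ge\epsilon$ there), applying the representation at the hitting time of $\sK_\epsilon$ lower bounds $V^*$ by a positive constant, yielding $\inf_{\Rd} V^*>0$. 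A parallel argument---or It\^o applied to $\log V^*$, which satisfies a Hamilton--Jacobi equation with dissipation $\ge\epsilon$ outside $\sK_\epsilon$---bounds $V^*$ from above. Now for any admissible $U$, the HJB inequality makes $\E^{\int_0^t(c-\Lambda^\dagger)\D s}V^*(X_t)$ a nonnegative local sub-martingale; optional stopping at $\uptau_n\wedge T$, legitimate thanks to the two-sided bounds and the Lyapunov control, gives $V^*(x)\le(\sup V^*)\,\Exp_x^U\bigl[\E^{\int_0^T(c-\Lambda^\dagger)\D s}\bigr]$, whence $\Lambda^U_x\ge\Lambda^\dagger$ and therefore $\Lambda^*\ge\Lambda^\dagger$. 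Combined with $\Lambda^\dagger\le\Lambda^*$, we conclude $\Lambda^\dagger=\Lambda^*$.

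With $\Lambda^\dagger=\Lambda^*$ and the two-sided bounds on $V^*$ in force, the remaining statements fall out quickly. The exact representation~(iii) under any selector $v^*$ of~\eqref{EP1.1A} yields $\Exp_x^{v^*}\bigl[\E^{\int_0^T(c-\Lambda^*)\D t}\bigr]\le V^*(x)/\inf V^*$, so $\Lambda^{v^*}_x\le\Lambda^*$; combined with $\Lambda^{v^*}_x\ge\Lambda^*_x\ge\Lambda^*$ (the latter by definition of $\Lambda^*$ as an infimum over $x$) this produces both (i) and the optimality of $v^*$ in (ii). Stability of $v^*$ follows from the Foster--Lyapunov bound, which the closed-loop drift inherits under the selector. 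The main difficulty is concentrated in the preceding paragraph: the weak-stability condition~\eqref{EA1.2} allows but only restricts transience, so the Lyapunov function must be carefully calibrated to the near-monotonicity gap $\epsilon$, and the pointwise bounds on $V^*$ and the identity $\Lambda^\dagger=\Lambda^*$ are mutually dependent and so must be derived in a carefully ordered bootstrap.
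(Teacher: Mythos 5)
Your scaffolding is right: the Dirichlet eigenvalue approximation on $B_n$, the passage to a $\Cc^{2,\alpha}_{\mathrm{loc}}$ limit $(V^*,\lambda^*)$ with $\lambda^*\le\Lambda^*$, and the It\^o / Fatou representation under a selector are all as in the paper (Lemma~\ref{L3.1} and Theorem~\ref{T3.4}). The problem is concentrated exactly where you flag it as the crux.

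Your claim that Assumption~\ref{A1.1} yields a Lyapunov function with ``exponential moments of the first hitting time of a fixed compact set, uniformly in the control'' is not true, and cannot be: the paper emphasizes that \eqref{EA1.2} \emph{allows} transience. A function $\Phi=(1+|x|^2)^\alpha$ with $\sup_u\Lg^u\Phi/\Phi\to0$ controls growth, not recurrence --- it is not a Foster--Lyapunov condition for stability of any kind, and in a transient regime $\Prob_x(\uuptau(\sK_\epsilon)<\infty)$ can tend to $0$ as $|x|\to\infty$, so the hitting-time argument does not deliver $\inf_{\Rd}V^*>0$. Nor is $\sup_{\Rd}V^*<\infty$ available (the appeal to it in the admissible-control step therefore also fails); in fact once a selector is shown stable, $V^*$ is \emph{inf-compact}, i.e.\ tends to $+\infty$.

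What the paper actually does at this juncture is different and is what makes the theorem work. Because $b$, $\upsigma$, and $c$ are bounded and $\upsigma$ is Lipschitz, a global Harnack / interior-estimate argument (Lemma~\ref{L3.3}) gives the uniform gradient bound $\sup_{\Rd}|\grad V^*|/V^*<\infty$, hence only the two-sided \emph{exponential} bound $\E^{-\kappa(1+|x|)}\le V^*\le\E^{\kappa(1+|x|)}$. The role of \eqref{EA1.2} is then Lemma~\ref{L3.2}: $\Exp_x^U[|X_t|]=\sorder(t)$ uniformly in $U\in\Uadm$. Combining the Fatou representation with Jensen's inequality and dividing by $T$, the boundary term $\tfrac1T\Exp^v_x[\log V^*(X_T)]$ is controlled by $\tfrac\kappa T\Exp^v_x[1+|X_T|]\to0$, yielding $\limsup_T\tfrac1T\Exp^v_x[\int_0^T c]\le\lambda^*\le\Lambda^*$. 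Only now does near-monotonicity of $c$ relative to $\Lambda^*$ (via the tightness argument in Lemma~\ref{L2.1}(g)$\Rightarrow$(a)) force stability of the selector $v$; stability then makes $V^*$ inf-compact and gives $\Lambda^v_x\le\lambda^*$ by Lemma~\ref{L2.1}(d),(f), so $\Lambda^*\le\Lambda^v_x\le\lambda^*\le\Lambda^*$ closes the loop. In short: the bound on $V^*$ must be exponential (not constant), the role of \eqref{EA1.2} is sublinear expected displacement (not recurrence), and stability is a \emph{consequence} of the ergodic estimate, not an input. The ``bootstrap'' you anticipate is not a mutual dependency of $\inf V^*>0$ and $\Lambda^\dagger=\Lambda^*$ via a Lyapunov argument, but the sequence: exponential bound $\Rightarrow$ ergodic average $\le\lambda^*$ $\Rightarrow$ stability $\Rightarrow$ inf-compactness and equality.
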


\begin{proof}
Existence of a solution and parts (i)--(ii) of the proposition
follow by Theorem~\ref{T3.4} in Section~\ref{S3}.
Part (iii)  follows by Theorem~\ref{T1.5} and Lemma~\ref{L3.3},
together with the fact that a diffusion with Lipschitz continuous
diffusion matrix and a drift having at most linear growth is regular.
\qed\end{proof}

%%%%%%%%%%%%%%%%%%%%%%%%%%%%%%%%%%%%%%%%%%%%%%%%%%%%%%%%%%%%%%%%%%%%%%%%%%%%%%%%
\begin{remark}
The hypothesis in \eqref{EA1.2}
of Assumption~\ref{A1.1}
may be replaced by the following.
There exists a $\Cc^{2}$ function $\Lyap_\circ$, satisfying
$\liminf_{\abs{x}\to\infty}\;\frac{\Lyap_\circ(x)}{1+\abs{x}^{2}}>0$,
such that
\begin{equation*}
\frac{\bigl[\Lg^{u}\Lyap_\circ(x)\bigr]^{+}}{\sqrt{\Lyap_\circ(x)}}
\;\xrightarrow[\abs{x}\to\infty]{}\;0\qquad\forall\, u\in\Act\,.
\end{equation*}
It is clear from the proof that the result of
Lemma~\ref{L3.2} in Section~\ref{S3} holds under this assumption.
It is also evident that
\eqref{EA1.2} may be replaced by the more general hypothesis
that $\Exp_{x}^{U}\bigl[\abs{X_{t}}\bigr]\in\sorder(t)$ under any
$U\in\Uadm$, which is the conclusion of Lemma~\ref{L3.2} on which the
proof of Theorem~\ref{T3.4} is based.
Note that when the coefficients $b$ and $\upsigma$ are bounded, it is always the case
that $\Exp_{x}^{U}\bigl[\abs{X_{t}}\bigr]\in\order(t)$.
\end{remark}

%%%%%%%%%%%%%%%%%%%%%%%%%%%%%%%%%%%%%%%%%%%%%%%%%%%%%%%%%%%%%%%%%%%%%%%%%%%%%%%%%
\begin{remark}
As shown in \cite[Proposition~2.6]{Berestycki-15} if $a$ and $b$ are bounded
then in general $\Lambda^*=\infty$ when $c$ is not bounded.
Therefore, the assumption that $c$ is bounded in Proposition~\ref{P1.1}
cannot be relaxed.
\end{remark}

Let
\begin{equation*}
\Usm^* \;\df\;
\bigl\{v\in\Usm \colon \Lambda^v_x(c) = \lm\,,\ \forall\, x\in\Rd\bigr\}\,.
\end{equation*}
Concerning uniqueness of the HJB equation we have the following.

%%%%%%%%%%%%%%%%%%%%%%%%%%%%%%%%%%%%%%%%%%%%%%%%%%%%%%%%%%%%%%%%%%%%%%%%%%%%%%%%%
\begin{theorem}\label{T1.2}
Suppose that in addition to the assumptions of Proposition~\ref{P1.1}
it holds that
\begin{equation}\label{ET1.2A}
\lm(c+h)\;>\;\lm(c)\qquad\forall\,h\in\sCo\,.
\end{equation}
Then there exists a unique pair $(V,\Lambda)\in\mathfrak{V}$ which solves
\begin{equation}\label{ET1.2B}
\min_{u\in\Act}\;
\bigl[\Lg^{u} V(x) + c(x,u)\,V(x)\bigr] \;=\; \Lambda\,V(x)
\qquad\forall\,x\in\Rd\,.
\end{equation}
and $v\in\Usm^*$ if and only if it satisfies \eqref{EP1.1A}.
In addition, the function $V^*$ in Proposition~\ref{P1.1}
has the stochastic representation
\begin{equation}\label{ET1.2C}
V^*(x)\;=\;\Exp_{x}^{v}\Bigl[\E^{\int_{0}^{\uuptau_r}
[c(X_{t},v(X_{t}))-\Lambda^*]\,\D{t}}\,V^*(X_{\uuptau_r})\Bigr]
\qquad\forall\, x\in \Bar{B}_r^c\,,
\end{equation}
for all $r>0$, and $v\in\Usm^*$.
Conversely, if $v\in\Usm^*$ satisfies \eqref{ET1.2C} for some $r>0$,
then $\Lambda^v(c+h) > \Lambda^v(c)$ for all $h\in\sCo$.
\end{theorem}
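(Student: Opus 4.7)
The existence of a pair $(V^*,\Lambda^*)\in\mathfrak{V}$ solving \eqref{ET1.2B} is already supplied by Proposition~\ref{P1.1}, which also gives $\lm=\Lambda^*$ and identifies every measurable selector of the Hamiltonian as a stable, optimal control. The substance of Theorem~\ref{T1.2} therefore reduces to three points: (a) every $(V,\Lambda)\in\mathfrak{V}$ solving the HJB equals $(V^*,\Lambda^*)$; (b) every $v\in\Usm^*$ is a selector of \eqref{EP1.1A}; and (c) the stochastic representation \eqref{ET1.2C} holds, together with its stated partial converse. My plan is to derive all of these from the MPE theory of Section~\ref{S2}, with the strict monotonicity hypothesis \eqref{ET1.2A} playing the role of the hinge that turns a generic positive eigenfunction into the unique ground state.

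The central fact to isolate first is the following consequence of \eqref{ET1.2A}: since $\Lambda^v(c+h)\ge\lm(c+h)>\lm(c)=\Lambda^v(c)$ for every $v\in\Usm^*$ and every $h\in\sCo$, Theorem~\ref{T1.6} yields that the ground state diffusion for each such $v$ is recurrent, and Theorem~\ref{T1.5} then provides (i) uniqueness up to a positive scalar of the positive eigenfunction of $\Lg_v + c(\cdot,v)$ at its principal eigenvalue $\Lambda^v(c)=\Lambda^*$; (ii) the stochastic representation \eqref{ET1.2C}; and (iii) the simplicity property of the principal eigenvalue, namely that every positive classical supersolution at that eigenvalue is already a positive scalar multiple of the ground state.

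With this dictionary in place, (a) follows as follows. Let $(V,\Lambda)\in\mathfrak{V}$ solve \eqref{ET1.2B} and fix a Borel selector $v$; then $V$ is a positive classical solution of $\Lg_v V + c(\cdot,v)V = \Lambda V$ with $V(0)=1$. The standard characterization of the principal eigenvalue via positive supersolutions identifies $\Lambda$ with the principal eigenvalue of $\Lg_v + c(\cdot,v)$, which by Theorem~\ref{T1.5} equals $\Lambda^v(c)$; combined with $\Lambda\le\Lambda^*=\lm\le\Lambda^v(c)$ this gives $\Lambda=\Lambda^*$ and $v\in\Usm^*$. Now the HJB \eqref{ET1.2B} applied to $V^*$ says $\Lg_v V^*+c(\cdot,v)V^*\ge\Lambda^* V^*$ a.e., so $V^*$ is a positive supersolution of the MPE for $v$ at $\Lambda^v(c)=\Lambda^*$; by the simplicity property, $V^*$ is a positive scalar multiple of $V$, and $V(0)=V^*(0)=1$ force $V=V^*$. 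The very same argument, applied to a generic $v\in\Usm^*$ rather than to an HJB selector, proves (b): $V^*$ is a supersolution of the MPE for $v$ at the principal eigenvalue $\Lambda^v(c)$ and must be a scalar multiple of the ground state, so the inequality is an equality a.e., which is exactly \eqref{EP1.1A}. Claim (c) is the representation already extracted from Theorem~\ref{T1.5}.

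The remaining converse in (c)---that \eqref{ET1.2C} holding for some $r>0$ and some $v\in\Usm^*$ implies $\Lambda^v(c+h)>\Lambda^v(c)$ for all $h\in\sCo$---goes through Theorem~\ref{T1.6} read in the reverse direction: the representation identifies $V^*$ as a positive eigenfunction whose ground state diffusion is recurrent, and recurrence implies strict monotonicity of the principal eigenvalue in the potential. The main obstacle I foresee is extracting Theorems~\ref{T1.5}--\ref{T1.6} in exactly the equivalence form used above---strict monotonicity of the principal eigenvalue at $c$, recurrence of the ground state diffusion, uniqueness/simplicity of the ground state, and the stochastic representation \eqref{ET1.2C} as mutually equivalent conditions---since those results are framed for an uncontrolled elliptic operator, and translating them to the controlled setting requires fixing a selector, verifying the standing regularity hypotheses, and passing the monotonicity back and forth between $\lm$ and $\Lambda^v$ along $\Usm^*$.
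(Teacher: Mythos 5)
Your proposal reduces the theorem to a ``simplicity property'' of the ground state which, unfortunately, does not hold in the direction you invoke it, and this is the point where the paper has to do substantially more work. Fix $v\in\Usm^*$ with ground state $V_v$ solving $\Lg_v V_v + \Bar{c}_v V_v = \lm V_v$. The HJB applied at $v$ gives $\Lg_v V + \Bar{c}_v V \ge \lm V$. Writing $g=V/V_v$, the identity \eqref{E-Lg*Id} (or a direct computation) shows $\Lg_v^* g = \bigl[\Lg_v V + (\Bar{c}_v-\lm)V\bigr]/V_v \ge 0$, so $g(X^*_t)$ is a \emph{sub}martingale for the ground state diffusion $X^*$, not a supermartingale. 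The Liouville argument you implicitly lean on (as in the proof of Lemma~\ref{L2.9}) works for nonnegative \emph{super}martingales, which converge a.s.\ and must therefore be constant when $X^*$ is recurrent; a nonnegative submartingale for a recurrent diffusion can perfectly well diverge (think of $|X_t|$ for a one--dimensional Brownian motion). Nor is $g$ bounded above a priori: the exponential gradient bounds only give $V\le \E^{\kappa(1+|x|)}$ and $V_v\ge \E^{-\kappa(1+|x|)}$, so $g$ can grow. Thus claim (iii) of your ``dictionary''---that a positive function satisfying $\Lg_v V + (\Bar{c}_v-\lm)V\ge 0$ and touching the ground state must be a multiple of it---does not follow from Theorem~\ref{T1.5}, and both your step (a) (uniqueness) and (b) (that every $v\in\Usm^*$ is a selector) break at this point. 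There is also a smaller slip: you cite Theorem~\ref{T1.6} for recurrence, but the hypothesis $\lm(c+h)>\lm(c)$ is strict monotonicity to the \emph{right}, which is the equivalence (iv)$\Leftrightarrow$(i) of Theorem~\ref{T1.5}; Theorem~\ref{T1.6} concerns monotonicity to the \emph{left} and geometric ergodicity.

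What the paper does to close exactly this gap is the content that your sketch skips. It constructs, via the perturbed Dirichlet problems \eqref{PT1.2A}--\eqref{PT1.2B} (modeled on the proof of Lemma~\ref{L2.11}), a positive HJB supersolution $\Phi$ together with the \emph{upper} stochastic inequality
\[
\Phi(x) \;\le\; \Exp_x^v\Bigl[\E^{\int_{0}^{\uuptau}[\Bar{c}_v(X_s)-\Lambda^*]\,\D{s}}\,\Phi(X_{\uuptau})\Bigr]\,,\qquad x\in\sB^c\,,
\]
valid for every $v\in\Usm^*$. Combined with the Fatou lower bound for $V_v$, this is precisely the missing half that Lemma~\ref{L3.5} needs to run the strong maximum principle (scale $V_v$ to touch $\Phi$ from above, then conclude equality). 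The condition \eqref{ET1.2A} then forces $\alpha=0$ in \eqref{PT1.2B}, turning $\Phi$ into a genuine HJB solution, and Lemma~\ref{L3.6}/Remark~\ref{R3.1} tie $\overline{\mathfrak{G}}=\mathfrak{G}=\{\Phi\}$ together. None of this is a routine consequence of recurrence plus a Liouville theorem; the Dirichlet construction that manufactures the upper stochastic inequality is the core of the paper's proof, and it is absent from your proposal. Your treatment of the final converse in (c) is fine, but it sits on top of the uniqueness you have not actually established.
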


\begin{proof}
The proof is in Section~\ref{S3}.
\qed\end{proof}

Without imposing any restrictions on the coefficients, we
have the following result.

%%%%%%%%%%%%%%%%%%%%%%%%%%%%%%%%%%%%%%%%%%%%%%%%%%%%%%%%%%%%%%%%%%%%%%%%%%%%%%%%
\begin{proposition}\label{P1.3}
Suppose that $c$ is near-monotone relative to $\lm$.
Then the HJB equation
\begin{equation*}
\min_{u\in\Act}\;
\bigl[\Lg^{u} V^*(x) + c(x,u)\,V^*(x)\bigr] \;=\; \lm\,V^*(x)
\qquad\forall\,x\in\Rd
\end{equation*}
has a solution $V^*\in\Cc^{2}(\RR^{d})$,
satisfying $V^*(0)=1$ and $\inf_{\Rd}\,V^*>0$.
Moreover, any $v\in\Usm$ that satisfies \eqref{EP1.1A}
is stable, and is optimal in the class $\Usm$, i.e., $\Lambda^v_x=\lm$
for all $x\in\Rd$.

Under the additional assumption in \eqref{ET1.2A},
there exists a unique pair $(V,\Lambda)\in\mathfrak{V}_\circ$ which solves
\eqref{ET1.2B}, and
$v\in\Usm^*$ if and only if it satisfies \eqref{EP1.1A}.
The function $V^*$ has the stochastic representation
\begin{equation}\label{EP1.3A}
V^*(x)\;=\;\Exp_{x}^{v}\Bigl[\E^{\int_{0}^{\uuptau_r}
[c(X_{t},v(X_{t}))-\lm]\,\D{t}}\,V^*(X_{\uuptau_r})\Bigr]
\qquad\forall\, x\in \Bar{B}_r^c\,,
\end{equation}
for all $r>0$, and $v\in\Usm^*$.
Conversely, if $v\in\Usm^*$ satisfies \eqref{EP1.3A} for some $r>0$,
then $\Lambda^v(c+h) > \Lambda^v(c)$ for all $h\in\sCo$.
\end{proposition}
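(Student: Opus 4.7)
The plan is to follow the architecture of the proofs of Proposition~\ref{P1.1} and Theorem~\ref{T1.2}, but with $\lm$ playing the role of $\Lambda^*$ and with all optimality statements restricted to the class $\Usm$. Since the drift control of Assumption~\ref{A1.1} is no longer available, its role must be taken over entirely by near-monotonicity of $c$ relative to $\lm$, together with the multiplicative Poisson machinery of Section~\ref{S2} (in particular Theorems~\ref{T1.5}--\ref{T1.6}).

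First I would construct $V^*$ via a Dirichlet exhaustion. For each $r>0$ let $(V_r,\Lambda_r)$ be the principal eigenpair of $\min_{u\in\Act}[\Lg^u V_r + cV_r]=\Lambda_r V_r$ on $B_r$ with Dirichlet data $V_r=0$ on $\partial B_r$, normalized by $V_r(0)=1$. Monotonicity in the domain gives $\Lambda_r\nearrow\Lambda_\infty\le\lm$, and the interior Harnack inequality together with the standard $\Sobl^{2,p}$ estimates for HJB operators produces a subsequential limit $V^*\in\Cc^2(\Rd)$ solving the full HJB at eigenvalue $\Lambda_\infty$. Near-monotonicity of $c$ relative to $\lm$ provides $\inf V^*>0$: outside the compact $\sK_\epsilon$ we have $c(\,\cdot\,,u)\ge\lm+\epsilon$ for every $u\in\Act$, and this, combined with the Feynman--Kac representation along a measurable selector, precludes $V^*$ from touching zero. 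The reverse inequality $\Lambda_\infty\ge\lm$, and hence equality, then follows from the Feynman--Kac identity along any $v\in\Usm$ via a Fatou argument.

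For stability and optimality I fix $v\in\Usm$ satisfying \eqref{EP1.1A}, so that $V^*$ solves the linear multiplicative Poisson equation $\Lg_v V^* + (c(\,\cdot\,,v)-\lm)V^*=0$. Itô's formula on $[0,T\wedge\uptau_r]$ yields
\[
V^*(x) \;=\; \Exp^v_x\Bigl[\E^{\int_0^{T\wedge\uptau_r}(c(X_s,v(X_s))-\lm)\,\D s}V^*(X_{T\wedge\uptau_r})\Bigr].
\]
With $\inf V^*>0$ in hand, letting $T\to\infty$ and using near-monotonicity of $c-\lm$ delivers both $\Lambda^v_x\le\lm$ and positive recurrence of the twisted (ground state) diffusion associated with $(v,V^*)$, since sustained excursions into $\sK_\epsilon^c$ would otherwise drive $V^*(x)$ to infinity. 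This places the problem inside the hypotheses of Theorem~\ref{T1.5}, which promotes the inequality to equality and yields the stochastic representation \eqref{EP1.3A} after passing to the limit in $T$ on the domain $\Bar B_r^c$ with exit time $\uuptau_r$. The uniqueness clause under \eqref{ET1.2A} and the characterization of $\Usm^*$ are then obtained as in Theorem~\ref{T1.2}: fixing any selector $v$ of \eqref{EP1.1A}, one compares any candidate $(V,\Lambda)\in\mathfrak{V}_\circ$ with $(V^*,\lm)$ through the linear MPE for the diffusion governed by $v$, and the strict monotonicity of the principal eigenvalue along $v$ supplied by Theorem~\ref{T1.6} forces $V\equiv V^*$ and $\Lambda=\lm$. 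The converse --- that \eqref{EP1.3A} implies $\Lambda^v(c+h)>\Lambda^v(c)$ for every $h\in\sCo$ --- is again the cleaner direction of the monotonicity--recurrence dictionary of Theorem~\ref{T1.6}, applied to the uncontrolled diffusion under $v$.

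The main obstacle is unquestionably the positive recurrence step in the argument above. Without any Foster--Lyapunov or drift-growth input, recurrence of the twisted diffusion must be squeezed purely out of near-monotonicity of $c$ relative to $\lm$ and the MPE satisfied by $V^*$; this is the single point at which the machinery of Theorems~\ref{T1.5}--\ref{T1.6} is truly indispensable, and the proposition is best viewed as a reduction of the controlled problem to the MPE theory of Section~\ref{S2}.
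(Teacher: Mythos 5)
Your proposal runs straight into the obstruction the authors flag in Remark~\ref{R3.2}. The Dirichlet exhaustion of Lemma~\ref{L3.1} gives eigenvalues $\Hat\lambda_n\nearrow\lambda^*=\Hat\Lambda^*$, and under Assumption~\ref{A1.1} one indeed has $\Hat\Lambda^*=\doublehat\Lambda^*=\lm$. But absent Assumption~\ref{A1.1} the paper explicitly notes that $\Hat\Lambda^*$ can be \emph{strictly} less than $\doublehat\Lambda_{\mathrm{m}}^{*}=\lm$, so your $\Lambda_\infty$ may sit strictly below $\lm$. In that case the limit $V^*$ is positive but need not satisfy $\inf_{\Rd}V^*>0$: Lemma~\ref{L2.1} only delivers inf-compactness of a ground state when the associated diffusion is recurrent, and a minimizing selector of the naive Dirichlet limit may well be transient. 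Your sentence that near-monotonicity relative to $\lm$ together with the Feynman--Kac representation ``precludes $V^*$ from touching zero'' is precisely the unproved step; the Fatou inequality only gives a lower bound on $V^*$ along paths that return to a fixed ball, and nothing forces such returns without a stability input. The circularity is visible in your next step too: you use $\inf V^*>0$ to upgrade $\Lambda_\infty$ to $\lm$, but you needed $\Lambda_\infty=\lm$ (or a stable selector) to get $\inf V^*>0$ in the first place.

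The paper's construction avoids this by pinning a fixed $\varepsilon$-optimal stable Markov control at infinity. One replaces $b,c$ on $B_n^c$ by $b(\cdot,v_\varepsilon(\cdot)),\,\Bar c_{v_\varepsilon}(\cdot)$ and solves the corresponding HJB on $\Rd$, so that any minimizing selector $\Hat v_n$ coincides with $v_\varepsilon$ outside $B_n$ and is therefore automatically stable. The resulting solutions $V^{v_\varepsilon}_n$ are then inf-compact \emph{uniformly in $n$} (the key bound \eqref{PP1.3D}), and the double limit $n\to\infty$, $\varepsilon\searrow0$ produces $V^*$ solving the HJB exactly at $\lm$ with $\inf_{\Rd}V^*>0$. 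This is the step your outline has no substitute for, and it is the whole point of the construction.

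The uniqueness part has the same difficulty. You propose to ``proceed as in Theorem~\ref{T1.2},'' i.e., to solve Dirichlet problems with forcing $-\alpha_n\Ind_\sB$ and argue $\alpha=0$ from \eqref{ET1.2A}. But Remark~\ref{R3.2} also explains why this fails here: without Assumption~\ref{A1.1} the resulting $\Phi$ need not be inf-compact, so the comparison with the ground states $V_v$ of the optimal controls (the mechanism of Lemma~\ref{L2.11}) cannot be run, and one cannot conclude $\alpha=0$. The paper instead fixes $\alpha>0$, solves $\min_u[\Lg^u\varphi+(c-\lm-\alpha)\varphi]=-\zeta_\alpha$ with forcing constant over all of $\Rd$, exploits the shift by $-\alpha$ to make the relevant Green's-type integrals finite via \eqref{PP1.3M}, obtains a family $\varPhi_\alpha$ uniformly bounded away from zero (via a Harnack estimate and the near-monotonicity argument), and only then sends $\alpha\searrow0$ to deduce $\Bar\zeta=0$. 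The final identification, converse, and stochastic representation do then go through the way you sketch, but the passage to those final steps requires the two construction devices just described, neither of which appears in your outline.
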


The proof of Proposition~\ref{P1.3} is in Section~\ref{S3}.

%%%%%%%%%%%%%%%%%%%%%%%%%%%%%%%%%%%%%%%%%%%%%%%%%%%%%%%%%%%%%%%%%%%%%%%%%%%%%%%%
\begin{remark}
The main reason $\lm$ appears in Proposition~\ref{P1.3}
instead of $\Lambda^*$ has to do with the
way the solution $V^*$ is constructed.
It should be kept in mind that $\Lambda^*\leq\lm$, in general, and
therefore we cannot follow the path of Proposition~\ref{P1.1}
to prove Proposition~\ref{P1.3}.
Instead, a fixed $\epsilon$-optimal stationary Markov control is imposed at
`$\infty$' to guarantee that the solution is bounded away from zero, and then
a limit is taken as $\epsilon\searrow0$.
This has the effect of restricting optimality over the class $\Usm$.
For more details on this see Remark~\ref{R3.2} in Section~\ref{S3}.
\end{remark}

%%%%%%%%%%%%%%%%%%%%%%%%%%%%%%%%%%%%%%%%%%%%%%%%%%%%%%%%%%%%%%%%%%%%%%%%%%%%%%%%
The proofs of these results depend heavily on properties of
the multiplicative Poisson equation (MPE),
which are summarized next.

\subsubsection{Results concerning the multiplicative Poisson equation}

We consider an uncontrolled diffusion
\begin{equation}\label{E1-sde2}
\D{X}_{t} \;=\;b(X_{t})\,\D{t} + \upsigma(X_{t})\,\D{W}_{t}\,,
\end{equation}
where $\upsigma$ and $b$ satisfy (A2)--(A3),  $\upsigma$ is locally  Lipschitz
(as in (A1)), and $b$ is measurable.
We let $\Exp_x$
denote the expectation operator induced by the
strong Markov process with $X_{0}=x$, governed by \eqref{E1-sde2},
and
\begin{equation}\label{E-Lg}
\Lg \;\df\; \tfrac{1}{2}\,a^{ij}(x)\,\partial_{ij}
+ b^{i}(x)\, \partial_{i}\,,
\end{equation}
 with
$a\df \upsigma \upsigma\transp$.
Let $f\,\colon\Rd\to\RR_+$ be measurable and locally bounded,
and define
\begin{equation}\label{E-Lambdaf}
\begin{split}
\Lambda_x(f) &\;\df\; \limsup_{T\to\infty}\;\frac{1}{T}\,\log\,
\Exp_x\Bigl[\E^{\int_{0}^{T} f(X_t)\,\D{t}}\Bigr]\qquad
\forall\, x\in\Rd\,,\\[5pt]
\Lambda(f) &\;\df\; \inf_{x\in\,\Rd}\;\Lambda_x(f)\,.
\end{split}
\end{equation}
We assume $\Lambda(f)<\infty$.
We say that $(\Psi,\Lambda)\in\Sobl^{2,p}(\Rd)\times\RR$, $p>d$, $\Psi>0$,
is a solution of the multiplicative Poisson equation (MPE)
if it satisfies
\begin{equation}\label{E1-Pois}
\Lg \Psi(x) + f(x)\,\Psi(x)
\;=\; \Lambda\,\Psi(x) \qquad\text{a.e.\ }x\in\Rd\,.
\end{equation}
We refer to $\Lambda$ as an \emph{eigenvalue} of the MPE.

Consider the following hypothesis.
%%%%%%%%%%%%%%%%%%%%%%%%%%%%%%%%%%%%%%%%%%%%%%%%%%%%%%%%%%%%%%%%%%%%%%%%%%%%%%%%
\begin{description}
\item[(H1)] The diffusion in \eqref{E1-sde2} is recurrent, and $f\colon\Rd\to\RR_+$
is a locally bounded measurable map which
is near-monotone relative to $\Lambda(f)$.
\end{description}

Implicit in the statement in (H1) is of course the requirement $\Lambda(f)<\infty$.
We have assumed $f\ge0$, for simplicity, but all the results are
valid with $f$ bounded below in $\Rd$.

We compare the definition in \eqref{E-Lambdaf}
with the following definitions for the principal eigenvalue,
commonly used in the pde literature
\cite{Berestycki-15}:

\begin{align*}
\Hat\Lambda(f) &\;\df\;
\inf\;\bigl\{\lambda\in\RR\,\colon \exists\, \varphi\in\Sobl^{2,d}(\Rd)\,,
\ \varphi>0\,,\ \Lg\varphi + (f-\lambda)\varphi \le 0\ \text{a.e. in\ }\Rd\bigr\}\,,
\\[5pt]
\doublehat\Lambda(f) &\;\df\;
\inf\;\bigl\{\lambda\in\RR\,\colon \exists\, \varphi\in\Sobl^{2,d}(\Rd)\,,
\ \inf_{\Rd}\,\varphi>0\,,\ \Lg\varphi
+ (f-\lambda)\varphi \le 0\ \text{a.e. in\ }\Rd\bigr\}\,.
\end{align*}

We have the following theorem.
%%%%%%%%%%%%%%%%%%%%%%%%%%%%%%%%%%%%%%%%%%%%%%%%%%%%%%%%%%%%%%%%%%%%%%%%%%%%%%%%
\begin{theorem}\label{T1.4}
Under {\upshape(H1)}, we have $\Lambda(f) = \Hat\Lambda(f) =\doublehat\Lambda(f)$.
\end{theorem}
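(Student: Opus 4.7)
The plan is to establish the chain
$\Lambda(f) \le \Hat\Lambda(f) \le \doublehat\Lambda(f) \le \Lambda(f)$.
The middle inequality is immediate, since any $\varphi$ admissible for
$\doublehat\Lambda(f)$ is a fortiori admissible for $\Hat\Lambda(f)$.
The two nontrivial bounds both hinge on a single key observation, which
is where (H1) enters crucially: \emph{any positive $\varphi \in \Sobl^{2,d}(\Rd)$
satisfying $\Lg\varphi + (f-\lambda)\varphi \le 0$ a.e.\ for some
$\lambda \le \Lambda(f)$ automatically satisfies $\inf_\Rd \varphi > 0$.}

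To prove the observation I would use near-monotonicity of $f$ relative to
$\Lambda(f)$ to select $\epsilon>0$ and a compact $\sK$ with
$f - \lambda \ge \epsilon$ on $\sK^c$, so that on $\sK^c$ the supersolution
inequality reduces to $\Lg\varphi + \epsilon\varphi \le 0$. Applying It\^o's
formula to $Y_t := \varphi(X_t)\,\E^{\epsilon t}$ and localizing via
$\uuptau_r$ with $r$ chosen so that $\sK\subset\Bar{B}_r$ makes
$\{Y_{t\wedge\uuptau_r}\}$ a non-negative local supermartingale. The
recurrence assumption in (H1) gives $\uuptau_r<\infty$ almost surely;
sending $t\to\infty$ by monotone convergence yields, for $x\in\Bar{B}_r^c$,
\begin{equation*}
\varphi(x) \;\ge\; \Exp_x\bigl[\varphi(X_{\uuptau_r})\,\E^{\epsilon\uuptau_r}\bigr]
\;\ge\; \bigl(\min_{\partial B_r}\varphi\bigr)\,\Exp_x\bigl[\E^{\epsilon\uuptau_r}\bigr]
\;\ge\; \min_{\partial B_r}\varphi \;>\; 0\,,
\end{equation*}
so $\inf_{\Bar{B}_r^c}\varphi>0$; continuity of $\varphi$ on $\Bar{B}_r$
then gives $\inf_\Rd\varphi>0$.

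Given the observation, the bound $\Lambda(f)\le\Hat\Lambda(f)$ follows
by contradiction: were $\Hat\Lambda(f)<\Lambda(f)$, choose $\lambda$ strictly
between the two and a corresponding positive supersolution $\varphi$. The
observation yields $m:=\inf\varphi>0$, and It\^o applied to
$M_t := \varphi(X_t)\,\E^{\int_0^t(f(X_s)-\lambda)\,\D{s}}$ together with
localization via $\uptau_n := \uptau(B_n)$ and Fatou's lemma (using
non-explosion, hence $\uptau_n\to\infty$ a.s.) gives
$\varphi(x) \ge m\,\E^{-\lambda t}\,\Exp_x\bigl[\E^{\int_0^t f(X_s)\,\D{s}}\bigr]$,
forcing $\Lambda(f)\le\lambda$ and contradicting $\lambda<\Lambda(f)$.

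For the remaining bound $\doublehat\Lambda(f)\le\Lambda(f)$, I would
construct a ground state $(\Psi,\Lambda(f))$ for the MPE by a
vanishing-boundary procedure: solve the Dirichlet principal eigenvalue
problem on each ball $B_n$ to produce a pair $(\Psi_n,\lambda_n)$ normalized
by $\Psi_n(0)=1$, invoke the results of Section~\ref{S2} to conclude
$\lambda_n\nearrow\Lambda(f)$, and extract a $\Cc^{2}_{\mathrm{loc}}$
subsequential limit $\Psi>0$ via Harnack's inequality and interior
$\Sobl^{2,p}$ estimates, with $\Psi$ solving the MPE at eigenvalue
$\Lambda(f)$. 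Applying the observation with $\lambda=\Lambda(f)$ forces
$\inf\Psi>0$, which places $\Psi$ in the admissible set for
$\doublehat\Lambda(f)$ at this $\lambda$. The conceptual core---and
principal obstacle---is the observation itself: upgrading $\varphi>0$
to $\inf\varphi>0$ relies on the \emph{combination} of near-monotonicity
(producing the uniform gap $f-\lambda\ge\epsilon$ outside a compact set)
with recurrence (making $\uuptau_r$ almost surely finite so that the
supermartingale inequality transmits positivity from $\partial B_r$
to the exterior); neither ingredient alone would do.
\qed
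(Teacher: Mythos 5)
Your proof is correct and takes essentially the same route as the paper: the key observation you isolate is precisely the content of the paper's Lemma~\ref{L2.1}, parts (d) and (f) (near-monotonicity together with recurrence forces any admissible supersolution to be bounded away from zero, and a supersolution with positive infimum yields $\Lambda_x(f)\le\lambda$), while your construction of a ground state with $\inf_{\Rd}\Psi>0$ via Dirichlet exhaustion is the paper's Lemma~\ref{L2.3}. One small slip: sending $t\to\infty$ in the stopped supermartingale bound uses Fatou's lemma rather than monotone convergence, since $Y_{t\wedge\uuptau_r}$ is not monotone in $t$; this does not affect the argument.
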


\begin{proof}
The proof is in Section~\ref{S2.1}.
\qed
\end{proof}

%%%%%%%%%%%%%%%%%%%%%%%%%%%%%%%%%%%%%%%%%%%%%%%%%%%%%%%%%%%%%%%%%%%%%%%%%%%%%%%%
\begin{definition}
Let $\Psi\in\Sobl^{2,p}(\Rd)$, $p>d$, be a positive
solution of the MPE
\begin{equation}\label{E1-Pois2}
\Lg \Psi(x) + f(x)\,\Psi(x)
\;=\; \Lambda(f)\,\Psi(x) \qquad\text{a.e.\ }x\in\Rd\,,
\end{equation}
and let $\psi\df \log \Psi$.
We introduce the stochastic differential equation (sde)
\begin{equation}\label{E-sde*}
\D{X}^*_{t} \;=\; \bigl(b(X^*_t) + a (X^*_t)
\grad\psi(X^*_t)\bigr)\,\D{t} + \upsigma(X^*_t)\,\D W^*_{t}\,,
\end{equation}
where $W^*$ is, as usual, a standard Wiener process.
We denote by $\Lg^*$ the extended generator of \eqref{E-sde*},
given by
\begin{equation}\label{E-Lg*}
\Lg^* g\;\df\; \tfrac{1}{2}a^{ij}\partial_{ij} g + \langle b,\grad g\rangle + 
\langle a\grad\psi,\grad g\rangle
\end{equation}
for $g\in\Cc^2(\Rd)$.
\end{definition}

The sde in \eqref{E-sde*} is well known.
Recall the Feynman--Kac semigroup corresponding to $\Lg+f$, given by
\begin{equation}\label{E-FK}
P^f_t h(x) \;\df\; \Exp_x\Bigl[\E^{\int_{0}^{t}f(X_s)\,\D{s}}\,h(X_{t})\Bigr]
\qquad\text{for\ \ } h\ge 0\,,\quad h\text{\ measurable.}
\end{equation}
The function $f$ is referred to as the \emph{potential} in the
study of the Feynman--Kac semigroup for symmetric Markov processes, and the
eigenfunction $\Psi$ is called a \emph{ground state}.
The \emph{ground state semigroup}  is given by
\begin{equation}\label{E-cT*}
\cT^{\Psi}_t h(x) \;\df\; \E^{-\Lambda(f)t}\frac{1}{\Psi(x)}\,P^f_t (\Psi h)(x)\,,
\end{equation}
and it turns out that $\Lg^*$ is its generator
\cite{Wu-94,Pinsky}.
In \cite{Kontoyiannis-02,Balaji-00}
$\cT^{\Psi}_t$ called the \emph{twisted kernel}.
In addition, elliptic equations with $\Lg^*$ have been studied extensively
in \cite{Kaise-06}, albeit under smoothness assumptions on the coefficients.

Since the drift of \eqref{E-sde*} does not necessarily satisfy
(A2), existence and uniqueness of a solution for this equation is
not guaranteed.
Diffusions with locally bounded drift have been studied in \cite{Gyongy-96},
and under the assumption that the diffusion matrix $\upsigma$ is locally
Lipschitz and nonsingular, existence of a unique strong solution up to
explosion time has been established.
Recently, similar results have been obtained for locally integrable
drifts \cite{Krylov-05}.
Moreover, if the diffusion in \eqref{E-sde*} is positive recurrent,
it is well known that it has a unique invariant probability measure,
with a positive density \cite{Bogachev-01}.

We say that the diffusion in \eqref{E-sde*} is \emph{regular}, if 
it has a unique strong solution which exists for all $t>0$.

%%%%%%%%%%%%%%%%%%%%%%%%%%%%%%%%%%%%%%%%%%%%%%%%%%%%%%%%%%%%%%%%%%%%%%%%%%%%%%%%
\begin{theorem}\label{T1.5}
Assume {\upshape(H1)}.
Then the diffusion in \eqref{E-sde*} is \emph{regular}
if and only if $P^f_t \Psi(x) = \E^{\Lambda(f) t} \Psi(x)$ for all
$(t,x)\in\RR_+\times\Rd$.
In addition, the following are equivalent:
\begin{enumerate}
\item[{\upshape(}i\/{\upshape)}]
The process $X^*$ in \eqref{E-sde*} is recurrent.

\smallskip
\item[{\upshape(}ii\/{\upshape)}]
For some $r>0$,  we have
\begin{equation*}
\Psi(x) \;=\; \Exp_x\Bigl[\E^{\int_{0}^{\uuptau_r}[f(X_s)-\Lambda(f)]\,\D{s}}\,
\Psi(X_{\uuptau_r})\Bigr]\qquad\forall\,x\in B_r^c\,.
\end{equation*}

\item[{\upshape(}iii\/{\upshape)}]
For some $x\in\Rd$ it holds that
\begin{equation*}
\int_0^\infty\Exp_{x}\,\Bigl[\E^{\int_{0}^{t}
[f(X_s)-\Lambda(f)]\,\D{s}}\Bigr]\,\D{t}\;=\;\infty\,.
\end{equation*}

\item[{\upshape(}iv\/{\upshape)}]
It holds that $\Lambda(f)<\Lambda(f+h)$
for all $h\in\sCo$.

\smallskip
\item[{\upshape(}v\/{\upshape)}]
If $A\subset\Rd$ is a  measurable set of positive
Lebesgue measure, then $\Lambda(f)<\Lambda(f+\epsilon\Ind_A)$
for all $\epsilon>0$.
\end{enumerate}
Moreover, in {\upshape(}ii\/{\upshape)}--{\,\upshape(}iii\/{\upshape)}
 ``some'' may be replaced with ``all''.
\end{theorem}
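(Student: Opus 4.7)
The entire argument is driven by the Doob $h$-transform associated with $\Psi$. By It\^o's formula and \eqref{E1-Pois2}, the process $t\mapsto \Psi(X_t)\,\E^{\int_0^t[f(X_s)-\Lambda(f)]\,\D s}$ is a positive $\Lg$-local martingale under $\Prob_x$, and hence a supermartingale; this already yields the a priori bound $P^f_t\Psi(x)\le\E^{\Lambda(f)t}\Psi(x)$. Normalizing by $\Psi(x)$ produces the Girsanov density that turns $X$ into $X^*$ on $\{t<\uptau^*_\infty\}$, and the identity $\cT^\Psi_t\Ind(x)=\E^{-\Lambda(f)t}\Psi(x)^{-1}P^f_t\Psi(x)=\Exp^*_x[\Ind_{\{t<\uptau^*_\infty\}}]$ immediately gives the first assertion: $X^*$ is regular iff the survival probability is $1$ for every $(t,x)$, iff $P^f_t\Psi=\E^{\Lambda(f)t}\Psi$.

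For the equivalence of (i), (ii), (iii), I would work at the level of the $h$-transform. Fatou's lemma combined with the supermartingale property and the recurrence of $X$ (which gives $\uuptau_r<\infty$ $\Prob_x$-a.s.) always yields $\Psi(x)\ge\Exp_x\bigl[\E^{\int_0^{\uuptau_r}[f-\Lambda(f)]\,\D s}\Psi(X_{\uuptau_r})\bigr]$ for $x\in\Bar{B}_r^c$, with equality iff $\Prob^*_x(\uuptau_r<\uptau^*_\infty)=1$. Under (i), $X^*$ is regular and returns to $B_r$ in finite time, so (ii) holds; conversely, equality in (ii) for some $r$, combined with the irreducibility of $X^*$ coming from the nondegeneracy of $\Lg^*$, forces $X^*$ to be regular and recurrent, and (ii) then propagates to every $r>0$. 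For (iii), the same change of measure rewrites $\int_0^\infty\E^{-\Lambda(f)t}P^f_t\Ind(x)\,\D t$ as the $X^*$-Green potential of the strictly positive continuous function $\Psi^{-1}$, which is infinite at every $x$ exactly when $X^*$ is recurrent; the ``some $r$/some $x$'' $\Leftrightarrow$ ``all $r$/all $x$'' dichotomy then follows because (i) is intrinsic to $X^*$.

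The crux is the link with the monotonicity conditions (iv) and (v). For (i)$\Rightarrow$(iv), assuming $\Lambda(f+h)=\Lambda(f)$ for some $h\in\sCo$ and taking a positive $\Sobl^{2,p}$-eigenfunction $\Psi_h$ for $f+h$, a direct calculation using \eqref{E1-Pois2} yields $\Lg^*\phi+h\phi=0$ with $\phi\df\Psi_h/\Psi$, so $\phi(X^*_t)\E^{\int_0^t h(X^*_s)\,\D s}$ is a positive $\Lg^*$-supermartingale bounded by $\phi(x)$; recurrence of $X^*$, together with absolute continuity of its occupation measure, forces $\int_0^\infty h(X^*_s)\,\D s=+\infty$ $\Prob^*_x$-a.s., and picking a compact $K$ with $\inf_K\phi>0$ visited infinitely often produces a Fatou contradiction. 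The same template, with the It\^o--Krylov formula replacing It\^o's, yields (i)$\Rightarrow$(v); and (v)$\Rightarrow$(iv) is immediate since every $h\in\sCo$ dominates $\epsilon\Ind_{\{h\ge\epsilon\}}$ for small enough $\epsilon>0$. Conversely, if $X^*$ is not recurrent I would take the $X^*$-Green potential $u\df G^*\phi$ of any nonzero nonnegative $\phi\in\Cc_c^\infty(\Rd)$: then $u\in\Cc^2$ is positive, vanishes at infinity, and satisfies $\Lg^*u=-\phi$, and a short calculation gives $(\Lg+f+h)(u\Psi)=\Lambda(f)(u\Psi)$ with $h\df\phi/u\in\Cc_c(\Rd)\subset\sCo$, so $\Lambda(f+h)\le\Lambda(f)$ by Theorem~\ref{T1.4} and therefore equality by monotonicity, contradicting (iv). The main technical obstacle will be this last step, namely justifying the regularity, positivity, and decay of the Green potential $G^*\phi$ under the minimal smoothness of (H1) (locally bounded $b$, only locally Lipschitz $\upsigma$); this will require a careful application of elliptic theory for $\Lg^*$ and a tail control on $X^*$ extracted from the near-monotonicity of $f$.
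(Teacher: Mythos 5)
Your organizing principle---Doob $h$-transform, Girsanov, and the supermartingale
$M_t = \Psi(X_t)\,\E^{\int_0^t[f(X_s)-\Lambda(f)]\,\D s}$---is the same as the paper's, and your treatment of the first assertion and of the equivalence (i)$\;\Leftrightarrow\;$(ii) is essentially identical to Lemmas~\ref{L2.4}--\ref{L2.6}. Your argument for (i)$\;\Rightarrow\;$(iv), (v) is genuinely different from the paper's (and arguably cleaner): you pass to $\phi=\Psi_h/\Psi$, note $\Lg^*\phi+h\phi=0$, and use the a.s.\ convergence of the positive supermartingale $\phi(X^*_t)\E^{\int_0^t h(X^*_s)\,\D s}$ together with recurrence (which forces $\int_0^\infty h(X^*_s)\,\D s=\infty$ a.s.\ and infinitely many visits to a compact set where $\phi$ is bounded below) to produce a contradiction. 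The paper instead runs the entire Lemma~\ref{L2.11} through Dirichlet problems on $B_n$ and a touching argument with the strong maximum principle. Your route buys a shorter, more probabilistic proof of this implication; the paper's buys uniformity of method across all directions.

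However, two of your steps have real gaps. First, the equivalence of (iii) with (i) is asserted via the claim that the $X^*$-Green potential of $\Psi^{-1}$ is infinite exactly when $X^*$ is recurrent. This dichotomy is standard only for compactly supported integrands; $\Psi^{-1}$ merely vanishes at infinity under (H1), and a transient diffusion can easily have an infinite Green potential of a $\Cc_0$ function, so the ``only if'' direction does not follow. The paper does not treat this as a quick corollary: (iii)$\;\Rightarrow\;$(ii) is the content of Theorem~\ref{T2.8}, a delicate construction through the resolvent-type Dirichlet problems $\Lg\varphi_{\alpha,n}+F_\alpha\varphi_{\alpha,n}=-\Gamma^{-1}_\alpha(0)$ on $B_n$ with uniform Harnack control as $\alpha\searrow0$, and (i)$\;\Rightarrow\;$(iii) is Lemma~\ref{L2.9}, which uses the explicit supersolution $\Phi/\Psi$ of $\Lg^*$ and the recurrence of $X^*$. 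You need an argument of comparable strength here; the Green-potential heuristic does not supply one.

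Second, for $\neg$(i)$\;\Rightarrow\;\neg$(iv) your construction via $u=G^*\phi$ has the obstacle you yourself flag, plus two more. (a) If $X^*$ is not recurrent it may be \emph{explosive} rather than transient; in that case there is no honest Green potential on $\Rd$, and the whole construction is void. The paper's Lemma~\ref{L2.11} makes no assumption of regularity of $X^*$: it works with Dirichlet problems $\Lg\varphi_n+(f-\Lambda(f))\varphi_n=-\alpha_n\Ind_A$ on $B_n$ and passes to a global $\Phi$, which is well-defined regardless of explosion. (b) Even in the transient case, to deduce $\Lambda(f+h)\le\Lambda(f)$ from $(\Lg+f+h)(u\Psi)=\Lambda(f)(u\Psi)$ via Theorem~\ref{T1.4} you would need either $\inf_{\Rd}(u\Psi)>0$ or that $f+h$ already satisfies (H1) relative to $\Lambda(f+h)$, neither of which is established at that point (and the first is doubtful, since $u$ vanishes at infinity while $\Psi$ is inf-compact, so the product's behavior is indeterminate). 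The paper avoids this by producing $\Phi$ satisfying $\Lg\Phi+\bigl(f+\alpha\Phi^{-1}\Ind_A-\Lambda(f)\bigr)\Phi=0$ with $\alpha>0$ and then setting $\epsilon=\inf_\sB\alpha\Phi^{-1}$, so the relevant positive function is $\Phi$ itself, whose behavior is under control through \eqref{EL2.11D}. You should replace the Green-potential step with a compactly supported Dirichlet construction as in Lemma~\ref{L2.11}, or otherwise supply the missing uniform positivity and non-explosion input.
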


It is clear by Theorem~\ref{T1.5}\,(ii) that
if the process $X^*$ in \eqref{E-sde*},
corresponding to some positive solution $\Psi$ of \eqref{E1-Pois2},
is recurrent, then there exists
a unique positive solution $\Psi\in\Sobl^{2,p}(\Rd)$, $p>d$, 
to the MPE in \eqref{E1-Pois2}, satisfying $\Psi(0)=1$.
In particular the ground state diffusion in \eqref{E-sde*}
is uniquely determined by $f$.

\begin{proof}
These results are contained in individual lemmas in Section~\ref{S2.2}.
The first assertion follows by Lemmas~\ref{L2.4} and \ref{L2.5}.
That (i)$\;\Leftrightarrow\;$(ii)
and  (i)$\;\Leftrightarrow\;$(v) follow by  Lemmas~\ref{L2.6}
and \ref{L2.11}, respectively.
Theorem~\ref{T2.8} asserts that
(iii)$\;\Rightarrow\;$(ii), while (i)$\;\Rightarrow\;$(iii)
follows by Lemma~\ref{L2.9}.
It is easy to see that (v)$\;\Rightarrow\;$(iv), and by
Lemma~\ref{L2.11},
(iv)$\;\Rightarrow\;$(i).
\end{proof}

%%%%%%%%%%%%%%%%%%%%%%%%%%%%%%%%%%%%%%%%%%%%%%%%%%%%%%%%%%%%%%%%%%%%%%%%%%%%%%%%
\begin{theorem}\label{T1.6}
Under {\upshape(H1)}, the following are equivalent.
\begin{enumerate}
\item[{\upshape(}i\/{\upshape)}]
The process $X^*$ in \eqref{E-sde*} is geometrically ergodic.

\smallskip
\item[{\upshape(}ii\/{\upshape)}]
For some $r>0$, there exists $\delta_r>0$, such that
\begin{equation*}
\Exp_{x}\,\Bigl[\E^{\int_{0}^{\uuptau_r}
[f(X_{s})-\Lambda(f)+\delta_r]\,\D{s}}\Bigr]\;<\;\infty
\qquad\forall\, x\in \Bar{B}_r^c\,.
\end{equation*}

\item[{\upshape(}iii\/{\upshape)}]
It holds that $\Lambda(f)>\Lambda(f-h)$
for some $h\in\sCo$.
\end{enumerate}
Moreover, in {\upshape(}ii\/{\upshape)}--{\,\upshape(}iii\/{\upshape)}
 ``some'' may be replaced with ``all''.
 \end{theorem}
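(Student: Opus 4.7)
\emph{Plan.} I would prove the equivalences by a cycle (iii)$\;\Rightarrow\;$(ii)$\;\Rightarrow\;$(i)$\;\Rightarrow\;$(iii), using throughout the intertwining relation \eqref{E-cT*} between the Feynman--Kac semigroup $P^{f}_{t}$ of $X$ and the transition semigroup $\cT^{\Psi}_{t}$ of the ground-state diffusion $X^{*}$. The master identity, obtained by applying optional sampling to the positive local martingale $M_{t}=\Psi(X_{t})\Psi(x)^{-1}\E^{\int_{0}^{t}(f-\Lambda(f))\,\D s}$ (a true martingale whenever $X^{*}$ is regular, by Theorem~\ref{T1.5}), is
\begin{equation*}
\Exp_{x}^{*}\bigl[\E^{\delta\uuptau_{r}}\bigr]\;=\;\frac{1}{\Psi(x)}\,
\Exp_{x}\Bigl[\E^{\int_{0}^{\uuptau_{r}}[f(X_{s})-\Lambda(f)+\delta]\,\D{s}}\,\Psi(X_{\uuptau_{r}})\Bigr]\,.
\end{equation*}
Since $\Psi$ is continuous and strictly positive, hence bounded away from $0$ and $\infty$ on $\partial B_{r}$, this identity shows that (ii) is equivalent to the finite exponential exit-time moment $\sup_{x\in\partial B_{r}}\Exp_{x}^{*}[\E^{\delta_{r}\uuptau_{r}}]<\infty$; for the nondegenerate $X^{*}$, whose bounded balls are petite, this in turn is the Down--Meyn--Tweedie characterization of geometric ergodicity. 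This gives (i)$\;\Leftrightarrow\;$(ii).

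For (iii)$\;\Rightarrow\;$(ii), let $\tilde\Psi$ be the positive eigenfunction produced by Theorem~\ref{T1.4} for the potential $f-h$ at eigenvalue $\Lambda(f-h)$. A product-rule computation using the two MPE identities yields
\begin{equation*}
\Lg^{*}\bigl(\tilde\Psi/\Psi\bigr)\;=\;-\gamma\,\bigl(\tilde\Psi/\Psi\bigr)+h\,\bigl(\tilde\Psi/\Psi\bigr)\,,\qquad \gamma\;\df\;\Lambda(f)-\Lambda(f-h)\;>\;0\,.
\end{equation*}
Since $h\in\sCo$ vanishes at infinity, $V\df\tilde\Psi/\Psi$ satisfies $\Lg^{*}V\leq-(\gamma/2)V$ on $B_{r}^{c}$ for all large $r$, which (combined with the global bound $\Lg^{*}V\leq\norm{h}_{\infty}V$) rules out explosion of $X^{*}$; Dynkin's formula applied to $V(X^{*}_{t})\E^{(\gamma/2)t}$ stopped at $\uuptau_{r}\wedge T$ then yields a finite $\Exp^{*}_{x}[\E^{(\gamma/2)\uuptau_{r}}]$, and the master identity delivers (ii). For (i)$\;\Rightarrow\;$(iii), fix a Foster--Lyapunov pair satisfying $\Lyap\geq1$ and $\Lg^{*}\Lyap\leq-\gamma\Lyap+c\Ind_{K}$. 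Setting $\hat\Psi\df\Lyap\Psi$ and using the identity $\Lg\hat\Psi=(\Lambda(f)-f)\hat\Psi+\Psi\Lg^{*}\Lyap$, one obtains
\begin{equation*}
\Lg\hat\Psi+\bigl(f-(\Lambda(f)-\gamma)\bigr)\hat\Psi\;\leq\;c\Psi\Ind_{K}\,.
\end{equation*}
Choosing a continuous $h\in\sCo$ supported in a slight enlargement of $K$ with $h\hat\Psi\geq c\Psi\Ind_{K}$ (possible because $1/\Lyap$ is bounded on the compact set $K$) makes $\hat\Psi$ a positive supersolution of $\Lg+(f-h)-(\Lambda(f)-\gamma)$, hence $\Hat\Lambda(f-h)\leq\Lambda(f)-\gamma$. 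Since $f-h$ inherits near-monotonicity relative to its principal eigenvalue (as $h$ has compact support), Theorem~\ref{T1.4} gives $\Lambda(f-h)=\Hat\Lambda(f-h)\leq\Lambda(f)-\gamma<\Lambda(f)$, which is (iii).

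The principal technical obstacle is controlling the regularity and integrability of $V$ and $\hat\Psi$ well enough to justify the optional-sampling and Dynkin arguments: verifying non-explosion of $X^{*}$ in the (iii)$\;\Rightarrow\;$(ii) step, securing a uniform positive lower bound for $V$ on $\partial B_{r}$, and obtaining uniform integrability when letting $T\to\infty$. The replacement of ``some'' by ``all'' in (ii) follows once (i) is established, by applying the master identity with the single Foster--Lyapunov function supplied by geometric ergodicity at every $r>0$ (using monotonicity $\uuptau_{r'}\leq\uuptau_{r}$ for $r'\geq r$ and the strong Markov property to handle smaller radii). For (iii), once geometric ergodicity is known, the supersolution construction in (i)$\;\Rightarrow\;$(iii) can be carried out for any prescribed $h\in\sCo$: enlarge $K$ beyond the support of $h$ and exploit that $h$ is bounded below by a positive constant on a relatively open subset of $K$ where $1/\Lyap$ is bounded, so that $h\hat\Psi\geq c\Psi\Ind_{K}$ can always be arranged, yielding $\Lambda(f-h)<\Lambda(f)$ for every $h\in\sCo$.
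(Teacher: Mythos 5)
Your proposal is structured around the ground-state change of measure (your ``master identity'') and Foster--Lyapunov drift conditions for $\Lg^*$, which is in the same spirit as the paper, but the actual route the paper takes is quite different: it first proves a purely analytic equivalence (Theorem~\ref{T2.12}) among (a) an exit-time Feynman--Kac bound, (b) strict monotonicity $\Lambda(f)>\Lambda(f-h)$, and (c) the same bound for all radii, \emph{without ever touching $X^*$}. The implication (a)$\Rightarrow$(b) is by contradiction: if $\Lambda(f-h)=\Lambda(f)$, the companion eigenfunction $\Tilde\Psi$ (for the potential $f-h$) has the same stochastic representation as $\Psi$ outside a ball (Lemma~\ref{L2.10}), so the strong maximum principle forces $\Psi=\Tilde\Psi$, which yields $h\Tilde\Psi=0$, absurd. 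The implication (b)$\Rightarrow$(c) is a one-line It\^o+Fatou argument on $\Tilde\Psi$ for the potential $f-\Ind_{B_r}$ --- no detour through $X^*$. Only afterwards (Corollary~\ref{C2.14}) is condition (a) shown equivalent to geometric ergodicity of $X^*$, by building the Lyapunov function $\Tilde\Psi\Psi^{-1}$ in one direction and using the localized version of your master identity in the other.

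Two of your steps have genuine gaps. First, your argument that ``all $h\in\sCo$'' works in part (iii): given a Foster--Lyapunov pair $\Lg^*\Lyap\le-\gamma\Lyap+c\Ind_K$, you want to arrange $c\,\Psi\Ind_K\le h\Lyap\Psi$, i.e.\ $c\Ind_K\le h\Lyap$ on $K$. If $K$ is enlarged beyond the support of $h$ (as you propose), then $h$ vanishes on part of $K$ and the inequality fails there outright. If instead $K$ is shrunk into $\{h>0\}$ so that $h\ge\eta>0$ on $K$, you still need $\Lyap\ge c/\eta$ on $K$ --- but the constant $c$ is fixed by the drift inequality and is typically at least of order $\gamma\sup_K\Lyap$, so the inequality $c\le\eta\Lyap$ cannot be secured for an arbitrarily small prescribed $h$. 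The paper sidesteps this entirely: its contradiction argument in Theorem~\ref{T2.12}\,(a)$\Rightarrow$(b) works for every nontrivial nonnegative $h\in\Cc_c(\Rd)$ uniformly, because strict positivity of $\Tilde\Psi$ makes $h\Tilde\Psi\equiv 0$ impossible regardless of how small $h$ is.

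Second, in (iii)$\Rightarrow$(ii), you assert that ``the global bound $\Lg^*V\le\norm{h}_\infty V$ rules out explosion of $X^*$.'' That inequality only says $\E^{-\norm{h}_\infty t}\,V(X^*_t)$ is a supermartingale, which does not preclude explosion unless $V=\Tilde\Psi/\Psi$ is inf-compact --- and the ratio of two inf-compact ground states is not obviously inf-compact. The paper secures regularity of $X^*$ from condition (ii) itself via Lemma~\ref{L2.10}\,(ii) together with Theorem~\ref{T1.5} (the stochastic representation forces recurrence, hence regularity), which is a precondition for then reading geometric ergodicity off the drift inequality; the Lyapunov function alone is not enough. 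If you rebuild the argument so that the representation formula of Lemma~\ref{L2.10} is established first and regularity is deduced from it, your master identity can be used in the unlocalized form you want. Your (i)$\Rightarrow$(iii) direction via the supersolution $\Lyap\Psi$ and Theorem~\ref{T1.4} is a nice and valid alternative to the paper's route (with the caveat that the Lyapunov function must be taken in $\Sobl^{2,p}$, which is available for nondegenerate diffusions by choosing, e.g., $\Lyap=\Exp^*_\cdot[\E^{\delta\uuptau_r}]$), but it only produces ``some $h$'', not ``all $h$'', as discussed above.
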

 
\begin{proof}
This follows by Theorem~\ref{T2.12} and Corollary~\ref{C2.14}
in Section~\ref{S2.3}.
\qed
\end{proof}

%%%%%%%%%%%%%%%%%%%%%%%%%%%%%%%%%%%%%%%%%%%%%%%%%%%%%%%%%%%%%%%%%%%%%%%%%%%%%%%%
\begin{remark}
Strict monotonicity of the principal eigenvalue,
i.e., $f'\lneqq f$ implies $\Lambda(f')<\Lambda(f)$,
holds for bounded domains (see also  Lemma~\ref{L2.2}\,(b) in Section~\ref{S2}).
However, this property does not
hold in unbounded domains in general \cite[Remark~2.4]{Berestycki-15}.
Under the near-monotone hypothesis in (H1), Theorems~\ref{T1.5}--\ref{T1.6},
provide the following characterization:
Let $X^*$ be the process in \eqref{E-sde*} corresponding to a solution
of \eqref{E1-Pois2} for $f$.
Then $f\gneqq f'$ implies $\Lambda(f')<\Lambda(f)$
if and only if $X^*$ is geometrically ergodic,
and $X^*$ is recurrent if and only if 
$\Lambda(f')>\Lambda(f)$ for all $f'$ satisfying $f'\gneqq f$.
 
Generally speaking, not much is known for the principal eigensolutions in $\Rd$
of the class of elliptic operators $\Lg$ considered in this paper.
Compare with Sections~8--9 in \cite{Berestycki-15}.
Therefore we feel this characterization will be of interest to
a wider audience.
\end{remark}

\begin{remark}
It is evident by Theorem~\ref{T1.5}\,(iv) and 
(iii)$\;\Rightarrow\;$(i) in Theorem~\ref{T1.6} that if
the ground state diffusion corresponding to $f$ is recurrent,
then the ground state diffusion corresponding to $f+h$
is geometrically ergodic for any $h\in\sCo$.

It is also interesting to note that, under (H1), there always
exists $h\in\sCo$ such that the ground state diffusion in \eqref{E-sde*}
corresponding to $f+h$ is geometrically ergodic.
In fact such an $h$ may be selected in $\Cc_c(\Rd)$ with a given support.
This assertion follows by the proof of Lemma~\ref{L2.11}.
\end{remark}

Consider the following hypothesis.

%%%%%%%%%%%%%%%%%%%%%%%%%%%%%%%%%%%%%%%%%%%%%%%%%%%%%%%%%%%%%%%%%%%%%%%%%%%%%%%%
\begin{description}
\item[(H2)] There exists a smooth function $\psi_0$ such that
\begin{equation}\label{E-H2}
\tfrac{1}{2}\,a^{ij}\partial_{ij} \psi_0 + \langle b,\grad \psi_0\rangle + 
\tfrac{1}{2}\,\langle\grad\psi_0,a\grad \psi_0\rangle +f
\;\xrightarrow\;-\infty\quad\text{as\ \ } \abs{x}\to\infty\,.
\end{equation}
\end{description}

%%%%%%%%%%%%%%%%%%%%%%%%%%%%%%%%%%%%%%%%%%%%%%%%%%%%%%%%%%%%%%%%%%%%%%%%%%%%%%%%
In Theorem~\ref{T1.7} stated below,
we do not assume that the running cost is inf-compact,
or even near-monotone in the sense of \cite[p.~126]{Balaji-00}.
Nor do we assume that $\Lambda(\alpha f)<\infty$ for
some $\alpha>1$, as is common in the literature.
This should be compared with \cite[Theorem~1.2]{Balaji-00},
and \cite[Theorem~2.2]{BorMeyn-02} for
irreducible Markov chains, as well as the more general results
in \cite{Kontoyiannis-02,Kontoyiannis-05,Wu-94}.

%%%%%%%%%%%%%%%%%%%%%%%%%%%%%%%%%%%%%%%%%%%%%%%%%%%%%%%%%%%%%%%%%%%%%%%%%%%%%%%%
\begin{theorem}\label{T1.7}
Under {\upshape(H1)--(H2)},
the sde in \eqref{E-sde*} has a unique strong solution $X^*$ which
exists for all $t>0$ and is a strong Markov process.
Moreover, it is positive recurrent, and its unique
invariant probability measure $\mu^*$ has positive density.
In addition, for any $g\in\Cc_b(\Rd)$, the following hold:
\begin{align*}
\cT^{\Psi}_t g(x)
&\;=\; \Exp^*_x\bigl[g(X^*_t)\bigr]\qquad\forall\,(t,x)\in\RR_+\times\Rd\,,\\[5pt]
\cT^{\Psi}_t g(x)
&\;\xrightarrow[t\to\infty]{}\; \int_{\Rd}g(y)\,\mu^*(\D{y})\qquad
\forall\,x\in\Rd\,,
\end{align*}
where $\Exp_x^*$ denotes the expectation operator associated
with the solution $X^*$ of \eqref{E-sde*}.
In particular, the limit 
\begin{equation*}
\Psi_*(x) \;\df\; \lim_{t\to\infty}\;
\Exp_x\Bigl[\E^{\int_{0}^{t}[f(X_s)-\Lambda(f)]\,\D{s}}\Bigr]
\end{equation*}
is the unique positive solution of the MPE in \eqref{E1-Pois2}
which satisfies $\mu^*(\Psi_*^{-1})=1$.
\end{theorem}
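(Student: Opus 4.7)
The plan is to exploit~\eqref{E-H2} to build a Foster--Lyapunov function
for the extended generator $\Lg^*$ and then feed the resulting drift
condition into the machinery of Theorems~\ref{T1.5}--\ref{T1.6}, together
with the standard theory of SDEs with locally bounded drift. Let $\Psi$ be
a positive solution of the MPE~\eqref{E1-Pois2} (which exists by
Theorem~\ref{T1.4}), set $\psi\df\log\Psi$, and introduce the trial function
$V\df\E^{\psi_0-\psi}$. A direct calculation, using the pointwise identity
$\Lg\psi=\Lambda(f)-f-\tfrac{1}{2}\langle\grad\psi,a\grad\psi\rangle$
(which follows from~\eqref{E1-Pois2} via the chain rule for $\log\Psi$),
yields
\begin{equation*}
\frac{\Lg^* V(x)}{V(x)}\;=\;\tfrac{1}{2}a^{ij}(x)\,\partial_{ij}\psi_0(x)
+\langle b(x),\grad\psi_0(x)\rangle
+\tfrac{1}{2}\langle\grad\psi_0(x),a(x)\grad\psi_0(x)\rangle
+f(x)-\Lambda(f)\,,
\end{equation*}
whose right-hand side tends to $-\infty$ as $\abs{x}\to\infty$ by~\eqref{E-H2}.
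Hence, for every $\theta>0$, $\Lg^* V\le -\theta V$ outside a sufficiently
large compact set. Replacing $\psi_0$ by $\psi_0+\epsilon\log(1+\abs{x}^2)$
for small $\epsilon>0$ preserves~\eqref{E-H2} and makes $V$ inf-compact (and
we may arrange $\psi_0\ge 0$); without loss of generality we adopt such a $V$.

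Given this Lyapunov function, I would assemble the assertions of
Theorem~\ref{T1.7} as follows. Local existence and pathwise uniqueness of
strong solutions to~\eqref{E-sde*} up to explosion follow
from~\cite{Gyongy-96,Krylov-05}, since $\upsigma$ is locally Lipschitz
nondegenerate and $b+a\grad\psi$ is locally bounded (using
$\Psi\in\Sobl^{2,p}(\Rd)$, $p>d$, the Sobolev embedding, and the Harnack
inequality for local strict positivity of $\Psi$). Non-explosion and positive
recurrence of the resulting strong Markov process $X^*$ follow from the drift
inequality of the first paragraph via the classical Khasminskii--Meyn--Tweedie
test; existence, uniqueness, and positivity of the density of the invariant
probability measure $\mu^*$ then follow from~\cite{Bogachev-01} thanks to the
nondegeneracy of $a$. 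The identification $\cT^\Psi_t g(x)=\Exp_x^*[g(X^*_t)]$
for $g\in\Cc_b(\Rd)$ is the classical Doob $h$-transform: both semigroups
share the generator $\Lg^*$ on $\Cc^2_c(\Rd)$ (obtained by differentiating
\eqref{E-cT*} at $t=0$), and well-posedness of the martingale problem for
$\Lg^*$ identifies them. The ergodic convergence
$\cT^\Psi_t g(x)\to\int g\,\D\mu^*$ for $g\in\Cc_b(\Rd)$ is then the standard
ergodic theorem for the positive recurrent, strong Feller process $X^*$.

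For the final assertion, specializing~\eqref{E-cT*} to $h=1/\Psi$ gives the
key identity
\begin{equation*}
\Exp_x\Bigl[\E^{\int_{0}^{t}[f(X_s)-\Lambda(f)]\,\D{s}}\Bigr]
\;=\;\Psi(x)\,\Exp_x^*\bigl[\Psi(X^*_t)^{-1}\bigr]\,,
\end{equation*}
so that, passing to the limit $t\to\infty$ on the right, one obtains
$\Psi_*(x)=\Psi(x)\,\mu^*(\Psi^{-1})$, a positive constant multiple of $\Psi$,
hence a positive solution of the MPE, with
$\mu^*(\Psi_*^{-1})=\mu^*(\Psi^{-1})^{-1}\mu^*(\Psi^{-1})=1$. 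Uniqueness
of positive MPE solutions up to positive scalars (noted after
Theorem~\ref{T1.5}) pins down $\Psi_*$. I expect the main technical obstacle
to be the justification of this $t\to\infty$ limit, since $\Psi^{-1}$ is not
known a priori to be bounded and the $\Cc_b$-ergodic theorem does not apply
directly. My strategy is to exploit the strong drift inequality
$\Lg^*V\le -\theta V$ for arbitrarily large $\theta$ to upgrade positive
recurrence to \emph{geometric} ergodicity---this is the
(i)$\Leftrightarrow$(ii) equivalence in Theorem~\ref{T1.6}---which furnishes
a $V$-weighted ergodic theorem; since $\psi_0\ge 0$ implies
$\Psi^{-1}=V\E^{-\psi_0}\le V$, this delivers the convergence. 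Failing that,
a truncation argument applied to $(\Psi^{-1})\wedge N$, together with the
Lyapunov tail bound and Fatou/monotone convergence, yields
$\mu^*(\Psi^{-1})<\infty$ and the claimed limit.
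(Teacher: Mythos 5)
Your Lyapunov calculation $\Lg^* V(x)/V(x) = \tfrac{1}{2}a^{ij}\partial_{ij}\psi_0 + \langle b,\grad\psi_0\rangle + \tfrac{1}{2}\langle\grad\psi_0, a\grad\psi_0\rangle + f - \Lambda(f)$ for $V = \E^{\psi_0-\psi}$ is correct and in fact more direct than the paper's construction of a Foster--Lyapunov function for $\Lg^*$ (Corollary~\ref{C2.14} builds a ratio $\Tilde\Psi\,\Psi^{-1}$ with $\Tilde\Psi$ a solution of a perturbed MPE). The semigroup identification, the $\Cc_b$ ergodic convergence, and the $V$-weighted argument for the $\Psi^{-1}$-limit are all in the right spirit and parallel Theorem~\ref{T2.13}. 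However, there is a genuine gap at the non-explosion step. You assert that replacing $\psi_0$ by $\psi_0+\epsilon\log(1+\abs{x}^2)$ ``preserves \eqref{E-H2} and makes $V$ inf-compact,'' but this is not verified and can fail: by the Cauchy--Schwarz inequality in the $a$-metric, the cross term $\epsilon\bigl\langle\grad\psi_0,\,a\,\grad\log(1+\abs{x}^2)\bigr\rangle$ is only bounded by a constant multiple of $\epsilon\sqrt{\langle\grad\psi_0,a\grad\psi_0\rangle}$, which is unbounded whenever $\abs{\grad\psi_0}_a$ is, and (H2) controls neither the growth of $\abs{\grad\psi_0}_a$ nor the \emph{rate} at which the expression in \eqref{E-H2} tends to $-\infty$. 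If that rate is slower than $\sqrt{\langle\grad\psi_0,a\grad\psi_0\rangle}$, the modified $\psi_0$ violates (H2). With $V$ merely bounded below, the drift inequality $\Lg^*V\le -\theta V$ off a compact set does \emph{not} give non-explosion of $X^*$, so the Khasminskii--Meyn--Tweedie step is unjustified.

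The paper reaches non-explosion by a different route that sidesteps this issue. Lemma~\ref{L2.15} shows that $\Psi_0:=\E^{\psi_0}$---a Lyapunov function for the \emph{original} generator $\Lg$, not $\Lg^*$---is inf-compact with $\inf_{\Rd}\Psi_0>0$; an It\^o/Fatou argument then yields the Feynman--Kac tail bound \eqref{ET2.12A}. Lemma~\ref{L2.10}\,(ii) converts this into the stochastic representation of $\Psi$ via $\uuptau_r$, and Lemma~\ref{L2.6} deduces recurrence---hence, via the argument in its proof, regularity---of $X^*$ from that representation. Only after non-explosion is secured does the paper invoke a Foster--Lyapunov inequality for $\Lg^*$, with a function merely bounded below, to upgrade to geometric ergodicity (Corollary~\ref{C2.14}). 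You should patch your argument in the same way: keep your drift inequality $\Lg^*V\le -\theta V$ for geometric ergodicity and the $V$-weighted ergodic estimate (with $\psi_0\ge0$ and $\Psi^{-1}\le V$), but establish non-explosion of $X^*$ via the chain $\Psi_0$ inf-compact $\Rightarrow$ \eqref{ET2.12A} $\Rightarrow$ Lemma~\ref{L2.10}\,(ii) $\Rightarrow$ Lemma~\ref{L2.6}, rather than by modifying $\psi_0$.
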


\begin{proof}
These results are contained in
Theorem~\ref{T2.13} and Lemma~\ref{L2.15} in Section~\ref{S2.3}.
\qed\end{proof}

%%%%%%%%%%%%%%%%%%%%%%%%%%%%%%%%%%%%%%%%%%%%%%%%%%%%%%%%%%%%%%%%%%%%%%%%%%%%%%%%
\begin{remark}
The diffusion in \eqref{E-sde*} is studied extensively in \cite{Kaise-06}.
A diffusion of the form
\begin{equation}\label{ER1.6A}
\D{X}^*_{t} \;=\; \bigl(b(X^*_t) + \Hat{a} (X^*_t)
\grad\Breve\psi(X^*_t)\bigr)\,\D{t} + \upsigma(X^*_t)\,\D W^*_{t}
\end{equation}
is treated, where $\Breve\psi$ is a solution of (compare
with \eqref{E-logsde})
\begin{equation}\label{ER1.6B}
\tfrac{1}{2}a^{ij}\partial_{ij} \Breve\psi + \langle b,\grad \Breve\psi\rangle 
+\tfrac{1}{2}\langle\grad\Breve\psi,\Hat{a}\grad\Breve\psi\rangle
+f\;=\; \lambda\,.
\end{equation}
The assumptions imposed are that $a$, $\Hat{a}$, $b$ and $f$ are smooth
and for some constant $C>0$ it holds that
\begin{equation}\label{ER1.6C}
C^{-1}\,\abs{\xi}^2 \;\le\; 
\sum_{i,j=1}^{d} \max\,\{a^{ij}(x)\,,\Hat{a}^{ij}(x)\}\,\xi_{i}\xi_{j}
\;\le\;C^{-1}\,\abs{\xi}^2
\qquad\forall\,\xi\,,\,x\in\Rd\,.
\end{equation}
In addition they assume \eqref{E-H2}.
Under these assumptions, they show that the set of $\lambda$
for which \eqref{ER1.6B} has a solution is of
the form $[\lambda_*,\infty)$,
and that when $\lambda=\lambda_*$ the solution of
\eqref{ER1.6B} is unique and
\eqref{ER1.6A} is positive recurrent.
In addition, for $\lambda>\lambda_*$, \eqref{ER1.6A} is transient.

In comparing their results with ours, it is evident that
\eqref{ER1.6B} is more general than the Poisson equation considered
here and there are no assumptions concerning near-monotonicity for $f$.
On the other hand, we do not require any smoothness of the coefficients,
nor do we require \eqref{ER1.6C}.
\end{remark}

Next, we consider the following hypothesis.

%%%%%%%%%%%%%%%%%%%%%%%%%%%%%%%%%%%%%%%%%%%%%%%%%%%%%%%%%%%%%%%%%%%%%%%%%%%%%%%%
\begin{description}
\item[(H3)] For some $\varepsilon>0$,
$\Lambda\bigl((1+\varepsilon)f\bigr)<\infty$,
and
$f$ is near monotone relative to
$\frac{1}{\varepsilon}\bigl[\Lambda\bigl((1+\varepsilon)f\bigr)-\Lambda(f)\bigr]$.
\end{description}

If $f$ is inf-compact, then (H3) is equivalent to the statement
that $\Lambda\bigl((1+\varepsilon)f\bigr)<\infty$.
On the other hand, since
$\frac{1}{\varepsilon}\bigl[\Lambda\bigl((1+\varepsilon)f\bigr)-\Lambda(f)\bigr]
\ge \Lambda(f)$ by the convexity of $f\mapsto \Lambda(f)$ (see
Lemma~2.2\,(b)), it follows that (H3)\;$\Rightarrow$\;(H1).
It thus follows from Definition~\ref{D1.1}, that under (H3)
$(1+\varepsilon) f$ is near-monotone relative to
$\Lambda\bigl((1+\varepsilon)f\bigr)$.
We have the following theorem, whose proof is at
the end of Section~\ref{S2}.

%%%%%%%%%%%%%%%%%%%%%%%%%%%%%%%%%%%%%%%%%%%%%%%%%%%%%%%%%%%%%%%%%%%%%%%%%%%%%%%%
\begin{theorem}\label{T1.8}
Under {\upshape(H1)} and {\upshape(H3)},
the diffusion in \eqref{E-sde*} 
is geometrically ergodic, and there exist
positive constants $\gamma$, $\kappa^*$ and $\beta^*$, 
depending on $\varepsilon$, such that
if $g\,\colon\Rd\to\RR$ is any locally bounded measurable function
satisfying $\norm{g}_{\Psi^\gamma}<\infty$, it holds that
\begin{equation}\label{ET1.8A}
\babs{\Exp^*_x [g(X_t)] - \mu^*(g)} \;\le\; \kappa^* \E^{-\beta^* t}\,
\norm{g}_{\Psi^\gamma}
\bigl(1+\Psi^\gamma(x)\bigr)\qquad\forall\, t>0\,.
\end{equation}
Moreover, if $f$ is inf-compact then $\gamma$ can be chosen
arbitrarily close to $\varepsilon$.
\end{theorem}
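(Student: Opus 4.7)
The plan is to build a Foster--Lyapunov drift for the ground state diffusion $X^*$ using a ratio of two principal eigenfunctions, extract geometric ergodicity from the usual Meyn--Tweedie machinery, and only at the end trade the natural Lyapunov weight for $\Psi^\gamma$.

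First, by the convexity of $p\mapsto\Lambda(pf)$ noted just before the theorem, (H3) implies that $(1+\varepsilon) f$ is itself near-monotone relative to $\Lambda_\varepsilon\df\Lambda((1+\varepsilon)f)$, so (H1) applies verbatim with $(1+\varepsilon)f$ in place of $f$. The existence machinery of Section~\ref{S2.2} together with Theorem~\ref{T1.5} therefore yields a positive $\Psi_\varepsilon\in\Sobl^{2,p}(\Rd)$, $p>d$, solving $\Lg\Psi_\varepsilon+(1+\varepsilon)f\Psi_\varepsilon=\Lambda_\varepsilon\Psi_\varepsilon$, whose own ground state diffusion is recurrent.

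Next, with $\Psi$ the eigenfunction of~\eqref{E1-Pois2} and $\psi\df\log\Psi$, set $h\df\Psi_\varepsilon/\Psi$. The quotient rule combined with the two MPEs cancels every first-order cross-term and yields the clean identity
\begin{equation*}
\Lg^* h \;=\; -\varepsilon\, h\Bigl[f-\tfrac{1}{\varepsilon}\bigl(\Lambda_\varepsilon-\Lambda(f)\bigr)\Bigr]\,.
\end{equation*}
By (H3) the bracket is bounded below by some $\delta>0$ off a compact $\sK$. Applying $\Lg^*$ to $h^\alpha$ via the chain rule and using that $\tfrac{\alpha(\alpha-1)}{2}\langle\grad h,a\grad h\rangle\le0$ for $\alpha\in(0,1]$ delivers the Foster--Lyapunov drift $\Lg^* h^\alpha\le -\alpha\varepsilon\delta\,h^\alpha + C_\alpha\Ind_\sK$. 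Since $X^*$ is recurrent (Theorem~\ref{T1.5}), uniformly elliptic, and strong Feller, every compact set is petite, and the Down--Meyn--Tweedie $V$-uniform ergodic theorem applied with $V=1+h^\alpha$ then produces a unique invariant probability measure $\mu^*$ with positive density and the bound
\begin{equation*}
\bigl|\Exp^*_x[g(X_t)]-\mu^*(g)\bigr|\;\le\;\kappa\,\E^{-\beta^* t}\bigl(1+h^\alpha(x)\bigr)\norm{g}_{h^\alpha}
\end{equation*}
for every measurable $g$ with $\norm{g}_{h^\alpha}<\infty$, with $\beta^*$ of order $\alpha\varepsilon\delta$.

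The final and most delicate step is to replace the weight $h^\alpha$ by $\Psi^\gamma$, which amounts to the pointwise lower bound $\Psi_\varepsilon\ge c\,\Psi^p$ with $p=1+\gamma/\alpha$. A direct computation gives
\begin{equation*}
\Lg\Psi^p+\Bigl[pf-\tfrac{p(p-1)}{2}\langle\grad\psi,a\grad\psi\rangle\Bigr]\Psi^p \;=\; p\Lambda(f)\,\Psi^p\,,
\end{equation*}
so $\Psi^p$ is the principal eigenfunction of a perturbed operator whose potential is pointwise $\le pf\le(1+\varepsilon)f$ and whose eigenvalue $p\Lambda(f)$ is $\le\Lambda_\varepsilon$ (again by convexity). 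Comparing against $\Psi_\varepsilon$ via the stochastic representation of Theorem~\ref{T1.5}(ii) applied to the $(1+\varepsilon)f$-problem then yields $\Psi_\varepsilon\ge c\,\Psi^p$ provided the extra gradient term $\tfrac{p(p-1)}{2}\langle\grad\psi,a\grad\psi\rangle$ can be absorbed into the slack $(1+\varepsilon-p)f$ on $\sK^c$. In general this forces $p-1$, hence $\gamma$, to be small; when $f$ is inf-compact, however, the eigenvalue identity $\tfrac{1}{2}\langle\grad\psi,a\grad\psi\rangle=\Lambda(f)-f-\Lg\psi$ combined with standard interior elliptic gradient estimates gives $\langle\grad\psi,a\grad\psi\rangle=\sorder(f)$ at infinity, so $p$ may be pushed arbitrarily close to $1+\varepsilon$ and $\gamma$ correspondingly close to $\varepsilon$. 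Calibrating this tradeoff between the near-monotone margin from (H3) and the gradient control on $\psi$ is the technical heart of the proof and the main obstacle.
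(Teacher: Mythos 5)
Your high-level plan is the right one and matches the paper's: set up the Foster--Lyapunov drift for $X^*$ through the ratio $\Psi_\varepsilon/\Psi$ (the identity you write for $\Lg^*h$ is exactly \eqref{E-Lg*Id} with $\Tilde{f}=(1+\varepsilon)f$), and then convert the resulting $h$-weighted ergodic bound into a $\Psi^\gamma$-weighted one by a pointwise comparison. The drift inequality $\Lg^*h^\alpha\le-\alpha\varepsilon\delta\,h^\alpha+C_\alpha\Ind_\sK$ is correct, although the extra power $\alpha\in(0,1]$ is superfluous: the paper takes $F=h$ directly and loses nothing, since the exponent $\gamma$ comes from a different place (see below).

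The step where you diverge from the paper, and where you concede the argument is incomplete, is precisely where the paper has a cleaner idea that sidesteps your difficulty. You try to prove $\Psi_\varepsilon\ge c\,\Psi^p$ by viewing $\Psi^p$ as the eigenfunction of the perturbed potential $pf-\tfrac{p(p-1)}{2}\sG$ and then comparing through stochastic representations; this forces you to control the gradient term $\sG=\langle\grad\psi,a\grad\psi\rangle$ against $(1+\varepsilon-p)f$, and your claim that ``standard interior elliptic gradient estimates'' give $\sG=\sorder(f)$ at infinity is not justified under the paper's regularity assumptions (at best a Bernstein-type argument under extra smoothness yields $\sG=\order(f)$, not $\sorder(f)$). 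The paper avoids $\sG$ entirely. It never writes a pde for $\Psi^p$. Instead, it establishes (this is \eqref{PT1.8D}) that near-monotonicity under (H3) gives $(1+\varepsilon)f-\Lambda((1+\varepsilon)f)\ge(1+\gamma)\bigl(f-\Lambda(f)\bigr)$ on $\sB^c$ for some $\gamma>0$, and then applies Jensen's inequality to the exit-time stochastic representation of $\Psi$:
\begin{equation*}
\Bigl(\Exp_x\bigl[\E^{\int_0^{\uuptau}[f-\Lambda(f)]\,\D{s}}\bigr]\Bigr)^{1+\gamma}
\;\le\;\Exp_x\bigl[\E^{(1+\gamma)\int_0^{\uuptau}[f-\Lambda(f)]\,\D{s}}\bigr]
\;\le\;\Exp_x\bigl[\E^{\int_0^{\uuptau}[(1+\varepsilon)f-\Lambda((1+\varepsilon)f)]\,\D{s}}\bigr]\,.
\end{equation*}
Combined with the equality in the exit-time representation of $\Psi$ (Theorem~\ref{T1.5}(ii)) on the left, and Fatou for $\Psi_\varepsilon$ on the right, this yields $\Psi_\varepsilon\ge M_\sB\,\Psi^{1+\gamma}$ on $\sB^c$ with no gradient term anywhere. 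When $f$ is inf-compact, the inequality in \eqref{PT1.8D} holds for any $\gamma<\varepsilon$, which is how the paper gets $\gamma$ arbitrarily close to $\varepsilon$; your $\sG=\sorder(f)$ heuristic is not needed.

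Two further points. First, using the equality rather than just Fatou in the representation of $\Psi$ requires knowing that $X^*$ is recurrent, and your appeal to Theorem~\ref{T1.5} for this is circular: Theorem~\ref{T1.5} only gives equivalent characterizations, one of which must be verified. The paper first proves a separate claim, via H\"older's inequality and a time-space supermartingale built from $\E^{\theta(\varepsilon)t}\Psi$, that the geometric-drift condition \eqref{ET2.12A} holds; by Corollary~\ref{C2.14} this gives recurrence of $X^*$ and is logically prior to the Jensen step. You skip this entirely. Second, like the paper you only establish a lower bound $h^\alpha\ge c\,\Psi^\gamma$, which controls $\norm{g}_{h^\alpha}$ by $\norm{g}_{\Psi^\gamma}$ but does not by itself replace the factor $1+h^\alpha(x)$ in the Down--Meyn--Tweedie bound by $1+\Psi^\gamma(x)$; you should at minimum flag that the final transfer of the weight on the $x$-side requires a separate argument rather than following by monotonicity.
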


\begin{proof}
The proof of this theorem is in Section~\ref{S2.3}.
\qed
\end{proof}

%%%%%%%%%%%%%%%%%%%%%%%%%%%%%%%%%%%%%%%%%%%%%%%%%%%%%%%%%%%%%%%%%%%%%%%%%%%%%%%%
\begin{remark}
Hypothesis (H3) is often used in the literature \cite{Balaji-00,Wu-94}.
As seen from Theorem~\ref{T1.8}, under (H3) the ground state diffusion
\eqref{E-sde*} is geometrically ergodic with a `storage function' $\Psi^\gamma$
for some $\gamma>0$.
Nevertheless, even under (H2) the ground state diffusion is geometrically ergodic
as can be seen by \eqref{EC2.14A}.
The situation seems to be different for denumerable Markov chains
where unless (H3) holds the twisted kernel cannot be geometrically
ergodic \cite[Theorem~5.1\,(ii)]{Balaji-00}.
\end{remark}

%%%%%%%%%%%%%%%%%%%%%%%%%%%%%%%%%%%%%%%%%%%%%%%%%%%%%%%%%%%%%%%%%%%%%%%%%%%%%%%%
In closing this section, let us mention that
it is a direct consequence of Jensen's inequality
that under (H1), the risk sensitive value $\Lambda(f)$ is not
less that the ergodic value $\mu(f)$, where $\mu$ is the
invariant probability measure of \eqref{E1-sde2}.
The difference $\Lambda(f)-\mu(f)$ can be quantified as the
following lemma shows.
To accomplish this we use the equation arising from \eqref{E1-Pois}
under the transformation $\psi=\log\Psi$, which takes the form 
\begin{equation}\label{E-logsde}
\tfrac{1}{2}a^{ij}\partial_{ij} \psi + \langle b,\grad \psi\rangle 
+\tfrac{1}{2}\langle\grad\psi,a\grad \psi\rangle \;=\; \Lambda(f) -f\,.
\end{equation}

%%%%%%%%%%%%%%%%%%%%%%%%%%%%%%%%%%%%%%%%%%%%%%%%%%%%%%%%%%%%%%%%%%%%%%%%%%%%%%%%
\begin{lemma}%\label{L1.9}
Let $\sG\df \langle\grad\psi,a\grad \psi\rangle$, with $\psi=\log\Psi$.
Under {\upshape(H1)}, it holds that
\begin{equation}\label{EL1.9A}
\tfrac{1}{2}\,\mu(\sG) + \mu(f) \;=\; \Lambda(f)\,,
\end{equation}
where $\mu$ is the
invariant probability measure of \eqref{E1-sde2}.
\end{lemma}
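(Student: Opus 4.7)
The plan is to integrate \eqref{E-logsde} against the invariant probability $\mu$ of \eqref{E1-sde2} and show that the first-order term $\mu(\Lg\psi)$ vanishes.

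First I would verify \eqref{E-logsde} itself: since $\Psi\in\Sobl^{2,p}(\Rd)$ is positive, $\psi=\log\Psi$ lies in $\Sobl^{2,p}(\Rd)$ as well, and the chain rule gives $\partial_i\Psi = \Psi\partial_i\psi$, $\partial_{ij}\Psi = \Psi(\partial_{ij}\psi+\partial_i\psi\partial_j\psi)$. Substituting into \eqref{E1-Pois2} and dividing by $\Psi>0$ yields $\Lg\psi+\tfrac12\sG = \Lambda(f)-f$ a.e. Integrating against $\mu$ formally gives
\begin{equation*}
\mu(\Lg\psi) + \tfrac12\mu(\sG) + \mu(f) \;=\; \Lambda(f)\,,
\end{equation*}
so \eqref{EL1.9A} reduces to the identity $\mu(\Lg\psi)=0$. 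A preliminary sanity check: Jensen's inequality applied to the Feynman--Kac formula defining $\Lambda_x(f)$ yields $\mu(f)\le\Lambda(f)<\infty$ (this is the fact referenced in the paragraph preceding the lemma), so $\mu(f)$ is certainly finite.

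Second, to establish $\mu(\Lg\psi)=0$, I would apply It\^o's formula to $\psi(X_t)$ along \eqref{E1-sde2}, localized at $\uptau_n\df\uptau(B_n)$, to obtain
\begin{equation*}
\psi(X_{t\wedge\uptau_n}) - \psi(X_0) \;=\; \int_0^{t\wedge\uptau_n}\bigl[\Lambda(f)-f(X_s)-\tfrac12\sG(X_s)\bigr]\D s \;+\; M^{(n)}_{t\wedge\uptau_n}\,,
\end{equation*}
where $M^{(n)}$ is a genuine martingale since $\nabla\psi$ is bounded on the compact $\Bar B_n$. Taking $\Exp_\mu$ (i.e.\ $X_0\sim\mu$), using stationarity of $X$ to rewrite $\Exp_\mu\bigl[\int_0^{t\wedge\uptau_n} g(X_s)\D s\bigr]$ in terms of $\mu(g)$ via Fubini together with the bound $\Prob_\mu(s<\uptau_n)\to 1$ as $n\to\infty$, and exploiting the fact that $\psi$ is bounded on $\Bar B_n$, yields the inequality
\begin{equation*}
\tfrac12\mu(\sG\Ind_{B_n})+\mu(f\Ind_{B_n}) \;\le\; \Lambda(f) + \frac{2\sup_{\Bar B_n}\abs{\psi}}{t}\,,
\end{equation*}
valid for every $t>0$. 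Sending $t\to\infty$ first and then $n\to\infty$, monotone convergence gives $\tfrac12\mu(\sG)+\mu(f)\le\Lambda(f)$, in particular $\mu(\sG)<\infty$.

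Third, I would close the gap by establishing the reverse inequality. With $\mu(\sG)<\infty$ in hand, the quadratic variation of $M^{(n)}$ is controlled by $\mu(\sG)t$ after taking $n\to\infty$, so the martingale contribution is genuinely mean-zero on $[0,t]$ in the limit; one then rearranges the identity above (without dropping positive terms this time) to deduce the matching lower bound $\tfrac12\mu(\sG)+\mu(f)\ge\Lambda(f)$, completing the proof. The main obstacle is precisely this limit: $\psi=\log\Psi$ need not be in $L^1(\mu)$, so one cannot simply invoke stationarity to kill the boundary terms $\Exp_\mu[\psi(X_{t\wedge\uptau_n})]$. The resolution is the one-sided bound $\Lg\psi\le\Lambda(f)$ (from $f,\sG\ge 0$), which allows upper estimates via Fatou/monotone convergence on the non-negative quantities $\sG$ and $f$, combined with the a priori local bound on $\psi$ on each $\Bar B_n$ to handle the boundary contribution uniformly in $t$.
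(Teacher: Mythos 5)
Your overall plan---apply It\^o's formula to $\psi=\log\Psi$ along \eqref{E1-sde2}, divide by $t$, and invoke Birkhoff---is the same as the paper's, and your first two steps (deriving \eqref{E-logsde}, the Jensen sanity check, and the upper bound $\tfrac12\mu(\sG)+\mu(f)\le\Lambda(f)$ from the fact that $\psi$ is bounded \emph{below}) are sound, modulo the caveat you yourself notice: it is cleaner to work with a fixed initial point $x$ than with $X_0\sim\mu$, since the latter presupposes $\psi\in L^1(\mu)$.

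The genuine gap is in your third step. To upgrade the inequality to an equality you must justify two limits: $\Exp_x\bigl[\psi(X_{t\wedge\uptau_n})\bigr]\to\Exp_x\bigl[\psi(X_t)\bigr]$ as $n\to\infty$, and $t^{-1}\Exp_x\bigl[\psi(X_t)\bigr]\to0$ as $t\to\infty$. Your ``resolution'' paragraph (Fatou/monotone convergence on $\sG,f\ge0$ together with the local boundedness of $\psi$) only reproduces the $\le$ direction: since $\psi$ is bounded below (because $\Psi$ is inf-compact under (H1)), Fatou gives $\liminf_n\Exp_x[\psi(X_{t\wedge\uptau_n})]\ge\Exp_x[\psi(X_t)]$, which kills the boundary term in the direction you already had. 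It does not give an upper bound on $\Exp_x[\psi(X_t)]$, and $\psi$ is unbounded above. Your quadratic-variation remark presupposes $\Exp\int_0^t\sG\,\D s<\infty$ (which the upper bound does provide) and makes the stochastic integral a true martingale, but even so the identity $\Exp_x[\psi(X_t)]-\psi(x)+\tfrac12\Exp_x\int_0^t\sG+\Exp_x\int_0^t f=\Lambda(f)t$ is only useful if you can show $\Exp_x[\psi(X_t)]$ is $\sorder(t)$; nothing in your argument rules out linear growth, and ``rearranging without dropping positive terms'' does not circumvent this.

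The paper's key observation, which you miss, is that near-monotonicity of $f$ relative to $\Lambda(f)$ turns \eqref{E2-Pois} into the Foster--Lyapunov inequality $\Lg\Psi\le k_0-k_1\Psi$ for positive constants $k_0,k_1$. Thus $\Psi$ is a storage function for \eqref{E1-sde2}, and since $\psi=\log\Psi\in\sorder(\Psi)$ (again because $\Psi$ is inf-compact), the cited \cite[Lemma~3.7.2]{book} yields $\sup_{t\ge0}\Exp_x\bigl[\abs{\psi(X_t)}\bigr]<\infty$ together with the localization limit $\Exp_x[\psi(X_{t\wedge\uptau_n})]\to\Exp_x[\psi(X_t)]$. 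This at once gives $t^{-1}\Exp_x[\psi(X_t)]\to0$ and closes the proof. Without this Lyapunov structure (or some substitute) your third step does not go through.
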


\begin{proof}
By It\^o's formula, we obtain from \eqref{E-logsde}
\begin{equation}\label{EL1.9B}
\Exp_x\bigl[\psi(X_{t\wedge\uptau_n})\bigr] - \psi(x)
+\frac{1}{2}\,\Exp_x\biggl[\int_0^{t\wedge\uptau_n} \sG(X_{s})\,\D{s}\biggr]
+ \Exp_x\biggl[\int_0^{t\wedge\uptau_n} f(X_{s})\,\D{s}\biggr]
\;=\; \Lambda(f)\,\Exp_x[t\wedge\uptau_n]\,.
\end{equation}
Since $f$ is near-monotone relative to $\Lambda(f)$,
\eqref{E2-Pois} takes the form $\Lg\Psi \le k_0 - k_1\Psi$ for
some positive constants $k_0$ and $k_1$.
It then follows by \cite[Lemma~3.7.2]{book}, that any $h\in\sorder(\Psi)$
satisfies
\begin{equation}\label{EL1.9C}
\Exp_x\bigl[h(X_{t\wedge\uptau_n})\bigr]\;
\xrightarrow[n\to\infty]{}\;\Exp_x\bigl[h(X_{t})\bigr]\,,
\quad\text{and}\quad
t^{-1}\,\Exp_x\bigl[h(X_{t})\bigr]
\xrightarrow[n\to\infty]{}\;0\,.
\end{equation}
Since $f$ is bounded below, we may assume without loss of generality
that it is nonnegative.
We first take limits as $n\to\infty$ in \eqref{EL1.9B}, using \eqref{EL1.9C}
with $\psi\in\sorder(\Psi)$
and monotone convergence for the integrals, then divide by $t$
and take limits as $t\to\infty$, using again \eqref{EL1.9B} and
Birkhoff's ergodic theorem to obtain \eqref{EL1.9A}.
\qed\end{proof}

%%%%%%%%%%%%%%%%%%%%%%%%%%%%%%%%%%%%%%%%%%%%%%%%%%%%%%%%%%%%%%%%%%%%%%%%%%%%%%%%
\subsection{Some basic results from the theory of second order elliptic pdes}
\label{S1.4}
%%%%%%%%%%%%%%%%%%%%%%%%%%%%%%%%%%%%%%%%%%%%%%%%%%%%%%%%%%%%%%%%%%%%%%%%%%%%%%%%

In this paper we use some basic properties of elliptic pdes
which we describe next.
The first is Harnack's inequality that plays a central role in
 the study of elliptic equations, and can be stated as follows
\cite[Theorem~9.1]{GilTru}.
Suppose that $\phi\in\Sobl^{2,p}(B_{R+1})$, $p>d$, $R>0$, is a positive function
that solves $\Lg\phi +h \phi=0$ on $B_{R+1}$, with $h\in\Lp^\infty(B_{R+1})$.
Then there exists a constant $C_H$ depending only on $R$, $d$, the constants
$C_{R+1}$ and $C_0$ in (A1)--(A3), and $\norm{h}_\infty$ such
that
\begin{equation}\label{E-Harn}
\phi (x) \;\le\; C_H\,\phi(y)\qquad \forall\, x,y\in B_R\,.
\end{equation}
Relative weak compactness of a family of functions in $\Sobl^{2,p}(B_{R+1})$
can be obtained as a result of the following well-known a priori estimate
\citep[Lemma~5.3]{ChenWu}.
If $\varphi\in\Sobl^{2,p}(B_{R+1})\cap\Lp^{p}(B_{R+1})$, with $p\in(1,\infty)$,
then
\begin{equation}\label{E-apriori}
\bnorm{\varphi}_{\Sob^{2,p}(B_R)}\;\le\;C\,
\Bigl(\bnorm{\varphi}_{\Lp^{p}(B_{R+1})}
+\bnorm{\Lg\varphi}_{\Lp^{p}(B_{R+1})}\Bigr)\,,
\end{equation}
with the constant $C$ depending only on $d$, $R$, $C_{R+1}$, and $C_0$.
This estimate along with the compactness of the embedding
$\Sob^{2,d}(B_R)\hookrightarrow \Cc^{1,r}(\Bar{B}_R)$, for $p>d$
and $r<1-\frac{d}{p}$ (see \citep[Proposition~1.6]{ChenWu})
imply the equicontinuity of any family of functions $\varphi_n$
which satisfies
$\sup_n\,\bigl(\norm{\varphi_n}_{\Lp^{p}(B_{R+1})}
+\norm{\Lg\varphi_n}_{\Lp^{p}(B_{R+1})}\bigr)<\infty$, $p>d$.
We also frequently use the weak and strong maximum principles
in the following form \cite[Theorems~9.5 and 9.6, p.~225]{GilTru}.
The \emph{weak maximum principle} states that if
$\varphi,\psi\in\Sobl^{2,d}(D)\cap\Cc(\Bar{D})$ satisfy
$(\Lg+h)\varphi=(\Lg+h)\psi$ in a bounded domain $D\subset\Rd$,
with $h\in\Lp^d(D)$, $h\leq 0$
and $\varphi=\psi$ on $\partial{D}$, then $\varphi=\psi$ in $D$.
On the other hand, the \emph{strong maximum principle} states that
if $\varphi\in\Sobl^{2,d}(D)$ satisfies
$(\Lg+h)\varphi\le0$ in a bounded domain $D$, with $h=0$ ($h\le0$),
then $\varphi$ cannot attain a minimum (nonpositive minimum)
in $D$ unless it is a constant.
We often use the following variation of the strong maximum principle.
If $\varphi\ge0$ and $(\Lg+h)\varphi\le0$ in a bounded domain $D\subset\Rd$,
then $\varphi$ is either positive on $D$ or identically equal to $0$.
This follows from the general statement by writing $(\Lg+h)\varphi\le0$
as
\begin{equation*}
(\Lg-h^-)\varphi\;\le\;-h^+\varphi\;\le\;0\,.
\end{equation*}

In this paper, we use the theory of elliptic pdes
to obtain limits of sequences of solutions to
\eqref{E1-Pois} as follows:
Suppose $\varphi_n\in\Sobl^{2,p}(\Rd)$, $p>d$, $n\in\NN$, is a sequence
of nonnegative functions satisfying
\begin{equation*}
\Lg \varphi_n + (f_n-\lambda_n) \varphi_n\;=\;-\alpha_n
\quad\text{on\ }\Rd\,,\quad \forall\,n\in\NN\,,
\end{equation*}
where 
$\{\lambda_n\}$ and
$\{\alpha_n\}$ are bounded sequences of nonnegative real numbers,
and $f_n$ is locally bounded
and converges to some $f\in\Lpl^\infty(\Rd)$ uniformly on compact sets.
Suppose also that $\{\varphi_n(0)\}$ is a bounded sequence of positive numbers.
Then by the extension of the Harnack inequality for a class of
superharmonic functions in \cite{AA-Harnack}
(see also \cite[Theorem~A.2.13]{book})
it follows that $\sup_{n\in\NN}\,\norm{\varphi_n}_{\infty,B_R}<\infty$ for every $R>0$.
Thus, by the a priori estimate in \eqref{E-apriori} and
the compactness of the embedding
$\Sob^{2,d}(B_R)\hookrightarrow \Cc^{1,r}(\Bar{B}_R)$,
the sequence $\{\varphi_n\}$ along with its first derivatives
are H\"older equicontinuous when restricted to any ball $B_R$.
Thus, given any diverging sequence $\{k_n\}\subset\NN$ we can extract a subsequence
also denoted as $\{k_n\}$ along which $\varphi_{k_n}$ converges to
some $\varphi\in\Sobl^{2,p}(\Rd)$, $p>d$, and $\lambda_{k_n}$ and
$\alpha_{k_n}$ converge to some constants $\lambda$ and $\alpha$ respectively,
and satisfy $\Lg \varphi + (f-\lambda) \varphi\;=\;-\alpha$ on $\Rd$.
In view of this convergence property, when we refer to a ``limit point'' of
$\{\varphi_n\}$ we mean a limit obtained as in the above procedure.

Completely analogous is the situation with solutions of \eqref{E1-HJB}.
Moreover, since under the current assumptions, the
map $x\mapsto \min_u \bigl[\langle b(x,u), \grad\psi(x)\rangle + c(x,u)\psi(x)\bigr]$,
is locally H\"older continuous, for any $\psi\in \Cc^{1,r}_{\mathrm{loc}}(\Rd)$,
then the compactness of the embedding
$\Sob^{2,d}(B_R)\hookrightarrow \Cc^{1,r}(\Bar{B}_R)$ together with
elliptic regularity \cite[Theorem~9.19]{GilTru} implies that solutions
are in $\Cc^2(\Rd)$, a fact which we use often.

%%%%%%%%%%%%%%%%%%%%%%%%%%%%%%%%%%%%%%%%%%%%%%%%%%%%%%%%%%%%%%%%%%%%%%%%%%%%%%%%
\section{Proofs of the results on the multiplicative Poisson equation}\label{S2}
%%%%%%%%%%%%%%%%%%%%%%%%%%%%%%%%%%%%%%%%%%%%%%%%%%%%%%%%%%%%%%%%%%%%%%%%%%%%%%%%

In this section we establish basic properties of the MPE through
lemmas that lead to the proof of Theorems~\ref{T1.5}--\ref{T1.8}.
Throughout the rest of the section $f\colon\Rd\to\RR$ is a locally bounded measurable
function, and $\Lg$ is as defined in \eqref{E-Lg}.
Also, as mentioned earlier, we assume that for the sde in \eqref{E1-sde2},
$\upsigma$ and $b$ satisfy (A2)--(A3),  $\upsigma$ is locally  Lipschitz
(as in (A1)), and $b$ is measurable.

%%%%%%%%%%%%%%%%%%%%%%%%%%%%%%%%%%%%%%%%%%%%%%%%%%%%%%%%%%%%%%%%%%%%%%%%%%%%%%%%
\subsection{Some basic results on the MPE}\label{S2.1}
%%%%%%%%%%%%%%%%%%%%%%%%%%%%%%%%%%%%%%%%%%%%%%%%%%%%%%%%%%%%%%%%%%%%%%%%%%%%%%%%

Some well known properties of the MPE are summarized in the following lemma.

%%%%%%%%%%%%%%%%%%%%%%%%%%%%%%%%%%%%%%%%%%%%%%%%%%%%%%%%%%%%%%%%%%%%%%%%
\begin{lemma}\label{L2.1}
Let $f$ be near-monotone relative to $\Lambda$,
and $(\Psi,\Lambda)\in\Sobl^{2,p}(\Rd)\times\RR$, $p\ge1$,
be a nonnegative solution to \eqref{E1-Pois} satisfying $\Psi(0)>0$.
Then the following are equivalent:
\begin{itemize}
\item[{\upshape(}a\/{\upshape)}]
The diffusion \eqref{E1-sde2} is recurrent.

\item[{\upshape(}b\/{\upshape)}]
$\inf_{\Rd}\,\Psi>0$.

\item[{\upshape(}c\/{\upshape)}]
$\min_{\Rd}\,\Psi =\min_{\sK} \,\Psi$, with $\sK$ as in Definition~\ref{D1.1}.

\item[{\upshape(}d\/{\upshape)}]
The function $\Psi$ is inf-compact.

\item[{\upshape(}e\/{\upshape)}]
The diffusion \eqref{E1-sde2} is geometrically ergodic, i.e.,
it is positive recurrent with invariant probability measure $\mu$,
and there exist positive constants $\kappa$ and $\beta$, such that
if $g\,\colon\Rd\to\RR$ is any locally bounded measurable function
satisfying $\norm{g}_{\Psi}<\infty$, it holds that
\begin{equation}\label{E-erg}
\babs{\Exp_x [g(X_t)] - \mu(g)} \;\le\; \kappa \E^{-\beta t}\,\norm{g}_{\Psi}
\bigl(1+\Psi(x)\bigr)\qquad\forall\, t>0\,.
\end{equation}

\item[{\upshape(}f\/{\upshape)}]
$\Lambda_x(f)\le\Lambda$ for all $x\in\Rd$.

\item[{\upshape(}g\/{\upshape)}]
$\Lambda_x(f)\le\Lambda$ for some $x\in\Rd$.
\end{itemize}
\end{lemma}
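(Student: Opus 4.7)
The key computational tool is the non-negative supermartingale
$$M_t\df\Psi(X_t)\,\E^{\int_0^t(f-\Lambda)(X_s)\,\D s}\,,$$
obtained from It\^o's formula via the MPE \eqref{E1-Pois}: it is a local martingale, and being non-negative it is also a supermartingale (by Fatou). The strong maximum principle recalled in Section~\ref{S1.4}, applied to $(\Lg+(f-\Lambda))\Psi=0$ with $\Psi\ge0$ and $\Psi(0)>0$, upgrades the hypothesis to $\Psi>0$ on $\Rd$, so that $\min_\sK\Psi>0$. I would establish the equivalences through the cycle $(\mathrm e)\Rightarrow(\mathrm a)\Rightarrow(\mathrm c)\Rightarrow(\mathrm b)\Rightarrow(\mathrm e)$, with (d), (f), (g) grafted on.

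For (a)$\,\Rightarrow\,$(c), write $\uptau\df\uptau(\sK_\epsilon^c)$, apply optional sampling to $M$ at $\uptau\wedge\uuptau_R\wedge t$, and pass to the limit $R,t\to\infty$ by Fatou; recurrence forces $\uptau<\infty$ $\Prob_x$-a.s., while $f-\Lambda\ge\epsilon$ on $\sK_\epsilon^c$ yields $\Psi(x)\ge\min_{\sK_\epsilon}\Psi$ for $x\in\sK_\epsilon^c$, and letting $\epsilon\searrow0$ gives (c). Then (c)$\,\Rightarrow\,$(b) and (d)$\,\Rightarrow\,$(b) are immediate from positivity of $\Psi$ and compactness of $\sK$. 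For (b)$\,\Rightarrow\,$(e), the MPE becomes $\Lg\Psi\le-\epsilon\Psi+C\Ind_{\sK_\epsilon}$ once $\inf\Psi>0$ and continuity of $\Psi$ on the compact set $\sK_\epsilon$ are used; this is the standard Foster--Lyapunov drift inequality from which \eqref{E-erg} follows with $\Psi$ as the Lyapunov function. The direction (e)$\,\Rightarrow\,$(a) is trivial.

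The remaining items follow quickly. For (b)$\,\Rightarrow\,$(f), the supermartingale inequality
$$\inf\Psi\cdot\Exp_x\bigl[\E^{\int_0^t(f-\Lambda)(X_s)\,\D s}\bigr]\;\le\;\Exp_x[M_t]\;\le\;\Psi(x)$$
yields $\Lambda_x(f)\le\Lambda$ after a logarithm-divide-by-$t$ and $\limsup$ step. The implication (f)$\,\Rightarrow\,$(g) is trivial, and (g)$\,\Rightarrow\,$(a) I would prove by contraposition: transience makes $T_\epsilon\df\int_0^\infty\Ind_{\sK_\epsilon}(X_s)\,\D s$ finite $\Prob_x$-a.s., so the pointwise bound $\int_0^t f(X_s)\,\D s\ge(\Lambda+\epsilon)(t-T_\epsilon)$ (valid because $f\ge0$ and $f\ge\Lambda+\epsilon$ off $\sK_\epsilon$) forces $\Lambda_x(f)\ge\Lambda+\epsilon$, contradicting (g).

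The most delicate step is (a)$\,\Rightarrow\,$(d), upgrading $\min_\Rd\Psi=\min_\sK\Psi$ to $\Psi(x)\to\infty$ as $\abs{x}\to\infty$. A naive hitting-time Laplace-functional argument
$\Psi(x)\ge m_\epsilon\,\Exp_x[\E^{\epsilon\uptau(\sK_\epsilon^c)}]$
with $m_\epsilon\df\min_{\partial\sK_\epsilon}\Psi>0$ is insufficient, because rapidly contracting drifts can keep $\Exp_x[\E^{\epsilon\uptau(\sK_\epsilon^c)}]$ bounded. Instead I would view $\Psi$ as a positive supersolution of the shifted operator $\Lg+\epsilon$ on the exterior domain $\sK_\epsilon^c$ and invoke the fact that, under recurrence of $\Lg$, the principal Dirichlet eigenvalue of $\Lg$ on this exterior domain is strictly positive, so that the principal Dirichlet eigenvalue of $\Lg+\epsilon$ there strictly exceeds $\epsilon$; the minimal positive supersolution of $(\Lg+\epsilon)u\le0$ with positive Dirichlet data on $\partial\sK_\epsilon$ must therefore blow up at infinity, and a comparison principle then forces $\Psi$ to do the same. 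This principal-eigenvalue-shift argument, combined with the standard comparison and regularity tools from Section~\ref{S1.4}, is where the main technical care is needed; everything else is routine supermartingale and Foster--Lyapunov bookkeeping.
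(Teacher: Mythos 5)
Your overall architecture --- the nonnegative supermartingale $M_t=\Psi(X_t)\E^{\int_0^t(f-\Lambda)(X_s)\,\D s}$, the strong maximum principle to upgrade $\Psi\ge0$, $\Psi(0)>0$ to $\Psi>0$, the Foster--Lyapunov drift $\Lg\Psi\le-\delta\Psi$ off a compact set for (b)$\Rightarrow$(e), and the supermartingale/Fatou bound for (b)$\Rightarrow$(f) --- is the same as the paper's. Your proof of (g)$\Rightarrow$(a) is a genuinely different route: the paper deduces from Jensen's inequality that time-averaged occupation measures of \eqref{E1-sde2} cannot fully escape to infinity, whereas you argue directly from transience, using that $T_\epsilon=\int_0^\infty\Ind_{\sK_\epsilon}(X_s)\,\D s<\infty$ a.s., so the pathwise estimate $\int_0^t f(X_s)\,\D s\ge(\Lambda+\epsilon)t-\text{const}\cdot T_\epsilon$ forces $\Lambda_x(f)\ge\Lambda+\epsilon$. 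Both routes are correct; yours is the more elementary.

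Where you go astray is (a)$\Rightarrow$(d). You correctly observe that in full generality a strongly contracting drift can keep $x\mapsto\Exp_x[\E^{\delta\uuptau}]$ bounded, but you missed that the affine growth condition (A2) is a standing assumption throughout Section~\ref{S2} (see the paragraph opening the section), and the paper invokes it explicitly at exactly this point. Under (A2) the drift grows at most linearly, so from $\abs{x}=R$ the process needs time of order $\log R$ to reach a fixed ball; combined with recurrence (so $\uuptau<\infty$ a.s.) this yields inf-compactness of $x\mapsto\Exp_x[\E^{\delta\uuptau}\Ind\{\uuptau<\infty\}]$, which is precisely the ``naive'' bound you dismiss. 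Your proposed replacement does not repair the gap. The claim that recurrence of $\Lg$ makes the principal Dirichlet eigenvalue of $\Lg$ on $\sK_\epsilon^c$ strictly positive already fails for null-recurrent processes (for planar Brownian motion the exterior eigenvalue is zero), and even granting a positive eigenvalue shift, the assertion that the minimal positive supersolution with unit boundary data ``must blow up'' fails without a growth restriction: for $\Lg=\tfrac12\tfrac{d^2}{dx^2}-x^{101}\tfrac{d}{dx}$ in one dimension (recurrent, but violating (A2)) the hitting time of $[-1,1]$ from $x>1$ is uniformly bounded, so $\Exp_x[\E^{\epsilon\uuptau}]$, which is that minimal solution, stays bounded in $x$ even though the exterior eigenvalue far exceeds $\epsilon$. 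Some growth restriction on the coefficients is thus unavoidable here, and once (A2) is recognized the elementary Laplace-functional bound already suffices; the principal-eigenvalue machinery is unnecessary and, as stated, incorrect.
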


\begin{proof}
Let $\sB$ be a bounded ball and $\delta>0$ a constant,
such that $f-\Lambda>\delta$ in $\sB^c$.
Then with $\uuptau\equiv\uptau(\sB^c)$ we have
\begin{align*}
\Psi(x) &\;\ge\; \Exp_{x}\,
\bigl[\E^{\delta\uuptau}\,\Psi(X_{\uuptau})\,\Ind\{\uuptau<\infty\}\bigr]
\nonumber\\[5pt]
&\;\ge\; \Bigl(\inf_{\partial\sB}\; \Psi\Bigr)\,
\Exp_{x}\bigl[\E^{\delta\uuptau}\,\Ind\{\uuptau<\infty\}\bigr]
\qquad\forall\, x\in\Bar{\sB}^c\,.
\end{align*}
If (a) holds, then since $\inf_{\sB}\, \Psi>0$ by the Harnack inequality,
and $x\mapsto \Exp_{x}\bigl[\E^{\delta\uuptau}\,\Ind\{\uuptau<\infty\}\bigr]$
is inf-compact,
by Assumption~(A2),
then both (c) and (d) follow, and of course either (c) or (d)
imply (b).
Thus to complete the proof it suffices to
show that (b) implies (e) and (f) and that (g) implies (a).

Suppose $\inf_{\Rd}\,\Psi>0$.
Then since $\Lg \Psi < -\delta\Psi$ on $\sB^c$,
(e) follows (see \cite{Down-95,Fort-05}).
Also, by \eqref{E1-Pois} and Fatou's lemma we have
\begin{align*}
\Psi(x)&\;\ge\; \Exp_{x}\,\Bigl[\E^{\int_{0}^{T}
[f(X_{t})-\Lambda]\,\D{t}}\,\Psi(X_{T})\Bigr]\\[5pt]
&\;\ge\; \Bigl(\inf_{\Rd}\; \Psi\Bigr)\,\Exp_{x}\,
\Bigl[\E^{\int_{0}^{T}[f(X_{t})-\Lambda]\,\D{t}}\Bigr]\,,
\end{align*}
and (f) follows by taking log, dividing by $T$ and letting $T\to\infty$.

It is obvious that (f)\;$\Rightarrow$\;(g).
We next show that (g)\;$\Rightarrow$\;(a), and this completes the proof.
Using Jensen's inequality we have
\begin{equation*}
\limsup_{T\to\infty}\;\frac{1}{T}\,\int_{0}^{T} 
\Exp_x[f(X_t)]\,\D{t}\;\le\;
\Lambda_x(f) \;\le\;\Lambda\,.
\end{equation*}
A standard argument then shows that
any limit point of ergodic occupation measures
along some sequence $T_n\to\infty$ in $\cP(\Rd\cup\{\infty\})$,
i.e., the set of probability measures on the one point compactification
of $\Rd$, takes the form $\rho \mu + (1-\rho)\delta_{\infty}$,
with $\rho\in(0,1)$ and that $\mu$ is an invariant probability measure of
\eqref{E1-sde2}
(see Lemma~3.4.6 and Theorem~3.4.7 in \cite{book}).
This of course implies that \eqref{E1-sde2} is positive recurrent.
\qed\end{proof}

The following lemma summarizes some results from \cite{Berestycki-94,Berestycki-15,
Quaas-08a} on eigenvalues of
the Dirichlet problem for the operator $\Lg$.
Recall that subsets $A$ and $B$ of $\Rd$ we write $A\Subset B$
if $\Bar{A}\subset B$. 

%%%%%%%%%%%%%%%%%%%%%%%%%%%%%%%%%%%%%%%%%%%%%%%%%%%%%%%%%%%%%%%%%%%%%%%%%%%%%%%%
\begin{lemma}\label{L2.2}
For each $r\in(0,\infty)$ there exists a unique pair
$(\widehat\Psi_{r},\Hat\lambda_{r})
\in\bigl(\Sob^{2,p}(B_{r})\cap\Cc(\Bar{B}_{r})\bigr)\times\RR$,
for any $p>d$, satisfying
$\widehat\Psi_{r}>0$ on $B_{r}$, $\widehat\Psi_{r}=0$ on
$\partial B_{r}$, and $\widehat\Psi_{r}(0)=1$,
which solves
\begin{equation}\label{L2.2A}
\Lg \widehat\Psi_{r}(x) + f(x)\,\widehat\Psi_{r}(x)
\;=\; \Hat\lambda_{r}\,\widehat\Psi_{r}(x)
\qquad\text{a.e.\ }x\in B_{r}\,.
\end{equation}
Moreover, solution has the following properties:
\begin{itemize}
\item[{\upshape(}a{\upshape)}]
The map $r\mapsto\Hat\lambda_{r}$ is continuous and strictly increasing.
\item[{\upshape(}b{\upshape)}]
In its dependence on the function $f$,
 $\Hat\lambda_r$ is nondecreasing, convex, and Lipschitz continuous
 (with respect to the $\Lp^{\infty}$ norm) with Lipschitz constant $1$.
In addition, if $f\lneqq f'$ then $\Hat\lambda_r(f)<\Hat\lambda_r(f')$.
\end{itemize}
\end{lemma}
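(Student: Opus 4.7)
The existence of the principal eigenpair $(\widehat\Psi_r,\widehat\lambda_r)$ for the Dirichlet problem on $B_r$ is a classical construction; I would follow Berestycki--Nirenberg--Varadhan \cite{Berestycki-94} (see also \cite{Quaas-08a,Berestycki-15}). Choose $M$ large enough that $M-\Lg-f$ satisfies the weak maximum principle on $B_r$, so that $T:=(M-\Lg-f)^{-1}$ with zero Dirichlet data on $\partial B_r$ is a well-defined positive operator on $\Cc(\Bar{B}_r)$, compact by the a priori estimate \eqref{E-apriori} and the Sobolev embedding $\Sob^{2,p}(B_r)\hookrightarrow\Cc(\Bar{B}_r)$ for $p>d$, and strongly positive via the Hopf boundary lemma. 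Krein--Rutman then yields a positive eigenfunction $\widehat\Psi_r$, from which $\widehat\lambda_r$ and the normalization $\widehat\Psi_r(0)=1$ follow. Uniqueness is the standard comparison argument: given a second positive Dirichlet eigenpair $(\widetilde\Psi_r,\widetilde\lambda_r)$ with, WLOG, $\widetilde\lambda_r\le\widehat\lambda_r$, set $s_*:=\inf_{B_r}\widetilde\Psi_r/\widehat\Psi_r$, which is positive and finite by Hopf's lemma applied to both eigenfunctions on $\partial B_r$. The difference $w:=\widetilde\Psi_r-s_*\widehat\Psi_r\ge 0$ satisfies the linear inequality $(\Lg+(f-\widetilde\lambda_r))w=-s_*(\widehat\lambda_r-\widetilde\lambda_r)\widehat\Psi_r\le 0$, and by the strong maximum principle variant stated in Section~\ref{S1.4} (applied after touching contact is ruled out on $\partial B_r$ by Hopf), $w\equiv 0$, which forces $\widetilde\Psi_r=s_*\widehat\Psi_r$ and hence $\widetilde\lambda_r=\widehat\lambda_r$.

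For part (a), strict monotonicity in $r$ is proved by the ratio method. Given $r_1<r_2$, the ratio $R:=\widehat\Psi_{r_1}/\widehat\Psi_{r_2}$ on $B_{r_1}$ is positive in the interior, continuous up to $\partial B_{r_1}$, and vanishes there, so it attains its positive maximum at some interior point $x_0$. A direct computation from the two eigenvalue equations yields $\widetilde\Lg R=(\widehat\lambda_{r_1}-\widehat\lambda_{r_2})R$ in $B_{r_1}$, where $\widetilde\Lg\varphi:=\Lg\varphi+a^{ij}\,\partial_i(\log\widehat\Psi_{r_2})\,\partial_j\varphi$. Evaluating at $x_0$ (where $\grad R=0$ and the Hessian is negative semidefinite) gives $\widehat\lambda_{r_1}\le\widehat\lambda_{r_2}$; equality would force $R$ to be constant by the strong maximum principle applied to $\widetilde\Lg$, contradicting its vanishing on $\partial B_{r_1}$. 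Continuity of $r\mapsto\widehat\lambda_r$ follows from uniqueness together with \eqref{E-apriori} and Harnack's inequality \eqref{E-Harn}: along any sequence $r_n\to r$, the normalized eigenfunctions $\widehat\Psi_{r_n}$ are precompact in $\Cc^{1,\rho}_{\mathrm{loc}}$ and the eigenvalues remain bounded, so every limit point is a principal eigenpair on $B_r$ and must coincide with $(\widehat\Psi_r,\widehat\lambda_r)$.

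For part (b), the Berestycki--Nirenberg--Varadhan variational characterization
\begin{equation*}
\widehat\lambda_r(f)\;=\;\inf\bigl\{\lambda\in\RR\,\colon\,\exists\,\varphi\in\Sob^{2,p}(B_r),\ \varphi>0\text{ in }B_r,\ (\Lg+f-\lambda)\varphi\le 0\text{ a.e.}\bigr\}
\end{equation*}
gives monotonicity and $1$-Lipschitz continuity at once: enlarging $f$ tightens the defining inequality, while shifting $f$ by a constant $c$ shifts the eigenvalue by $c$, hence $\abs{\widehat\lambda_r(f)-\widehat\lambda_r(f')}\le\norm{f-f'}_\infty$. For convexity I would use the logarithmic substitution $\Psi_t:=\Psi_0^{1-t}\Psi_1^t$, with $\Psi_i$ the principal eigenfunction for $f_i$; setting $\psi_i:=\log\Psi_i$, $\psi_t:=(1-t)\psi_0+t\psi_1$, and taking the convex combination of the two eigenvalue equations written in logarithmic form (cf.\ \eqref{E-logsde}) leads to
\begin{equation*}
(\Lg+f_t-\lambda_t)\Psi_t\;=\;\tfrac12\Psi_t\bigl[\langle\grad\psi_t,a\grad\psi_t\rangle-(1-t)\langle\grad\psi_0,a\grad\psi_0\rangle-t\langle\grad\psi_1,a\grad\psi_1\rangle\bigr]\;\le\;0,
\end{equation*}
with $\lambda_t:=(1-t)\widehat\lambda_r(f_0)+t\widehat\lambda_r(f_1)$, $f_t:=(1-t)f_0+tf_1$, the inequality following from convexity of $\xi\mapsto\langle\xi,a\xi\rangle$; the variational characterization then gives $\widehat\lambda_r(f_t)\le\lambda_t$. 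Finally, for strict monotonicity, if $f\lneqq f'$ but $\widehat\lambda_r(f)=\widehat\lambda_r(f')$, then the eigenfunction $\Psi'$ for $f'$ satisfies $(\Lg+f-\widehat\lambda_r(f))\Psi'=-(f'-f)\Psi'\lneqq 0$, so $\Psi'$ is a strict positive supersolution for the $f$-problem; running the $s_*$-argument used in the uniqueness proof with $\Psi'$ in place of $\widetilde\Psi_r$ produces a nonnegative $w$ whose governing inequality has strictly negative right-hand side on the set $\{f'>f\}$, which the strong maximum principle precludes. The main technical step is the convexity calculation, which hinges on the linearity of $\langle b,\grad\cdot\rangle$ in $\psi$ (compatible with convex combinations) and on the quadratic form $\xi\mapsto\langle\xi,a\xi\rangle$ supplying the crucial sign via convexity.
\qed
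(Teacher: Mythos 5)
Your proposal takes a self-contained route where the paper simply cites: the paper's own proof is a one-liner pointing to \cite[Theorem~1.1]{Quaas-08a} (or \cite{Berestycki-94}) for existence/uniqueness, \cite[Theorem~1.10 and Proposition~2.3(iii)--(iv)]{Berestycki-15} for part~(a), and \cite[Proposition~2.1]{Berestycki-94} for part~(b). Your Krein--Rutman construction, the $s_*$-comparison for uniqueness, the ratio method with the conjugated operator $\widetilde\Lg$ for strict domain monotonicity, the variational characterization for monotonicity/Lipschitz continuity in $f$, and the logarithmic substitution $\Psi_t=\Psi_0^{1-t}\Psi_1^t$ for convexity (reducing to convexity of $\xi\mapsto\langle\xi,a\xi\rangle$) are all exactly the arguments those references use internally, so you are in effect unfolding the citations; the computations you give (e.g.\ the identity for $\widetilde\Lg R$, the log-transform inequality) check out.

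Two steps, though, are stated more tersely than they can be proved. First, the continuity of $r\mapsto\widehat\lambda_r$: local $\Cc^{1,\rho}$ precompactness from \eqref{E-apriori} and \eqref{E-Harn} plus interior-uniqueness does not by itself show that a local limit point of $\widehat\Psi_{r_n}$ is a \emph{Dirichlet} eigenpair on $B_r$ — one must also show the limit extends continuously to $\Bar{B}_r$ and vanishes on $\partial B_r$. For $r_n\nearrow r$ you can sidestep this by using the variational characterization directly (the limit is a positive solution of $(\Lg+f-\ell)\Phi=0$, hence $\widehat\lambda_r\le\ell$, and monotonicity gives $\ell\le\widehat\lambda_r$), but for $r_n\searrow r$ you need uniform $\Cc^{1,\alpha}$ estimates up to $\partial B_{r_n}$ to get $\Phi|_{\partial B_r}=0$; as written, this is a genuine gap, albeit a standard and fillable one. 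Second, in the strict-monotonicity-in-$f$ argument, the phrase that the strong maximum principle ``precludes'' the strictly negative right-hand side skips the actual case analysis: the variant of the maximum principle you quote from Section~\ref{S1.4} forces $w>0$ in $B_r$ or $w\equiv0$; $w\equiv 0$ is incompatible with $(f'-f)\Psi'\not\equiv0$; and $w>0$ in $B_r$ with $\inf\,\Psi'/\widehat\Psi_r=s_*$ attained on $\partial B_r$ contradicts Hopf's lemma via the ratio of normal derivatives. The same dichotomy is needed in your uniqueness step. You gesture at this with ``touching contact is ruled out on $\partial B_r$ by Hopf'', but the boundary case deserves to be spelled out, since that is where the argument actually closes.
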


\begin{proof}
Existence and uniqueness of the solution follow
by \cite[Theorem~1.1]{Quaas-08a} (see also \cite{Berestycki-94}).
Part (a) follows by \cite[Theorem~1.10]{Berestycki-15},
and (iii)--(iv) of \cite[Proposition~2.3]{Berestycki-15}
while part [(b)] follows by \cite[Proposition~2.1]{Berestycki-94}.
\end{proof}

We refer to the pair
$(\widehat\Psi_{r},\Hat\lambda_{r})$ in Lemma~\ref{L2.2}
as the Dirichlet eigensolution
of the MPE on $B_r$.
We also call $\widehat\Psi_{r}$, and $\Hat\lambda_{r}$, the
Dirichlet
eigenfunction and eigenvalue on $B_r$, respectively.

%%%%%%%%%%%%%%%%%%%%%%%%%%%%%%%%%%%%%%%%%%%%%%%%%%%%%%%%%%%%%%%%%%%%%%%%%%%%%%%%
\begin{lemma}\label{L2.3}
Let $(\widehat\Psi_{r},\Hat\lambda_{r})$ be as in Lemma~\ref{L2.2}.
Then
\begin{itemize}
\item[{\upshape(}i\/{\upshape)}]
$\Hat\lambda_{r}< \inf_{x\in B_r}\,\Lambda_x(f)$ for all $r\in(0,\infty)$.

\item[{\upshape(}ii\/{\upshape)}]
If $\Lambda(f)<\infty$, then
every sequence $\{r_n\}$, with $r_n\to\infty$ contains a
subsequence also denoted as $\{r_n\}$, along which
$(\widehat\Psi_{r_n},\Hat\lambda_{r_n})$ in \eqref{L2.2A}
converge to some $(\widehat\Psi,\Hat\lambda)
\in\Sobl^{2,p}(\Rd)\times\RR$, $p>d$,
which solves the MPE \eqref{E1-Pois}.

\item[{\upshape(}iii\/{\upshape)}]
If the diffusion in \eqref{E1-sde2} is recurrent and
$f$ is near-monotone relative to $\Lambda(f)$, then
$\Lambda_x(f)=\Lambda(f)$ for all $x\in\Rd$, and
$\Hat\lambda_{r}\nearrow \Lambda(f)$ as $r\to\infty$.
\end{itemize}
\end{lemma}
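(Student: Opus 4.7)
Fix any $r'>r$. By Lemma~\ref{L2.2}\,(a), $\Hat\lambda_r<\Hat\lambda_{r'}$, so it suffices to show $\Hat\lambda_{r'}\le\Lambda_x(f)$ for every $x\in B_{r'}$. I would apply It\^o's formula to $t\mapsto \E^{\int_0^{t\wedge\uptau_{r'}}(f(X_s)-\Hat\lambda_{r'})\,\D{s}}\,\widehat\Psi_{r'}(X_{t\wedge\uptau_{r'}})$; using \eqref{L2.2A} the drift vanishes, and local boundedness of $f$ together with boundedness of $\widehat\Psi_{r'}$ on $\Bar{B}_{r'}$ ensures the stopped process is a bounded martingale on $[0,T\wedge\uptau_{r'}]$. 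Taking expectations, invoking the boundary condition $\widehat\Psi_{r'}\equiv 0$ on $\partial B_{r'}$, and crudely bounding $\widehat\Psi_{r'}$ by $\max_{\Bar{B}_{r'}}\widehat\Psi_{r'}$ on the surviving term, gives $\widehat\Psi_{r'}(x)\,\E^{\Hat\lambda_{r'}T}\le(\max_{\Bar{B}_{r'}}\widehat\Psi_{r'})\,\Exp_x\bigl[\E^{\int_0^T f(X_s)\,\D{s}}\bigr]$. Since $\widehat\Psi_{r'}(x)>0$ on $B_{r'}$, taking logarithms, dividing by $T$, and letting $T\to\infty$ produces $\Hat\lambda_{r'}\le\Lambda_x(f)$, as required.

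\textbf{Part (ii).} This is a standard compactness argument that fits the template at the end of Section~\ref{S1.4}. Since $\Lambda(f)<\infty$, there exists $x_0\in\Rd$ with $\Lambda_{x_0}(f)<\infty$, so by part (i) we have $\Hat\lambda_{r_n}\le\Lambda_{x_0}(f)<\infty$ for all $r_n>\abs{x_0}$; the lower bound comes from monotonicity in Lemma~\ref{L2.2}\,(a). The normalization $\widehat\Psi_{r_n}(0)=1$ together with Harnack's inequality \eqref{E-Harn} yields a uniform $\Lp^\infty(B_R)$ bound on every ball $B_R$; the a priori estimate \eqref{E-apriori} then bounds $\widehat\Psi_{r_n}$ in $\Sob^{2,p}(B_R)$ for $p>d$, and the compact embedding $\Sob^{2,p}\hookrightarrow\Cc^{1,r}$ supplies $\Cc^{1,r}(\Bar{B}_R)$-convergence along a subsubsequence. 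A diagonal extraction produces $(\widehat\Psi,\Hat\lambda)\in\Sobl^{2,p}(\Rd)\times\RR$ solving \eqref{E1-Pois} with $\widehat\Psi(0)=1$ and $\widehat\Psi\ge0$; strict positivity then follows from the strong maximum principle variant noted in Section~\ref{S1.4}.

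\textbf{Part (iii).} Here I combine (i) and (ii) with Lemma~\ref{L2.1}. Strict monotonicity from Lemma~\ref{L2.2}\,(a) guarantees that $\Hat\lambda^\infty\df\lim_{r\to\infty}\Hat\lambda_r$ exists in $\RR\cup\{+\infty\}$, and applying part (i) for each $x_0\in\Rd$ with $r$ large enough gives $\Hat\lambda^\infty\le\Lambda_{x_0}(f)$; taking infimum over $x_0$ yields $\Hat\lambda^\infty\le\Lambda(f)$. Now fix any sequence $r_n\to\infty$ and extract a subsequence as in (ii), delivering a positive solution $(\widehat\Psi,\Hat\lambda^\infty)$ of the MPE. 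The crucial observation is that near-monotonicity of $f$ relative to $\Lambda(f)$ automatically implies near-monotonicity relative to any $\lambda\le\Lambda(f)$, since the sublevel set $\sK_\epsilon$ in Definition~\ref{D1.1} shrinks as $\lambda$ decreases; in particular, $f$ is near-monotone relative to $\Hat\lambda^\infty$. Together with the recurrence hypothesis, Lemma~\ref{L2.1}\,(a)$\Rightarrow$(f) applied with $\Lambda=\Hat\lambda^\infty$ yields $\Lambda_x(f)\le\Hat\lambda^\infty$ for every $x\in\Rd$. Taking infimum forces $\Lambda(f)\le\Hat\lambda^\infty\le\Lambda(f)$, collapsing to $\Hat\lambda^\infty=\Lambda(f)=\Lambda_x(f)$ for all $x$.

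The main point requiring care, rather than routine invocation, is the hypothesis transfer in part (iii): Lemma~\ref{L2.1} is stated in terms of near-monotonicity relative to the eigenvalue of the specific solution at hand, so one must explicitly verify that this weaker hypothesis is inherited from near-monotonicity at $\Lambda(f)$ before the lemma applies to $\Hat\lambda^\infty$. As noted above, this is immediate from the definition. Everything else reduces to direct applications of the elliptic pde machinery of Section~\ref{S1.4} together with Lemmas~\ref{L2.1}--\ref{L2.2}.
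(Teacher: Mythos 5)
Your proposal is correct and follows essentially the same route as the paper's proof: part~(i) is the same It\^o-formula argument applied to the Dirichlet eigensolution (the paper's displayed computation establishes only $\Hat\lambda_r\le\Lambda_x(f)$ for $x\in B_r$ and leaves the upgrade to strict inequality implicit, whereas you make it explicit by inserting $r'>r$ and invoking Lemma~\ref{L2.2}\,(a), which is exactly the right way to close that small gap); part~(ii) is the standard Harnack/a-priori-estimate compactness argument of Section~\ref{S1.4}, which the paper dispatches by citing \cite[Lemma~2.1]{Biswas-11a}; and part~(iii) passes to a limit of Dirichlet eigensolutions and then applies Lemma~\ref{L2.1}\,(f) to the resulting MPE solution, exactly as the paper does. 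You also correctly flag and resolve the hypothesis-transfer issue in part~(iii) --- that near-monotonicity of $f$ relative to $\Lambda(f)$ carries over to near-monotonicity relative to the (a priori smaller) limiting eigenvalue $\Hat\lambda^\infty$, which is what licenses the use of Lemma~\ref{L2.1} --- a point the paper takes for granted.
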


\begin{proof}
Since $\widehat\Psi_{r}=0$ on $\uptau_r=\uptau(B_r^c)$, applying
It\^o's formula to \eqref{L2.2A} we obtain
\begin{align*}
\widehat\Psi_r(x) &\;=\;
\Exp_{x}\,\Bigl[\E^{\int_{0}^{T} [f(X_{t})-\Hat\lambda_r]\,\D{t}}\,
\widehat\Psi_r(X_{T})\,\Ind_{\{T\le\uptau_r\}}\Bigr]\\[5pt]
&\;\le\; \norm{\widehat\Psi_r}_{\infty,B_r}\,
\Exp_{x}\,\Bigl[\E^{\int_{0}^{T}[f(X_{t})-\Hat\lambda_r]\,\D{t}}\Bigr]
\qquad\forall\,(T,x)\in\RR_+\times B_r\,.
\end{align*}
Therefore, we have
\begin{equation*}
\Lambda_x(f)\;=\;\limsup_{T\to\infty}\;\frac{1}{T}\;\log\,
\Exp_{x}\,\Bigl[\E^{\int_{0}^{T}f(X_{t})\,\D{t}}\Bigr]
\;\ge\; \Hat\lambda_{r}
\qquad\forall\,x\in B_r\,,
\end{equation*}
which proves part (i).
Part (ii) is as in \cite[Lemma~2.1]{Biswas-11a}.

Suppose $\Hat\lambda_{r_n}\to\Hat\lambda$ along some sequence $r_n\nearrow\infty$.
By part (ii) there exists a subsequence also denoted as $\{r_n\}$ along which
$\widehat\Psi_{r_n}\to \widehat\Psi\in\Sobl^{2,p}(\Rd)\times\RR$, $p>d$,
and the pair $(\widehat\Psi,\Hat\lambda)$ solves the MPE \eqref{E1-Pois}.
By Lemma~\ref{L2.1}\,(f) we have
$\Hat\lambda\ge\Lambda_x(f)$ for all $x\in\Rd$, which
combined with part (i) results in equality.
\qed\end{proof}

By Lemma~\ref{L2.1}, (H1) implies that \eqref{E1-sde2} is positive recurrent.
Also without loss of generality we may assume that $\Lambda(f)>0$, otherwise
we translate $f$ by a constant to attain this.
Lemma~\ref{L2.3} shows  that there exists
a positive solution  $\Psi\in\Sobl^{2,p}(\Rd)$, $p>d$, to
\begin{equation}\label{E2-Pois}
\Lg \Psi(x) + f(x)\,\Psi(x)
\;=\; \Lambda(f)\,\Psi(x) \qquad\text{a.e.\ }x\in\Rd\,.
\end{equation}
Then, necessarily $\inf_{\Rd}\,\Psi>0$ by Lemma~\ref{L2.1}.
Unless stated otherwise, we use the symbol
$\Psi$ to denote a positive solution of \eqref{E2-Pois}.

We continue with the proof of Theorem~\ref{T1.4}.

\begin{pT1.4}
Let $\epsilon>0$ and $\sK_\epsilon$ a compact set in $\Rd$ such that
$\essinf_{\sK^c_\epsilon} (f-\Lambda(f)-\epsilon)\ge0$.
If a positive function $\varphi\in\Sobl^{2,d}(\Rd)$, satisfies
$\Lg\varphi + (f-\lambda)\varphi\le 0$ for some $\lambda\le \Lambda(f)+\epsilon$,
then by the proof of Lemma~\ref{L2.1}\,(d), we have
$\inf_{\Rd}\,\varphi>0$.
On the other hand, if $\Lg\varphi + (f-\lambda)\varphi\le 0$
and $\inf_{\Rd}\,\varphi>0$, then as in the proof of Lemma~\ref{L2.1}\,(f)
we obtain $\Lambda_x(f)\le\lambda$ for all $x\in\Rd$.
Thus $\Lambda(f)\le\hat\Lambda(f)$, which implies that
$\Lambda(f)\le \Hat\Lambda(f)\le\doublehat\Lambda(f)$.
However, $\Psi$ satisfies
$\Lg\Psi + \bigl(f-\Lambda(f)\bigr)\Psi\le 0$ and therefore
$\Lambda(f)\ge\doublehat\Lambda(f)$, which results in equality.
The proof is complete.
\qed
\end{pT1.4}

%%%%%%%%%%%%%%%%%%%%%%%%%%%%%%%%%%%%%%%%%%%%%%%%%%%%%%%%%%%%%%%%%%%%%%%%%%%%%%%%
\subsection{Results concerning Theorem~\ref{T1.5}}\label{S2.2}
%%%%%%%%%%%%%%%%%%%%%%%%%%%%%%%%%%%%%%%%%%%%%%%%%%%%%%%%%%%%%%%%%%%%%%%%%%%%%%%%

Recall the definitions of the ground state diffusion in \eqref{E-sde*},
the operator $
$ in \eqref{E-Lg*}, and the ground state
semigroup in \eqref{E-cT*}.
The operators $\Lg$ and $\Lg^*$ are linked via
the following useful identity.
If  $\Psi$ is a positive solution of \eqref{E2-Pois}
and $\Tilde{\Psi}$ is a positive solution of
\begin{equation*}
\Lg \Tilde\Psi(x) + \Tilde{f}(x)\,\Tilde\Psi(x)
\;=\; \Lambda(\Tilde{f})\,\Tilde\Psi(x) \qquad\text{a.e.\ }x\in\Rd\,,
\end{equation*}
then
\begin{equation}\label{E-Lg*Id}
\Lg^* \bigl(\Tilde\Psi\,\Psi^{-1}\bigr) \;=\; \bigl(\Lambda(\Tilde{f}) - \Lambda(f)
+ f - \Tilde{f}\bigr)\,\Tilde\Psi\,\Psi^{-1}\,.
\end{equation}

Throughout this section we assume (H1).
Recall the definition in \eqref{E-FK}.
We start with the following lemma.

%%%%%%%%%%%%%%%%%%%%%%%%%%%%%%%%%%%%%%%%%%%%%%%%%%%%%%%%%%%%%%%%%%%%%%%%%%%%%%%%
\begin{lemma}\label{L2.4}
Let 
\begin{equation*}
\sM_t\;\df\;\exp\biggl(\int_0^t \bigl\langle
\upsigma\transp(X_s)\grad\psi(X_s),\D W_s\bigr\rangle\,\D{s} - \frac{1}{2}
\int_0^t
\langle\grad\psi,a\grad \psi\rangle(X_s)\,\D{s}\biggr)\,.
\end{equation*}
Then $\bigl(\sM_t,\sF^X_t\bigr)$ is a martingale if and only if
\begin{equation}\label{EL2.4A}
\E^{-\Lambda(f)t} P^f_t\Psi (x) = \Psi(x)\qquad\forall\,(t,x)\in\RR_+\times\Rd\,.
\end{equation}
\end{lemma}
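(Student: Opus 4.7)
The plan is to compute $\sM_t$ explicitly in terms of $X_t$, $\Psi$, and $\int_0^t f(X_s)\,\D s$ via It\^o's formula applied to $\psi = \log \Psi$, and then use the MPE \eqref{E2-Pois} to collapse the drift correction. Specifically, since $\Psi \in \Sobl^{2,p}(\Rd)$ with $p>d$, $\Psi$ is $\Cc^{1,r}_{\mathrm{loc}}$ and, because $\inf_{\Rd}\Psi>0$ under (H1), $\psi=\log\Psi$ enjoys the same local regularity. Applying (the generalized) It\^o's formula to $\psi(X_t)$, and using
\begin{equation*}
\Lg\psi \;=\; \Psi^{-1}\Lg\Psi - \tfrac{1}{2}\langle\grad\psi,a\grad\psi\rangle
\;=\; \Lambda(f) - f - \tfrac{1}{2}\langle\grad\psi,a\grad\psi\rangle,
\end{equation*}
I would obtain
\begin{equation*}
\psi(X_t)-\psi(X_0) \;=\; \int_0^t \bigl[\Lambda(f)-f(X_s)\bigr]\,\D s
-\tfrac{1}{2}\int_0^t\langle\grad\psi,a\grad\psi\rangle(X_s)\,\D s
+\int_0^t \bigl\langle\upsigma\transp(X_s)\grad\psi(X_s),\D W_s\bigr\rangle.
\end{equation*}
Substituting this expression into the definition of $\sM_t$, the $\tfrac{1}{2}\langle\grad\psi,a\grad\psi\rangle$ terms cancel and I arrive at the crucial identity
\begin{equation*}
\sM_t \;=\; \frac{\Psi(X_t)}{\Psi(X_0)}\,\E^{-\Lambda(f)t+\int_0^t f(X_s)\,\D s}.
\end{equation*}

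Next I would observe that $\sM_t$ is a nonnegative continuous local martingale (this is clear from either the stochastic-exponential form of $\sM_t$ and the local boundedness of $\upsigma\transp\grad\psi$, or from the identity above together with an application of It\^o to the right-hand side and the MPE). Hence, by Fatou, $(\sM_t)$ is a supermartingale and $\Exp_x[\sM_t]\le \sM_0 = 1$ for all $(t,x)$. Taking $\Exp_x$ on both sides of the identity above yields
\begin{equation*}
\Psi(x)\,\Exp_x[\sM_t] \;=\; \E^{-\Lambda(f)t}\,\Exp_x\Bigl[\Psi(X_t)\,\E^{\int_0^t f(X_s)\,\D s}\Bigr] \;=\; \E^{-\Lambda(f)t}\,P^f_t\Psi(x).
\end{equation*}

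From this the equivalence is immediate: if $(\sM_t,\sF^X_t)$ is a (true) martingale then $\Exp_x[\sM_t]=1$, giving $\E^{-\Lambda(f)t}P^f_t\Psi(x)=\Psi(x)$; conversely, if this identity holds for all $(t,x)\in\RR_+\times\Rd$, then $\Exp_x[\sM_t]=1$, and a nonnegative supermartingale with constant expectation is a martingale, so $\sM_t$ is a martingale. The only technical care is to justify It\^o's formula for $\psi\in\Sobl^{2,p}(\Rd)$ (which can be done via mollification and the a priori estimates summarized in Section~\ref{S1.4}, or by a standard localization up to $\uptau_n$ together with the fact that $\Psi$ and $\nabla\Psi$ are continuous hence bounded on $B_n$); I expect this to be the only place that requires any real care, everything else being algebraic manipulation using the MPE and the elementary theory of positive local martingales.
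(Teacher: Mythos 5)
Your proof is correct and follows essentially the same route as the paper's: apply the It\^o--Krylov formula to $\psi=\log\Psi$ (localizing at $\uptau_n$), use the MPE to cancel the $\tfrac12\langle\grad\psi,a\grad\psi\rangle$ term, and identify $\sM_t=\Psi(X_t)\Psi(X_0)^{-1}\E^{-\Lambda(f)t+\int_0^t f(X_s)\,\D s}$, after which the equivalence reduces to the observation that a nonnegative local martingale is a supermartingale, hence a true martingale precisely when $\Exp_x[\sM_t]\equiv1$. Your presentation is somewhat more streamlined in that you state the pathwise identity directly and take expectations once; the paper instead passes through the intermediate relations \eqref{EL2.4D}--\eqref{EL2.4E} with approximating sequences $g_m\in\Cc_b$, $g_n\in\Cc_c$, because those identities are reused verbatim in Lemma~\ref{L2.5} and Theorem~\ref{T2.13}.
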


\begin{proof}
Recall that $\psi=\log\Psi$.
We use the first exit times from $B_n$, i.e., $\uptau_n$
as localization times.
Since the drift of \eqref{E1-sde2} satisfies (A2),
it is well-known that $\uptau_n\to \infty$ as $n\to\infty$ $\Prob_x$-a.s.
Applying the It\^{o}-Krylov formula \cite[p.~122]{Krylov} to \eqref{E1-sde2}
we obtain
\begin{align}\label{EL2.4B}
\psi(X_{t\wedge\uptau_n}) -\psi(x) &\;=\;
\int_0^{t\wedge\uptau_n}\mathcal{L}(X_{s})\,\D{s}
+ \int_0^{t\wedge\uptau_n} \langle \grad\psi(X_s), \upsigma(X_s)\,\D{W_s}\rangle
\nonumber\\[3pt]
&\;=\; \int_0^{t\wedge\uptau_n}
\Bigl( \Lambda(f)- f(X_s)
- \tfrac{1}{2}\langle\grad\psi,a\grad \psi\rangle(X_s)\Bigr)\,\D{s}
\nonumber\\[3pt]
&\mspace{300mu}
+ \int_0^{t\wedge\uptau_n} \langle \grad\psi(X_s), \upsigma(X_s) \D{W_s}\rangle\,.
\end{align}
By \eqref{EL2.4B} we have
\begin{align}\label{EL2.4D}
\Exp_x&\left[\E^{\int_0^{t\wedge \uptau_n}
[f(X_s)-\Lambda(f)]\,\D{s}}\,g(X_{t\wedge\uptau_n})\right]\nonumber\\[3pt]
&\;=\; \Exp_x\biggl[g(X_{t\wedge\uptau_n}) \exp\biggl(
-\psi(X_{t\wedge\uptau_n}) + \psi(x) \nonumber\\[3pt]
&\mspace{30mu}
+\int_0^{t\wedge\uptau_n} \langle \grad\psi(X_s), \upsigma(X_s) \D{W_s}\rangle
-\int_0^{t\wedge\uptau_n} \tfrac{1}{2}\langle\grad\psi,a\grad \psi\rangle(X_s)\,
\D{s}\biggr)\biggr]\nonumber\\[5pt]
&\;=\; \Psi(x) \Exp_x\bigl[g(X_{t\wedge\uptau_n}) \,
\Psi^{-1}(X_{t\wedge\uptau_n}) \,\sM_{t\wedge\uptau_n}\bigr]\,.
\end{align}
Thus choosing an increasing sequence $g_m\in\Cc_b(\Rd)$ which converges
to $\Psi$, uniformly on compact sets, we obtain
\begin{equation*}
\Exp_x\left[\E^{\int_0^{t\wedge \uptau_n}
[f(X_s)-\Lambda(f)]\,\D{s}}\,\Psi(X_{t\wedge\uptau_n})\right]
\;=\;\Psi(x) \Exp_x\bigl[\sM_{t\wedge\uptau_n}\bigr]
\end{equation*}
by monotone convergence.
It follows that $\Exp_x\bigl[\sM_{t\wedge\uptau_n}\bigr]=1$,
so that $\bigl(\sM_t,\sF^X_t\bigr)$ is a local martingale.

Next consider $g\in\Cc_c(\Rd)$.
By \eqref{EL2.4D} we obtain
\begin{equation*}
\Exp_x\left[\E^{\int_0^{t}
[f(X_s)-\Lambda(f)]\,\D{s}}\,g(X_t)\,\Psi(X_t)\,\Ind_{\{t\le\uptau_n\}}\right]
\;=\;\Psi(x) \Exp_x\bigl[g(X_t)\,\sM_{t}\,\Ind_{\{t\le\uptau_n\}}\bigr]
\end{equation*}
for all sufficiently large $n$, and therefore also
\begin{equation}\label{EL2.4E}
\Exp_x\left[\E^{\int_0^{t}
[f(X_s)-\Lambda(f)]\,\D{s}}\,g(X_t)\,\Psi(X_t)\right]
\;=\;\Psi(x) \Exp_x\bigl[g(X_t)\,\sM_{t}\bigr]
\end{equation}
by monotone convergence.
Thus evaluating \eqref{EL2.4E}
on some increasing sequence $g_n\in\Cc_c(\Rd)$ which converges
to $1$, uniformly on compact sets, and taking limits as $n\to\infty$,
using again monotone convergence and \eqref{EL2.4A}, we obtain
\begin{equation}\label{EL2.4F}
\Psi(x)\;=\;\Exp_x\left[\E^{\int_0^{t}
[f(X_s)-\Lambda(f)]\,\D{s}}\,\Psi(X_{t})\right]
\;=\;\Psi(x) \Exp_x\bigl[\sM_{t}\bigr]\,.
\end{equation}
If \eqref{EL2.4A} holds then $\Exp[\sM_{t}]=1$ by \eqref{EL2.4F}.
Thus $\bigl(\sM_t,\sF^X_t\bigr)$ is a martingale.
Conversely, if $\Exp[\sM_{t}]=1$, then \eqref{EL2.4F} implies \eqref{EL2.4A}.
This completes the proof.
\qed
\end{proof}

%%%%%%%%%%%%%%%%%%%%%%%%%%%%%%%%%%%%%%%%%%%%%%%%%%%%%%%%%%%%%%%%%%%%%%%%%%%%%%%%
\begin{lemma}\label{L2.5}
The sde in \eqref{E-sde*} has a unique strong solution $X^*$ which
exists for all $t>0$ if and only if $\bigl(\sM_t,\sF^X_t\bigr)$ is a martingale.
\end{lemma}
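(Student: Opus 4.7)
The plan is to use Girsanov's theorem to identify the change of measure with density $\sM_{t\wedge\uptau_n}$ as exactly the transformation taking the law of \eqref{E1-sde2} to that of the ground state diffusion \eqref{E-sde*} stopped at the exit from $B_n$. The true-martingale property of $\sM$ then becomes equivalent to non-explosion of the transformed process, which in turn characterizes global existence of the unique strong solution.

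First I would note that since $\Psi\in\Sobl^{2,p}(\Rd)$, $p>d$, and $\Psi>0$, Morrey embedding yields $\grad\psi=\Psi^{-1}\grad\Psi$ locally bounded. Consequently the stopped exponential $\sM_{\cdot\wedge\uptau_n}$ is a true martingale with $\Exp_x[\sM_{t\wedge\uptau_n}]=1$ (this is implicit in the derivation of \eqref{EL2.4F}). Fix $T>0$ and define a probability measure $\tilde\Prob_x^n$ on $\sF_T$ by the density $\sM_{T\wedge\uptau_n}$ relative to $\Prob_x$. By Girsanov, the process $\tilde W^n_t:=W_t-\int_0^{t\wedge\uptau_n}\upsigma\transp(X_s)\grad\psi(X_s)\,\D s$ is a $\tilde\Prob_x^n$-Brownian motion, and the pair $(X,\tilde W^n)$ satisfies \eqref{E-sde*} up to the stopping time $\uptau_n$ under $\tilde\Prob_x^n$. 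Since $\upsigma$ is locally Lipschitz and nondegenerate and the drift $b+a\grad\psi$ is locally bounded, \cite{Gyongy-96,Krylov-05} provide a pathwise unique strong solution $X^*$ of \eqref{E-sde*} up to an explosion time $\uptau^*_\infty:=\lim_n\uptau^*(B_n)$, where $\uptau^*(B_n)$ denotes the first exit of $X^*$ from $B_n$; let $\Prob^*$ denote its law started at $x$. Yamada--Watanabe, combined with the weak existence above, then yields the identification $\tilde\Prob_x^n(\uptau_n>t)=\Prob^*(\uptau^*(B_n)>t)$ for every $t\le T$.

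Chaining these together,
\begin{equation*}
\Exp_x[\sM_t]
\;=\;\lim_{n\to\infty}\Exp_x\bigl[\sM_t\Ind_{\{\uptau_n>t\}}\bigr]
\;=\;\lim_n\Exp_x\bigl[\sM_{t\wedge\uptau_n}\Ind_{\{\uptau_n>t\}}\bigr]
\;=\;\lim_n\tilde\Prob_x^n(\uptau_n>t)
\;=\;\Prob^*(\uptau^*_\infty>t)\,,
\end{equation*}
where the first equality uses $\uptau_n\uparrow\infty$ $\Prob_x$-a.s.\ (the diffusion \eqref{E1-sde2} is regular under (A1)--(A3)), the integrability $\sM_t\in L^1(\Prob_x)$ of the nonnegative supermartingale $\sM$, and dominated convergence; the second uses $\sM_t=\sM_{t\wedge\uptau_n}$ on $\{\uptau_n>t\}$; the third is the definition of $\tilde\Prob_x^n$ together with the tower rule applied to $\{\uptau_n>t\}\in\sF_t$; and the last is monotone convergence in $n$. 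Hence $\sM$ is a true martingale (i.e., $\Exp_x[\sM_t]=1$ for every $t>0$) if and only if $\Prob^*(\uptau^*_\infty>t)=1$ for every $t>0$, which is exactly the statement that the unique strong solution of \eqref{E-sde*} is non-exploding and exists for all $t>0$.

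The main technical obstacle I expect is the Girsanov/Yamada--Watanabe identification of laws up to the stopping time $\uptau_n$: because $\uptau_n$ is itself a functional of the stopped path, one must be careful that the joint law of $(X_{\cdot\wedge\uptau_n},\uptau_n)$ under $\tilde\Prob_x^n$ really matches that of $(X^*_{\cdot\wedge\uptau^*(B_n)},\uptau^*(B_n))$ under $\Prob^*$. This is standard once one restricts to the event $\{\uptau_n>t\}$ on which the Radon--Nikodym density is well defined and the path is stopped inside $B_n$, but it is the only non-routine step; the remaining convergences are $L^1$-bookkeeping.
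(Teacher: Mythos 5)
Your proof is correct and follows essentially the same route as the paper: both proofs use the Cameron--Martin--Girsanov change of measure on the stopped $\sigma$-algebra to identify $\Exp_x[\sM_t]$ with the non-explosion probability $\Prob^*_x(\uptau^*_\infty>t)$ of the ground state diffusion, then take $n\to\infty$. The only cosmetic difference is that the paper avoids the abstract Yamada--Watanabe law identification on the stopped filtration by evaluating the Girsanov identity \eqref{EL2.5B} against cutoff test functions $g=h\Psi$ to get the two-sided inequalities $\Prob_x^*(t<\uptau_{n})\ge\Exp_x[\sM_t\Ind_{\{t\le\uptau_{n-1}\}}]$ and $\Prob_x^*(t<\uptau_{n-1})\le\Exp_x[\sM_t\Ind_{\{t\le\uptau_{n}\}}]$, then squeezes; your chain of exact equalities accomplishes the same thing.
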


\begin{proof}
By \eqref{EL2.4D} we have
\begin{align}\label{EL2.5A}
\Exp_x\left[\E^{\int_0^{t\wedge \uptau_n}
[f(X_s)-\Lambda(f)]\,\D{s}}\,g(X_{t\wedge\uptau_n})\right]
&\;=\; \Psi(x) \Exp_x\bigl[g(X_{t\wedge\uptau_n}) \,
\Psi^{-1}(X_{t\wedge\uptau_n}) \,\sM_{t\wedge\uptau_n}\bigr]\nonumber\\[5pt]
&\;=\; \Exp_x^*\bigl[g(X^*_{t\wedge\uptau_n}) \,
\exp\bigl(-\psi(X^*_{t\wedge\uptau_n}) + \psi(x)\bigr)\bigr]\,,
\end{align}
where in the last line we use Cameron--Martin--Girsanov theorem \citep[p.~225]{LiSh-I}.

By the second equality in \eqref{EL2.5A} we have
\begin{equation}\label{EL2.5B}
\Exp_x\bigl[g(X_{t\wedge\uptau_n}) 
\,\Psi^{-1}(X_{t\wedge\uptau_n}) \sM_{t\wedge\uptau_n}\bigr]\;=\;
\Exp_x^*\bigl[g(X^*_{t\wedge\uptau_n})
\,\Psi^{-1}(X^*_{t\wedge\uptau_n})\bigr]
\end{equation}
for any $g\in\Cc_b(\Rd)$.
Let $h\colon\Rd\to[0,1]$ be a continuous function which is equal to $1$
on $B_{n-1}$, and vanishes on $B_n^c$.
Evaluating \eqref{EL2.5B} on $g= h\Psi$
we obtain $\Prob_x^* (t<\uptau_{n})
\ge \Exp_x\bigl[\sM_{t}\,\Ind_{\{t\le\uptau_{n-1}\}}\bigr]$
by \eqref{EL2.5B}.
Therefore, if $\bigl(\sM_t,\sF^X_t\bigr)$ is a martingale,
then with  $\uptau_\infty\df \lim_{n\to\infty}\,\uptau_n$,
we have $\Prob_x^* (t<\uptau_\infty)=\Exp_{x} [\sM_t]=1$.
Thus
\begin{equation*}
\Prob_x^* (\uptau_\infty=\infty) \;=\; \lim_{t\to\infty}\; 
\Prob_x^* (t<\uptau_\infty)\;=\;1\,,
\end{equation*}
which shows that the diffusion is regular.

The same argument shows that
$\Prob_x^* (t<\uptau_{n-1})
\le \Exp_x\bigl[\sM_{t}\,\Ind_{\{t\le\uptau_{n}\}}\bigr]$,
from which it follows that if the $X^*$ process is regular, then
$\bigl(\sM_t,\sF^X_t\bigr)$ is a martingale.
This completes the proof.
\qed
\end{proof}

%%%%%%%%%%%%%%%%%%%%%%%%%%%%%%%%%%%%%%%%%%%%%%%%%%%%%%%%%%%%%%%%%%%%%%%%%%%%%%%%
\begin{lemma}\label{L2.6}
If process $X^*$ in \eqref{E-sde*} is recurrent, then, for any $r>0$, we have
\begin{equation}\label{EL2.6A}
\Psi(x) \;=\; \Exp_x\Bigl[\E^{\int_{0}^{\uuptau_r}[f(X_s)-\Lambda(f)]\,\D{s}}\,
\Psi(X_{\uuptau_r})\Bigr]\,,\quad\text{for\ \ }x\in B_r^c\,.
\end{equation}
Conversely, if \eqref{EL2.6A} holds for some $r>0$ then
the process $X^*$ is recurrent.
\end{lemma}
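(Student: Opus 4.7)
Both implications rest on the identity extracted from equation~\eqref{EL2.4B} inside the proof of Lemma~\ref{L2.4},
\[
Y_{t\wedge\uptau_n} \;\df\; \E^{\int_0^{t\wedge\uptau_n}[f(X_s)-\Lambda(f)]\,\D{s}}\,\Psi(X_{t\wedge\uptau_n}) \;=\; \Psi(x)\,\sM_{t\wedge\uptau_n}\,,
\]
so that, pathwise, $Y_t = \Psi(x)\sM_t$ and $Y$ is a positive local $\Prob_x$-martingale whose optional stoppings produce exactly the stochastic representations in the statement.

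For the forward direction, recurrence of $X^*$ forces regularity, so by Lemma~\ref{L2.5} the process $\sM$ is a true $\Prob_x$-martingale and by Lemma~\ref{L2.4} we have $\Exp_x[Y_t] = \Psi(x)$ for all $(t,x)$. Optional sampling of $Y$ at the bounded stopping time $\uuptau_r\wedge t$ and a split yield
\[
\Psi(x) \;=\; \Exp_x\bigl[Y_{\uuptau_r}\Ind_{\{\uuptau_r\le t\}}\bigr] \;+\; \Exp_x\bigl[Y_t\Ind_{\{\uuptau_r>t\}}\bigr]\,.
\]
The Girsanov change of measure $\D\Prob_x^*|_{\sF_t} = \sM_t\,\D\Prob_x|_{\sF_t}$ (legitimate since $\sM$ is a true martingale) recasts the second summand as $\Psi(x)\Prob_x^*(\uuptau_r^*>t)$, which vanishes as $t\to\infty$ by recurrence of $X^*$. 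Monotone convergence lifts the first summand to $\Exp_x\bigl[Y_{\uuptau_r}\Ind_{\{\uuptau_r<\infty\}}\bigr]$, and since the underlying diffusion \eqref{E1-sde2} is itself recurrent under (H1), the event $\{\uuptau_r<\infty\}$ has full $\Prob_x$-measure, yielding \eqref{EL2.6A}.

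For the converse the Dol\'eans exponential $\sM$ is merely a priori a local martingale. I would instead localize on the bounded shell $B_n\setminus\Bar{B}_r$, whose exit time $\uptau_n\wedge\uuptau_r$ is $\Prob_x$-a.s.\ finite and on which $Y$ is bounded. Optional sampling of the local martingale $Y$ at $\uptau_n\wedge\uuptau_r\wedge t$ together with bounded convergence as $t\to\infty$ produces, for each $x\in B_r^c$ and $n>\abs{x}$,
\[
\Psi(x) \;=\; \Exp_x\bigl[Y_{\uuptau_r}\Ind_{\{\uuptau_r<\uptau_n\}}\bigr] \;+\; \Exp_x\bigl[Y_{\uptau_n}\Ind_{\{\uuptau_r\ge\uptau_n\}}\bigr]\,.
\]
The first summand rises by monotone convergence to $\Exp_x\bigl[Y_{\uuptau_r}\Ind_{\{\uuptau_r<\infty\}}\bigr]$, which equals $\Psi(x)$ by the standing hypothesis, so the second summand tends to $0$. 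Because $\sM_{\cdot\wedge\uptau_n}$ is a bounded martingale, the localized Girsanov change of measure on $\sF_{\uptau_n}$ is valid and identifies the $X$-law with the $X^*$-law up to $\uptau_n^*$, the first exit of $X^*$ from $B_n$; hence the second summand equals $\Psi(x)\Prob_x^*(\uuptau_r^*\ge\uptau_n^*)$. Letting $n\to\infty$ and using $\uptau_n^*\nearrow e^*$, the explosion time of $X^*$, we deduce $\Prob_x^*(\uuptau_r^*<e^*)=1$ for every $x\in B_r^c$. A strong Markov argument based on the nondegeneracy of $a$ (the process alternates between $\Bar{B}_r$ and $B_r^c$, each excursion being a.s.\ finite before explosion) then upgrades this to non-explosion of $X^*$ together with infinite returns to $\Bar{B}_r$ from every starting point, i.e., recurrence of $X^*$.

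The main technical obstacle sits in the converse: with only \eqref{EL2.6A} in hand, the clean global Girsanov identity of the forward direction is unavailable, so the change of measure must be performed locally up to $\uptau_n$, and both non-explosion and infinite returns to $\Bar{B}_r$ of $X^*$ must be extracted from the weaker conclusion $\Prob_x^*(\uuptau_r^*<e^*)=1$ via the strong Markov property and the ellipticity of $a$.
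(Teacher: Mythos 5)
Your approach is the same Girsanov/exponential-martingale route as the paper, and the forward direction, via the global change of measure (legitimate since recurrence forces regularity, hence $\sM$ is a true martingale by Lemma~\ref{L2.5}), optional sampling at the bounded time $\uuptau_r\wedge t$, and monotone convergence, is correct and matches the paper's argument in spirit.

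The converse, however, has two technical claims that do not hold as written. First, ``bounded convergence as $t\to\infty$'' does not apply: the quantity $Y_{\uptau_n\wedge\uuptau_r\wedge t}=\E^{\int_0^{\uptau_n\wedge\uuptau_r\wedge t}[f(X_s)-\Lambda(f)]\,\D{s}}\,\Psi(X_{\uptau_n\wedge\uuptau_r\wedge t})$ is \emph{not} bounded on the shell, because even though $\Psi$ and $|f-\Lambda(f)|$ are bounded there, the exponential factor grows with the (unbounded) dwell time in the shell; passing the limit requires either uniform integrability of the stopped martingale $Y_{\cdot\wedge\uptau_n\wedge\uuptau_r}$ or a Girsanov identity, neither of which is free. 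Second, $\sM_{\cdot\wedge\uptau_n}$ is \emph{not} a bounded martingale (it is a true martingale on $[0,T]$ for each $T$, by Novikov, since $\grad\psi$ is bounded on $\Bar{B}_n$, but it is unbounded), and a change of measure on the whole of $\sF_{\uptau_n}$ at the \emph{unbounded} stopping time $\uptau_n$ requires $\sM_{\cdot\wedge\uptau_n}$ to be a uniformly integrable martingale, i.e., $\Exp_x[\sM_{\uptau_n}]=1$, which is not automatic and is in fact essentially equivalent to the conclusion you are after. The paper sidesteps both issues by applying the Girsanov-type identity at the \emph{bounded} time $t\wedge\uptau_n$ to obtain \eqref{EL2.6X}, which yields $\Exp_x\bigl[Y_{\uuptau_r}\Ind_{\{\uuptau_r\le t\wedge\uptau_n\}}\bigr]=\Psi(x)\,\Prob_x^*(\uuptau_r\le t\wedge\uptau_n)$, and only then letting $t\to\infty$ and $n\to\infty$ with Fatou and monotone convergence on both sides. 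If you adopt this localization at $t\wedge\uptau_n$ your argument closes, since your final step (from $\Prob_x^*(\uuptau_r<\uptau_\infty)=1$ for $x\in B_r^c$ to non-explosion and recurrence via alternating stopping times, ellipticity, and Hasminskii) is sound and is exactly what the paper does.
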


\begin{proof}
Let $r>\epsilon>0$, and $h\colon\Rd\to[0,1]$ be a continuous function which equals
$1$ on $\bar{B}_{r}$ and vanishes on $B_{r+\epsilon}^c$.
Using \eqref{EL2.5A} with $g=h\Psi$ and localized at $\uuptau_r\wedge t\wedge \uptau_n$,
and then taking limits as $\epsilon\searrow0$, we obtain
\begin{equation}\label{EL2.6X}
\Exp_x\Bigl[\E^{\int_{0}^{t\wedge\uuptau_r}[f(X_s)-\Lambda(f)]\,\D{s}}\,
\Psi(X_{t\wedge\uuptau_r})\,\Ind_{\Bar{B}_r}(X_{t\wedge\uuptau_r})\,
\Ind_{\{t\wedge\uuptau_r<\uptau_n\}}\Bigr]
\;=\; \Psi(x)\,\Prob_x^* (\uuptau_r \le t\wedge\uptau_n)\,.
\end{equation}
By Fatou's lemma we have
\begin{equation*}
\Exp_x\Bigl[\E^{\int_{0}^{\uuptau_r}[f(X_s)-\Lambda(f)]\,\D{s}}\,
\Psi(X_{\uuptau_r})\Bigr] \;\le\; \Psi(x)\,,
\end{equation*}
and therefore, taking limits
in \eqref{EL2.6X} first as $t\to\infty$, and then as $n\to\infty$, we obtain
\begin{equation}\label{EL2.6B}
\Exp_x\Bigl[\E^{\int_{0}^{\uuptau_r}[f(X_s)-\Lambda(f)]\,\D{s}}\,
\Psi(X_{\uuptau_r})\Bigr] \;=\; \Psi(x)\,\Prob_x^* (\uuptau_r <\uptau_\infty)\,.
\end{equation}
Note that for fixed $n$ one can justify the limit as $t\to\infty$
of the term on the left hand side of \eqref{EL2.6X}
by dominated convergence.
If $X^*$ is recurrent (and regular) then $\Prob_x^* (\uptau_\infty=\infty)=1$,
and $\Prob_x^* (\uuptau_r <\infty)=1$ for any $r>0$, and thus
\eqref{EL2.6A} follows by \eqref{EL2.6B}.

Now suppose that \eqref{EL2.6A} holds for some $r>0$.
Then $\Prob_x^* (\uuptau_r <\uptau_\infty)=1$.
We claim that this implies that the process is regular,
i.e., $\Prob_x^* (\uptau_\infty=\infty)=1$.
To prove the claim,
let $\mathfrak{s}_{0}=0$, and for $k=0,1,\dotsc$ define inductively
an increasing sequence of stopping times by
\begin{equation*}
\begin{split}
\mathfrak{s}_{2k+1}
&\df\inf\;\{t>\mathfrak{s}_{2k}: X_{t}^*\in B^{c}_{2r}\}\,,\\[3pt]
\mathfrak{s}_{2k+2}
&\df \inf\; \{t>\mathfrak{s}_{2k+1}: X_{t}^*\in B_{r}\}\,.
\end{split}
\end{equation*}
It is clear that $\Prob_x^* (\uuptau_r <\uptau_\infty)=1$ implies that
$\Prob_x^*(\mathfrak{s}_{k}<\uptau_\infty)=1$.
It follows that $\mathfrak{s}_{n}\nearrow\infty$, $\Prob_x^*$-a.s.,
by a standard argument used in the
proof of \cite[Lemma~2.6.6]{book}.
This proves the claim.
Since the coefficients of $\Lg^*$ are locally bounded, Harnack's theorem
applies, and thus
a classical argument due to Hasminskii
shows that $\Prob_x^* (\uuptau_r <\infty)=1$ for some $r>0$ implies
that the same holds for all $r>0$ \cite[Lemma~2.1, p.~111]{Hasminskii}.
This  completes the proof.
\qed
\end{proof}

We have the following continuity property (compare with
\cite[Proposition~9.2]{Berestycki-15}), even though it is
not used in the proof of the main results.

%%%%%%%%%%%%%%%%%%%%%%%%%%%%%%%%%%%%%%%%%%%%%%%%%%%%%%%%%%%%%%%%%%%%%%%%%%%%%%%%
\begin{lemma}%\label{L2.7}
Suppose $f_n\in\Lpl^\infty(\Rd)$ is a sequence of nonnegative
functions
converging to $f\in\Lpl^\infty(\Rd)$,
such  that
\begin{enumerate}
\item
$\Lambda(f_n)\le \Lambda(f)$ for each $n\in\NN$.
\item
For some $\epsilon>0$
the set $D_\epsilon\;\df\;\cup_{n\in\NN} \{x\colon f_n(x)< \Lambda(f)+\epsilon\}$
is bounded.
\end{enumerate}
Then $\Lambda(f_n)\to\Lambda(f)$ as $n\to\infty$.
\end{lemma}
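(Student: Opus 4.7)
The plan is to show $\liminf_n\,\Lambda(f_n)\ge\Lambda(f)$, since hypothesis~(1) supplies the reverse inequality. Because $f_n\ge 0$ forces $\Lambda(f_n)\ge 0$, the sequence $\{\Lambda(f_n)\}$ lies in $[0,\Lambda(f)]$, so it suffices to fix a subsequence (relabelled as the full sequence) along which $\Lambda(f_n)\to\lambda\in[0,\Lambda(f)]$ and then show $\lambda\ge\Lambda(f)$. The first step is to verify that each $f_n$ is near-monotone relative to $\Lambda(f_n)$: combining~(2) with $\Lambda(f_n)\le\Lambda(f)$ gives
\begin{equation*}
\bigl\{x\colon f_n(x)\le\Lambda(f_n)+\tfrac{\epsilon}{2}\bigr\}
\;\subseteq\;\bigl\{x\colon f_n(x)<\Lambda(f)+\epsilon\bigr\}\;\subseteq\;D_\epsilon,
\end{equation*}
which is bounded. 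Consequently, for each $n$, Lemma~\ref{L2.3} produces a positive solution $\Psi_n\in\Sobl^{2,p}(\Rd)$, $p>d$, of $\Lg\Psi_n+f_n\Psi_n=\Lambda(f_n)\Psi_n$ normalized by $\Psi_n(0)=1$, and Lemma~\ref{L2.1} yields $\inf_{\Rd}\Psi_n>0$.

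Next I would invoke the limit-point apparatus described at the end of Section~\ref{S1.4}. Since $f_n\to f$ uniformly on compacts, the sequence $\{\Lambda(f_n)\}$ is bounded, and $\Psi_n(0)=1$, the coefficients of $\Lg+(f_n-\Lambda(f_n))$ are locally uniformly bounded, so Harnack's inequality~\eqref{E-Harn} applies with a uniform constant and the a priori estimate~\eqref{E-apriori} together with the compact Sobolev embedding yield a further subsequence along which $\Psi_n\to\Psi^\infty$ in the sense described in Section~\ref{S1.4}, with $\Psi^\infty\in\Sobl^{2,p}(\Rd)$, $\Psi^\infty(0)=1$, $\Psi^\infty\ge 0$, and
\begin{equation*}
\Lg\Psi^\infty + f\,\Psi^\infty \;=\; \lambda\,\Psi^\infty \qquad\text{a.e.\ in\ }\Rd.
\end{equation*}
The same uniform Harnack estimate (or Harnack applied to the limit equation) propagates the pointwise lower bound to give $\Psi^\infty>0$ on all of $\Rd$.

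Finally, since $\lambda\le\Lambda(f)$ and $f$ is near-monotone relative to $\Lambda(f)$ by~(H1), $f$ is also near-monotone relative to $\lambda$. Lemma~\ref{L2.1} therefore applies to the pair $(\Psi^\infty,\lambda)$: under~(H1) the diffusion~\eqref{E1-sde2} is recurrent, i.e.\ condition~(a) holds, so the chain (a)$\,\Rightarrow\,$(b)$\,\Rightarrow\,$(f) gives $\inf_{\Rd}\Psi^\infty>0$ and $\Lambda_x(f)\le\lambda$ for every $x\in\Rd$, hence $\Lambda(f)\le\lambda$. Combined with $\lambda\le\Lambda(f)$ this forces $\lambda=\Lambda(f)$, and since every subsequential limit equals $\Lambda(f)$, the whole sequence converges. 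The one delicate point is preventing $\Psi^\infty$ from degenerating in the limit, because Lemma~\ref{L2.1} collapses once $\Psi^\infty(0)=0$; the normalization $\Psi_n(0)=1$ combined with the uniform Harnack constant neatly closes that gap.
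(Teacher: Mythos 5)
Your proof is correct and follows essentially the same route as the paper's: build the MPE solutions $\Psi_n$ with $\Psi_n(0)=1$, pass to a subsequential limit $\Psi^\infty$ solving the MPE for $(f,\lambda)$, and invoke Lemma~\ref{L2.1}\,(f) under the recurrence hypothesis of (H1) to get $\Lambda(f)\le\lambda$. The only cosmetic difference is that the paper phrases the argument as a contradiction with $\liminf_n\Lambda(f_n)<\Lambda(f)$, whereas you argue directly on each subsequential limit; your extra care in verifying the near-monotonicity of each $f_n$ relative to $\Lambda(f_n)$ and in preventing degeneration of $\Psi^\infty$ via Harnack is consistent with, and slightly more explicit than, the paper's treatment.
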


\begin{proof}
Suppose not.
Then  we have $\liminf_{n\to\infty}\,\Lambda(f_n)=\Bar\Lambda<\Lambda(f)$.
Let $\Psi_n\in\Sobl^{2,p}$,  $p>d$, solutions to the Poisson equation
\begin{equation*}
\Lg\Psi_n + f_n \Psi_n \;=\;\Lambda(f_n) \Psi_n\,,
\end{equation*}
satisfying $\Psi_n(0)=1$.
Then $\inf_{\Rd}\,\Psi_n = \inf_{\Bar{D}_\epsilon}\,\Psi_n$ by (2).
Let $\overline\Psi$ be any limit point of $\Psi_n$ as $n\to\infty$,
along some subsequence along which $\Lambda(f_n)$ converges to $\Bar\Lambda$.
Then we obtain
\begin{equation*}
\Lg\overline\Psi + f \overline\Psi \;=\;\Bar\Lambda \overline\Psi\,,
\end{equation*}
and $\overline\Psi$ satisfies
 $\inf_{\Rd}\,\overline\Psi= \inf_{\Bar{K}_\epsilon}\,\overline\Psi$.
Then by Lemma~\ref{L2.1}\,(f) we have $\Lambda(f)\le\Bar\Lambda$ which
contradicts the original hypothesis.
\qed
\end{proof}

%%%%%%%%%%%%%%%%%%%%%%%%%%%%%%%%%%%%%%%%%%%%%%%%%%%%%%%%%%%%%%%%%%%%%%%%%%%%%%%%
\begin{theorem}\label{T2.8}
Suppose that for some $\Hat{x}\in\Rd$ we have 
\begin{equation}\label{ET2.8A}
\Gamma(\Hat{x})\;\df\;\int_0^\infty\Exp_{\Hat{x}}\Bigl[\E^{\int_{0}^{t}
[f(X_s)-\Lambda(f)]\,\D{s}}\Bigr]\,\D{t}\;=\;\infty\,.
\end{equation}
Then \eqref{E2-Pois} has a unique positive solution
$\Psi\in\Sobl^{2,d}(\Rd)$, satisfying $\Psi(0)=1$.
In addition, $\Psi$ satisfies
\begin{equation}\label{ET2.8B}
\Psi(x) \;=\; \Exp_x\Bigl[\E^{\int_{0}^{\uptau(\sB^c)}[f(X_s)-\Lambda(f)]\,\D{s}}\,
\Psi(X_{\uptau(\sB^c)})\Bigr]
\qquad\forall\, x\in \sB^c
\end{equation}
for some ball $\sB$.
\end{theorem}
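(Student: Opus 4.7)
The plan is to establish existence, the stochastic representation, and uniqueness in sequence. Existence follows directly from Lemma~\ref{L2.3}(ii)--(iii): take the Dirichlet eigenfunctions $\widehat\Psi_{r_n}$ on $B_{r_n}$ normalized by $\widehat\Psi_{r_n}(0)=1$ and extract a subsequential limit $\Psi\in\Sobl^{2,p}(\Rd)$ ($p>d$) solving \eqref{E2-Pois} with $\Psi(0)=1$; Lemma~\ref{L2.1}\,(b) then yields $\inf_\Rd\Psi>0$ under (H1).

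For the \emph{stochastic representation}, my strategy is to show that the ground state diffusion $X^*$ in \eqref{E-sde*} is both regular and recurrent, after which Lemma~\ref{L2.6} immediately produces \eqref{ET2.8B}. For regularity I argue by contradiction: if the local martingale $\sM_t$ in Lemma~\ref{L2.4} fails to be a true martingale, then $h(x,t):=\Exp_x[\sM_t]=\Psi^{-1}(x)\E^{-\Lambda(f)t}P^f_t\Psi(x)$ satisfies $h(x_0,t_0)<1$ for some $(x_0,t_0)$. The Markov identity $h(x,t+s)=\Exp_x[\sM_t\,h(X_t,s)]$ renders $H(t):=\sup_x h(x,t)$ sub-multiplicative, and a parabolic-propagation argument (continuity of $h$ together with positivity of transition densities) lifts $h(x_0,t_0)<1$ to a uniform bound $H(t_1)\le\rho<1$, yielding exponential decay $H(nt_1)\le\rho^n$. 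Combined with the pointwise estimate $\E^{-\Lambda(f)t}P^f_t 1(x)\le\delta^{-1}\Psi(x)h(x,t)$ where $\delta:=\inf_\Rd\Psi>0$, this forces $\Gamma(\hat x)<\infty$, contradicting the hypothesis.

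For recurrence, the Cameron--Martin--Girsanov change of measure (as in the proof of Lemma~\ref{L2.5}) gives the identity
\begin{equation*}
\Gamma(x)\;=\;\Psi(x)\int_0^\infty\Exp^*_x\bigl[\Psi^{-1}(X^*_t)\bigr]\,\D t\,,
\end{equation*}
while a direct computation from the MPE yields $\Lg^*\Psi^{-1}=(f-\Lambda(f))\Psi^{-1}$, so (H1) supplies the Foster--Lyapunov-type inequality $\Lg^*\Psi^{-1}\ge\epsilon\Psi^{-1}$ on $\sK^c_\epsilon$. Applying Dynkin's formula to the bounded function $\Psi^{-1}$ under $\Prob^*$ and exploiting $\Gamma(\hat x)=\infty$ then rules out transience of $X^*$. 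For uniqueness, any second positive normalized solution $\Psi'$ has $\Psi'/\Psi$ globally $\Lg^*$-harmonic by \eqref{E-Lg*Id}; applying \eqref{ET2.8B} to both $\Psi$ and $\Psi'$ yields boundedness of the ratio, and recurrence of $X^*$ combined with the Liouville property for $\Lg^*$-harmonic functions on a recurrent diffusion forces $\Psi'/\Psi$ constant, so $\Psi'=\Psi$ by normalization.

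The main obstacle is the recurrence step. Since $\Psi^{-1}$ is only known to be bounded above and not to decay at infinity, the divergence of $\int_0^\infty\Exp^*_{\hat x}[\Psi^{-1}(X^*_t)]\,\D t$ cannot be converted into recurrence of $X^*$ through a direct potential-theoretic dichotomy between recurrent and transient Green kernels. The near-monotonicity hypothesis in (H1) must be exploited essentially, via the inequality $\Lg^*\Psi^{-1}\ge\epsilon\Psi^{-1}$ outside $\sK_\epsilon$, to close this gap and rule out the possibility that the divergence of $\Gamma(\hat x)$ is compatible with transient $X^*$.
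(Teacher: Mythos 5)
Your route is genuinely different from the paper's, and the two strategies are worth contrasting. The paper never tries to prove regularity or recurrence of $X^*$ directly. Instead it constructs an auxiliary function by solving the perturbed Dirichlet problems $\Lg\varphi_{\alpha,n}+F_\alpha\varphi_{\alpha,n}=-\Gamma_\alpha^{-1}(0)$ on $B_n$, passes to the limit in $n$ to get $\Phi_\alpha$ with a stochastic representation, and then sends $\alpha\searrow0$. The hypothesis $\Gamma(\Hat x)=\infty$ is used exactly once: it forces the source term $\Gamma_\alpha^{-1}(0)\searrow0$, so the limiting $\Phi$ satisfies the exact MPE together with~\eqref{ET2.8B}. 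Identification $\Phi=\Psi$ and uniqueness then follow from the comparison/strong maximum principle. Recurrence of $X^*$ is downstream, via Lemma~\ref{L2.6}. Your plan goes in the opposite order: establish regularity and recurrence of $X^*$ first, then quote Lemma~\ref{L2.6}.

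Two of your steps do not close. First, the regularity step. You argue that $h(x_0,t_0)<1$ can be boosted to $H(t_1)=\sup_x h(x,t_1)\le\rho<1$. Continuity of $h(\cdot,t_0)$ and positivity of the transition densities under $\Prob^*$ do give $h(x,t)<1$ for every $x$ once $t>t_0$, but they do not bound the supremum strictly below $1$: nothing prevents $h(x,t)\to 1$ as $\abs{x}\to\infty$ for each fixed $t$ (the explosion time from far away may be large with high probability). Without a uniform gap, submultiplicativity of $H$ yields nothing, and the contradiction with $\Gamma(\Hat x)=\infty$ does not materialize. Second, the recurrence step. The Girsanov identity $\Gamma(x)=\Psi(x)\int_0^\infty\Exp_x^*[\Psi^{-1}(X_t^*)]\,\D t$ already presupposes regularity of $X^*$ (it comes from Theorem~\ref{T2.13}, which assumes it), so it cannot be invoked before that step is secured. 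And $\Lg^*\Psi^{-1}\ge\epsilon\Psi^{-1}$ on $\sK_\epsilon^c$ is a submartingale inequality for $\Psi^{-1}(X_t^*)$: it is the \emph{wrong} sign for a Foster--Lyapunov/recurrence criterion and, combined with $\Psi^{-1}\to0$ at infinity, is in fact closer to a transience-type condition. You flag this as the main obstacle and do not close it. Incidentally, your uniqueness step also quietly uses~\eqref{ET2.8B} for the second solution $\Psi'$, which would require the ground-state diffusion attached to $\Psi'$ to be recurrent; that particular gap is fixable, since $\Psi'/\Psi(X_t^*)$ is a nonnegative local martingale, hence a supermartingale, and recurrence of $X^*$ forces the a.s.\ limit to equal $\Psi'/\Psi(y)$ for every $y$, so $\Psi'/\Psi$ is constant without any boundedness. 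But the two earlier gaps are structural: your plan has no mechanism, other than the Girsanov identity you cannot yet use, by which the divergence of $\Gamma$ enters the argument, whereas the paper's perturbation construction feeds $\Gamma=\infty$ in at the level of the PDE, before any probability is needed.
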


\begin{proof}
Let $\theta>0$ be any positive constant such
that $f$ is near monotone relative to $\Lambda(f)+2\theta$,
and fix some $\alpha\in(0,\theta)$.
In order to simplify the notation we define
\begin{equation*}
F_\alpha(x)\;\df\; f(x) - \Lambda(f) - \alpha\,,\qquad\text{and\ \ }
F_0(x)\;\df\; f(x) - \Lambda(f)\,,
\end{equation*}
and
\begin{equation*}
\Gamma_\alpha(x) \;\df\; \int_{0}^{\infty}
\Exp_x\Bigl[ \E^{\int_{0}^{t}
F_\alpha(X_s)\,\D{s}}\Bigr]\,\D{t}\,.
\end{equation*}
Without loss of generality we assume $\Hat{x}=0$.
Moreover, $\Gamma_{\alpha}(0)$ is finite follows
from the fact that 
\begin{align*}
\Exp_x\Bigl[ \E^{\int_{0}^{t}F_\alpha(X_s)\,\D{s}}\Bigr] &\;\le\;
\E^{-\alpha t}\, \Bigl(\inf_{\Rd}\;\Psi\Bigr)^{-1}  \,
\Exp_x\Bigl[ \E^{\int_{0}^{t}F_0(X_s)\, \D{s}}\Psi(X_t)\,\Bigr]\\
&\;\le\;  \E^{-\alpha t}\, \Bigl(\inf_{\Rd}\;\Psi\Bigr)^{-1} \, \Psi(x)
\quad \forall \; t>0\,.
\end{align*}

Recall the eigenvalues $\{\Hat\lambda_n\}$ in Lemma~\ref{L2.2}.
Since $\Hat\lambda_n<\Lambda(f)$, the principal eigenvalue of the operator
$-\Lg-F_\alpha$ on every $B_n$ is positive.
Thus by Proposition~6.2 and Theorem~6.1 in \cite{Berestycki-94},
for any $n\in\NN$, the Dirichlet problem
\begin{equation}\label{ET2.8C}
\Lg\varphi_{\alpha,n}(x)+ F_\alpha(x)\,\varphi_{\alpha,n}(x)
\;=\; -\Gamma_\alpha^{-1}(0)\qquad\text{a.e.\ }x\in B_n\,,\qquad
\varphi_{\alpha,n}=0\text{\ \ on\ \ }\partial B_n\,,
\end{equation}
has a unique solution $\varphi_{\alpha,n}\in\Sobl^{2,p}(B_n)\cap\Cc(\Bar{B}_n)$,
for any $p\ge1$.
In addition, by the \emph{refined maximum principle} in
\cite[Theorem~1.1]{Berestycki-94} $\varphi_{\alpha,n}$ is nonnegative.
Since $\Gamma^{-1}_{\alpha}(0)>0$, then $\varphi_{\alpha,n}$
cannot be identically equal to $0$.
Thus writing \eqref{ET2.8C} as
\begin{equation*}
\Lg\varphi_{\alpha,n}- F_\alpha^-\,\varphi_{\alpha,n}
\;=\; - F_\alpha^+\,\varphi_{\alpha,n} - \Gamma_\alpha^{-1}(0)\,,
\end{equation*}
it follows by the strong maximum principle
that $\varphi_{\alpha,n}>0$ in $B_n$.

By It\^o's formula we obtain from \eqref{ET2.8C} that
\begin{align}\label{ET2.8D}
\varphi_{\alpha,n}(x) &\;=\; \Exp_x\Bigl[\E^{\int_0^{T\wedge\uptau_n}
F_\alpha(X_s)\,\D{s}}\,\varphi_{\alpha,n}(X_{T\wedge\uptau_n})\Bigr]
+\Gamma_\alpha^{-1}(0)\Exp_x\biggl[\int_0^{T\wedge\uptau_n}
\E^{\int_0^{t}F_\alpha(X_s)\,\D{s}}\,\D{t}\biggr]\nonumber\\[5pt]
&\;=\; \Exp_x\Bigl[\E^{\int_0^{T}
F_\alpha(X_s)\,\D{s}}\,\varphi_{\alpha,n}(X_{T})\,\Ind_{\{T\le\uptau_n\}}\Bigr]
+\Gamma_\alpha^{-1}(0)\Exp_x\biggl[\int_0^{T\wedge\uptau_n}
\E^{\int_0^{t}F_\alpha(X_s)\,\D{s}}\,\D{t}\biggr]
\end{align}
for all $(T,x)\in\RR_+\times B_n$, where we use
the property that $\varphi_{\alpha,n}=0$ on $\partial B_n$.
Let $\Psi$ be any positive solution of \eqref{E2-Pois}, satisfying
$\Psi(0)=1$.
Writing \eqref{E2-Pois} as
\begin{equation*}
\Lg \Psi(x) + F_\alpha(x)\,\Psi(x)
\;=\; -\alpha\,\Psi(x)\qquad\text{a.e.\ }x\in\Rd\,,
\end{equation*}
and using It\^o's formula and Fatou's lemma we obtain
\begin{equation}\label{ET2.8F}
\Psi(x) \;\ge\; \Exp_x\Bigl[\E^{\int_0^{\uptau_n}
F_\alpha(X_s)\,\D{s}}\,\Psi(X_{\uptau_n})\Bigr]
+ \alpha\,\Exp_x\biggl[\int_0^{\uptau_n}
\E^{\int_0^{t}F_\alpha(X_s)\,\D{s}}\,\Psi(X_t)\,\D{t}\biggr]\,,
\end{equation}
for  any $\alpha\ge0$.
By It\^o's formula and Fatou's lemma, we have from \eqref{E2-Pois} that
\begin{equation*}
\Exp_{x}\,\Bigl[\E^{\int_{0}^{t}F_0(X_s)\,\D{s}}\,\Bigr]
\;\le\; \Bigl(\inf_{\Rd}\,\Psi\Bigr)^{-1}\,
\Psi(x)\qquad\forall\,t>0\,,\quad\forall\,x\in \Rd\,.
\end{equation*}
Therefore,
\begin{equation*}
\Exp_x\Bigl[\E^{\int_0^{T}
F_\alpha(X_s)\,\D{s}}\,\varphi_{\alpha,n}(X_{T})\,\Ind_{\{T\le\uptau_n\}}\Bigr]
\;\le\;
\E^{-\alpha T}\,\Bigl(\sup_{B_n}\,\varphi_{\alpha,n}\Bigr)
\Exp_x\Bigl[\E^{\int_0^{T}F_0(X_s)\,\D{s}}\,\Ind_{\{T\le\uptau_n\}}\Bigr]
\;\xrightarrow[T\to\infty]{}\;0\,.
\end{equation*}
Thus taking limits in \eqref{ET2.8D} as $T\to\infty$, using monotone convergence
for the second integral, we obtain
\begin{equation*}
\varphi_{\alpha,n}(x)\;=\; 
\Gamma_\alpha^{-1}(0) \Exp_x\biggl[\int_0^{\uptau_n}
\E^{\int_0^{t}F_\alpha(X_s)\,\D{s}}\,\D{t}\biggr]\,,
\end{equation*}
which implies by \eqref{ET2.8F}
that
\begin{equation*}
\varphi_{\alpha,n}\;\le\;
\frac{1}{\alpha\Gamma_\alpha(0)}\,
\Bigl(\inf_{\Rd}\,\Psi\Bigr)\Psi \qquad\forall\,n\in\NN\,.
\end{equation*}
It therefore follows by \eqref{E-apriori} that $\{\varphi_{\alpha,n}\}$
is relatively weakly compact in $\Sob^{2,p}(B_n)$, for any $p\ge1$ and $n\in\NN$,
and as explained in Section~\ref{S1.4},
$\varphi_{\alpha,n}$ converges uniformly on compact sets
along some sequence $n\to\infty$
to a nonnegative
$\Phi_\alpha\in\Sobl^{2,p}(\Rd)$, for any $p\ge1$, which solves
\begin{equation}\label{ET2.8G}
\Lg \Phi_\alpha(x) + F_\alpha(x)\,\Phi_\alpha(x)
\;=\; -\Gamma_\alpha^{-1}(0)\qquad\text{a.e.\ }x\in\Rd\,.
\end{equation}
It is clear by the strong maximum principle and since also
$\Gamma_\alpha^{-1}(0)>0$ that
$\Phi_\alpha>0$.
By \eqref{ET2.8D} and dominated and monotone convergence, we obtain
\begin{equation}\label{ET2.8I}
\Phi_{\alpha}(x)\;=\; \Exp_x\Bigl[\E^{\int_0^{T}
F_\alpha(X_s)\,\D{s}}\,\Phi_{\alpha}(X_{T})\Bigr]
+\Gamma_\alpha^{-1}(0)\Exp_x\biggl[\int_0^{T}
\E^{\int_0^{t}F_\alpha(X_s)\,\D{s}}\,\D{t}\biggr]
\end{equation}
for all $T>0$ and $x\in\Rd$, and
since \eqref{ET2.8D} holds with $T$ replaced by $\uuptau_r$, we also have
\begin{equation}\label{ET2.8J}
\Phi_\alpha(x)\;=\;
\Exp_x\Bigl[\E^{\int_{0}^{\uuptau_r}
F_\alpha(X_s)\,\D{s}}\,\Phi_\alpha(X_{\uuptau_r})\Bigr]
+\Gamma_\alpha^{-1}(0)
\Exp_x\biggl[\int_{0}^{\uuptau_r} \E^{\int_{0}^{t}
F_\alpha(X_s)\,\D{s}}\,\D{t}\biggr]
\end{equation}
for all $x\in B_r^c$ and $r>0$.
It also follows by letting $T\to\infty$ in \eqref{ET2.8I} that $\Phi_\alpha(0)=1$.

Recall the constant $\theta>0$ in the beginning of the proof,
and the set $\sK_\theta$ in Definition~\ref{D1.1}.
Let $\sB$ be a ball containing $\sK_\theta$.
With $\uuptau=\uptau(\sB^c)$, and integration by parts, we obtain
\begin{align}\label{ET2.8K}
\Exp_x\biggl[\int_{0}^{\uuptau} \E^{\int_{0}^{t}
F_0(X_s)\,\D{s}}\,\D{t}\biggr]  &\;\le\;
\frac{1}{\theta}\,\Exp_x\biggl[\int_{0}^{\uuptau}[f(X_t)-\Lambda(f)]\,
\E^{\int_{0}^{t}[f(X_s)-\Lambda(f)]\,\D{s}}\,\D{t}\biggr]\,\nonumber\\[5pt]
&\;\le\;
\frac{1}{\theta}\,
\Exp_x\biggl[\E^{\int_{0}^{\uuptau}[f(X_s)-\Lambda(f)]\,\D{s}}\biggr]
\nonumber\\[5pt]
&\;\le\;
\Bigr(\theta\,\inf_{\partial \sB}\;\Psi\Bigr)^{-1} \Psi(x)
\qquad\forall\,x\in \sB^c\,.
\end{align}
It follows by \eqref{ET2.8A} that $\Gamma_\alpha^{-1}(0)$ is bounded
uniformly over $\alpha\in(0,1)$.
Thus, the assumptions are met for the Harnack inequality for a class of
superharmonic functions in \cite{AA-Harnack}, and we obtain by
\eqref{ET2.8G} that
\begin{equation}\label{ET2.8L}
\Phi_\alpha(x) \;\le\; C_{H}\,,\qquad\forall\, x\in \Bar\sB\,,
\quad\forall\,\alpha\in(0,1)\,,
\end{equation}
for some constant $C_{H}$.
It then follows by \eqref{ET2.8J}, \eqref{ET2.8K}, and \eqref{ET2.8L}
that the collection $\{\Phi_\alpha\,,\;\alpha\in(0,1)\}$ is
locally bounded, and thus also
relatively
weakly compact in $\Sob^{2,p}(B_R)$ for any $p\ge1$ and $R>0$, by \eqref{ET2.8G}.
Taking limits along some sequence $\alpha\searrow0$, we obtain a positive
function $\Phi\in\Sobl^{2,d}(\Rd)$ which satisfies
\begin{equation*}
\Lg \Phi(x) + F_0(x)\,\Phi(x)
\;=\; 0\qquad\text{a.e.\ }x\in\Rd\,,
\end{equation*}
and  $\Phi(0)=1$.
By \eqref{ET2.8A}, \eqref{ET2.8J}, and \eqref{ET2.8K} we obtain
\begin{equation*}
\Phi(x)\;=\;\Exp_x\Bigl[
\E^{\int_0^{\uuptau}F_0(X_s)\,\D{s}}\,\Phi(X_{\uuptau})\Bigr]\,.
\end{equation*}
Also by Fatou's lemma we have
\begin{equation*}
\Exp_x\Bigl[\E^{\int_{0}^{\uuptau_r}F_0(X_s)\,\D{s}}\Psi(X_{\uuptau})\Bigr]
\;\le\; \Psi(x)
\qquad\forall\,x\in B_r^c\,,\quad\forall\,r>0\,.
\end{equation*}
This shows by the comparison principle (see the same argument
detailed below) that $\Psi=\Phi$
and hence \eqref{ET2.8B} holds.

We next prove uniqueness.
Let $\sB$ be a ball that contains $\sK$ in Definition~\ref{D1.1}
and let $\uuptau=\uptau(\sB^c)$.
If $\Tilde\Psi$ is any positive solution of \eqref{E2-Pois} with $\Tilde\Psi(0)=1$,
then
\begin{equation*}
\Tilde\Psi(x) \;\ge\; \Exp_x\Bigl[\E^{\int_{0}^{\uuptau}F_0(X_s)\,\D{s}}\,
\Tilde\Psi(X_{\uuptau})\Bigr] \qquad\forall\,x\in \sB^c\,,\quad\forall\,r>0\,,
\end{equation*}
by It\^o's formula and Fatou's lemma.
This together with \eqref{ET2.8B} implies that
\begin{align*}
\Tilde\Psi(x)-\Psi(x)&\;\ge\;
\Exp_{x}\Bigl[\E^{\int_{0}^{\uuptau}F_0(X_s)\,\D{s}}\,
\bigl(\Tilde\Psi(X_{\uuptau}) - \Psi(X_{\uuptau}) \bigr)\Bigr]
\nonumber\\[5pt]
&\;\ge\; \min_{\partial\sB}\,\bigr(\Tilde\Psi-\Psi\bigr)
\qquad\forall\, x\in \sB^c\,.
\end{align*}
It follows that $\Tilde\Psi-\Psi$ attains a global minimum on $\Bar\sB$.
Which means that we can scale $\Psi$ 
until it touches $\Tilde\Psi$ from below in at least one point in $\Bar\sB$.
Since
\begin{equation*}
\Lg(\Tilde\Psi-\Psi) - \bigl(f-\Lambda(f)\bigr)^-
\bigl(\Tilde\Psi-\Psi\bigr)\;=\;
-\bigl(f-\Lambda(f)\bigr)^+\bigl(\Tilde\Psi-\Psi\bigr)\;\le\; 0
\quad\text{on\ \ }\Rd\,,
\end{equation*}
it follows by the strong maximum principle
that $\Tilde\Psi=\Psi$ on $\Rd$.
This completes the proof.
\qed
\end{proof}

We next prove the converse statement.

%%%%%%%%%%%%%%%%%%%%%%%%%%%%%%%%%%%%%%%%%%%%%%%%%%%%%%%%%%%%%%%%%%%%%%%%%%%%%%%%
\begin{lemma}\label{L2.9}
Let $\Gamma$ be as in \eqref{ET2.8A}.
If process $X^*$ in \eqref{E-sde*} is recurrent, then
$\Gamma(x)=\infty$ for all $x\in\Rd$.
\end{lemma}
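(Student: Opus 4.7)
The approach is to recognize $\Gamma(x)$ as the expected occupation integral of $X^*$ against the weight $\Psi^{-1}$, via the Cameron--Martin--Girsanov change of measure, and then invoke recurrence of $X^*$ to conclude divergence.

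First, since $X^*$ is recurrent, Lemma~\ref{L2.5} gives that $X^*$ is regular and that $(\sM_t,\sF^X_t)$ is a true martingale. The It\^o--Krylov computation \eqref{EL2.4B} then yields the explicit formula
\begin{equation*}
\sM_t \;=\; \Psi(x)^{-1}\,\Psi(X_t)\,\E^{\int_0^t[f(X_s)-\Lambda(f)]\,\D s}\,.
\end{equation*}
Define the equivalent measure $\Prob^*_x$ on each $\sF^X_t$ by $\D\Prob^*_x/\D\Prob_x\big|_{\sF^X_t}=\sM_t$; under $\Prob^*_x$ the coordinate process has the law of $X^*$ started at $x$ by Girsanov's theorem.

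Second, apply the change of measure to the bounded $\sF^X_t$-measurable random variable $\Psi^{-1}(X_t)$ (bounded since $\inf_{\Rd}\Psi>0$ by Lemma~\ref{L2.1}) to obtain
\begin{equation*}
\Exp^*_x\bigl[\Psi^{-1}(X^*_t)\bigr] \;=\; \Exp_x\bigl[\Psi^{-1}(X_t)\,\sM_t\bigr]
\;=\; \Psi(x)^{-1}\,\Exp_x\Bigl[\E^{\int_0^t[f(X_s)-\Lambda(f)]\,\D s}\Bigr]\,.
\end{equation*}
Integrating in $t$ and using Tonelli yields the key identity
\begin{equation*}
\Gamma(x) \;=\; \Psi(x)\,\Exp^*_x\Bigl[\int_0^\infty\Psi^{-1}(X^*_t)\,\D{t}\Bigr]\,,\qquad x\in\Rd\,.
\end{equation*}

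Finally, fix any $r>0$ and set $C_r\df\sup_{\Bar{B}_r}\Psi<\infty$, so that $\Psi^{-1}\ge C_r^{-1}$ on $B_r$. Since $X^*$ is recurrent with continuous sample paths, a standard Hasminskii-type iteration (in the spirit of the sequence $\mathfrak{s}_k$ used in the proof of Lemma~\ref{L2.6}) shows that $X^*$ enters $B_r$ infinitely often $\Prob^*_x$-a.s., and path continuity then forces each such visit to span a strictly positive Lebesgue interval of time. Hence $\int_0^\infty\Ind_{B_r}(X^*_t)\,\D{t}=\infty$ $\Prob^*_x$-a.s., so
\begin{equation*}
\int_0^\infty\Psi^{-1}(X^*_t)\,\D{t}\;\ge\;C_r^{-1}\int_0^\infty\Ind_{B_r}(X^*_t)\,\D{t}\;=\;\infty\qquad\Prob^*_x\text{-a.s.}\,,
\end{equation*}
which combined with the previous display delivers $\Gamma(x)=\infty$ for every $x\in\Rd$, as required. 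The main point to handle carefully is the a.s.\ infinitude of the occupation time of an open ball under mere recurrence; it reduces to iterating returns via the strong Markov property and lower-bounding each sojourn using path continuity, but is worth spelling out explicitly.
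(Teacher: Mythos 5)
Your proof is correct, but it takes a genuinely different and more directly probabilistic route than the paper. The paper argues by contradiction: assuming $\Gamma(0)<\infty$, it constructs (via the Dirichlet approximation machinery of Theorem~\ref{T2.8}) a positive solution $\Phi$ of $\Lg\Phi + (f-\Lambda(f))\Phi = -\Gamma^{-1}(0)$, shows that $\Phi/\Psi$ is a bounded nonnegative $\Lg^*$-superharmonic function, so that $(\Phi/\Psi)(X^*_t)$ is a bounded supermartingale which (by recurrence of $X^*$) must converge to a constant, forcing $\Phi=\Psi$ and hence $\Gamma^{-1}(0)=0$, a contradiction. You instead read off, from the Girsanov factorization $\sM_t = \Psi(x)^{-1}\Psi(X_t)\,\E^{\int_0^t[f(X_s)-\Lambda(f)]\,\D{s}}$ and Tonelli, the identity
\begin{equation*}
\Gamma(x)\;=\;\Psi(x)\,\Exp^*_x\biggl[\int_0^\infty \Psi^{-1}(X^*_t)\,\D{t}\biggr]\,,
\end{equation*}
which is \eqref{ET2.13A} with $g\equiv1$ integrated in $t$, and then invoke the fact that a recurrent nondegenerate diffusion has infinite occupation time in any fixed open ball $\Prob^*_x$-a.s. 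Your approach is shorter and more transparent, staying entirely in the change-of-measure picture rather than constructing an auxiliary PDE solution. The one point you correctly flag as needing care, namely a.s.\ infinitude of $\int_0^\infty\Ind_{B_r}(X^*_t)\,\D{t}$, is not stated in the paper and requires a short argument (iterate hitting times of $B_r$ and $B_{2r}^c$ via the strong Markov property, lower-bound each sojourn in probability uniformly over $\partial B_r$ using ellipticity and path continuity, and conclude by conditional Borel--Cantelli); once that is written out, your proof stands on its own.
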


\begin{proof}
We argue by contradiction.
Suppose, without loss of generality, that $\Gamma(0)<\infty$.
Following the steps in the proof of Theorem~\ref{T2.8} we obtain
a positive function $\Phi\in\Sobl^{2,d}(\Rd)$ which satisfies
\begin{equation*}
\Lg \Phi(x) + F_0(x)\,\Phi(x)
\;=\; -\Gamma^{-1}(0)\qquad\text{a.e.\ }x\in\Rd\,,
\end{equation*}
and  $\Phi(0)=1$.

By \eqref{ET2.8J} and \eqref{ET2.8K} we have
\begin{equation*}
\Phi(x)\;\le\;
\Exp_x\Bigl[\E^{\int_{0}^{\uuptau}
F_0(X_s)\,\D{s}}\,\Phi(X_{\uuptau})\Bigr]
+\Gamma^{-1}(0)
\Bigr(\theta\,\inf_{\partial \sB}\;\Psi\Bigr)^{-1} \Psi(x)
\qquad\forall\,x\in \sB^c\,.
\end{equation*}
Combining this with \eqref{ET2.8L} and Lemma~\ref{L2.6} we
deduce that $\frac{\Phi}{\Psi}$ is bounded above in $\Rd$.
We also have
\begin{equation*}
\Lg^*\frac{\Phi}{\Psi}=-\Gamma^{-1}(0)\,\Psi^{-1}\quad\text{on\ }\Rd\,.
\end{equation*}
Therefore $\frac{\Phi}{\Psi}(X^*_t)$ is an
$\sF^{X^*}_t$-supermartingale, and thus converges a.s.
Since the $X^*$ process is recurrent the limit must be a constant,
which implies that $\Psi=\Phi$.
This of course is only possible if $\Gamma^{-1}(0)=0$, which is
a contradiction.
This completes the proof.
\qed
\end{proof}

%%%%%%%%%%%%%%%%%%%%%%%%%%%%%%%%%%%%%%%%%%%%%%%%%%%%%%%%%%%%%%%%%%%%%%%%%%%%%%%%
\begin{lemma}\label{L2.10}
Under {\upshape(H1)}, the following hold:
\begin{enumerate}
\item[{\upshape(}i\/{\upshape)}]
For any $r>0$
the Dirichlet eigensolutions
$(\widehat\Psi_{n},\Hat\lambda_{n})$ in \eqref{L2.2A} have the stochastic
representation
\begin{equation*}
\widehat\Psi_{n}(x)\;=\; \Exp_{x} \Bigl[\E^{\int_{0}^{\uuptau_{r}}
[f(X_{t})-\Hat\lambda_{n}]\,\D{t}}\,
\widehat\Psi_{n}(X_{\uuptau_{r}})\,
\Ind\{\uuptau_{r}<\uptau_{n}\}\Bigr]\qquad\forall\,
x\in B_{n}\setminus\Bar{B}_r\,,
\end{equation*}
for all large enough $n\in\NN$.
\item[{\upshape(}ii\/{\upshape)}]
Suppose that for some ball $\sB\subset\Rd$ and a constant $\delta>0$, we have
\begin{equation*}
\Exp_{x}\,\Bigl[\E^{\int_{0}^{\uptau(\sB^c)}
[f(X_{s})-\Lambda(f)+\delta]\,\D{s}}\Bigr]\;<\;\infty
\qquad\forall\, x\in \Bar{\sB}^c\,.
\end{equation*}
Then the solution $\Psi$ of \eqref{E2-Pois} satisfies
\begin{equation}\label{EL2.10A}
\Psi(x) \;=\; \Exp_x\Bigl[\E^{\int_{0}^{\uuptau_r}[f(X_s)-\Lambda(f)]\,\D{s}}\,
\Psi(X_{\uuptau_r})\Bigr]
\qquad\forall\,r>0\,,\quad\forall\, x\in B_r^c\,.
\end{equation}
\end{enumerate}
\end{lemma}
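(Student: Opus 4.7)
I apply It\^o's formula to the process $t\mapsto \E^{\int_0^t[f(X_s)-\Hat\lambda_n]\,\D s}\widehat\Psi_n(X_t)$ stopped at $\tau\df\uuptau_r\wedge\uptau_n$. Since $\widehat\Psi_n\in\Sob^{2,p}(B_n)\cap\Cc(\Bar B_n)$ satisfies \eqref{L2.2A} and is bounded on $\Bar B_n$, the stopped process is a bounded nonnegative martingale under $\Prob_x$ for $x\in B_n$. Optional stopping yields
\begin{equation*}
\widehat\Psi_n(x) \;=\; \Exp_x\bigl[\E^{\int_0^{\tau}[f-\Hat\lambda_n]\,\D s}\,\widehat\Psi_n(X_\tau)\bigr]\,.
\end{equation*}
For $x\in B_n\setminus\Bar B_r$, since $\widehat\Psi_n=0$ on $\partial B_n$, the contribution on $\{\uptau_n\le\uuptau_r\}$ vanishes, and the stated representation follows (valid for all $n$ such that $\Bar B_r\subset B_n$).

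\textbf{Part (ii).} My plan is to pass to the limit $n\to\infty$ in the representation of part (i), via dominated convergence. Write $F\df f-\Lambda(f)$. By Lemma~\ref{L2.3}, along a subsequence the Dirichlet eigensolutions $(\widehat\Psi_n,\Hat\lambda_n)$ normalized by $\widehat\Psi_n(0)=1$ converge locally uniformly to a positive solution $(\Psi,\Lambda(f))$ of the MPE \eqref{E2-Pois}, with $\Hat\lambda_n\nearrow\Lambda(f)$. Using the strong Markov property and the near-monotonicity in (H1), I may enlarge $\sB$ to contain $\sK_\varepsilon$ for some $\varepsilon>0$; the exponential integrability hypothesis persists under this enlargement, and now $F\ge\varepsilon$ on $\sB^c$. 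I then fix $r>0$ with $\Bar\sB\subset B_r$, so $\uuptau_r\le\uptau(\sB^c)$ and $\uuptau_r<\infty$ $\Prob_x$-a.s.\ for $x\in B_r^c$; the conclusion for smaller $r$ then follows from the strong Markov property applied at $\uuptau_r$.

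For the dominating function, the Harnack inequality applied to the positive solutions $\widehat\Psi_n$ (whose equation $\Lg\widehat\Psi_n+(f-\Hat\lambda_n)\widehat\Psi_n=0$ has coefficients bounded uniformly in $n$ on $B_{r+1}$, since $\Hat\lambda_n\le\Lambda(f)$) yields a constant $C_H$ independent of $n$ with $\widehat\Psi_n\le C_H$ on $\Bar B_r$. For $n$ large enough that $\Lambda(f)-\Hat\lambda_n<\delta$, the integrand in the representation from part (i) is bounded as
\begin{equation*}
\E^{\int_0^{\uuptau_r}[f-\Hat\lambda_n]\,\D s}\,\widehat\Psi_n(X_{\uuptau_r})\,\Ind_{\{\uuptau_r<\uptau_n\}}
\;\le\; C_H\,\E^{\int_0^{\uptau(\sB^c)}[F+\delta]\,\D s}\,,
\end{equation*}
using $\uuptau_r\le\uptau(\sB^c)$ together with $F+\delta\ge\varepsilon+\delta>0$ on $\sB^c$. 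The right-hand side is integrable by hypothesis, so dominated convergence yields
\begin{equation*}
\Psi(x) \;=\; \Exp_x\bigl[\E^{\int_0^{\uuptau_r}F\,\D s}\Psi(X_{\uuptau_r})\bigr]\,,\quad x\in B_r^c\,.
\end{equation*}
That any positive solution of \eqref{E2-Pois} normalized by $\Psi(0)=1$ coincides with this limit is implied by the comparison argument used in the uniqueness portion of Theorem~\ref{T2.8}, applied to $\Psi-c\,\widetilde\Psi$ with $c$ chosen so the two touch on $\Bar B_r$.

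\textbf{Main obstacle.} The crux is securing an integrable dominating function, and the $\delta$-slack in the hypothesis is essential here: it absorbs the Dirichlet defect $\Lambda(f)-\Hat\lambda_n$ for large $n$. A bare Fatou argument along the same lines yields only the inequality $\Psi(x)\ge\Exp_x[\E^{\int_0^{\uuptau_r}F\,\D s}\Psi(X_{\uuptau_r})]$, and the reverse direction---equivalent, via the Girsanov identity \eqref{EL2.5A}, to the statement that the ground state diffusion $X^*$ neither explodes nor escapes to infinity before hitting $\Bar B_r$---genuinely requires the strengthened integrability furnished by $\delta>0$.
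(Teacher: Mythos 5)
Your argument for part (i) has a genuine gap. It\^o's formula does show that $t\mapsto M_{t\wedge\tau}$, with $M_t = \E^{\int_0^t[f(X_s)-\Hat\lambda_n]\,\D s}\,\widehat\Psi_n(X_t)$ and $\tau = \uuptau_r\wedge\uptau_n$, is a nonnegative local martingale, but it is \emph{not} bounded: although $\widehat\Psi_n$ is bounded on $\Bar B_n$, the exponential factor is not uniformly bounded since $\tau$ has no a.s.\ upper bound and $f-\Hat\lambda_n$ has no definite sign on the annulus. As a nonnegative local martingale $M_{\cdot\wedge\tau}$ is a supermartingale, so Fatou yields only the one-sided inequality
\begin{equation*}
\widehat\Psi_n(x)\;\ge\;\Exp_x\Bigl[\E^{\int_0^\tau[f(X_s)-\Hat\lambda_n]\,\D s}\,\widehat\Psi_n(X_\tau)\Bigr]\,.
\end{equation*}
The reverse inequality---equivalently, the vanishing as $T\to\infty$ of the escaping-mass term $\Exp_x\bigl[\E^{\int_0^T[f-\Hat\lambda_n]\,\D s}\,\widehat\Psi_n(X_T)\,\Ind\{T<\tau\}\bigr]$---is the whole content of part (i), and it cannot be obtained by simply invoking optional stopping; some uniform integrability must be \emph{proved}. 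The paper secures it by comparing $(\widehat\Psi_n,\Hat\lambda_n)$ with a Dirichlet eigensolution on a slightly larger ball $B_{r_n}\Supset B_n$ for a slightly smaller potential $\Tilde f = f-\delta\Ind_{B_r}$: strict monotonicity of the Dirichlet eigenvalue in both the domain and the potential (Lemma~\ref{L2.2}) produces an eigenvalue gap $\delta_n>0$, from which the escaping-mass term is bounded by a multiple of $\E^{-\delta_n T}$ and hence vanishes. This exponential-decay step is missing from your argument.

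On part (ii), your dominated-convergence strategy---using the $\delta$-slack to absorb $\Lambda(f)-\Hat\lambda_n$ and Harnack for the boundary bound---is sound in spirit and close to the paper's error-splitting argument, but two steps need shoring up: (a) the claim that enlarging $\sB$ to contain $\sK_\varepsilon$ preserves the hypothesis is not automatic, since $f-\Lambda(f)+\delta$ may be negative inside the enlarged ball; it requires a strong Markov argument together with a positive lower bound on $y\mapsto\Exp_y[\E^{\int_0^{\uptau(\sB^c)}[f-\Lambda(f)+\delta]\,\D s}]$ over the boundary of the enlarged ball; and (b) the passage from one particular large $r$ to all $r>0$ is handled in the paper via Lemma~\ref{L2.6} (representation for one $r$ $\Rightarrow$ recurrence of $X^*$ $\Rightarrow$ representation for all $r$); invoking the strong Markov property directly, as you do, is circular. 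Finally, since part (ii) is built on the representation from part (i), it inherits the gap described above.
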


\begin{proof}
Let $\Tilde{f}\df f-\delta\Ind_{B_r}$, and $\Hat\lambda_n(\Tilde{f})$,
$\Hat\lambda_n(f)$ denote the Dirichlet eigenvalues
on $B_n$ corresponding to $\Tilde{f}$, $f$, respectively.
By the strict monotonicity of the Dirichlet eigenvalue in Lemma~\ref{L2.2}\,(a),
we have $\Hat\lambda_n(\Tilde{f})<\Hat\lambda(f)$ for all $n\in\NN$.
It then follows by the continuity of the Dirichlet eigenvalue with respect to the
radius of the ball, again in Lemma~\ref{L2.2}\,(a), that there exists a ball
$B_{r_n}\Supset B_n$ such that
 $\Hat\lambda_{r_n}(\Tilde{f})<\Hat\lambda_n(f)$.
Define $\delta_n\df\Hat\lambda_n(f)-\Hat\lambda_{r_n}(\Tilde{f})>0$.
Let $\bigl(\widehat\Psi_{r_n}^{\Tilde{f}},\Hat\lambda_{r_n}(\Tilde{f})\bigr)$,
and $\bigl(\widehat\Psi_{n},\Hat\lambda_{n}(f)\bigr)$,
denote the Dirichlet eigensolutions on $B_{r_n}$ for $\Tilde{f}$,
and on $B_n$ for $f$, respectively.
In the interest of simplifying the equations,
we drop the explicit dependence on $n$, and adopt the following
simplified notation:
\begin{equation*}
\begin{gathered}
\widehat\Psi\;=\;\widehat\Psi_{n}\,,\quad
\Tilde\Psi\;=\;\widehat\Psi_{r_n}^{\Tilde{f}}\,,\quad
\Hat\lambda\;=\;\Hat\lambda_{n}(f)\,,\quad
\Tilde\lambda\;=\;\Hat\lambda_{r_n}(\Tilde{f})\,,\quad
\delta\;\df\;\Hat\lambda-\Tilde\lambda\,>\,0\,,\\[3pt]
D\;=\;B_n\,,\quad
\Tilde{D} \;=\;B_{r_n}\,,\quad
\uptau\;=\;\uptau(B_n)\,.
\end{gathered}
\end{equation*}
Then
\begin{equation}\label{EL2.10B}
\begin{aligned}
\Lg \Tilde\Psi(x) + \Tilde{f}(x)\,\Tilde\Psi(x)
&\;=\; (\Hat\lambda-\delta)\,\Tilde\Psi(x)
\qquad\text{a.e.\ }x\in \Tilde{D}\,,&&\quad\text{and \ \ }
\Tilde\Psi(0)=1\,,~\Tilde\Psi\bigr|_{\partial\Tilde{D}}=0
\\[5pt]
\Lg \widehat\Psi(x) + f(x)\,\widehat\Psi(x)
&\;=\; \Hat\lambda\,\widehat\Psi(x)
\qquad\text{a.e.\ }x\in D\,,&&\quad\text{and \ \ }
\widehat\Psi(0)=1\,,~\widehat\Psi\bigr|_{\partial{D}}=0\,.
\end{aligned}
\end{equation}
Applying Dynkin's formula to the first equation in \eqref{EL2.10B}, for the
stopping time $T\wedge\uptau\wedge\uuptau_r$, we obtain
\begin{align*}
\Tilde\Psi(x) &\;=\;
\Exp_{x}\Bigl[\E^{\int_{0}^{T\wedge\uptau\wedge\uuptau_r}
[\Tilde{f}(X_{t})-\Hat\lambda+\delta]\,\D{t}}\,
\Tilde\Psi(X_{T\wedge\uptau\wedge\uuptau_r})\Bigr]
\nonumber\\[5pt]
&\;\ge\;
\Exp_{x}\Bigl[\E^{\int_{0}^{T}
[\Tilde{f}(X_{t})-\hat\lambda+\delta]\,\D{t}}\,\Tilde\Psi(X_{T})\,
\Ind\{T<\uptau\wedge\uuptau_r\}\Bigr]\,.
\end{align*}
Therefore we have
\begin{multline}\label{EL2.10D}
\Exp_{x}\Bigl[\E^{\int_{0}^{T}
[\Tilde{f}(X_{t})-\Hat\lambda]\,\D{t}}\,\widehat\Psi(X_{T})\,
\Ind\{T<\uptau\wedge\uuptau_r\}\Bigr]\\
\;\le\;
\E^{-\delta\,T}\,(\sup_{D} \widehat\Psi)\,\Bigr(\inf_{D}\,\Tilde\Psi\Bigr)^{-1}
\Tilde\Psi(x)\qquad\forall\,T>0\,,\quad\forall\, x\in D\,.
\end{multline}
Thus, applying Dynkin's formula to the second equation in \eqref{EL2.10B},
we obtain
\begin{align}\label{EL2.10E}
\widehat\Psi(x)&\;=\;
\lim_{T\to\infty}\;
\Exp_{x}\Bigl[\E^{\int_{0}^{T}[f(X_{t})-\Hat\lambda]\,\D{t}}\,
\widehat\Psi(X_{T})\,\Ind\{T<\uptau\wedge\uuptau_r\}\Bigr]
\nonumber\\[5pt]
&\mspace{100mu}
+ \lim_{T\to\infty}\;
\Exp_{x}\Bigl[\E^{\int_{0}^{\uptau\wedge\uuptau_r}[f(X_{t})-\Hat\lambda]\,\D{t}}\,
\widehat\Psi(X_{\uptau\wedge\uuptau_r})\,\Ind\{T\ge\uptau\wedge\uuptau_r\}\Bigr]
\nonumber\\[5pt]
&\;=\;
\Exp_{x}\Bigl[\E^{\int_{0}^{\uptau\wedge\uuptau_r}[f(X_{t})-\Hat\lambda]\,\D{t}}\,
\widehat\Psi(X_{\uptau\wedge\uuptau_r})\Bigr]
\nonumber\\[5pt]
&\;=\;
\Exp_{x}\Bigl[\E^{\int_{0}^{\uuptau_r}[f(X_{t})-\Hat\lambda]\,\D{t}}\,
\widehat\Psi(X_{\uuptau_r})\,\Ind\{\uuptau_r<\uptau\}\Bigr]\,,
\end{align}
where for the first limit we use \eqref{EL2.10D}, for the second
limit we use monotone convergence,
while for the last equality we use the property that $\widehat\Psi=0$
on $\partial{D}$.
This proves part (i).

Now we prove \eqref{EL2.10A}. Applying Fatou's lemma to \eqref{EL2.10E} we obtain,
with $\uuptau=\uptau(\sB^c)$ that
\begin{equation}\label{EL2.10F}
\Psi(x)\;\geq \;\Exp_{x}\Bigl[\E^{\int_{0}^{\uuptau}
[f(X_{t})-\Lambda(f)]\,\D{t}}\,\Psi(X_{\uuptau})\Bigr].
\end{equation}
We write \eqref{EL2.10E} as
\begin{align}\label{EL2.10G}
\widehat\Psi_{n}(x)&\;=\;
\Exp_{x}\Bigl[\E^{\int_{0}^{\uuptau}
[f(X_{t})-\Hat\lambda_{n}]\,\D{t}}\,
\Psi(X_{\uuptau})\,\Ind\{\uuptau<\uptau_{n}\}\Bigr]
\nonumber\\[5pt]
&\mspace{150mu}\;+\; \biggl(\sup_{B_r}\,\babs{\Psi-\widehat\Psi_n}\biggr)\;
\Exp_{x}\Bigl[\E^{\int_{0}^{\uuptau}
[f(X_{t})-\Hat\lambda_{n}]\,\D{t}}\,\Ind\{\uuptau<\uptau_{n}\}\Bigr]
\nonumber\\[5pt]
&\;\le\;
\Exp_{x}\Bigl[\E^{\int_{0}^{\uuptau}
[f(X_{t})-\Hat\lambda_{n}]\,\D{t}}\,\Psi(X_{\uuptau})\Bigr]
+ \biggl(\sup_{B_r}\,\babs{\Psi-\widehat\Psi_n}\biggr)\;
\Exp_{x}\Bigl[\E^{\int_{0}^{\uuptau}
[f(X_{t})-\Hat\lambda_{n}]\,\D{t}}\,\Ind\{\uuptau<\uptau_{n}\}\Bigr]\,.
\end{align}
Note that since $\Hat\lambda_{n}\nearrow\Lambda(f)$, the first
term on the right hand side of \eqref{EL2.10G} is finite for
all large enough $n$ by Lemma~\ref{L2.3}(iii).
Let
\begin{equation*}
\kappa_n\;\df\; \biggl(\inf_{B_r}\,\widehat\Psi_{n}\biggr)^{-1}
\biggl(\sup_{B_r}\,\babs{\Psi-\widehat\Psi_n}\biggr)\,.
\end{equation*}
The second term on the right hand side of \eqref{EL2.10G} has the bound
\begin{align*}
\biggl(\sup_{B_r}\,\babs{\Psi-\widehat\Psi_n}\biggr)\;
\Exp_{x}\Bigl[\E^{\int_{0}^{\uuptau}
[f(X_{t})-\Hat\lambda_{n}]\,\D{t}} \Ind\{\uuptau<\uptau_{n}\}\Bigr]&\;\le\;
\kappa_n\;
\Exp_{x}\Bigl[\E^{\int_{0}^{\uuptau}
[f(X_{t})-\Hat\lambda_{n}]\,\D{t}}\,
\widehat\Psi_{n}(X_{\uuptau})\,\Ind\{\uuptau<\uptau_{n}\}\Bigr]
\nonumber\\[5pt]
&\;=\; \kappa_n\;\widehat\Psi_{n}(x)\,.
\end{align*}
By the convergence of $\widehat\Psi_{n}\to\Psi$
as $n\to\infty$, uniformly on compact sets,
and since $\widehat\Psi_{n}$ is bounded away from $0$ in $B_r$,
uniformly in $n\in\NN$ by the Harnack inequality,
we have $\kappa_n\to0$ as $n\to\infty$.
Therefore, the second term on the right hand side of \eqref{EL2.10G}
vanishes as $n\to\infty$. 
Also, since $\Hat\lambda_{n}$ is nondecreasing in $n$, and
$\Hat\lambda_{n}\nearrow\Lambda(f)$, we have
\begin{equation}\label{EL2.10H}
\Exp_{x}\Bigl[\E^{\int_{0}^{\uuptau}
[f(X_{t})-\Hat\lambda_{n}]\,\D{t}}\,\Psi(X_{\uuptau})\Bigr]
\;\xrightarrow[n\to\infty]{}\;
\Exp_{x}\Bigl[\E^{\int_{0}^{\uuptau}
[f(X_{t})-\Lambda(f)]\,\D{t}}\,\Psi(X_{\uuptau})\Bigr]
\end{equation}
by monotone convergence.
Thus taking limits in \eqref{EL2.10G} as $n\to\infty$, and using 
\eqref{EL2.10F} and \eqref{EL2.10H}
we obtain \eqref{ET2.8B}.
This implies that \eqref{EL2.10A} holds for any $r>0$ by Lemma~\ref{L2.6}.
This completes the proof.
\qed\end{proof}

%%%%%%%%%%%%%%%%%%%%%%%%%%%%%%%%%%%%%%%%%%%%%%%%%%%%%%%%%%%%%%%%%%%%%%%%%%%%%%%%
\begin{lemma}\label{L2.11}
If the process $X^*$ in \eqref{E-sde*} is not recurrent,
then for any bounded measurable set $A\in\Rd$ of positive
Lebesgue measure there exists $\epsilon>0$ such that
$\Lambda(f+\epsilon\Ind_{A})= \Lambda(f)$.
On the other hand, if $X^*$ is recurrent then
$\Lambda(f)<\Lambda(f+\epsilon\Ind_{A})$ for all
measurable set $A\in\Rd$ of positive
Lebesgue measure and $\epsilon>0$.
\end{lemma}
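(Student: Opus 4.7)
The proof splits into two directions, and the identity \eqref{E-Lg*Id} is the key technical tool in both.

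For the first direction, suppose $X^*$ is not recurrent, and let $A$ be the given bounded set of positive Lebesgue measure. By the contrapositive of Lemma~\ref{L2.9} (propagated from one point to all of $\Rd$ by a Harnack estimate for the supersolution $\Gamma$), one has $\Gamma(x)<\infty$ for every $x$. I would follow the construction in the proof of Theorem~\ref{T2.8} but with $\alpha=0$ throughout, which is admissible because Lemma~\ref{L2.3}\,(i) and (iii) give $\Hat\lambda_n(f)<\Lambda(f)$, so the Dirichlet problem
\begin{equation*}
\Lg\varphi_n + \bigl(f-\Lambda(f)\bigr)\varphi_n \;=\; -1 \text{ in } B_n\,,\qquad \varphi_n=0 \text{ on } \partial B_n\,,
\end{equation*}
has a positive solution with stochastic representation $\varphi_n(x)=\Exp_x\bigl[\int_0^{\uptau_n}\E^{\int_0^t[f(X_s)-\Lambda(f)]\,\D s}\,\D t\bigr]\nearrow\Gamma(x)$. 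Combining the Harnack-type inequality of \cite{AA-Harnack} with the a priori estimate \eqref{E-apriori}, a limit point $\Phi\in\Sobl^{2,p}(\Rd)$ with $\Phi>0$ satisfies $\Lg\Phi+(f-\Lambda(f))\Phi=-1$ on $\Rd$. Choosing $\epsilon>0$ small enough that both $\epsilon\sup_A\Phi\le 1$ and $\tilde f\df f+\epsilon\Ind_A$ remains near-monotone relative to $\Lambda(\tilde f)$, I obtain $\Lg\Phi+(\tilde f-\Lambda(f))\Phi\le 0$ on $\Rd$. A scaling-and-touching comparison of $\Phi$ with the Dirichlet eigenfunction $\widehat\Psi_r$ for $\tilde f$ on $B_r$---defining $c_*\df\sup\{c:\Phi\ge c\widehat\Psi_r\text{ on }\Bar{B}_r\}$, computing that $g\df\Phi-c_*\widehat\Psi_r$ satisfies $\Lg g+(\tilde f-\Lambda(f))g\le c_*(\Lambda(f)-\Hat\lambda_r(\tilde f))\widehat\Psi_r$, and invoking the form of the strong maximum principle noted at the end of Section~\ref{S1.4}---yields $\Hat\lambda_r(\tilde f)\le\Lambda(f)$ for every $r$. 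Passing $r\to\infty$ via Lemma~\ref{L2.3}\,(iii) applied to $\tilde f$ gives $\Lambda(\tilde f)\le\Lambda(f)$, while the reverse inequality is immediate from the monotonicity of $f\mapsto\Lambda(f)$.

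For the converse, suppose $X^*$ is recurrent. Replacing $A$ by $A\cap B_R$ for large $R$ and using monotonicity of $\Lambda$, I may assume $A$ is bounded. Assume for contradiction that $\Lambda(\tilde f)=\Lambda(f)$ with $\tilde f\df f+\epsilon\Ind_A$. Lemma~\ref{L2.3}\,(ii)--(iii) applied to $\tilde f$ produces a positive $\tilde\Psi\in\Sobl^{2,p}(\Rd)$ solving the MPE for $\tilde f$ with eigenvalue $\Lambda(f)$, with $\inf_{\Rd}\tilde\Psi>0$ by Lemma~\ref{L2.1}\,(b). Substituting $\Lambda(\tilde f)=\Lambda(f)$ and $\tilde f-f=\epsilon\Ind_A$ in \eqref{E-Lg*Id} reduces it to
\begin{equation*}
\Lg^* V \;=\; -\epsilon\,\Ind_A\,V\,,\qquad V\df\tilde\Psi/\Psi\,.
\end{equation*}
Since recurrence of $X^*$ implies its regularity (cf.\ the proof of Lemma~\ref{L2.6}), the It\^o--Krylov formula applied to $V(X^*_t)$ shows that $V(X^*_t)+\epsilon\int_0^t\Ind_A(X^*_s)V(X^*_s)\,\D s$ is a nonnegative local martingale, hence a supermartingale under $\Prob^*_x$. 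Taking expectations and passing $t\to\infty$ by monotone convergence gives
\begin{equation*}
\epsilon\,\Exp^*_x\biggl[\int_0^\infty\Ind_A(X^*_s)\,V(X^*_s)\,\D s\biggr]\;\le\; V(x)\;<\;\infty\,.
\end{equation*}
Since $V$ is positive and continuous on the compact $\Bar{A}$, $\inf_{\Bar{A}}V>0$, and therefore $\Exp^*_x\bigl[\int_0^\infty\Ind_A(X^*_s)\,\D s\bigr]<\infty$. On the other hand, the recurrence of $X^*$ together with the positivity and continuity of its transition density (so that its invariant $\sigma$-finite measure is equivalent to Lebesgue measure) forces $\int_0^\infty\Ind_A(X^*_s)\,\D s=\infty$ almost surely by a Hopf-type ratio ergodic theorem, a contradiction.

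The main obstacles are (i) the scaling-and-touching maximum-principle argument in the first direction, whose delicate point is applying the strong maximum principle to the inequality for $g$ with a sign-indefinite zero-order coefficient, and (ii) the ergodic-theoretic justification in the second direction that $\int_0^\infty\Ind_A(X^*_s)\,\D s=\infty$ a.s.\ whenever $X^*$ is recurrent and $A$ has positive Lebesgue measure. The remaining pieces are direct adaptations of the techniques already developed in Section~\ref{S2}.
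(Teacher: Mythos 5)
Your proof is correct, but it departs from the paper's argument in both directions, so a comparison is warranted. In the first direction the paper builds a one-parameter family of Dirichlet problems with right-hand side $-\alpha_n\,\Ind_A$, normalizes via $\alpha_n=\min(1,\Tilde\alpha_n)$ so that a limit function $\Phi>0$ with $\Lg\Phi+(f-\Lambda(f))\Phi=-\alpha\Ind_A$ is obtained together with the stochastic representation \eqref{EL2.11D}, and then rules out $\alpha=0$ by invoking Lemma~\ref{L2.6} (and implicitly the uniqueness from Theorem~\ref{T2.8}); the desired identity then comes from writing $\alpha\Phi^{-1}\Ind_A\ge\epsilon\Ind_A$ and appealing to Lemma~\ref{L2.1}\,(f). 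You instead extract the finiteness of $\Gamma$ directly from the contrapositive of Lemma~\ref{L2.9} (together with Theorem~\ref{T2.8} and Lemma~\ref{L2.6}), build a strict supersolution $\Phi$ with $\Lg\Phi+(f-\Lambda(f))\Phi=-1$, and close the argument by a scaling-and-touching comparison against the Dirichlet eigenfunction $\widehat\Psi_r(\tilde f)$ plus Lemma~\ref{L2.3}\,(iii); this is a valid and somewhat more streamlined route, though you should make explicit that the stochastic representation $\varphi_n(x)=\Exp_x\bigl[\int_0^{\uptau_n}\E^{\int_0^t[f-\Lambda(f)]\,\D{s}}\D t\bigr]$ needs the $\Hat\lambda_{n+1}<\Lambda(f)$ decay estimate (as in the paper's \eqref{EL2.10D}) to justify discarding the terminal term, since there is no $\E^{-\alpha T}$ damping with $\alpha=0$, and that the smallness of $\epsilon$ entails $\Lambda(\tilde f)\le\Lambda(f)+\epsilon$ via the elementary bound $\Lambda(f+\epsilon\Ind_A)\le\Lambda(f+\epsilon)=\Lambda(f)+\epsilon$, from which near-monotonicity of $\tilde f$ relative to $\Lambda(\tilde f)$ follows.

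Your second direction is where the divergence is most substantial. The paper again uses the Dirichlet-problem construction (now for $f+\epsilon\Ind_A$) together with \eqref{EL2.11D} and a scaling-and-touching comparison against $\Psi$ via the strong maximum principle, entirely within the elliptic toolkit already developed. You instead substitute $\Lambda(\tilde f)=\Lambda(f)$ into \eqref{E-Lg*Id} to obtain $\Lg^*V=-\epsilon\Ind_A V$ with $V=\Tilde\Psi/\Psi$, turn $V(X^*_t)+\epsilon\int_0^t\Ind_A(X^*_s)V(X^*_s)\,\D s$ into a nonnegative supermartingale, and reach a contradiction from $\Exp^*_x\bigl[\int_0^\infty\Ind_A(X^*_s)\,\D s\bigr]<\infty$. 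This is elegant and the supermartingale step is fine, but the final contradiction leans on the fact that a Harris-recurrent nondegenerate diffusion spends infinite time a.s.\ in any set of positive Lebesgue measure. That fact is true (positive continuous transition densities make $X^*$ Harris recurrent with a $\sigma$-finite invariant measure equivalent to Lebesgue, and one then applies the occupation-time characterization of Harris recurrence or the Chacon--Ornstein theorem), but it pulls in machinery that the paper deliberately avoids and never sets up; if you intend to use it you should cite it explicitly, since it is the one genuinely external ingredient in your argument. The paper's purely analytic comparison is more self-contained in this respect, at the cost of being somewhat less transparent about the probabilistic meaning of the dichotomy.
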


\begin{proof}
Let $A\in\Rd$ be a bounded measurable set, and $\sB$ and open ball containing $A$.
We set $\uuptau=\uptau(\sB^c)$.
Recall the eigenvalues $\{\Hat\lambda_n\}$ in Lemma~\ref{L2.2}.
Since $\Hat\lambda_n<\Lambda(f)$, the principal eigenvalue of the operator
$-\Lg-(f-\Lambda(f))$ on every $B_n$ is positive.
Thus by Proposition~6.2 and Theorem~6.1 in \cite{Berestycki-94},
for any $n\in\NN$, and $\alpha_n\ge0$, the Dirichlet problem
\begin{equation}\label{EL2.11A}
\Lg\varphi_n + (f-\Lambda(f))\varphi_n \;=\; - \alpha_n \Ind_A\quad\text{in~}B_n\,,
\end{equation}
with $\varphi_n=0$ on $\partial B_n$,
has a unique solution $\varphi_{n}\in\Sobl^{2,p}(B_n)\cap\Cc(\Bar{B}_n)$,
for any $p\ge1$.
In addition, provided $\alpha_n\ge0$, then by the \emph{refined maximum principle} in
\cite[Theorem~1.1]{Berestycki-94} $\varphi_{n}$ is nonnegative.
Let $\Tilde\alpha_n>0$ be such that the Dirichlet problem
$\Lg\Tilde\varphi_n + (f-\Lambda(f))\Tilde\varphi_n 
= - \Tilde\alpha_n \Ind_A$ in $B_n$,
with $\Tilde\varphi_n=0$ on $\partial B_n$, satisfies $\Tilde\varphi_n(0)=1$.
We set $\alpha_n= \min\,(1,\Tilde\alpha_n)$ in \eqref{EL2.11A}.
Thus, the assumptions are met for the Harnack inequality for a class of
superharmonic functions in \cite{AA-Harnack}, and we have
\begin{equation*}
\varphi_n(x) \;\le\; C_{H}\qquad\forall\, x\in B_n\,,
\quad\forall\,n\in\NN\,,
\end{equation*}
for some constant $C_{H}$ depending on $n$.
Thus the collection $\{\varphi_n\,,\;n\in\NN\}$ is relatively
weakly compact in $\Sob^{2,p}(B_R)$ for any $p\ge1$ and $R>0$,
and taking limits along some sequence $n\to\infty$, we obtain a positive function
$\Phi\in\Sobl^{2,d}(\Rd)$,
which solves
\begin{equation}\label{EL2.11A0}
\Lg\Phi + (f-\Lambda(f))\Phi \;=\; - \alpha \Ind_A
\end{equation}
on $\Rd$, for some nonnegative constant $\alpha$.
If $\alpha>0$ then $\Phi$ is positive on $\Rd$ by the strong maximum principle.
On the other hand, if $\alpha_n\searrow0$, then the definition of $\alpha$,
$\varphi_n(0)=1$ along this sequence, except for
a finite number of terms, since $\varphi_n(0)=1$ whenever $\alpha_n<1$.
Thus, in either case, the solution $\Phi$ is positive.

By It\^o's formula applied to \eqref{EL2.11A}, with $\uuptau=\uptau(\sB^c)$,
we obtain
\begin{multline}\label{EL2.11B}
\varphi_n(x) \;=\; \Exp_x\Bigl[\E^{\int_{0}^{\uuptau}[f(X_s)-\Lambda(f)]\,\D{s}}\,
\varphi_n(X_{\uuptau})\,\Ind_{\{\uuptau<T\wedge\uptau_n\}}\Bigr]\\[5pt]
+\Exp_x\Bigl[\E^{\int_{0}^{T}[f(X_s)-\Lambda(f)]\,\D{s}}\,
\varphi_n(X_{T})\,
\Ind_{\{T<\uuptau\wedge\uptau_n\}}\Bigr]
\qquad\forall\,x\in B_n\setminus\sB\,,\ \forall\,T>0\,.
\end{multline}
With 
$(\widehat\Psi_{n},\Hat\lambda_{n})$
denoting the Dirichlet eigensolutions in \eqref{L2.2A},
and It\^o's formula, we obtain
\begin{equation*}
\Exp_x\Bigl[\E^{\int_{0}^{T}[f(X_s)-\Hat\lambda_{n+1}]\,\D{s}}\,
\Ind_{\{T<\uuptau\wedge\uptau_n\}}\Bigr]
\;\le\; \Bigl(\inf_{B_n\setminus\sB}\,\widehat\Psi_{n+1}\Bigr)^{-1}\,
\widehat\Psi_{n+1}(x)\qquad\forall\,T>0\,,\ 
\forall\,x\in B_n\setminus\sB\,.
\end{equation*}
Thus, we have
\begin{align*}
\Exp_x &\Bigl[\E^{\int_{0}^{T}[f(X_s)-\Lambda(f)]\,\D{s}}\,
\varphi_n(X_{T})\,\Ind_{\{T<\uuptau\wedge\uptau_n\}}\Bigr]\\[5pt]
&\;\le\; 
\norm{\varphi_n}_\infty\;\E^{(\Hat\lambda_{n+1}-\Lambda(f))T}\,
\Exp_x\Bigl[\E^{\int_{0}^{T}[f(X_s)-\Hat\lambda_{n+1}]\,\D{s}}\,
\Ind_{\{T<\uuptau\wedge\uptau_n\}}\Bigr]\\[5pt]
&\;\le\;
\norm{\varphi_n}_\infty\;\E^{(\Hat\lambda_{n+1}-\Lambda(f))T}\,
\Bigl(\inf_{B_n\setminus\sB}\,\widehat\Psi_{n+1}\Bigr)^{-1}\,
\widehat\Psi_{n+1}(x)\quad\forall\,T>0\,,
\end{align*}
and all $x\in B_n$,
and it follows that the second term on the right hand side of \eqref{EL2.11B}
vanishes as $T\to\infty$.
Therefore, taking limits as $T\to\infty$ in \eqref{EL2.11B}, we have
\begin{equation}\label{EL2.11C}
\varphi_n(x) \;=\; \Exp_x\Bigl[\E^{\int_{0}^{\uuptau}[f(X_s)-\Lambda(f)]\,\D{s}}\,
\varphi_n(X_{\uuptau})\,\Ind_{\{\uuptau<\uptau_n\}}\Bigr]
\qquad\forall\,x\in B_n\setminus\sB\,,
\end{equation}
and taking again limits as $n\to\infty$ in \eqref{EL2.11C}, we obtain
\begin{equation}\label{EL2.11D}
\Phi(x) \;=\; \Exp_x\Bigl[\E^{\int_{0}^{\uuptau}[f(X_s)-\Lambda(f)]\,\D{s}}\,
\Phi(X_{\uuptau})\,\Ind_{\{\uuptau<\infty\}}\Bigr]
\qquad\forall\,x\in \sB^c\,.
\end{equation}

Suppose that $X^*$ in \eqref{E-sde*} is not recurrent.
Then we claim that $\alpha>0$.
Indeed, if $\alpha=0$, then $\Phi$ is a ground state corresponding to
$f$, and Lemma~\ref{L2.6} together with \eqref{EL2.11D} imply that
$X^*$ is recurrent, which contradicts the hypothesis.
Therefore, writing \eqref{EL2.11A0} as
\begin{equation*}
\Lg\Phi + \bigl(f+ \alpha\,\Phi^{-1}\Ind_A-\Lambda(f)\bigr)\Phi \;=\; 0\,,
\end{equation*}
and letting $\epsilon\df \inf_{\sB}\;\alpha\Phi^{-1}$, it follows
that $\Lambda(f+\epsilon\Ind_{A})= \Lambda(f)$.

Next assume $X^*$ in \eqref{E-sde*} is recurrent, and suppose that
$\Lambda(f+\epsilon\Ind_{A})= \Lambda(f)$
for some $\epsilon>0$.
We repeat the construction in the first part of the proof, starting
from the Dirichlet problem
\begin{equation*}
\Lg\varphi_n + \bigl(f+\epsilon\Ind_{A}-\Lambda(f)\bigr)\varphi_n
\;=\; - \alpha_n \Ind_A\quad\text{in~}B_n\,,
\end{equation*}
with $\varphi_n=0$ on $\partial B_n$, to obtain
positive function
$\Phi\in\Sobl^{2,d}(\Rd)$,
which solves
\begin{equation*}
\Lg\Phi + \bigl(f+ (\epsilon+\alpha\Phi^{-1})\Ind_A-\Lambda(f)\bigr)\Phi \;=\; 0
\end{equation*}
for some $\alpha\ge0$,
and satisfies \eqref{EL2.11D}.
Since $X^*$ is recurrent, the ground state $\Psi$ corresponding to
$f$ satisfies \eqref{EL2.11D} by Lemma~\ref{L2.6}.
Thus
\begin{equation*}
\Phi(x)-\Psi(x) \;\ge\; \min_{\sB}\,\bigr(\Phi-\Psi\bigr)
\qquad\forall\, x\in\sB^c\,,
\end{equation*}
and by scaling $\Phi$ so that it touches $\Psi$ at one point from above,
and using the strong maximum principle we deduce that $\Psi=\Phi$.
This of course is impossible unless $\epsilon=0$ and $\alpha=0$,
hence we reach a contradiction.
The proof is complete.
\qed
\end{proof}

%%%%%%%%%%%%%%%%%%%%%%%%%%%%%%%%%%%%%%%%%%%%%%%%%%%%%%%%%%%%%%%%%%%%%%%%%%%%%%%%
\subsection{Results concerning Theorems~\ref{T1.6}--\ref{T1.8}}\label{S2.3}
%%%%%%%%%%%%%%%%%%%%%%%%%%%%%%%%%%%%%%%%%%%%%%%%%%%%%%%%%%%%%%%%%%%%%%%%%%%%%%%%

We start with the following theorem.

%%%%%%%%%%%%%%%%%%%%%%%%%%%%%%%%%%%%%%%%%%%%%%%%%%%%%%%%%%%%%%%%%%%%%%%%%%%%%%%%
\begin{theorem}\label{T2.12}
Under {\upshape(H1)} the following are equivalent.
\begin{itemize}
\item[{\upshape(}a{\upshape)}]
For some ball $\sB\subset\Rd$ and a constant $\delta>0$, we have
\begin{equation}\label{ET2.12A}
\Exp_{x}\,\Bigl[\E^{\int_{0}^{\uptau(\sB^c)}
[f(X_{s})-\Lambda(f)+\delta]\,\D{s}}\Bigr]\;<\;\infty
\qquad\forall\, x\in \Bar{\sB}^c\,.
\end{equation}
\item[{\upshape(}b{\upshape)}]
If $h\in\Cc_c(\Rd)$ is non-negative function, $h\not\equiv0$,
then $\Lambda(f)>\Lambda(f-h)$.
\smallskip
\item[{\upshape(}c{\upshape)}]
For every $r>0$, there exists $\delta_r>0$, such that
\begin{equation}\label{ET2.12B}
\Exp_{x}\,\Bigl[\E^{\int_{0}^{\uuptau_r}
[f(X_{s})-\Lambda(f)+\delta_r]\,\D{s}}\Bigr]\;<\;\infty
\qquad\forall\, x\in \Bar{B}_r^c\,.
\end{equation}
\end{itemize}
\end{theorem}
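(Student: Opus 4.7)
The plan is to close the cycle
(a)\,$\Rightarrow$\,(c)\,$\Rightarrow$\,(b)\,$\Rightarrow$\,(a); the remaining
implication (c)\,$\Rightarrow$\,(a) is immediate upon setting $\sB=B_r$ and
$\delta=\delta_r$. The three nontrivial arrows rest, respectively, on path
confinement plus the strong Markov property; a boundedness argument for the
ratio $W=\Tilde\Psi/\Psi$ of ground states combined with recurrence of the
ground state diffusion $X^*$; and a direct rearrangement of the MPE for the
perturbation $\Tilde f=f-\delta\,\Ind_{B_\rho}$.

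For (a)\,$\Rightarrow$\,(c), I first enlarge $\sB$, using near-monotonicity of
$f$, so that $f-\Lambda(f)\ge0$ on $\sB^c$; condition (a) persists since the
integrand is now nonnegative. For any $r$ with $B_r\supset\sB$ and
$x\in\Bar B_r^c$, the path stays in $B_r^c\subset\sB^c$ up to $\uuptau_r$, so
$\uuptau_r\le\uptau(\sB^c)$ and
\begin{equation*}
\Exp_x\bigl[\E^{\int_0^{\uuptau_r}[f-\Lambda(f)+\delta]\,\D s}\bigr]
\;\le\;\Exp_x\bigl[\E^{\int_0^{\uptau(\sB^c)}[f-\Lambda(f)+\delta]\,\D s}\bigr]
\;<\;\infty,
\end{equation*}
yielding (c) with $\delta_r=\delta$. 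For $r$ with $B_r\subset\sB$, I invoke
the strong Markov property at the first exit from $\sB$; boundedness of $f$
on $\sB$ together with standard exponential-moment estimates for the hitting
time of $\Bar B_r$ from $\partial\sB$ (uniform ellipticity on the bounded
region) deliver (c) for a suitable $\delta_r>0$.

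For (b)\,$\Rightarrow$\,(a), fix any $\rho,\delta>0$, choose any nonzero
nonnegative $h\in\Cc_c(\Rd)$ with $h\le\delta\,\Ind_{B_\rho}$, and put
$\Tilde f:=f-\delta\,\Ind_{B_\rho}$. Since $\Tilde f$ is near-monotone
relative to $\Lambda(\Tilde f)$ (the perturbation has compact support),
Lemma~\ref{L2.3}(ii) yields a positive ground state $\Tilde\Psi$ of the MPE
for $\Tilde f$. By monotonicity of $\Lambda(\cdot)$ and (b),
$\eta:=\Lambda(f)-\Lambda(\Tilde f)\ge\Lambda(f)-\Lambda(f-h)>0$. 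Rewriting
\begin{equation*}
\Lg\Tilde\Psi+(f-\Lambda(f)+\eta)\Tilde\Psi\;=\;\delta\,\Ind_{B_\rho}\Tilde\Psi
\end{equation*}
shows the right-hand side vanishes on $B_\rho^c$. An It\^o--Fatou argument
at $\uuptau_\rho$ then gives
$\Tilde\Psi(x)\ge(\inf_{\partial B_\rho}\Tilde\Psi)\,
\Exp_x\bigl[\E^{\int_0^{\uuptau_\rho}[f-\Lambda(f)+\eta]\,\D s}\bigr]$
for $x\in\Bar B_\rho^c$, which is (a) with $\sB=B_\rho$ and $\eta$ in place
of $\delta$.

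The main obstacle is (c)\,$\Rightarrow$\,(b). Fix a nonzero nonnegative
$h\in\Cc_c(\Rd)$ with $\mathrm{supp}\,h\subset B_r$, and suppose for
contradiction that $\Lambda(f-h)=\Lambda(f)$. Letting $\Tilde\Psi$ be the
ground state of $f-h$, the identity \eqref{E-Lg*Id} yields $\Lg^*W=hW\ge0$
for $W:=\Tilde\Psi/\Psi$. Under (c), Lemma~\ref{L2.10}(ii) provides the exact
stochastic representation \eqref{EL2.10A} for $\Psi$, and Theorem~\ref{T1.5}
then gives recurrence of $X^*$. The key technical step is the upper bound on
$W$: since $\Tilde f-\Lambda(\Tilde f)\le f-\Lambda(f)$, condition (c)
transfers to $\Tilde f$, so Lemma~\ref{L2.10}(ii) also gives the exact
stochastic representation for $\Tilde\Psi$. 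Comparing the two
representations at $\uuptau_R$, with $R$ large enough that
$\mathrm{supp}\,h\subset B_R$ (so that $h$ vanishes along the path up to
$\uuptau_R$), one obtains
\begin{equation*}
W(x)\;\le\;\frac{\sup_{\partial B_R}\Tilde\Psi}{\inf_{\partial B_R}\Psi}
\qquad\forall\,x\in\Bar B_R^c,
\end{equation*}
and continuity on $\Bar B_R$ yields a uniform bound on $\Rd$. Finally,
It\^o applied to $W(X^*_t)$ produces a bounded nonnegative submartingale
(since $\Lg^*W=hW\ge0$), which converges a.s.; recurrence of $X^*$ then
forces $W$ to be constant, and $hW\equiv 0$ with $W>0$ gives $h\equiv 0$,
the desired contradiction. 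The delicate point throughout is securing the
\emph{exact} stochastic representation for $\Tilde\Psi$ (not merely the
one-sided Fatou inequality), which is precisely what hypothesis (c) makes
available.
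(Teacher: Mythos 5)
Your overall cycle (a)\,$\Rightarrow$\,(c)\,$\Rightarrow$\,(b)\,$\Rightarrow$\,(a) differs from the paper's (a)\,$\Rightarrow$\,(b)\,$\Rightarrow$\,(c)\,$\Rightarrow$\,(a), and the two nontrivial arrows you supply are genuinely different in method. Your (c)\,$\Rightarrow$\,(b) step is a nice alternative to the paper's: where the paper compares $\Psi$ and $\Tilde\Psi$ directly via the touching/strong-maximum-principle argument of Theorem~\ref{T2.8}, you first bound $W=\Tilde\Psi/\Psi$ by comparing the two exact stochastic representations at $\uuptau_R$ (a clean observation), then invoke Doob's bounded submartingale convergence together with recurrence of $X^*$ (from Lemma~\ref{L2.10}(ii) and Theorem~\ref{T1.5}) to force $W$ constant. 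Your (b)\,$\Rightarrow$\,(a) step is the same idea as the paper's (b)\,$\Rightarrow$\,(c): perturb $f$ on a ball, use strict monotonicity of $\Lambda$, and It\^o--Fatou on the perturbed ground state. Both steps are correct.

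The gap is in your (a)\,$\Rightarrow$\,(c) for small $r$, where $B_r\Subset\sB$. You cannot simply appeal to "standard exponential-moment estimates for the hitting time of $\Bar B_r$ from $\partial\sB$'': starting near $\partial\sB$, the process can make arbitrarily many long excursions out of $\sB$ and back before ever touching $\Bar B_r$, so $\uuptau_r$ is not a bounded-domain exit time. What is needed is a renewal/geometric-trials argument, splitting the trajectory at successive crossings of $\partial\sB$ and $\partial\sB'$ for some slightly larger $\sB'$, using nondegeneracy to get a uniformly positive per-cycle probability of hitting $\Bar B_r$ before leaving $\sB'$, together with hypothesis (a) to control the exponential moment of the excursions outside $\sB'$. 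Moreover, one must check that a $\delta_r>0$ can be chosen small enough that the per-cycle exponential moment multiplied by the failure probability is strictly less than one; this is not obvious from (a) alone, since (a) only gives exponential moments for the passage from $\sB^c$ to $\partial\sB$, not for the $\Bar B_r$-hitting time. The one-liner you do have from (a), namely $\uuptau_r\le\uptau(\sB^c)$ when $B_r\Supset\sB$, works only in the large-$r$ direction.

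The fix is a reorganization rather than new analysis: notice that your (c)\,$\Rightarrow$\,(b) step never actually uses (c) for all $r$---it only uses the hypothesis of Lemma~\ref{L2.10}(ii), which is exactly (a) (finiteness for \emph{some} ball and \emph{some} $\delta>0$). Lemma~\ref{L2.10}(ii) then hands you the exact representations for $\Psi$ and $\Tilde\Psi$ at \emph{every} $r$, including the large $R$ you need for the $W$-bound. So your submartingale argument is really an (a)\,$\Rightarrow$\,(b) proof. Run it that way, then get (b)\,$\Rightarrow$\,(c) by the perturbation $f-\Ind_{B_r}$ exactly as you do for (b)\,$\Rightarrow$\,(a) (this handles all $r>0$ at once, with no small-$r$ issue), and (c)\,$\Rightarrow$\,(a) is trivial. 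That closes the cycle without needing (a)\,$\Rightarrow$\,(c) directly, and preserves your submartingale argument, which is the genuinely novel part of your proposal relative to the paper's strong maximum principle approach.
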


\begin{proof}
The proof (a)$\;\Rightarrow\;$(b) is by contradiction.
Without loss of generality  suppose that $\sB$ contains the support of $h$,
and define $\Tilde{f}=f-h$.
It is clear that $\Lambda(f)\ge\Lambda(\Tilde{f})$,
so suppose that $\Lambda(f)=\Lambda(\Tilde{f})$.
Let $\Tilde\Psi$ be a positive solution to
\begin{equation*}
\Lg \Tilde\Psi(x) + \Tilde{f}(x)\Tilde\Psi(x)
\;=\; \Lambda(f) \Tilde\Psi(x)\qquad\text{a.e.\ }x\in\Rd\,.
\end{equation*}
If $\Psi$ is a positive solution to
\eqref{E2-Pois}, then we have
\begin{equation*}
\Lg \Psi(x) + \bigl(\Tilde{f}(x)-\Lambda(f)\bigr)\Psi(x)
\;=\; -h(x) \Psi(x)\qquad\text{a.e.\ }x\in\Rd\,.
\end{equation*}
By (a) and Lemma~\ref{L2.10}, we have
\begin{equation*}
\Tilde\Psi(x) \;=\; \Exp_x\Bigl[\E^{\int_{0}^{\uptau(\sB^c)}
[f(X_s)-\Lambda(f)]\,\D{s}}\,
\Tilde\Psi(X_{\uptau(\sB^c)})\Bigr]
\qquad\forall\, x\in \Bar{\sB}^c\,.
\end{equation*}
Therefore, as in the proof of uniqueness in Theorem~\ref{T2.8}, we have
\begin{equation*}
\Psi(x)-\Tilde\Psi(x) \;\ge\; \min_{\sB}\,\bigr(\Psi-\Tilde\Psi\bigr)
\qquad\forall\, x\in\sB^c\,,
\end{equation*}
which means that we can scale $\Psi$ 
until it touches $\Tilde\Psi$ from above in at least one point in $\sB$.
Since
\begin{equation*}
\Lg(\Psi-\Tilde\Psi) - \bigl(f-\Lambda(f)\bigr)^-
\bigl(\Psi-\Tilde\Psi\bigr)\;=\;
-\bigl(f-\Lambda(f)\bigr)^+\bigl(\Psi-\Tilde\Psi\bigr) - h \Tilde\Psi \;\le\; 0
\quad\text{on\ \ }\Rd\,,
\end{equation*}
it follows by the strong maximum principle that $\Psi=\Tilde\Psi$ on $\Rd$.
However, $\Psi$ solves $\Lg \Psi \;=\; (\Lambda(f)- f)\Psi$
on $\Rd$, and this implies that $h\Tilde\Psi=0$ a.e.
This, of course, is not possible since $\Tilde\Psi$ is positive.
Thus we reached a contradiction, and the proof of part (a) is complete.

Let $r>0$, and $\Tilde\Psi$ be a positive solution of \eqref{E2-Pois}
corresponding to $\Tilde{f}=f-\Ind_{B_r}$.
By part (b), we have $\delta_r\df \Lambda(f) - \Lambda(\Tilde{f})>0$.
Applying It\^o's formula and Fatou's lemma to $\Tilde\Psi$,
we obtain \eqref{ET2.12B}.
This shows (b)$\;\Rightarrow\;$(c), and since
it is clear that (c)$\;\Rightarrow\;$(a)
the proof is complete.
\qed
\end{proof}

Recall that $\Exp_x^*$ denotes the expectation operator
associated with $X^*$ in \eqref{E-sde*},
while $\Exp_x$ and $\mu$ denote the expectation operator
and invariant probability measure
associated with the solution of \eqref{E1-sde2}, respectively.

%%%%%%%%%%%%%%%%%%%%%%%%%%%%%%%%%%%%%%%%%%%%%%%%%%%%%%%%%%%%%%%%%%%%%%%%%%%%%%%%
\begin{theorem}\label{T2.13}
Assume {\upshape(H1)},
and suppose that the sde in \eqref{E-sde*} has a unique strong solution $X^*$ which
exists for all $t>0$.
The following hold for any function $g\in\Cc_{b}(\Rd)$, and all
$(t,x)\in\RR_+\times\Rd$:
\begin{align}
\Psi(x)\,\Exp^*_x\bigl[g(X^*_t)\,\Psi^{-1}(X^*_{t})\bigr] &\;=\;
\Exp_x\Bigl[\E^{\int_{0}^{t}[f(X_s)-\Lambda(f)]\,\D{s}}\,g(X_{t})\Bigr]
\,,\label{ET2.13A}\\[5pt]
\Psi(x)\,\Exp^*_x\bigl[g(X^*_t)\bigr]
&\;=\;\Exp_x\Bigl[\E^{\int_{0}^{t}[f(X_s)-\Lambda(f)]\,\D{s}}\,
\Psi(X_{t})\,g(X_{t})\Bigr]\label{ET2.13B}\,,
\end{align}
where $\Psi$ is a positive solution of \eqref{E2-Pois}.
In particular, we have
\begin{equation}\label{ET2.13C}
\Psi(x) \;=\;\Exp_{x}\,\Bigl[\E^{\int_{0}^{t}
[f(X_s)-\Lambda(f)]\,\D{s}}\,\Psi(X_{t})\Bigr]\qquad\forall\,(t,x)\in\RR_+\times\Rd\,.
\end{equation}
Suppose that $X^*$ is positive recurrent and let $\mu^*$
denote its invariant probability measure.
Then
\begin{equation}\label{ET2.13D}
\Exp_x\bigl[\E^{\int_{0}^{t}[f(X_s)-\Lambda(f)]\,\D{s}}\,\Psi(X_{t})\,g(X_{t})\bigr]
\;\xrightarrow[t\to\infty]{}\; \Psi(x)\,\int_{\Rd}
g(y)\,\mu^*(\D{y})
\end{equation}
for all $g\in\Cc_b(\Rd)$.
In particular, $\Psi_*$ defined by
\begin{equation*}
\Psi_*(x) \;\df\; \lim_{T\to\infty}\;\Exp_{x}\,\Bigl[\E^{\int_{0}^{T}
[f(X_{t})-\Lambda(f)]\,\D{t}}\Bigr]\,,
\end{equation*}
is a positive solution of the MPE and satisfies 
$\mu^*(\Psi_*^{-1})=1$.
\end{theorem}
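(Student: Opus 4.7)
The plan is to deduce Theorem~\ref{T2.13} by passing to the limit $n\to\infty$ in the Girsanov identity \eqref{EL2.5A} obtained in the proof of Lemma~\ref{L2.5}, namely
$\Exp_x\bigl[\E^{\int_0^{t\wedge\uptau_n}[f(X_s)-\Lambda(f)]\,\D{s}}g(X_{t\wedge\uptau_n})\bigr]=\Psi(x)\,\Exp^*_x\bigl[g(X^*_{t\wedge\uptau_n})\,\Psi^{-1}(X^*_{t\wedge\uptau_n})\bigr]$
for any $g\in\Cc_b(\Rd)$. Under {\upshape(H1)}, Lemma~\ref{L2.1} yields $\inf_{\Rd}\,\Psi>0$, so $\Psi^{-1}$ is bounded and continuous; moreover, by Lemma~\ref{L2.5} the standing regularity hypothesis on $X^*$ is equivalent to $\sM_t$ being a true martingale. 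Rearranging \eqref{EL2.4B} produces the pathwise identity $\Psi(x)\,\sM_t=\Psi(X_t)\,\E^{\int_0^t[f(X_s)-\Lambda(f)]\,\D{s}}$, which I will use repeatedly.

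For the right-hand side of \eqref{EL2.5A}, regularity gives $\uptau_n\to\infty$ $\Prob^*_x$-a.s., and since the integrand is bounded by $\norm{g}_\infty/\inf_{\Rd}\Psi$, dominated convergence yields the limit $\Psi(x)\,\Exp^*_x[g(X^*_t)\,\Psi^{-1}(X^*_t)]$. For the left-hand side, the pathwise identity lets me rewrite it as $\Psi(x)\,\Exp_x[h_n\,\sM_{t\wedge\uptau_n}]$, where $h_n\df(g\Psi^{-1})(X_{t\wedge\uptau_n})$ is $\sF_{t\wedge\uptau_n}$-measurable and uniformly bounded. Optional sampling (legitimate because $\sM$ is a true martingale) then gives $\Exp_x[h_n\sM_{t\wedge\uptau_n}]=\Exp_x\bigl[h_n\,\Exp_x[\sM_t\mid\sF_{t\wedge\uptau_n}]\bigr]=\Exp_x[h_n\,\sM_t]$. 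Since $h_n\to(g\Psi^{-1})(X_t)$ a.s., dominated convergence with dominator $\bigl(\norm{g}_\infty/\inf_{\Rd}\Psi\bigr)\sM_t\in L^1$ yields the limit $\Psi(x)\,\Exp_x[g(X_t)\,\Psi^{-1}(X_t)\,\sM_t]=\Exp_x\bigl[\E^{\int_0^t[f(X_s)-\Lambda(f)]\,\D{s}}g(X_t)\bigr]$. This establishes \eqref{ET2.13A}; the special case $g\equiv 1$ is \eqref{ET2.13C}, which is also immediate from Lemma~\ref{L2.4}.

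To obtain \eqref{ET2.13B}, I would substitute $g\,(\Psi\wedge k)$ into \eqref{ET2.13A} and let $k\to\infty$ by dominated convergence on both sides: on the right, $|g(\Psi\wedge k)\Psi^{-1}|\le\norm{g}_\infty$, while on the left the dominator $\norm{g}_\infty\,\E^{\int_0^t[f(X_s)-\Lambda(f)]\,\D{s}}\Psi(X_t)$ is integrable with integral $\norm{g}_\infty\Psi(x)$ by \eqref{ET2.13C}. Given positive recurrence of $X^*$ with invariant probability measure $\mu^*$, one has $\Exp^*_x[g(X^*_t)]\to\mu^*(g)$ for $g\in\Cc_b(\Rd)$, so \eqref{ET2.13D} follows directly from \eqref{ET2.13B}. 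Finally, applying \eqref{ET2.13B} with the bounded $g=\Psi^{-1}$ yields $\Exp_x\bigl[\E^{\int_0^t[f(X_s)-\Lambda(f)]\,\D{s}}\bigr]=\Psi(x)\,\Exp^*_x[\Psi^{-1}(X^*_t)]\to\Psi(x)\,\mu^*(\Psi^{-1})$ as $t\to\infty$, identifying $\Psi_*=\mu^*(\Psi^{-1})\,\Psi$ as a positive scalar multiple of $\Psi$, hence a positive solution of the MPE, and yielding $\mu^*(\Psi_*^{-1})=\mu^*(\Psi^{-1})/\mu^*(\Psi^{-1})=1$.

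The main technical obstacle is the interchange of limit and expectation on the left-hand side: it depends crucially on $\sM$ being a true martingale rather than merely a local martingale, since only then does optional sampling at $t\wedge\uptau_n$ produce $\Exp_x[\sM_t\mid\sF_{t\wedge\uptau_n}]$. This martingale property is precisely what the regularity hypothesis on $X^*$ provides via Lemma~\ref{L2.5}; without it, the passage to the limit in \eqref{EL2.5A} would fail and the unstopped identities could not be recovered.
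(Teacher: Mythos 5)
Your proof is correct and follows the same basic skeleton as the paper (pass to the limit as $n\to\infty$ in the localized Girsanov identity \eqref{EL2.5A}), but with two genuinely different technical steps worth noting, plus one small slip.

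The slick difference is in how you handle the left-hand side limit for \eqref{ET2.13A}. You invoke optional sampling at the bounded stopping time $t\wedge\uptau_n$, writing $\Exp_x[h_n\sM_{t\wedge\uptau_n}]=\Exp_x[h_n\sM_t]$, and then dominate by $(\norm{g}_\infty/\inf_\Rd\Psi)\sM_t\in L^1$. This handles all $g\in\Cc_b(\Rd)$ in a single step. The paper instead first proves \eqref{ET2.13A} for $g\in\Cc_c(\Rd)$ (where the right side is uniformly bounded in $n$ because $g\Psi^{-1}$ has compact support), and then extends to $\Cc_b$ by monotone convergence over an increasing sequence in $\Cc_c$. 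Both are valid; your optional-sampling route is cleaner and makes transparent exactly where the true martingale property (guaranteed here by Lemma~\ref{L2.5}) enters.

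One slip: you write that \eqref{ET2.13C} is ``the special case $g\equiv 1$'' of \eqref{ET2.13A}, but setting $g\equiv1$ in \eqref{ET2.13A} gives $\Psi(x)\Exp^*_x[\Psi^{-1}(X^*_t)]=\Exp_x\bigl[\E^{\int_0^t[f-\Lambda(f)]\,\D{s}}\bigr]$, not \eqref{ET2.13C}. What you want is $g\equiv1$ in \eqref{ET2.13B}, or your fallback via Lemmas~\ref{L2.4}--\ref{L2.5} (which is exactly what the paper invokes), both of which do work; and your derivation of \eqref{ET2.13B} via $g(\Psi\wedge k)$ and dominated convergence uses \eqref{ET2.13C} as a dominator, so the fallback is actually the logically cleaner ordering.

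The other genuine difference is in \eqref{ET2.13D}. You assert $\Exp^*_x[g(X^*_t)]\to\mu^*(g)$ ``directly'' from positive recurrence. This is true for the diffusion $X^*$ at hand (nondegenerate, locally bounded drift, strong Feller with positive transition densities, hence aperiodic Harris recurrent), but it is not a consequence of positive recurrence alone and should be flagged with a citation to a suitable Harris-ergodicity result. The paper sidesteps the need to invoke this machinery by first establishing only Cesàro convergence via Birkhoff's ergodic theorem, then upgrading to plain convergence via \cite[Theorem~4.12]{Ichihara-12}. Your final paragraph identifying $\Psi_*=\mu^*(\Psi^{-1})\,\Psi$ and concluding $\mu^*(\Psi_*^{-1})=1$ is exactly the paper's conclusion and is correct.
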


\begin{proof}
Recall \eqref{EL2.5A}, which we repeat here as
\begin{align}\label{ET2.13F}
\Exp_x\left[\E^{\int_0^{t\wedge \uptau_n}
[f(X_s)-\Lambda(f)]\,\D{s}}\,g(X_{t\wedge\uptau_n})\right]
&\;=\; \Psi(x) \Exp_x\bigl[g(X_{t\wedge\uptau_n}) \,
\Psi^{-1}(X_{t\wedge\uptau_n}) \,\sM_{t\wedge\uptau_n}\bigr]\nonumber\\[5pt]
&\;=\; \Psi(x) \Exp_x^*\bigl[g(X^*_{t\wedge\uptau_n}) \,
\Psi^{-1}(X^*_{t\wedge\uptau_n})\bigr]\,.
\end{align}
Let $g\in\Cc_c(\Rd)$, and note that since $\Psi$ is inf-compact,
the term inside the expectation in the right hand side of \eqref{ET2.13F} is bounded
uniformly in $n$.
Thus letting $n\to\infty$ in \eqref{ET2.13F}, we obtain
\begin{align}\label{ET2.13H}
\Exp_x\Bigl[\E^{\int_0^{t} [f(X_s)-\Lambda(f)]\,\D{s}}\,g(X_{t})\Bigr]
&\;=\; \Psi(x) \Exp_x\bigl[g(X_{t}) \,
\Psi^{-1}(X_{t}) \,\sM_{t}\bigr]\nonumber \\[5pt]
&\;=\; \Psi(x)\Exp_x^*\bigl[g(X^*_{t})
\Psi^{-1}(X^*_{t})\bigr]\qquad\forall\,t>0\,.
\end{align}
This proves \eqref{ET2.13A} for $g\in\Cc_c(\Rd)$, and also
for $g\in\Cc_b(\Rd)$ by monotone convergence over an increasing sequence
$g_n\in\Cc_c(\Rd)$.

Applying again monotone convergence
to \eqref{ET2.13H} we obtain \eqref{ET2.13B}.
By Lemma~\ref{L2.5}, $\bigl(\sM_t,\sF^X_t\bigr)$ is a martingale,
and thus \eqref{ET2.13C} follows by Lemma~\ref{L2.4}.

Since $X^*$ is ergodic, we obtain from \eqref{ET2.13B} that
\begin{align*}
\lim_{T\to\infty}\;\frac{1}{T}\,\int_0^T\,\Exp_x
\Bigl[\E^{\int_0^{T} (f(X_s)-\Lambda(f))\,\D{s}}\,\Psi(X_{T})\,g(X_{T})\Bigr]
&\;=\; \Psi(x) \;\lim_{T\to\infty}\;
\frac{1}{T}\,\int_0^T \Exp^*_x\bigl[g(X^*_t)\bigr]\\[5pt]
&\;=\; \Psi(x)\,\int_{\Rd} g(y)\,\mu^*(\D{y})
\end{align*}
by Birkhoff's ergodic theorem.
Then \eqref{ET2.13D} follows
by an application of \cite[Theorem~4.12]{Ichihara-12}.
This completes the proof.
\qed
\end{proof}

%%%%%%%%%%%%%%%%%%%%%%%%%%%%%%%%%%%%%%%%%%%%%%%%%%%%%%%%%%%%%%%%%%%%%%%%%%%%%%%%
\begin{corollary}\label{C2.14}
Assume {\upshape(H1)}.
Then there exist an open ball $\sB\subset\Rd$ and a constant $\delta>0$
such that \eqref{ET2.12A} holds, if and only if the ground state diffusion
$X^*$ is geometrically ergodic.
\end{corollary}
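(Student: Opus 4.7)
The plan is to establish both implications by exploiting the identity \eqref{E-Lg*Id} together with the Cameron--Martin--Girsanov correspondence between $X$ and $X^*$ provided by the martingale $\sM$ of Lemma~\ref{L2.4}, and then to invoke Meyn--Tweedie/Foster--Lyapunov theory for nondegenerate diffusions.

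For the implication (ET2.12A)$\;\Rightarrow\;$geometric ergodicity of $X^*$: Theorem~\ref{T2.12} supplies $h\in\Cc_c(\Rd)$, $h\gneqq 0$, such that $\delta_0\df\Lambda(f)-\Lambda(f-h)>0$. Compactness of $\mathrm{supp}(h)$ implies that $\tilde f\df f-h$ is near-monotone relative to $\Lambda(\tilde f)$, so (H1) propagates to $\tilde f$ and Lemma~\ref{L2.3}(ii) yields a positive solution $\tilde\Psi\in\Sobl^{2,p}(\Rd)$ of the MPE associated to $\tilde f$. The identity \eqref{E-Lg*Id} then gives
\[
\Lg^*(\tilde\Psi/\Psi)\;=\;(h-\delta_0)\,(\tilde\Psi/\Psi)\,.
\]
Let $\sB$ be any open ball containing $\mathrm{supp}(h)$, and set $\tau^*\df\tau(\sB^c)$. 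On the stochastic interval $[0,\tau^*]$, the process $(\tilde\Psi/\Psi)(X^*_{t\wedge\tau^*})\,\E^{\delta_0(t\wedge\tau^*)}$ is a nonnegative local martingale under $\Prob_x^*$, since $h$ vanishes on $\sB^c$. Combining Fatou with the regularity and recurrence of $X^*$ (which follow from (ET2.12A) through Lemma~\ref{L2.10}(ii), Lemma~\ref{L2.6}, and Lemma~\ref{L2.5}) yields
\[
\Exp^*_x\!\bigl[\E^{\delta_0\tau^*}\bigr]\;\le\;\frac{(\tilde\Psi/\Psi)(x)}{\inf_{\partial\sB}(\tilde\Psi/\Psi)}\qquad\forall\,x\in\sB^c\,.
\]
Because $\bar\sB$ is compact and hence petite for the nondegenerate diffusion $X^*$, this Kendall-type exponential-moment-of-hitting-time bound delivers geometric ergodicity by the standard Meyn--Tweedie criterion.

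For the reverse implication, suppose $X^*$ is geometrically ergodic. Then $X^*$ is regular, so Lemma~\ref{L2.5} implies that $\sM$ is a genuine martingale, and the proof of Lemma~\ref{L2.4} supplies the pathwise identity $\E^{\int_0^{t\wedge\uptau_n}[f-\Lambda(f)]\,\D{s}}\,\Psi(X_{t\wedge\uptau_n})=\Psi(x)\,\sM_{t\wedge\uptau_n}$. Choose an open ball $\sB$ and $\delta>0$ for which geometric ergodicity provides $\sup_{x\in\bar\sB^c\cap K}\Exp^*_x[\E^{\delta\tau^*}]<\infty$ for every compact $K$. Cameron--Martin--Girsanov applied at the bounded stopping time $S\df\tau\wedge t\wedge\uptau_n$, with $\tau\df\tau(\sB^c)$ for $X$, then gives
\[
\Exp_x\!\Bigl[\E^{\int_0^S[f-\Lambda(f)+\delta]\,\D{s}}\Bigr]\;=\;\Psi(x)\,\Exp^*_x\bigl[\E^{\delta S}\,\Psi^{-1}(X^*_S)\bigr]\,.
\]
Since $\Psi$ is inf-compact by Lemma~\ref{L2.1}(d), $\Psi^{-1}$ is globally bounded by $(\inf_{\Rd}\Psi)^{-1}$, so the integrand on the right is dominated by $(\inf_{\Rd}\Psi)^{-1}\E^{\delta(\tau^*\wedge t)}$. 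Sending $n\to\infty$ (dominated convergence from $S\le t$) followed by $t\to\infty$ (monotone convergence on the left, dominated convergence on the right using $\tau^*<\infty$ a.s.\ together with the finite exponential moment) yields (ET2.12A).

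The main obstacle is the careful double localization in the reverse direction: Cameron--Martin--Girsanov is only directly applicable at bounded stopping times, so one must first localize by both $t$ and $\uptau_n$ and then pass to $n\to\infty$ (using regularity of $X^*$) strictly before $t\to\infty$ (using recurrence). A secondary subtlety in the forward direction is the choice of framework, since $\tilde\Psi/\Psi$ need not be inf-compact; geometric ergodicity is accordingly obtained via the Kendall-type exponential moment of return times to the petite set $\bar\sB$ rather than through a classical drift inequality with compact sublevel sets, relying on the fact that for a nondegenerate diffusion every compact set is petite.
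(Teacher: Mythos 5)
Your proof is correct and follows essentially the same route as the paper: the necessity direction builds the Lyapunov function $\Tilde\Psi/\Psi$ from a solution of the MPE for $f-h$ and exploits the drift identity \eqref{E-Lg*Id}, while the sufficiency direction transfers the exponential moment of $\uuptau$ from $\Prob^*_x$ back to $\Prob_x$ via the Girsanov/Feynman--Kac identity \eqref{ET2.13F} with $\inf_{\Rd}\Psi>0$. The only differences are cosmetic: you use a general $h\in\Cc_c$ where the paper uses $\gamma\Ind_{B_r}$, you phrase the Foster--Lyapunov step through a Kendall-type return-time moment instead of citing the drift inequality \eqref{EC2.14A} directly, and you spell out the double localization in $t$ and $\uptau_n$ that the paper's derivation of \eqref{EC2.14B} leaves implicit.
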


\begin{proof}
We first show necessity.
By Theorem~\ref{T2.12},
for any $r>0$, we have $\delta\df \Lambda(f)-\Lambda(f-\gamma\Ind_{B_r})>0$.
Let $\Tilde\Psi$ be the positive solution
of the MPE
\begin{equation*}
\Lg\Tilde\Psi \;=\; \bigl(\Lambda(f-\gamma\Ind_{B_r})-f
+\gamma\Ind_{B_r})\,\widetilde\Psi\,.
\end{equation*}
With $\Tilde\Lyap=\Tilde\Psi\Psi^{-1}$, we have
\begin{align}\label{EC2.14A}
\Lg^* \Tilde\Lyap &\;=\; \bigl(\Lambda(f-\gamma\Ind_{B_r})-f
+\gamma\Ind_{B_r}-\Lambda(f)+f)\,\Tilde\Lyap\nonumber\\[3pt]
&\;=\; \bigl(\gamma\Ind_{B_r}-\delta)\,\Tilde\Lyap\,.
\end{align}
It follows by Lemma~\ref{L2.10}\,(ii) that
$\inf_{\Rd}\,\Tilde\Lyap>0$.
Thus, the Foster--Lyapunov equation in \eqref{EC2.14A} implies
that \eqref{E-sde*} is geometrically ergodic.

Next suppose that $X^*$ is geometrically ergodic.
This implies that for some open ball $\sB\subset\Rd$ and a constant $\delta>0$
we have
$\Exp^*_x\bigl[\E^{\delta\uuptau}\bigr]<\infty$ for all $x\in\sB^c$,
where $\uuptau=\uptau(\sB^c)$.
We may select $\sB$ so that $\sB\supset\sK$ in Definition~\ref{D1.1}.
Similarly to \eqref{ET2.13F} we obtain
\begin{equation}\label{EC2.14B}
\Exp_x\left[\E^{\int_0^{\uuptau\wedge \uptau_n}
[f(X_s)-\Lambda(f)+\delta]\,\D{s}}\,\Psi(X_{\uuptau\wedge\uptau_n})\right]
\;=\; \Psi(x) \Exp_x^*\bigl[\E^{\delta(\uuptau\wedge \uptau_n)}\bigr]\,.
\end{equation}
Thus \eqref{ET2.12A} follows by letting $n\to\infty$ in
\eqref{EC2.14B}, and using the fact that $\inf_{\Rd}\,\Psi>0$.
This completes the proof.
\qed
\end{proof}

%%%%%%%%%%%%%%%%%%%%%%%%%%%%%%%%%%%%%%%%%%%%%%%%%%%%%%%%%%%%%%%%%%%%%%%%%%%%%%%%
\begin{lemma}\label{L2.15}
Under {\upshape(H1)--(H2)} the ground state diffusion
in \eqref{E-sde*} is geometrically ergodic.
\end{lemma}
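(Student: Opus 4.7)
The plan is to verify hypothesis \eqref{ET2.12A} of Corollary~\ref{C2.14} directly by means of (H2), and then invoke that corollary to conclude. The Lyapunov function is suggested by the form of (H2): take $W\df\E^{\psi_0}$, and aim to show that for some $\delta>0$ and sufficiently large $R$ the pointwise inequality $(\Lg+f-\Lambda(f)+\delta)W\le 0$ holds on $B_R^c$.

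A direct chain-rule computation yields
\begin{equation*}
\Lg W + \bigl(f-\Lambda(f)+\delta\bigr)W \;=\; W\cdot\bigl[\tfrac{1}{2}a^{ij}\partial_{ij}\psi_0
+\langle b,\grad\psi_0\rangle+\tfrac{1}{2}\langle\grad\psi_0,a\grad\psi_0\rangle
+f-\Lambda(f)+\delta\bigr]\,.
\end{equation*}
The bracket differs from the left hand side of \eqref{E-H2} only by the additive constant $\delta-\Lambda(f)$, and hence tends to $-\infty$ as $\abs{x}\to\infty$ by (H2). Fixing any $\delta>0$, the bracket is therefore nonpositive on $B_R^c$ for all sufficiently large $R$.

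From this pointwise inequality, the next step is to apply It\^o's formula to the process
$Y_t\df \E^{\int_{0}^{t}[f(X_s)-\Lambda(f)+\delta]\,\D{s}}\,W(X_t)$
stopped at $t\wedge\uuptau\wedge\uptau_n$, where $\uuptau\df\uptau(B_R^c)$. This makes $Y_{t\wedge\uuptau\wedge\uptau_n}$ a nonnegative supermartingale started from any $x\in\Bar{B}_R^c$, so that $\Exp_x\bigl[Y_{t\wedge\uuptau\wedge\uptau_n}\bigr]\le W(x)$. Taking the combined limit $t,n\to\infty$---justified by recurrence of $X$ from (H1), which gives $\uuptau<\infty$ a.s., the affine growth condition (A2), which gives $\uptau_n\nearrow\infty$ a.s., and Fatou's lemma applied to the nonnegative integrand---produces
\begin{equation*}
\Exp_x\Bigl[\E^{\int_{0}^{\uuptau}[f(X_s)-\Lambda(f)+\delta]\,\D{s}}\,W(X_{\uuptau})\Bigr]
\;\le\; W(x)\qquad\forall\, x\in\Bar{B}_R^c\,.
\end{equation*}
Since $W(X_{\uuptau})\ge\min_{\partial B_R}\E^{\psi_0}>0$, this delivers \eqref{ET2.12A} with $\sB=B_R$, and Corollary~\ref{C2.14} then closes the argument.

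The main technical subtlety is handling the double limit: a priori the exponential factor could be very large, but the stopped supermartingale already supplies a uniform upper bound $W(x)$ independent of both $t$ and $n$, so Fatou transfers the inequality cleanly to the limit. One should also note that existence of the strong solution $X^*$ of \eqref{E-sde*}, implicit in the statement of geometric ergodicity, is already encoded in the argument via Lemma~\ref{L2.10}\,(ii) combined with the recurrence characterization in Theorem~\ref{T1.5}.
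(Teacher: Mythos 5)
Your proof is correct and follows essentially the same route as the paper's: take $\E^{\psi_0}$ as the test function, verify the Foster--Lyapunov inequality on the complement of a large ball using (H2), apply It\^o plus Fatou to establish hypothesis \eqref{ET2.12A}, and invoke Corollary~\ref{C2.14}. The only cosmetic difference is that the paper goes on to show $\inf_{\Rd}\E^{\psi_0}>0$ via the inf-compactness argument of Lemma~\ref{L2.1}, whereas you bound $W(X_{\uuptau})$ by the boundary minimum $\min_{\partial B_R}W>0$, which is sufficient and slightly more economical.
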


\begin{proof}
By (H2) there exist a nonnegative constant $\kappa_0$ and an
inf-compact function $g_0\colon\Rd\to\RR_+$ such that
\begin{equation*}
\tfrac{1}{2}\,a^{ij}\partial_{ij} \psi_0 + \langle b,\grad \psi_0\rangle + 
\tfrac{1}{2}\,\langle\grad\psi_0,a\grad \psi_0\rangle +f
\;=\;\kappa_0 - g_0\,.
\end{equation*}
Thus, with $\Psi_0=\E^{\psi_0}$ we obtain
\begin{equation*}
\Lg \Psi_0 \;=\; (\kappa_0 - g_0 - f) \Psi_0\,.
\end{equation*}
The argument used in the proof of Lemma~\ref{L2.1} shows that
$\Psi_0$ is inf-compact, and therefore $\inf_{\Rd}\,\Psi_0>0$.
Let $\sB$ be a ball such that $g_0 \ge \kappa_0-\Lambda(f)+\delta$ on
$\sB^c$, for some $\delta>0$.
By It\^o's formula and Fatou's lemma we obtain, with $\uuptau=\uptau(\sB^c)$
that
\begin{equation*}
\Psi_0(x) \;\ge\; \Bigl(\inf_{\Rd}\;\Psi_0\Bigr)\,
\Exp_{x}\,\Bigl[\E^{\int_{0}^{\uptau}
[f(X_{s})-\Lambda(f)+\delta]\,\D{s}}\Bigr]
\qquad\forall\, x\in \Bar{\sB}^c\,.
\end{equation*}
The result then follows by Corollary~\ref{C2.14}.
\qed
\end{proof}

We conclude this section with the proof of Theorem~\ref{T1.8}.

%%%%%%%%%%%%%%%%%%%%%%%%%%%%%%%%%%%%%%%%%%%%%%%%%%%%%%%%%%%%%%%%%%%%%%%%%%%%%%%%
\begin{pT1.8}
We claim that there exist a bounded open ball $\sB_\circ$
and a positive constant $\delta_\circ$ such that,
with $\uuptau_\circ=\uptau(\sB_\circ^c)$
we have
\begin{equation*}
\Exp_{x}\Bigl[\E^{\int_{0}^{\uuptau_\circ}
[f(X_{t})-\Lambda(f)+\delta_\circ]\,\D{t}}\Bigr]\;<\;\infty
\qquad\forall\,x\in \sB_\circ^c\,.
\end{equation*}

Under the hypothesis for some $\varepsilon>0$,
$\varepsilon f$ is near monotone relative to
$\Lambda\bigl((1+\varepsilon)f\bigr)-\Lambda(f)$ and so $\varepsilon(f-\Lambda(f))$ is
near-monotone relative to
$\Lambda\bigl((1+\varepsilon)f\bigr)-(1+\varepsilon)\Lambda(f)$.
Thus, we can select some $\delta_\circ$ such
that $f-\Lambda(f)$ is near monotone relative to
$\theta(\varepsilon)$ which is defined as
\begin{equation*}
\theta(\varepsilon)\;\df\;\frac{1}{\varepsilon}\Bigl(
(1+\varepsilon)\delta_\circ +
\Lambda\bigl((1+\varepsilon)f\bigr)-(1+\varepsilon)\Lambda(f)\bigr)\,.
\end{equation*}
Let $\sB_\circ$ be a bounded
ball centered at the origin
such that $f-\Lambda\ge\theta(\varepsilon)$ on $\sB_\circ^c$.
Let $F(t,x) \df \E^{\theta(\varepsilon)t}\Psi$.
Then $\nicefrac{\partial}{\partial t} F(t,x)+ \Lg F(t,x)\le0$
for all $x\in\sB_\circ^c$, and
a straightforward application of the It\^o formula shows that
\begin{equation}\label{PT1.8A}
\Exp_x \bigl[\E^{\theta(\varepsilon)\uuptau_\circ}\bigr]\;\le\;
\Psi(x)\,\Bigl(\inf_{\partial \sB_\circ}\;\Psi\Bigr)^{-1}
\qquad\forall\,x\in \sB_\circ^c\,.
\end{equation}
We write
\begin{equation*}
f-\Lambda(f)+\delta_\circ \;=\; \frac{\varepsilon}{1+\varepsilon}\theta(\varepsilon)
+ \biggl(f - \frac{\Lambda\bigl((1+\varepsilon)f\bigr)}{1+\varepsilon}\biggr)\,,
\end{equation*}
and apply H\"older's inequality to obtain
\begin{equation}\label{PT1.8B}
\Exp_{x}\Bigl[\E^{\int_{0}^{\uuptau_\circ}
[f(X_{t})-\Lambda(f)+\delta_\circ]\,\D{t}}\Bigr]\;\le\;
\Bigl(\Exp_x \bigl[\E^{\theta(\varepsilon)\uuptau_\circ}\bigr]
\Bigr)^{\frac{\varepsilon}{1+\varepsilon}}
\,\biggr(\Exp_{x}\Bigl[\E^{\int_{0}^{\uuptau_\circ}
[(1+\varepsilon)f(X_{t})-\Lambda((1+\varepsilon)f)]\,\D{t}}\Bigr]
\biggr)^{\frac{1}{1+\varepsilon}}\,.
\end{equation}
Let $\Psi_\epsilon$ denote a positive solution of \eqref{E2-Pois}
for $(1+\varepsilon)f$ and $\Lambda((1+\varepsilon)f)$.
Then by Fatou's lemma we have
\begin{equation}\label{PT1.8C}
\Exp_{x}\Bigl[\E^{\int_{0}^{\uuptau_\circ}
[(1+\varepsilon)f(X_{t})-\Lambda((1+\varepsilon)f)]\,\D{t}}\Bigr]
\;\le\; \Psi_\varepsilon(x)
\Bigl(\inf_{\partial \sB_\circ}\;\Psi_\varepsilon\Bigr)^{-1}
\qquad\forall\,x\in \sB_\circ^c\,.
\end{equation}
The claim then follows by \eqref{PT1.8A}, \eqref{PT1.8B}, and \eqref{PT1.8C}.

Let $\varepsilon>0$ be as in (H3) and $\Psi_\varepsilon$
denote the solution of the MPE for $(1+\varepsilon)f$.
It is straightforward to show using H\"older's inequality
together with the stochastic representation equation in \eqref{ET2.13C}
that $F\df\Psi_\varepsilon \Psi^{-1}$ is inf-compact.
Indeed if $\varepsilon f$ is near monotone relative to
$\Lambda\bigl((1+\varepsilon) f\bigr)-\Lambda(f)$,
this implies that for some ball $\sB$ and constants $\delta>0$
and $\gamma>0$ we have
\begin{equation}\label{PT1.8D}
\begin{split}
 f-\Lambda(f) &\;\ge\;\delta\,,\\[5pt]
(1+\varepsilon)f -\Lambda\bigl((1+\varepsilon) f\bigr)&\;\ge\;
(1+\gamma) \bigl(f-\Lambda(f)\bigr)
\end{split}
\end{equation}
on $\sB^c$,
where we also use the property that $f$ is near monotone relative
to $\Lambda(f)$.
By \eqref{PT1.8D} and Jensen's inequality, and letting
$\uuptau=\uuptau(\sB^{c})$, we obtain
\begin{equation}\label{PT1.8E}
\biggl(\Exp_{x}\Bigl[\E^{\int_{0}^{\uuptau}
[f(X_{t})-\Lambda(f)]\,\D{t}}\Bigr]\biggr)^{1+\gamma}\;\le\;
\Exp_{x}\Bigl[\E^{\int_{0}^{\uuptau}
[(1+\varepsilon)f(X_{t})-\Lambda((1+\varepsilon)f)]\,\D{t}}\Bigr]\qquad
\forall\,x\in\sB^c\,.
\end{equation}
Therefore, by \eqref{ET2.13C} and \eqref{PT1.8E} we have
\begin{equation}\label{PT1.8F}
\frac{\Psi_\varepsilon(x)}{\Psi(x)}\;\ge\;
M_\sB\,\bigl(\Psi(x)\bigr)^\gamma \qquad\forall\,x\in\sB^c\,,
\end{equation}
with
\begin{equation*}
M_\sB\;\df\;\biggl(\inf_{\partial\sB}\,\Psi_\varepsilon\biggr)
\biggl(\sup_{\partial\sB}\,\Psi\biggr)^{-1-\gamma}\,.
\end{equation*}
By \eqref{E-Lg*Id} and \eqref{PT1.8D} we obtain 
\begin{align}\label{PT1.8G}
\Lg^*F(x) &\;=\;
\bigl(\Lambda\bigl((1+\varepsilon) f\bigr)-\Lambda(f)-\varepsilon f \bigr)\,F(x)
\nonumber\\[5pt]
&\;\le\; -\gamma \bigl(f-\Lambda(f)\bigr)\,F(x)\nonumber\\[5pt]
&\;\le\; -\gamma \delta\,F(x)
\qquad\forall\,x\in\sB^c\,.
\end{align}
The Foster--Lyapunov equation in \eqref{PT1.8G}
implies of course that
the diffusion is geometrically ergodic.
The estimate in \eqref{ET1.8A} is  obtained as the one in
\eqref{E-erg} using \eqref{PT1.8F} and \eqref{PT1.8G}.
\qed\end{pT1.8}

%%%%%%%%%%%%%%%%%%%%%%%%%%%%%%%%%%%%%%%%%%%%%%%%%%%%%%%%%%%%%%%%%%%%%%%%%%%%%%%%
\section{Proofs of the results on the control problem}\label{S3}
%%%%%%%%%%%%%%%%%%%%%%%%%%%%%%%%%%%%%%%%%%%%%%%%%%%%%%%%%%%%%%%%%%%%%%%%%%%%%%%%

%%%%%%%%%%%%%%%%%%%%%%%%%%%%%%%%%%%%%%%%%%%%%%%%%%%%%%%%%%%%%%%%%%%%%%%%%%%%%%%%
For the proof of Proposition~\ref{P1.1} we need some auxiliary lemmas.
The lemma which follows is the nonlinear Dirichlet eigenvalue
problem studied in \cite{Quaas-08a}, combined with a result
from \cite[Lemma~2.1]{Biswas-11a}.

%%%%%%%%%%%%%%%%%%%%%%%%%%%%%%%%%%%%%%%%%%%%%%%%%%%%%%%%%%%%%%%%%%%%%%%%%%%%%%%%
\begin{lemma}\label{L3.1}
For each $n\in\NN$,
there exists a unique pair
$(\widehat{V}_{n},\Hat\lambda_n)
\in\bigl(C^2(B_{n})\cap\Cc(\Bar{B}_{n})\bigr)\times\RR$, satisfying
$\widehat{V}_{n}>0$ on $B_{n}$,
$\widehat{V}_{n}=0$ on $\partial B_{n}$, and $\widehat{V}_{n}(0)=1$,
which solves
\begin{equation*}%\label{EL3.1A}
\min_{u\in\Act}\; \bigl[\Lg^{u} \widehat{V}_{n}(x) + c(x,u)\,\widehat{V}_{n}(x)\bigr]
\;=\; \Hat\lambda_n\,\widehat{V}_{n}(x)\,,\qquad x\in B_{n}\,.
\end{equation*}
Moreover, $\Hat\lambda_n<\Hat\lambda_{n+1}< \Lambda^*$ for all $n\in\NN$.
\end{lemma}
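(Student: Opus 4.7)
The plan is, first, to obtain existence and uniqueness of the Dirichlet eigenpair from the theory of principal eigenvalues of fully nonlinear uniformly elliptic operators developed in \cite{Quaas-08a}, and then to establish the two strict inequalities by, respectively, a comparison argument on nested domains and an It\^o-calculus subsolution inequality bootstrapped by monotonicity in $n$.

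The operator $F[\varphi](x)\df\min_{u\in\Act}\bigl[\Lg^u\varphi(x)+c(x,u)\varphi(x)\bigr]$ is uniformly elliptic by (A3), concave in $(\varphi,\grad\varphi,D^2\varphi)$ as an infimum of affine maps, and has continuous coefficients on $\Bar B_{n}\times\Act$. Theorem~1.1 of \cite{Quaas-08a} then yields a unique $\Hat\lambda_n\in\RR$ together with a positive $\widehat V_n\in\Sob^{2,p}(B_n)\cap\Cc(\Bar B_n)$, $p>d$, solving the eigenvalue problem with zero Dirichlet data, unique up to a positive multiplicative constant; the normalization $\widehat V_n(0)=1$ pins down the representative, and the elliptic-regularity machinery summarized in Section~\ref{S1.4} (together with Evans--Krylov type interior estimates) promotes $\widehat V_n$ to $\Cc^2(B_n)$. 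The strict monotonicity $\Hat\lambda_n<\Hat\lambda_{n+1}$ is the direct analog of Lemma~\ref{L2.2}(a) for the concave Bellman operator $F$, and again follows from the strict monotonicity of the principal Dirichlet eigenvalue with respect to the domain established in \cite{Quaas-08a}. The key observation is that the restriction of $\widehat V_{n+1}$ to $B_n$ solves $F[\varphi]=\Hat\lambda_{n+1}\varphi$ while being \emph{strictly positive} on $\partial B_n$, so a comparison-and-scaling argument (slide a multiple of $\widehat V_n$ under $\widehat V_{n+1}|_{B_n}$ until they first touch at an interior point) together with the strong maximum principle for the concave operator $F$ rules out $\Hat\lambda_n=\Hat\lambda_{n+1}$.

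To bound $\Hat\lambda_n$ by $\Lambda^*$, fix an arbitrary $U\in\Uadm$ and $x\in B_n$. From $F[\widehat V_n]=\Hat\lambda_n\widehat V_n$ on $B_n$ it follows that $\Lg^u\widehat V_n+\bigl(c(\cdot,u)-\Hat\lambda_n\bigr)\widehat V_n\ge0$ on $B_n$ for every $u\in\Act$. Applying the It\^o--Krylov formula to the process
\[
M_t\;\df\;\E^{\int_0^{t\wedge\uptau_n}[c(X_s,U_s)-\Hat\lambda_n]\,\D s}\,\widehat V_n(X_{t\wedge\uptau_n})
\]
shows it is a submartingale under $\Prob^U_x$, and combined with $\widehat V_n\equiv0$ on $\partial B_n$ this yields
\[
\widehat V_n(x)\;\le\;\bnorm{\widehat V_n}_{\infty,B_n}\,\Exp^U_x\Bigl[\E^{\int_0^T[c(X_t,U_t)-\Hat\lambda_n]\,\D t}\Bigr]\qquad\forall\,T>0.
\]
Taking logarithms, dividing by $T$, and sending $T\to\infty$ gives $\Hat\lambda_n\le\Lambda^U_x$. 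Since $U$ and $x\in B_n$ are arbitrary, $\Hat\lambda_n\le\Lambda^*_x$ for every $x\in B_n$. Combined with the strict monotonicity of the previous paragraph together with the observation that each $x\in\Rd$ belongs to $B_m$ for all large enough $m$, this gives
\[
\Hat\lambda_n\;<\;\Hat\lambda_{n+1}\;\le\;\lim_{m\to\infty}\Hat\lambda_m\;\le\;\inf_{x\in\Rd}\Lambda^*_x\;=\;\Lambda^*.
\]

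The main obstacle will be the strict inequality $\Hat\lambda_n<\Hat\lambda_{n+1}$: the subsolution bound and the bootstrap to $\Lambda^*$ are essentially routine It\^o calculus, but for the fully nonlinear Bellman operator $F$ one must either invoke the principal-eigenvalue machinery of \cite{Quaas-08a} directly or carefully adapt the linear comparison argument using concavity of $F$ and the strong maximum principle. Once that is in hand, the remainder of the argument is straightforward.
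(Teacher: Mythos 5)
Your proof is correct and follows essentially the same route as the paper, which delegates existence and uniqueness of the Dirichlet eigenpair to Quaas--Sirakov, cites \cite[Remark~3]{Quaas-08a} (or the strong maximum principle, as you use) for the strict domain monotonicity, and relies on \cite[Lemma~2.1]{Biswas-11a} for the It\^o/Feynman--Kac estimate $\Hat\lambda_n\le\Lambda^*_x$ ($x\in B_n$), which combined with strict monotonicity yields $\Hat\lambda_n<\Lambda^*$. Two small cosmetic remarks: since $F=\min_{u}[\cdot]$ is concave, the Quaas--Sirakov framework actually produces two principal demi-eigenvalues, and the one you want is $-\Hat\lambda_n$ with negative eigenfunction $-\widehat V_n$---the paper flags this sign convention explicitly while your write-up elides it, though the conclusion is unchanged; and Evans--Krylov is unnecessary here, because $a$ does not depend on $u$, so once $\Sob^{2,p}\hookrightarrow\Cc^{1,r}$ makes the minimized first- and zeroth-order terms H\"older continuous, linear Schauder theory (as in Section~\ref{S1.4}) already gives $\widehat V_n\in\Cc^2(B_n)$.
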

Let us add here that, as shown in \cite{Quaas-08a}, the non-linear
elliptic operator in Lemma~\ref{L3.1} has two principal eigenvalues.
In the setting of \cite{Quaas-08a},
$-\hat\lambda_n$ is the principal eigenvalue in $B_n$
with negative principal eigenfunction $-\widehat{V}_n$.
Then strict monotonicity of $\hat\lambda_n$ follows from 
\cite[Remark~3]{Quaas-08a} (or by the strong maximum principle).
%%%%%%%%%%%%%%%%%%%%%%%%%%%%%%%%%%%%%%%%%%%%%%%%%%%%%%%%%%%%%%%%%%%%%%%%%%%%%%%%
\begin{lemma}\label{L3.2}
Suppose that $\upsigma$ is bounded and
\begin{equation*}
\max_{u\in\Act}\;\frac{\bigl\langle b(x,u),\, x\bigr\rangle^{+}}{\abs{x}}
\;\xrightarrow[\abs{x}\to\infty]{}\;0\,.
\end{equation*}
Then,
\begin{equation*}
\limsup_{t\to\infty}\; \frac{1}{t}\;\Exp_{x}^{U}\bigl[\abs{X_{t}}\bigr]\;=\;0
\qquad\forall\,U\in\Uadm\,.
\end{equation*}
\end{lemma}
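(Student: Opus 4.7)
The plan is to apply It\^o's formula to the test function $V(x)\df 1+\abs{x}^2$ and then compare the resulting integral inequality for $\Exp_{x}^{U}[V(X_t)]$ with a quadratic-in-$t$ ansatz. A direct computation gives
\[
\Lg^u V(x) \;=\; \trace a(x) + 2\langle b(x,u),x\rangle.
\]
Boundedness of $\upsigma$ bounds $\trace a$ by a constant. For any $\epsilon>0$, the hypothesis on the radial drift produces $R_\epsilon>0$ with $\langle b(x,u),x\rangle^+ \le \epsilon \abs{x} \le \epsilon\sqrt{V(x)}$ for all $\abs{x}\ge R_\epsilon$, uniformly in $u\in\Act$, while on the compact set $\Bar{B}_{R_\epsilon}\times\Act$ the continuous function $\langle b,x\rangle^+$ is bounded. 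Combining these yields the key pointwise estimate
\[
\Lg^u V(x)\;\le\; \kappa(\epsilon) + 2\epsilon\sqrt{V(x)} \qquad \forall\,(x,u)\in\Rd\times\Act,
\]
for some finite constant $\kappa(\epsilon)$ (which blows up as $\epsilon\searrow 0$).

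Next, fix $U\in\Uadm$ and apply It\^o's formula to $V(X_t)$, localizing at $\uptau_n=\uptau(B_n)$. Because $\grad V$ and $\upsigma$ are bounded on $B_n$, the stochastic integral up to $\uptau_n$ is a true martingale, so with $F_n(t)\df\Exp_{x}^{U}[V(X_{t\wedge\uptau_n})]$,
\[
F_n(t)\;\le\; V(x) + \kappa(\epsilon)\,t + 2\epsilon\,\Exp_{x}^{U}\biggl[\int_0^{t\wedge\uptau_n}\sqrt{V(X_s)}\,\D{s}\biggr].
\]
Bounding the last integral by $\int_0^t \sqrt{V(X_{s\wedge\uptau_n})}\,\D{s}$, then applying Fubini and Jensen's inequality,
\[
F_n(t)\;\le\; V(x) + \kappa(\epsilon)\,t + 2\epsilon\int_0^t\sqrt{F_n(s)}\,\D{s}.
\]
Note $F_n$ is finite and continuous in $t$, since $V(X_{t\wedge\uptau_n})\le 1+n^2$.

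Now introduce the ansatz $\Bar{G}(t)\df (A+2\epsilon t)^2$ with $A\df\max\{\sqrt{V(x)},\,\kappa(\epsilon)/(2\epsilon)\}$. A direct algebraic check shows $\Bar{G}(t)\ge V(x)+\kappa(\epsilon)t+2\epsilon\int_0^t\sqrt{\Bar{G}(s)}\,\D{s}$. Subtracting from the inequality for $F_n$ and using that on the set $\{F_n>\Bar{G}\}$ one has
\[
\sqrt{F_n(s)}-\sqrt{\Bar{G}(s)}\;=\;\frac{F_n(s)-\Bar{G}(s)}{\sqrt{F_n(s)}+\sqrt{\Bar{G}(s)}}\;\le\;\frac{(F_n(s)-\Bar{G}(s))^+}{A},
\]
(since $\sqrt{\Bar{G}}\ge A$), we obtain
\[
(F_n(t)-\Bar{G}(t))^+ \;\le\; \frac{2\epsilon}{A}\int_0^t (F_n(s)-\Bar{G}(s))^+\,\D{s}.
\]
Gronwall's inequality, together with the initial condition $(F_n(0)-\Bar{G}(0))^+=0$, forces $F_n(t)\le \Bar{G}(t)=(A+2\epsilon t)^2$ for all $t\ge0$ and $n\in\NN$. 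Letting $n\to\infty$, using $\uptau_n\to\infty$ a.s.\ (which is standard under (A1)--(A3)) together with Fatou's lemma and then Jensen's inequality, we conclude
\[
\Exp_{x}^{U}[\abs{X_t}] \;\le\; \sqrt{\Exp_{x}^{U}[V(X_t)]}\;\le\; A + 2\epsilon\, t.
\]
Dividing by $t$ and taking $t\to\infty$ gives $\limsup_{t\to\infty}\Exp_{x}^{U}[\abs{X_t}]/t\le 2\epsilon$, and since $\epsilon>0$ is arbitrary the limsup equals $0$. The principal technical obstacle is the nonlinear (square-root) character of the integral inequality for $F_n$, which rules out a direct Gronwall application; the quadratic ansatz $\Bar{G}$ is the scale-correct comparator, after which the Gronwall step on $(F_n-\Bar{G})^+$ becomes routine.
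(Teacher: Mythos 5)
Your proof is correct, and its overall skeleton agrees with the paper's: apply It\^o to $1+\abs{x}^2$, use the $\sorder(\abs{x})$ hypothesis on $\langle b(x,u),x\rangle^+$ to produce a drift bound with an arbitrarily small slope, and integrate the resulting square-root Gronwall-type inequality. The difference lies in how the nonlinearity is handled. The paper parametrizes the drift bound as $\max_u\langle b(x,u),x\rangle^+\le C_\varepsilon(1+\varepsilon\abs{x})$ and needs the joint decay $\varepsilon C_\varepsilon\to 0$; it then passes directly (without explicit localization) to the differential inequality $\dot\varphi(t)\le C_\varepsilon'(1+\varepsilon\sqrt{\varphi(t)})$ and integrates it exactly via the substitution $t\mapsto\sqrt{\varepsilon^{-2}+\varphi(t)}$, yielding the slope $\varepsilon C_\varepsilon'\to 0$. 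You instead split off a fixed slope, writing $\Lg^uV\le\kappa(\epsilon)+2\epsilon\sqrt{V}$ with $\kappa(\epsilon)$ allowed to blow up, localize at $\uptau_n$ so the stochastic integral is a genuine martingale, compare the resulting $F_n$ with the quadratic ansatz $\Bar{G}(t)=(A+2\epsilon t)^2$, and close the argument with Gronwall applied to $(F_n-\Bar{G})^+$. Your version is slightly more scrupulous (the localization step and the Fatou/Jensen passage to the limit are spelled out) and your parameterization avoids tracking $\varepsilon C_\varepsilon\to 0$; the paper's change-of-variables is terser once one is willing to take the ODE comparison on faith. Both arguments are valid and lead to the same linear-in-$t$ bound with vanishing slope.
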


\begin{proof}
We claim that for each $\varepsilon>0$ there exists
a positive constant $C_{\varepsilon}$ such that
$\varepsilon C_{\varepsilon}\to 0$, as $\varepsilon\searrow0$, and
\begin{equation*}
\max_{u\in\Act}\;\bigl\langle b(x,u),\, x\bigr\rangle^{+}\;\le\; C_{\varepsilon}
\bigl(1+\varepsilon\,\abs{x}\bigr)\qquad \forall\,x\in\Rd\,.
\end{equation*}
Indeed, if $f$ is nonnegative
and $f(x)\in\sorder(\abs{x})$, we
write
\begin{align*}
f(x) &\;\le\; \sup_{\abs{x}<R}\,f(x)
+ \biggl(\sup_{\abs{x}\ge R}\,\frac{f(x)}{\abs{x}}\biggr) \abs{x}\\[5pt]
&\;=\; M_{R} + \varepsilon_{R} \abs{x}\\[5pt]
&\;<\; 1+M_{R} + \varepsilon_{R} \abs{x}\\[5pt]
&\;=\; (1+M_{R})\Bigl( 1 + \tfrac{\varepsilon_{R}}{1+M_{R}} \abs{x}\Bigr)\,,
\end{align*}
which proves the claim since $\varepsilon_{R}\searrow0$ as $R\nearrow\infty$.

Applying It\^o's formula in \eqref{E2.2}, under any control $U\in\Uadm$, we have
\begin{align}\label{EL3.2A}
\Exp_{x}^{U}\bigl[\abs{X_{t}}^{2}\bigr]&\;\le\; \abs{x}^{2}
+ \int_{0}^{t} \Exp_{x}^{U}\bigl[ 2\, \langle b(X_{s},U_{s})\,, X_{s}\rangle^{+}
+ \trace\bigl(a(X_{s})\bigr)\bigr]\,\D{s}
\nonumber\\[5pt]
&\;\le\; \abs{x}^{2} + C_{\varepsilon}'\int_{0}^{t}
\bigl(1+\varepsilon\,\Exp_{x}^{U}[\abs{X_{s}}]\bigr)\,\D{s}\,,
\end{align}
where $C_{\varepsilon}'$ also satisfies
$\varepsilon C_{\varepsilon}'\to 0$, as $\varepsilon\searrow0$.
Let $\varphi(t)$ denote the right hand side of \eqref{EL3.2A}.
Then
\begin{equation*}
\Dot\varphi(t)\;\le\;
C_{\varepsilon}'\bigl(1 + \varepsilon \sqrt{\varphi(t)}\bigr)\,,
\end{equation*}
which implies that
\begin{equation}\label{EL3.2B}
\frac{\Dot\varphi(t)}{\sqrt{\varepsilon^{-2} + \varphi(t)}}
\;\le\; 2\,\varepsilon C_{\varepsilon}'\,.
\end{equation}
Integrating \eqref{EL3.2B}, we have
$\sqrt{\varepsilon^{-2} + \varphi(t)}\le \varepsilon C_{\varepsilon}'\, t
+ \sqrt{\varepsilon^{-2} + \abs{x}^2}$
and using \eqref{EL3.2A}, we obtain
\begin{align}\label{EL3.2C}
\Exp_{x}^{U}\bigl[\abs{X_{t}}\bigr]&\;\le\;\sqrt{ \varphi(t)}\nonumber\\[5pt]
&\;\le\;
\sqrt{\varepsilon^{-2} + \varphi(t)}\;\le\;\varepsilon C_{\varepsilon}'\, t
+\sqrt{\varepsilon^{-2} + \abs{x}^2}\,.
\end{align}
Since \eqref{EL3.2C} holds for all $\varepsilon>0$, the result follows.
\qed\end{proof}

We need the following lemma.

%%%%%%%%%%%%%%%%%%%%%%%%%%%%%%%%%%%%%%%%%%%%%%%%%%%%%%%%%%%%%%%%%%%%%%%%%%%%%%%%
\begin{lemma}\label{L3.3}
Let $\varphi\in \Sobl^{2,p}$, $p> d$, be a strong, positive solution of 
\begin{equation*}
\Lg \varphi + h \varphi \;=\; 0\,,
\end{equation*}
where $h\in L^\infty(\Rd)$.
Suppose that $b$ and $\upsigma$ are bounded and $\upsigma$ is
Lipschitz continuous. Then there exists a constant $\tilde{C}$ such that
\begin{equation*}
\sup_{x\in\Rd}\, \frac{\abs{\grad\varphi(x)}}{\varphi(x)}\; <\; \tilde{C}\,.
\end{equation*}
\end{lemma}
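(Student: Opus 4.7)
\smallskip
\noindent\textbf{Proof plan for Lemma~\ref{L3.3}.}
The plan is to combine the interior $W^{2,p}$ estimate \eqref{E-apriori} with Harnack's inequality \eqref{E-Harn}, exploiting translation invariance of the uniform bounds on the coefficients to get a constant $\tilde C$ independent of $x$. Since $\tfrac{\abs{\grad\varphi}}{\varphi}=\abs{\grad\log\varphi}$, it suffices to show that $\abs{\grad\varphi(x)}\le\tilde C\,\varphi(x)$ for every $x\in\Rd$.

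Fix $x_0\in\Rd$ and consider the ball $B_2(x_0)$. Since $b$ and $\upsigma$ are bounded (so $a=\upsigma\upsigma\transp$ is bounded) and $\upsigma$ is globally Lipschitz, and since we are in the uniformly nondegenerate setting in which the lemma is invoked, the constants in the a priori estimate, Sobolev embedding and Harnack's inequality, on balls of a fixed radius, can be taken independent of the center $x_0$. The equation $\Lg\varphi+h\varphi=0$ with $h\in L^\infty(\Rd)$ gives $\Lg\varphi=-h\varphi\in L^\infty(\Rd)$, so by \eqref{E-apriori}, applied on $B_2(x_0)\subset B_3(x_0)$,
\begin{equation*}
\bnorm{\varphi}_{\Sob^{2,p}(B_1(x_0))}\;\le\;C_1\,\Bigl(\bnorm{\varphi}_{\Lp^{p}(B_2(x_0))}+\bnorm{h\varphi}_{\Lp^p(B_2(x_0))}\Bigr)
\;\le\;C_2\,\bnorm{\varphi}_{\Lp^\infty(B_2(x_0))}\,,
\end{equation*}
with $C_1,C_2$ depending only on $d$, $p$, the uniform bounds on $a$, $b$, $h$, and the uniform ellipticity constant, hence independent of $x_0$. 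The compact embedding $\Sob^{2,p}(B_1(x_0))\hookrightarrow \Cc^{1,r}(\Bar{B}_1(x_0))$ for $p>d$ and $r<1-\tfrac{d}{p}$ (invoked in Section~\ref{S1.4}) then yields
\begin{equation*}
\abs{\grad\varphi(x_0)}\;\le\;\bnorm{\varphi}_{\Cc^{1,r}(\Bar{B}_1(x_0))}\;\le\;C_3\,\bnorm{\varphi}_{\Lp^\infty(B_2(x_0))}\,.
\end{equation*}

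Finally, applying Harnack's inequality \eqref{E-Harn} to the positive solution $\varphi$ of $\Lg\varphi+h\varphi=0$ on $B_3(x_0)$, one obtains a constant $C_H$, independent of $x_0$ by the same uniform bounds and translation invariance, such that $\bnorm{\varphi}_{\Lp^\infty(B_2(x_0))}\le C_H\,\varphi(x_0)$. Chaining these inequalities gives $\abs{\grad\varphi(x_0)}\le C_2 C_3 C_H\,\varphi(x_0)$, and taking $\tilde C\df C_2 C_3 C_H$ (independent of $x_0$) completes the proof. The main obstacle is ensuring that all three constants ($W^{2,p}$, embedding, and Harnack) can be taken uniformly in $x_0$; this reduces to the translation-invariant uniform bounds on $a$, $b$, $h$ together with uniform ellipticity, which are available under the hypotheses of the lemma (together with the uniform nondegeneracy that is in force when Lemma~\ref{L3.3} is used, e.g., via Assumption~\ref{A1.1}).
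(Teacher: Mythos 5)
Your proposal is correct and follows essentially the same route as the paper's proof: the a priori $W^{2,p}$ estimate combined with the Sobolev embedding to bound $\abs{\grad\varphi}$ locally by $\sup\varphi$, then Harnack's inequality to convert the sup into $\varphi(x_0)$, with the observation that bounded coefficients and uniform ellipticity make all three constants translation-invariant. No gaps.
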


\begin{proof}
Let $x\in\Rd$. By \eqref{E-Harn} we obtain
\begin{equation}\label{EL3.3A}
\varphi(y) \;\le\; C_H\, \varphi(z) \quad \text{for all\ } z, y\in B_2(x)\,.
\end{equation}
Since the coefficients $b$, $\upsigma$ and $h$ are bounded,
the Harnack constant $C_H$ in \eqref{EL3.3A} does not depend on $x$.
Again, applying \eqref{E-apriori} and the Sobolev embedding theorem
mentioned in the beginning of Section~\ref{S3}, we obtain
\begin{equation*}
\sup_{y\in B_1(x)}\,\abs{\grad\varphi(y)} \;\le\;
\tilde{C} \sup_{z\in B_2(x)}\,\varphi(z) \;\le\;
\tilde{C}\, C_H \inf_{z\in B_2(x)}\, \varphi(z) \;\le\;
 \tilde{C}\, C_H\varphi(x)\,,
\end{equation*}
where the constant $\tilde{C}$ does not depend on $x$.
Hence the result follows.
\qed\end{proof}

%%%%%%%%%%%%%%%%%%%%%%%%%%%%%%%%%%%%%%%%%%%%%%%%%%%%%%%%%%%%%%%%%%%%%%%%%%%%%%%%
\begin{theorem}\label{T3.4}
Suppose Assumption~\ref{A1.1} and \eqref{EA1.2} hold, and that
$c$ is near-monotone relative to $\Lambda^*$ and bounded.
Then
\begin{itemize}
\item[{\upshape(}i\/{\upshape)}]
There exists a solution $(V^*,\Lambda^*)\in\Cc^{2}(\RR^{d})\times\RR$,
satisfying $V^*(0)=1$ and $\inf_{\Rd}\,V^*>0$, to the
HJB equation
\begin{equation*}
\min_{u\in\Act}\;
\bigl[\Lg^{u} V^*(x) + c(x,u)\,V^*(x)\bigr] \;=\; \Lambda^*\,V^*(x)
\qquad\forall\,x\in\Rd\,.
\end{equation*}
\item[{\upshape(}ii\/{\upshape)}]
If $(V,\Lambda)\in\mathfrak{V}$ satisfies
\begin{equation}\label{ET3.4A}
\min_{u\in\Act}\;
\bigl[\Lg^{u} V(x) + c(x,u)\,V(x)\bigr] \;=\; \Lambda\,V(x)
\qquad\forall\,x\in\Rd\,,
\end{equation}
then $\Lambda=\Lambda^*$, and $\inf_{\Rd}\,V>0$.
In addition, any measurable selector from the minimizer of \eqref{ET3.4A}
belongs to $\Ussm$ and is optimal, i.e., $\Lambda^v_x=\Lambda^*$
for all $x\in\Rd$.
\end{itemize}
\end{theorem}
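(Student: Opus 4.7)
The plan is: first construct a solution to the HJB equation by approximation on expanding balls; then show that any pair $(V,\Lambda)\in\mathfrak V$ solving the HJB has $\inf_{\Rd}V>0$; and finally use the log-transform together with a Girsanov argument to identify $\Lambda=\Lambda^*$ and to verify that any measurable minimizer of the Hamiltonian is stable and optimal in the class $\Uadm$.

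For part (i) I would invoke Lemma~\ref{L3.1} to obtain, for each $n\in\NN$, a Dirichlet eigensolution $(\widehat V_n,\Hat\lambda_n)$ on $B_n$ with $\widehat V_n(0)=1$ and $\Hat\lambda_n<\Lambda^*$ strictly increasing. Fixing a measurable selector $v_n$ of the Hamiltonian linearizes the equation to $\Lg_{v_n}\widehat V_n+(c-\Hat\lambda_n)\widehat V_n=0$ on $B_n$, with coefficients bounded uniformly in $n$ by Assumption~\ref{A1.1} and boundedness of $c$. Harnack's inequality~\eqref{E-Harn} gives uniform local bounds on $\widehat V_n$, and the a priori estimate~\eqref{E-apriori} combined with the compact embedding $\Sob^{2,d}(B_R)\hookrightarrow\Cc^{1,r}(\Bar B_R)$ from Section~\ref{S1.4} provides a subsequence converging in $\Cc^{1,r}_{\mathrm{loc}}$ to a positive $V^*\in\Cc^{2}(\Rd)$ satisfying the HJB with limiting eigenvalue $\Hat\lambda\le\Lambda^*$.

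For part (ii) let $(V,\Lambda)\in\mathfrak V$ solve the HJB. Choose $R>0$ so that the compact set $\sK_\epsilon$ of Definition~\ref{D1.1} is contained in $B_R$; since $\Lambda\le\Lambda^*$ we have $c-\Lambda\ge\epsilon$ on $B_R^c$. For any measurable selector $v$ the process $\E^{\int_0^t(c-\Lambda)\,\D s}V(X_t)$ is a nonnegative local martingale, hence a supermartingale, under $\Prob^v$. Localizing by $\uuptau_R\wedge T\wedge\uptau_n$ (no explosion by (A2)) and sending $n\to\infty$ then $T\to\infty$ via Fatou, one obtains
\begin{equation*}
V(x)\;\ge\;\bigl(\min_{\partial B_R}V\bigr)\,\Exp_x^v\bigl[\E^{\epsilon\uuptau_R}\,\Ind_{\{\uuptau_R<\infty\}}\bigr]\,,\qquad x\in B_R^c\,.
\end{equation*}
Testing the same inequality on $\{T<\uuptau_R\}$ yields $\Exp^v_x\bigl[V(X_T)\Ind_{\{T<\uuptau_R\}}\bigr]\le V(x)\E^{-\epsilon T}$, and combined with the $\sorder(t)$ bound on $\Exp_x^v[\abs{X_t}]$ furnished by Lemma~\ref{L3.2} under Assumption~\ref{A1.1} and with the nondegeneracy of $\upsigma$, this rules out a nontrivial escape of the trajectories to infinity, so $\Prob_x^v[\uuptau_R<\infty]=1$. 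An application of Harnack on $\Bar B_R$ then gives $\inf_{\Rd}V>0$.

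With $\inf V>0$ in hand I would set $\psi\df\log V$; the HJB becomes
\begin{equation*}
\min_{u\in\Act}\,\bigl[\Lg^u\psi+c(\cdot,u)\bigr]+\tfrac12\,\langle a\grad\psi,\grad\psi\rangle\;=\;\Lambda\,,
\end{equation*}
and $\abs{\grad\psi}$ is bounded by Lemma~\ref{L3.3}. It\^o's formula applied to $\psi(X_t)$ under a selector $v$ yields
\begin{equation*}
\E^{\int_0^T c(X_t,v(X_t))\,\D t}\;=\;\frac{V(x)}{V(X_T)}\,\E^{\Lambda T}\,\sM_T\,,
\end{equation*}
where $\sM_T$ is the Dol\'eans--Dade exponential of $\int_0^{\cdot}\langle\grad\psi,\upsigma\,\D W_s\rangle$. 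Boundedness of $\abs{\grad\psi}$ makes $\sM_T$ a genuine $\Prob^v$-martingale via Novikov, so Girsanov's theorem recasts the expectation as $V(x)\E^{\Lambda T}\,\Exp^Q[V(X_T)^{-1}]$, where under $Q$ the drift becomes $b(\cdot,v)+a\grad\psi$. The bound $V\ge\inf V>0$ gives $\Lambda^v_x\le\Lambda$, and combining this with the trivial chain $\Lambda^*\le\Lambda^*_x\le\Lambda^v_x$ and with $\Lambda\le\Lambda^*$ built into $\mathfrak V$ forces $\Lambda=\Lambda^*=\Lambda^*_x=\Lambda^v_x$; in particular $v$ is optimal over all of $\Uadm$. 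Finally $\Lg_v V=(\Lambda^*-c)V\le-\epsilon\inf V$ on $\sK_\epsilon^c$ is a Foster--Lyapunov inequality with $V$ as storage function, so $v\in\Ussm$.

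The main obstacle I anticipate is the second step, namely the uniform lower bound $\inf_{\Rd}V>0$: pointwise positivity of $V$ is immediate from the strong maximum principle, but near-monotonicity only provides a Foster--Lyapunov decay $\Lg_v V\le-\epsilon V$ on $\sK_\epsilon^c$, and converting this into a uniform lower bound requires enough recurrence of the controlled process to return to $\Bar B_R$ from any starting point, which is exactly what Assumption~\ref{A1.1} (via Lemma~\ref{L3.2}) together with the nondegeneracy of $\upsigma$ must be used to deliver.
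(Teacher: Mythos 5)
Your part (i) is essentially the paper's construction (Dirichlet eigensolutions on $B_n$, Harnack plus the a priori estimate to pass to the limit), so there is nothing to add there. Part (ii), however, is where your route diverges from the paper, and it is precisely where you flagged your own worry that the argument is incomplete.

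The gap is in the step producing $\inf_{\Rd}V>0$. You write that the supermartingale bound $\Exp^v_x[V(X_T)\Ind_{\{T<\uuptau_R\}}]\le V(x)\E^{-\epsilon T}$, ``combined with the $\sorder(t)$ bound from Lemma~\ref{L3.2} and with the nondegeneracy of $\upsigma$, rules out escape to infinity.'' As stated this does not follow: if $V$ were permitted to decay to zero at infinity, the left-hand side could be exponentially small even with $\Prob^v_x[\uuptau_R=\infty]>0$, and the $\sorder(t)$ estimate on $\Exp^v_x[\abs{X_t}]$ by itself cannot distinguish these cases. Nondegeneracy of $\upsigma$ plays no role here and is a red herring. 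The missing ingredient is exactly Lemma~\ref{L3.3}, which you invoke later for the Novikov condition but not here: it gives $V(y)\ge\E^{-\kappa(1+\abs{y})}$, and only with that lower bound can one run a Jensen-type argument on the conditional law given $\{\uuptau_R=\infty\}$ to see that $\Exp^v_x\bigl[\E^{-\kappa(1+\abs{X_T})}\Ind_{\{T<\uuptau_R\}}\bigr]$ decays no faster than $\E^{-\kappa(1+\Exp[\abs{X_T}])/\rho}=\E^{-\sorder(T)}$ whenever $\rho\df\Prob^v_x[\uuptau_R=\infty]>0$, contradicting the $\E^{-\epsilon T}$ bound. With that repair the recurrence step is sound, and the remainder of your argument (Girsanov via Novikov, $\Lambda^v_x\le\Lambda$, sandwiching with $\Lambda\le\Lambda^*\le\Lambda^*_x\le\Lambda^v_x$) goes through.

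For comparison, the paper never proves recurrence directly at this point. Instead it takes logarithms in the Fatou bound $V(x)\ge\Exp^v_x\bigl[\E^{\int_0^T(c-\lambda^*)\,\D t}V(X_T)\bigr]$, applies Jensen, divides by $T$, and uses Lemma~\ref{L3.3} (so that $\abs{\log V(X_T)}\le\kappa(1+\abs{X_T})$) together with Lemma~\ref{L3.2} to kill the boundary term. This yields the ergodic-cost bound $\limsup_T T^{-1}\Exp^v_x[\int_0^T c]\le\lambda^*$, from which stability of $v$ and then $\inf_{\Rd}V>0$ and $\Lambda^v_x\le\lambda^*$ follow directly via Lemma~\ref{L2.1}, with no separate Girsanov step needed. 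The paper's order of operations sidesteps the delicate recurrence estimate you attempted, whereas your Girsanov route, once the lower bound is correctly justified, gives a somewhat more transparent probabilistic identity for $\Lambda^v_x$. One more small inaccuracy: at the end you say $\Lg_v V\le-\epsilon\inf V$ on $\sK_\epsilon^c$ is ``a Foster--Lyapunov inequality with $V$ as storage function''; what one actually uses is the multiplicative drift condition $\Lg_v V\le-\epsilon V$ together with $\inf_{\Rd}V>0$ (which makes $V$ inf-compact by Lemma~\ref{L2.1}(d)), as in the citation to Down--Meyn--Tweedie in the proof of Lemma~\ref{L2.1}(b)$\Rightarrow$(e).
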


\begin{proof}
As shown in \cite[Lemma~2.1]{Biswas-11a},
any limit point, 
$(V^*,\lambda^*)\in\Cc^{2}(\RR^{d})\times\RR$
of the eigensolutions $\bigl(\widehat{V}_{n},\Hat\lambda_{n}\bigr)$
of the Dirichlet problem on $B_{n}$ in Lemma~\ref{L3.1}, as $n\to\infty$,
satisfies
\begin{equation}\label{ET3.4B}
\min_{u\in\Act}\;
\bigl[\Lg^{u} V^*(x) + c(x,u)\,V^*(x)\bigr] \;=\; \lambda^*\,V^*(x)
\qquad\forall\,x\in\Rd\,.
\end{equation}
Clearly then,
$V^*>0$ on $\RR^{d}$, $V^*(0)=1$, and $\lambda^*\le\Lambda^*$
by Lemma~\ref{L3.1}.
Since $b$, $\upsigma$, and $c$ are bounded,
and $\grad(\log V^*) = \frac{\grad V^*}{V^*}$,
then by Lemma~\ref{L3.3} there exists
a constant $\kappa>0$ such that
\begin{equation}\label{ET3.4C}
\E^{-\kappa(1+\abs{x})}\;\le\;V^*(x)\;\le\;
\E^{\kappa(1+\abs{x})}\qquad\forall\,x\in\Rd\,.
\end{equation}
Let $v$ be a measurable selector from the minimizer of \eqref{ET3.4B}.
A straightforward application of Fatou's lemma on the
stochastic representation of the solution $V^*$ of \eqref{ET3.4B}
shows that
\begin{equation}\label{ET3.4D}
V^*(x)\;\ge\; \Exp_{x}^{v}
\Bigl[\E^{\int_{0}^{T} [c(X_{t},v(X_{t}))-\lambda^*]\,\D{t}}\,
V^*(X_{T})\Bigr] \qquad \forall\,T>0\,.
\end{equation}
Evaluating \eqref{ET3.4D} at $x=0$, taking the logarithm on both sides,
applying Jensen's inequality,
dividing by $T$, and rearranging terms, we obtain
\begin{equation}\label{ET3.4E}
\frac{1}{T}\;\Exp_{x}^{v}\biggl[\int_{0}^{T}
c\bigl(X_{t},v(X_{t})\bigr)\,\D{t}\biggr]
+\frac{1}{T}\;\Exp_{x}^{v}\bigl[\log{V^*(X_{T})}\bigr] \;\le\;
\lambda^* + \frac{1}{T} \log V^*(x)\,.
\end{equation}
Hence, since $\babs{\log{V^*(X_{T})}}\le \kappa\,\bigl(1+\abs{X_{T}}\bigr)$
by \eqref{ET3.4C},
it follows by Lemma~\ref{L3.2} that
\begin{equation*}
\limsup_{T\to\infty}\;\frac{1}{T}\;
\Exp_{x}^{v}\bigl[\babs{\log{V^*(X_{T})}}\bigr]\;=\;0\,.
\end{equation*}
Therefore, by \eqref{ET3.4E} we obtain
\begin{equation*}
\limsup_{T\to\infty}\;
\frac{1}{T}\;\Exp_{x}^{v}\biggl[\int_{0}^{T}
c\bigl(X_{t},v(X_{t})\bigr)\,\D{t}\biggr]\;\le\;\lambda^*\,.
\end{equation*}
Since $\lambda^*\;\le\;\Lambda^*$, it follows that $v$ is stable
by the  argument used in the proof of (g)\;$\Rightarrow$\;(a) in Lemma~\ref{L2.1}.
Also $V^*$ is inf-compact, and $\Lambda^{v}(c)\le\lambda^*$
by Lemma~\ref{L2.1}\,(d), and (f), respectively.
Thus we have shown that
\begin{equation*}
\Lambda_x^{v}(c)\;\le\;\lambda^*\;\le\;\Lambda^*\qquad\forall\,x\in\Rd\,.
\end{equation*}
Therefore, we have $\lambda^*=\Lambda^*=\Lambda_x^{v}(c)$
for all $x\in\Rd$ by the definition of $\Lambda^*$.
This proves part (i).

The proof of parts (ii) follows by the same arguments as in
the preceding paragraph.
\qed\end{proof}

In order to simplify the notation, we define
\begin{equation*}
\Bar{c}_{v}(x)\;\df\; c\bigl(x,v(x)\bigr)\qquad\forall\,x\in\Rd\,,\quad
v\in\Usm\,.
\end{equation*}
Also recall that
\begin{equation*}
\Usm^* \;\df\;
\bigl\{v\in\Usm\, \colon \Lambda^v_x(c) = \lm\,,\ \forall\, x\in\Rd\bigr\}\,.
\end{equation*}
The proof in Lemma~\ref{L2.1}, (f)\;$\Rightarrow$\;(a),
shows that any $v\in\Usm^*$ is a stable Markov control.

In the two lemmas which follow we do not impose Assumption~\ref{A1.1}.

%%%%%%%%%%%%%%%%%%%%%%%%%%%%%%%%%%%%%%%%%%%%%%%%%%%%%%%%%%%%%%%%%%%%%%%%%%%%%%%%
\begin{lemma}\label{L3.5}
Suppose that $c$ is near-monotone relative to $\lm$,
and that $V\in\Cc^{2}(\RR^{d})$ is a positive solution of
\begin{equation}\label{EL3.5A}
\min_{u\in\Act}\;
\bigl[\Lg^{u} V(x) + c(x,u)\,V(x)\bigr] \;=\; \lm\,V(x)
\qquad\forall\,x\in\Rd\,.
\end{equation}
Let $v\in\Usm^*$, and $V_v\in\Sobl^{2,p}(\Rd)$, $p>d$,
be a positive solution of
\begin{equation}\label{EL3.5B}
\Lg_v V_v(x)
+ \Bar{c}_{v}(x)\,V_v(x)
\;=\; \lm\,V_v(x)\qquad \text{a.e.\ in\ }\Rd\,.
\end{equation}
If for some ball $\sB\in\Rd$ it holds that
\begin{equation*}
V(x) \;\le\; \Exp_x^{v}\Bigl[\E^{\int_{0}^{\uuptau}
[\Bar{c}_{v}(X_s)-\lm]\,\D{s}}\,V(X_{\uuptau})\Bigr]
\qquad\forall\,x\in \sB^c\,,
\end{equation*}
with $\uuptau=\uptau(\sB^c)$, then $\frac{V}{V_v}$ is constant on $\Rd$.
In particular,
\begin{equation*}
\Lg_{v} V(x) + \Bar{c}_{v}(x)\,V(x) \;=\; \lm\,V(x)
\qquad \text{a.e.\ in\ }\Rd\,.
\end{equation*}
\end{lemma}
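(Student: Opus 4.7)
The plan is to produce a positive constant $C$ such that $V\le CV_v$ on all of $\Rd$ with equality attained at some point in $\bar\sB$, and then apply the strong maximum principle to $\varphi\df CV_v-V$. Since $v\in\Usm^*$ satisfies $\Lambda^v_x(c)=\lm$, and $\Bar{c}_v$ inherits near-monotonicity relative to $\lm$ from $c$, the implication (g)$\Rightarrow$(a) of Lemma~\ref{L2.1} applied to the $v$-diffusion yields positive recurrence, so $\uuptau\df\uptau(\sB^c)$ is $\Prob^v_x$-a.s.\ finite.

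First I would establish the one-sided stochastic representation
\begin{equation*}
V_v(x)\;\ge\;\Exp^v_x\!\Bigl[\E^{\int_{0}^{\uuptau}[\Bar{c}_v(X_s)-\lm]\,\D{s}}\,V_v(X_{\uuptau})\Bigr]\qquad\forall\,x\in\sB^c\,,
\end{equation*}
which follows by applying the It\^o--Krylov formula to \eqref{EL3.5B} with localization $\uuptau\wedge\uptau_n\wedge t$ (giving an exact identity, since $V_v$ satisfies the linear MPE) and then Fatou's lemma as $t\to\infty$ and $n\to\infty$. Next, set $C\df\max_{\bar\sB}V/V_v$; this is finite and attained at some $x_0\in\bar\sB$ because $V$ is $\Cc^2$ and $V_v$ is continuous (by Sobolev embedding, $p>d$) and strictly positive on the compact set $\bar\sB$. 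Then $V\le CV_v$ on $\bar\sB$, and for $x\in\sB^c$ I would chain the hypothesis of the lemma with the inequality just established:
\begin{equation*}
V(x)\;\le\;\Exp^v_x\!\Bigl[\E^{\int_0^{\uuptau}[\Bar{c}_v-\lm]\,\D{s}}V(X_{\uuptau})\Bigr]\;\le\;C\,\Exp^v_x\!\Bigl[\E^{\int_0^{\uuptau}[\Bar{c}_v-\lm]\,\D{s}}V_v(X_{\uuptau})\Bigr]\;\le\;C\,V_v(x)\,,
\end{equation*}
using $X_{\uuptau}\in\partial\sB\subset\bar\sB$ in the middle inequality. Therefore $\varphi\ge0$ on all of $\Rd$ and vanishes at $x_0$.

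To finish, I would combine the pointwise consequence $\Lg_vV+(\Bar{c}_v-\lm)V\ge0$ of the HJB \eqref{EL3.5A} (specializing the minimum to $u=v(x)$) with the equation $\Lg_vV_v+(\Bar{c}_v-\lm)V_v=0$ to obtain $\Lg_v\varphi+(\Bar{c}_v-\lm)\varphi\le0$ a.e.\ on $\Rd$. Rewriting this as
\begin{equation*}
\Lg_v\varphi-(\Bar{c}_v-\lm)^-\varphi\;\le\;-(\Bar{c}_v-\lm)^+\varphi\;\le\;0\,,
\end{equation*}
the strong maximum principle recalled in Section~\ref{S1.4}, applied on any bounded domain containing $x_0$, forces $\varphi\equiv0$ there; a standard chain-of-balls propagation then gives $\varphi\equiv0$ on $\Rd$. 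Thus $V\equiv CV_v$, and the final assertion of the lemma follows by substitution into \eqref{EL3.5B}.

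The main subtlety is the dovetailing between the two stochastic inequalities in the first two steps: the representation for $V_v$ is only a one-sided Fatou inequality (since a nonnegative local martingale is only a supermartingale), and one must check that its direction is exactly the one that combines with the \emph{reverse} inequality \emph{assumed} in the hypothesis for $V$. Once $C$ is pinned down as a maximum on the compact set $\bar\sB$, the two bounds fit together on $\sB^c$ and hand the problem to the strong maximum principle.
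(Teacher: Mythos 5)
Your proof is correct and follows essentially the same route as the paper: establish the one‑sided Fatou inequality for $V_v$ outside $\sB$, use the assumed reverse inequality for $V$ to propagate the bound $V\le CV_v$ from $\bar\sB$ to all of $\Rd$ where $C$ is chosen so the scaled $V_v$ touches $V$, and then feed $\varphi=CV_v-V$ into the strong maximum principle after absorbing the sign of $\Bar{c}_v-\lm$. Your write‑up is somewhat more explicit than the paper's (which compresses the chaining of the two stochastic inequalities into a single ``therefore we can scale''), but the argument is identical in substance.
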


\begin{proof}
By It\^o's formula and the Fatou lemma, we have
\begin{equation*}
V_v(x) \;\ge\; \Exp_x^{v}\Bigl[\E^{\int_{0}^{\uuptau}
[\Bar{c}_{v}(X_s)-\lm]\,\D{s}}\,V_v(X_{\uuptau})\Bigr]
\qquad\forall\,x\in \sB^c\,.
\end{equation*}
Therefore we can scale $V_v$, by multiplying with a positive constant,
so that it touches $V$ from above at some point in $\sB$.
Since \eqref{EL3.5A} implies that
\begin{equation*}
\Lg_{v} V(x) + \Bar{c}_{v}(x)\,V(x)\;\ge\; \lm\,V(x)
\qquad \text{a.e.\ in\ }\Rd\,,
\end{equation*}
and thus,
\begin{equation*}
\Lg_{v} (V_v-V)(x) - (\Bar{c}_{v}(x)-\lm)^{-}\,(V_v-V)(x)\;\le\; 0\,
\qquad \text{a.e.\ in\ }\Rd\,,
\end{equation*}
the result follows by the strong maximum principle.
\qed
\end{proof}

%%%%%%%%%%%%%%%%%%%%%%%%%%%%%%%%%%%%%%%%%%%%%%%%%%%%%%%%%%%%%%%%%%%%%%%%%%%%%%%%
\begin{lemma}\label{L3.6}
Let $c$ be near-monotone relative to $\lm$, and
suppose that $\lm(c+h)>\lm(c)$ for all $h\in\sCo$.
Then for any positive solution $V_v\in\Sobl^{2,p}(\Rd)$, $p>d$,
of \eqref{EL3.5B},  with $v\in\Usm^*$,
it holds that
\begin{equation}\label{EL3.6A}
\min_{u\in\Act}\;
\bigl[\Lg^{u} V_v(x) + c(x,u)\,V_v(x)\bigr] \;=\; \lm\,V_v(x)
\qquad\forall\,x\in\Rd\,.
\end{equation}
\end{lemma}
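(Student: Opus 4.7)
The approach is by contradiction: assuming the HJB fails to hold, I exhibit an indicator-function perturbation of $c$ that contradicts the strict monotonicity hypothesis by way of Theorem~\ref{T1.5}(v). First I record the baseline facts about $V_v$: since $v\in\Usm^*$ we have $\Lambda^v(c)=\lm$, and $\bar c_v\ge\min_u c(\cdot,u)$ is near-monotone relative to $\lm$, so by Lemma~\ref{L2.1} the $v$-diffusion is recurrent, $\inf V_v>0$, and $V_v$ is inf-compact. For any $h\in\sCo$ the hypothesis gives
\begin{equation*}
\Lambda^v(c+h)\;\ge\;\lm(c+h)\;>\;\lm(c)\;=\;\Lambda^v(c)\,,
\end{equation*}
so Theorem~\ref{T1.5}(iv) shows the ground-state diffusion of $(\Lg_v,\bar c_v)$ is recurrent.

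Now set
\begin{equation*}
\delta(x)\;\df\;\lm V_v(x)-\min_{u\in\Act}\bigl[\Lg^u V_v(x)+c(x,u)V_v(x)\bigr]\;\ge\;0\,,
\end{equation*}
which is measurable and locally bounded (the latter because the $v$-equation expresses $\tfrac12 a^{ij}\partial_{ij}V_v$ as a locally bounded function of $x$). Suppose for contradiction that $\delta\not\equiv 0$ a.e., and let $u^{**}$ be a measurable selector of the minimum, so that
\begin{equation*}
\Lg^{u^{**}} V_v+c(\cdot,u^{**})V_v\;=\;\lm V_v-\delta\;\le\;\lm V_v\,.
\end{equation*}
With $\inf V_v>0$, the It\^o--Fatou--Jensen estimate from the proof of Lemma~\ref{L2.1}(f), adapted to supersolutions, forces $\Lambda^{u^{**}}_x(c(\cdot,u^{**}))\le\lm$; since $u^{**}\in\Usm$, equality holds. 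Lemmas~\ref{L2.1}(g)$\Rightarrow$(a) and \ref{L2.3}(iii) then yield $u^{**}\in\Usm^*$, and running the hypothesis-transfer of the first paragraph with $u^{**}$ in place of $v$ gives recurrence of the ground-state diffusion of $(\Lg^{u^{**}},c(\cdot,u^{**}))$.

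The contradiction comes from rewriting the supersolution identity as
\begin{equation*}
\Lg^{u^{**}} V_v+\bigl(c(\cdot,u^{**})+\delta/V_v\bigr)V_v\;=\;\lm V_v\,,
\end{equation*}
so $V_v$ is a positive solution (with $\inf V_v>0$) of the MPE for potential $c(\cdot,u^{**})+\delta/V_v$ and eigenvalue $\lm$. Hypothesis~(H1) is in force (recurrent $u^{**}$-diffusion; the potential dominates $\min_u c$ and is therefore near-monotone relative to its own principal eigenvalue), so Theorem~\ref{T1.4} gives $\Lambda^{u^{**}}(c(\cdot,u^{**})+\delta/V_v)\le\lm$, and monotonicity in the potential then forces equality. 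Since $\delta/V_v\ge 0$ is not a.e.\ zero, choose $\epsilon_0>0$ such that $A\df\{\delta/V_v\ge\epsilon_0\}$ has positive Lebesgue measure; then $\epsilon_0\Ind_A\le\delta/V_v$ pointwise, and applying Theorem~\ref{T1.5}(v) to $(\Lg^{u^{**}},c(\cdot,u^{**}))$ together with monotonicity yields
\begin{equation*}
\lm\;<\;\Lambda^{u^{**}}\bigl(c(\cdot,u^{**})+\epsilon_0\Ind_A\bigr)\;\le\;\Lambda^{u^{**}}\bigl(c(\cdot,u^{**})+\delta/V_v\bigr)\;=\;\lm\,,
\end{equation*}
a contradiction. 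Hence $\delta=0$ a.e., and the elliptic regularity discussion of Section~\ref{S1.4} upgrades the a.e.\ HJB identity to the pointwise statement with $V_v\in\Cc^2(\Rd)$.

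The main obstacle to sidestep is that strict monotonicity of $\lm$ is available only for continuous perturbations $h\in\sCo$, whereas the natural obstruction $\delta/V_v$ arising from a putative failure of the HJB is only locally bounded and measurable. The resolution is to first transport strict monotonicity from $\lm$ to $\Lambda^{u^{**}}$ through the fact that $u^{**}\in\Usm^*$, and then to invoke Theorem~\ref{T1.5}(v), whose conclusion is phrased at the level of indicator perturbations and therefore bypasses any need to regularize $\delta/V_v$ into a function in $\sCo$.
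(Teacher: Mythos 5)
Your argument follows the same route as the paper's proof: take a minimizing selector $u^{**}$ (the paper calls it $\Tilde v$), show $u^{**}\in\Usm^*$ from the supersolution inequality, transfer the strict-monotonicity hypothesis $\lm(c+h)>\lm(c)$ from $\lm$ to $\Lambda^{u^{**}}$, invoke Theorem~\ref{T1.5} to get recurrence of the associated ground-state diffusion, and then derive a contradiction via a perturbation $\epsilon\Ind_A\le\delta/V_v$. Your handling of the perturbation is in fact slightly more careful than the paper's, which writes $\Ind_A$ where $\epsilon\Ind_A$ for small $\epsilon>0$ is what is really meant.

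The one genuine slip is in the step where you establish $\Lambda^{u^{**}}\bigl(\bar{c}_{u^{**}}+\delta/V_v\bigr)\le\lm$ by invoking Theorem~\ref{T1.4} under (H1). The near-monotonicity you can verify for the potential $f:=\bar{c}_{u^{**}}+\delta/V_v$ is relative to $\lm$ (since $f\ge\min_u c(\cdot,u)$). But (H1) requires near-monotonicity relative to $\Lambda^{u^{**}}(f)$, and a priori you only know $\Lambda^{u^{**}}(f)\ge\lm$ by monotonicity in the potential; you have no upper bound yet, so there is no guarantee that $\{f\le\Lambda^{u^{**}}(f)+\epsilon\}$ is compact — this is circular, because the upper bound $\Lambda^{u^{**}}(f)\le\lm$ is exactly what you are trying to deduce from Theorem~\ref{T1.4}. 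The fix is what the paper does: cite Lemma~\ref{L2.1} directly rather than Theorem~\ref{T1.4}. Lemma~\ref{L2.1} is stated with near-monotonicity relative to the \emph{given} eigenvalue of the supplied MPE solution (here $\lm$, since $\Lg_{u^{**}}V_v+fV_v=\lm V_v$), not the unknown principal eigenvalue; its implication (b)$\Rightarrow$(f) then gives $\Lambda^{u^{**}}_x(f)\le\lm$ directly from $\inf_\Rd V_v>0$ without any circularity. With that substitution the argument is complete and matches the paper's.
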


\begin{proof}
Let $\Tilde{v}$ be a minimizing selector of
\begin{equation*}
x\;\mapsto\;\argmin_{u\in\Act}\;
\bigl\{\langle b(x, u), \grad V_v(x)\rangle + c_v(x)\, V_v(x)\bigr\}\,.
\end{equation*}
Then $\Lg_{\Tilde{v}} V_v(x) + c_{\tilde{v}}(x)\,V_v(x)\le \lm\,V_v$ which implies
that $\Lambda^{\Tilde{v}}(c)\leq\lm$, and thus $\Tilde{v}\in\Usm^*$
and $\Lambda^{\Tilde{v}}(c)=\lm$.
If \eqref{EL3.6A} is not an equality, then for some measurable
set $A$ of positive Lebesgue measure we have
\begin{equation*}
\Lg_{\Tilde{v}} V_v
+ \bigl(\Bar{c}_{\Tilde{v}}+\Ind_{A}\bigr)\,V_v
\;\leq\; \lm\,V_v\qquad \text{a.e.\ in\ }\Rd\,.
\end{equation*}
Thus $\Lambda^{\Tilde{v}}(c+\Ind_{A})\le\lm$ by Lemma~\ref{L2.1}\,(f).
However, if \eqref{ET1.2A} holds, then by Theorem~\ref{T1.5} we have
\begin{equation*}
\Lambda^{\Tilde{v}}(c+\Ind_{A})\;>\;\Lambda^{\Tilde{v}}(c)\;=\;\lm\,,
\end{equation*}
 which is a contradiction.
Thus \eqref{EL3.6A} holds.
\qed
\end{proof}

\begin{remark}\label{R3.1}
Let $\mathscr{V}\,\df\,\bigl\{V\in\Cc^{2}(\RR^{d})\colon\,
V(0)=1\,,~\inf_{\Rd}\,V_v>0\bigr\}$, and
\begin{align*}
\overline{\mathfrak{G}}&\;\df\; \bigl\{V\in\mathscr{V}\colon\,
V\text{\ solves\ \eqref{EL3.5A}}\bigr\}\,,\\[5pt]
\mathfrak{G}&\;\df\; \bigl\{V_v\in\mathscr{V}\colon\,
V_v\text{\ solves\ \eqref{EL3.5B}\ for some\ }v\in\Usm^*\bigr\}\,.
\end{align*}
It follows by Lemma~\ref{L3.6} that, if
$\lm(c+h)>\lm(c)$ for all $h\in\sCo$, then $\overline{\mathfrak{G}}=\mathfrak{G}$.
\end{remark}

We continue with the proof of Theorem~\ref{T1.2}.

%%%%%%%%%%%%%%%%%%%%%%%%%%%%%%%%%%%%%%%%%%%%%%%%%%%%%%%%%%%%%%%%%%%%%%%%%%%%%%%%
\begin{pT1.2}
In view of Theorem~\ref{T3.4}\,(ii) it suffices to prove uniqueness
in the class $\mathfrak{V}_{\circ}$.  Also recall that $\Lambda^*=\lm$.
Since the Dirichlet eigenvalues in Lemma~\ref{L3.1} satisfy
$\Hat\lambda_n<\Lambda^*$ for all $n\in\NN$,
 the Dirichlet problem
\begin{equation}\label{PT1.2A}
\min_{u\in\Act}\;
\bigl[\Lg^u\varphi_{n}(x)+ \bigl(c(x,u)-\Lambda^*\bigr)\,\varphi_{n}(x)\bigr]
\;=\; -\alpha_n\,\Ind_{\sB}(x)\qquad\text{a.e.\ }x\in B_n\,,\qquad
\varphi_{n}=0\text{\ \ on\ \ }\partial B_n\,,
\end{equation}
with $\alpha_n>0$,
has a unique solution $\varphi_{n}\in\Sobl^{2,p}(B_n)\cap\Cc(\Bar{B}_n)$,
for any $p\ge1$ \cite[Theorem~1.9]{Quaas-08a}
(see also \cite[Theorem~1.1\,(ii)]{Yoshimura-06}).
We choose $\alpha_n$ as in Lemma~\ref{L2.11}.
Namely if $\Tilde\alpha_n>0$ is such that the solution
$\varphi_n$ of \eqref{PT1.2A} with $\alpha_n=\Tilde\alpha_n$
satisfies $\varphi_n(0)=1$, we set $\alpha_n=\min(1,\Tilde\alpha_n)$.
Passing to the limit to obtain a positive solution
$\Phi\in\Sobl^{2,p}(\Rd)$ of
\begin{equation}\label{PT1.2B}
\min_{u\in\Act}\;
\bigl[\Lg^u\Phi(x)+ \bigl(c(x,u)-\Lambda^*\bigr)\,\Phi(x)\bigr]
\;=\; -\alpha\,\Ind_{\sB}(x)\,,\qquad x\in\Rd\,.
\end{equation}

We claim that $\inf_{\Rd}\,\Phi>0$.
Indeed, let $\Hat{v}$ be a measurable selector from
the minimizer of \eqref{PT1.2B}.
Then
\begin{equation}\label{PT1.2Bb}
\Lg_{\Hat{v}}\Phi(x)+ \bigl(\Bar{c}_{\Hat{v}}(x)-\Lambda^*\bigr)\,\Phi(x)
\;=\; -\alpha\,\Ind_{\sB}(x)\,,\qquad \text{a.e.\ in\ }\Rd\,.
\end{equation}
Consider a ball $\Breve\sB$ which is concentric with $\sB$ and of twice the
radius.
By the proof of Lemma~\ref{L3.3} we obtain
\begin{equation*}
\sup_{x\in\Breve\sB^c}\, \frac{\abs{\grad\Phi(x)}}{\Phi(x)}\; <\; \Breve{C}\,,
\end{equation*}
for some constant $\Breve{C}$.
Since $\inf_{\Breve\sB}\,\Phi>0$,
it follows that
$\frac{\abs{\grad\Phi(x)}}{\Phi(x)}$ is bounded on $\Rd$.
Hence a bound as in \eqref{ET3.4C} holds for $\Phi$.
By It\^o's formula and Fatou's lemma we obtain from \eqref{PT1.2Bb} that
\begin{equation*}
\Phi(x)\;\ge\; \Exp_{x}^{v}
\Bigl[\E^{\int_{0}^{T} [\Bar{c}_{\Hat{v}}(X_{t})-\Lambda^*]\,\D{t}}\,
\Phi(X_{T})\Bigr] \qquad \forall\,T>0\,,
\end{equation*}
and then proceed exactly as in the proof of Theorem~\ref{T3.4} to show
that $\Phi$ is inf-compact.
This proves the claim.

Let $v$ be any control in $\Usm^*$.
By It\^o's formula applied to \eqref{PT1.2A}, with $\uuptau=\uptau(\sB^c)$,
we obtain
\begin{multline*}
\varphi_n(x) \;\le\; \Exp_x\Bigl[\E^{\int_{0}^{\uuptau}
[\Bar{c}_{v}(X_s)-\lm]\,\D{s}}\,
\varphi_n(X_{\uuptau})\,\Ind_{\{\uuptau<T\wedge\uptau_n\}}\Bigr]\\[5pt]
+\Exp_x\Bigl[\E^{\int_{0}^{T}[\Bar{c}_{v}(X_s)-\lm]\,\D{s}}\,
\varphi_n(X_{T})\,
\Ind_{\{T<\uuptau\wedge\uptau_n\}}\Bigr]
\qquad\forall\,x\in B_n\setminus\sB\,,\ \forall\,T>0\,.
\end{multline*}
Then we use the argument in the proof of Lemma~\ref{L2.11},
by replacing $f$ with $\Bar{c}_{v}$, and $\Lambda(f)$ with
$\lm$ in \eqref{EL2.11B}--\eqref{EL2.11D}
to obtain
\begin{equation}\label{PT1.2C}
\Phi(x) \;\le\; \Exp_x\Bigl[\E^{\int_{0}^{\uuptau}[\Bar{c}_v(X_s)-\Lambda^*]\,\D{s}}\,
\Phi(X_{\uuptau})\Bigr]
\qquad\forall\,x\in \sB^c\,,
\end{equation}
for all $v\in\Usm^*$.

Using \eqref{PT1.2Bb} and the
assumption $\lm(c+h)>\lm(c)$ for all $h\in\sCo$, we conclude as
in the proof of Lemma~\ref{L2.11} that $\alpha=0$.
Recall the definition of $\overline{\mathfrak{G}}$ and $\mathfrak{G}$ in
Remark~\ref{R3.1}.
Thus by Lemma~\ref{L3.5} we obtain $\mathfrak{G}=\{\Phi\}$,
and since $\overline{\mathfrak{G}}=\mathfrak{G}$ by Remark~\ref{R3.1},
uniqueness of the solution to \eqref{ET1.2B} and the verification
part of the result follow.

Since by Fatou's lemma the converse inequality to \eqref{PT1.2C} holds,
the representation in \eqref{ET1.2C} follows by Theorem~\ref{T1.5}.

For the converse, note that $\lm(c+h)=\lm(c)$ implies the
existence of $v\in\Usm$ such that
\begin{equation*}
\Lambda^v_x(c+h)\;=\;\lm(c+h)\;=\;\lm(c)\;\le\;\Lambda^v_x(c)
\end{equation*}
for all $x\in\Rd$. But $\Lambda^v_x(c+h)\geq \Lambda^v_x(c)$.
Therefore, $v\in\Usm^*$ and 
$\Lambda^v_x(c+h)= \Lambda^v_x(c)$.
If this is the case, then \eqref{ET1.2C} contradicts Theorem~\ref{T1.5}.
This completes the proof.
\qed
\end{pT1.2}

%%%%%%%%%%%%%%%%%%%%%%%%%%%%%%%%%%%%%%%%%%%%%%%%%%%%%%%%%%%%%%%%%%%%%%%%%%%%%%%%
\begin{pP1.3}
Let $\varepsilon_0>0$ be small enough so that
$c$ is near-monotone relative to $\lm+\varepsilon_0$,
and for $\varepsilon\in(0,\varepsilon_0)$,
let $v_{\varepsilon}\in\Usm$ be a $\varepsilon$-optimal control relative
to $\lm$. 
In other words, $v_{\varepsilon}$ satisfies
$\Lambda^{v_{\varepsilon}}\le \lm+\varepsilon$.
An analogous argument as in the proof
(g)\;$\Rightarrow$\;(a) of Lemma~\ref{L2.1} shows that
$v_\varepsilon$ is a stable Markov control.
Define
\begin{equation*}
b^{v_{\varepsilon}}_{n}(x,u)\;\df\;\begin{cases}
b(x,u)&\text{for}~x\in B_{n}\,,\quad u\in\Act\,,\\[5pt]
b\bigl(x,v_{\varepsilon}(x)\bigr)&\text{for}~x\in B_{n}^{c}\,,\quad u\in\Act\,,
\end{cases}
\end{equation*}
and $c^{v_{\varepsilon}}_{n}(x,u)$ in an exactly analogous manner.
Let $\Usm^{n,v_{\varepsilon}}$ denote the class of stationary Markov controls
that agree with $v_{\varepsilon}$ on $B_{n}^{c}$.
By \cite[Theorem~1.1]{Quaas-08a}, there exists a unique pair
$(V^{v_{\varepsilon}}_{n,k},\lambda^{v_{\varepsilon}}_{n,k})
\in\bigl(\Sob^{2,p}(B_{k})\cap\Cc(\Bar{B}_{k})\bigr)\times\RR$,
for any $p>d$, satisfying
$V^{v_{\varepsilon}}_{n,k}>0$ on $B_{k}$ and $V^{v_{\varepsilon}}_{n,k}(0)=1$,
which solves
\begin{equation}\label{PP1.3A}
\min_{u\in\Act}\; \bigl[\Lg^{u} V^{v_{\varepsilon}}_{n,k}(x)
+ c^{v_{\varepsilon}}_{n}(x,u)\,V^{v_{\varepsilon}}_{n,k}(x)\bigr]
\;=\; \lambda^{v_{\varepsilon}}_{n,k}\,V^{v_{\varepsilon}}_{n,k}(x)
\qquad\text{a.e.\ }x\in B_{k}\,,
\end{equation}
and $V^{v_{\varepsilon}}_{n,k}=0$ on $\partial B_{k}$
(compare with Lemma~\ref{L3.1}).
Following the proof of \cite[Lemma~2.1]{Biswas-11a}, we deduce that
$\lambda^{v_{\varepsilon}}_{n,k}\le\Lambda^{v_{\varepsilon}}_x$ for all $x\in B_k$.
Since $V^{v_{\varepsilon}}_{n,k}$ is locally bounded, uniformly in $k\in\NN$,
by Harnack's inequality,
we have 
$\sup_k\,\norm{V^{v_{\varepsilon}}_{n,k}}_{\Sob^{2,p}(B_R)}<\infty$
for  all $R>0$, and any $p>d$.
Thus taking limits as $k\to\infty$ along some subsequence, we obtain
by \eqref{PP1.3A} a pair
$(V^{v_{\varepsilon}}_{n},\lambda^{v_{\varepsilon}}_{n})
\in\Sobl^{2,p}(\Rd)\times\RR$,
for any $p>d$, satisfying $\lambda^{v_{\varepsilon}}_{n}\le\Lambda^{v_{\varepsilon}}$,
$V^{v_{\varepsilon}}_{n}>0$ on $\Rd$, and $V^{v_{\varepsilon}}_{n}(0)=1$,
which solves
\begin{equation}\label{PP1.3B}
\min_{u\in\Act}\; \bigl[\Lg^{u} V^{v_{\varepsilon}}_{n}(x)
+ c^{v_{\varepsilon}}_{n}(x,u)\,V^{v_{\varepsilon}}_{n}(x)\bigr]
\;=\; \lambda^{v_{\varepsilon}}_{n}\,V^{v_{\varepsilon}}_{n}(x)\qquad
\text{a.e.\ }x\in\Rd\,.
\end{equation}
It also follows by elliptic regularity that the restriction of
$V^{v_{\varepsilon}}_{n}$ in $B_{n}$ is in $\Cc^{2}(\Rd)$.

Let $\Hat{v}_{n}\in\Usm^{n,v_{\varepsilon}}$
be a  measurable selector from the minimizer of \eqref{PP1.3B}.
Since $\Hat{v}_{n}$ agrees with $v_{\varepsilon}$ on $B_n^c$,
any Lyapunov function for the diffusion
under the control $v_{\varepsilon}$ is also a Lyapunov function
under the control $\Hat{v}_{n}$.
It follows that $\Hat{v}_{n}\in\Ussm$.
Let $\sB$ be a bounded open ball such that
$c(x,u)>\lm+\varepsilon_0$ for all $(x,u)\in\sB^c\times\Act$.
Applying It\^o's formula to \eqref{PP1.3B}, and using Fatou's lemma,
we obtain, with $\uuptau\equiv\uptau\bigl(\sB^{c}\bigr)$, that
\begin{align}\label{PP1.3C}
V^{v_{\varepsilon}}_{n}(x)&\;\ge\; \Exp_{x}^{\Hat{v}_{n}}
\Bigl[\E^{\int_{0}^{\uuptau}
[c^{v_{\varepsilon}}_{n}(X_{t},\Hat{v}_{n}(X_{t}))
-\lambda^{v_{\varepsilon}}_{n}]\,\D{t}}\,
V^{v_{\varepsilon}}_{n}(X_{\uuptau})\Bigr]\nonumber\\[5pt]
&\;\ge\; \biggl(\min_{\partial\sB(\varepsilon_0)}\;V^{v_{\varepsilon}}_{n}\biggr)\,
\Exp_{x}^{\Hat{v}_{n}}\Bigl[\exp\Bigl(\tfrac{1}{2}(\varepsilon_0-\varepsilon)\,
\uuptau\Bigr)\Bigr]\qquad \forall\, x\in\sB^{c}\,.
\end{align}
Since $V^{v_{\varepsilon}}_{n}(0)=1$, it follows by
the Harnack inequality that
$\min_{\partial\sB(\varepsilon_0)}\;V^{v_{\varepsilon}}_{n}\ge C_H^{-1}$.
Therefore, by \eqref{PP1.3C}, we have
\begin{equation}\label{PP1.3D}
\inf_{n\in\NN}\;\inf_{\Rd}\;V^{v_{\varepsilon}}_{n}\;>\;0\,.
\end{equation}
Again $\sup_n\,\norm{V^{v_{\varepsilon}}_{n}}_{\Sob^{2,p}(B_R)}<\infty$
for  all $R>0$, and any $p>d$.
Thus, taking limits in \eqref{PP1.3C} as $n\to\infty$, along some subsequence,
we obtain a pair
$(V^{v_{\varepsilon}},\lambda^{v_{\varepsilon}})
\in\Cc^{2}(\Rd)\times\RR$,
satisfying $\lambda^{v_{\varepsilon}}\le\Lambda^{v_{\varepsilon}}$,
$\inf_{\Rd}\;V^{v_{\varepsilon}}>0$, and $V^{v_{\varepsilon}}(0)=1$,
which solves
\begin{equation}\label{PP1.3E}
\min_{u\in\Act}\;\bigl[\Lg^{u} V^{v_{\varepsilon}}(x)
+ c(x,u)\,V^{v_{\varepsilon}}(x)\bigr]
\;=\; \lambda^{v_{\varepsilon}}\,V^{v_{\varepsilon}}(x)\qquad
\text{a.e.\ }x\in\Rd\,.
\end{equation}
Once more, taking any limit of \eqref{PP1.3E} as $\varepsilon\searrow0$ along
some subsequence, we obtain a function $V^*\in\Cc^{2}(\RR^{d})$, satisfying
\begin{equation}\label{PP1.3F}
\min_{u\in\Act}\;
\bigl[\Lg^{u} V^*(x) + c(x,u)\,V^*(x)\bigr] \;=\; \lm\,V^*(x)
\qquad\forall\,x\in\Rd\,.
\end{equation}
It holds that $\inf_{\Rd}\;V^*>0$ by \eqref{PP1.3D}, and $V^*(0)=1$
by construction.

Let $v^{*}$ be a measurable selector from the minimizer of \eqref{PP1.3F}.
Then $v^*\in\Ussm$, and $\Lambda_x^{v^{*}} = \lm$ for all $x\in\Rd$
by Lemma~\ref{L2.1}.

Suppose that $\lm(c+h)>\lm(c)$ for all $h\in\sCo$.
We follow a variation of the construction in the proof of Theorem~\ref{T2.8},
using \cite[Theorem~1.9]{Quaas-08a}.
Let $\theta>0$ be any positive constant such
that $c$ is near monotone relative to $\lm+2\theta$,
and fix some $\alpha\in(0,\theta)$.
Let $\zeta_{\alpha}>0$.
As argued in the  proof of Theorem~\ref{T1.2}, the Dirichlet problem
\begin{equation}\label{PP1.3G}
\min_{u\in\Act}\,\bigl[\Lg^u \varphi_{\alpha,n}(x)+ 
\bigl(c(x,u) - \lm - \alpha\bigr)\,\varphi_{\alpha,n}(x)\bigr]
\;=\; -\zeta_{\alpha}\qquad\text{a.e.\ }x\in B_n\,,\qquad
\varphi_{\alpha}=0\text{\ \ on\ \ }\partial B_n\,,
\end{equation}
has a unique nonnegative solution $\varphi_{\alpha,n}\in\Cc^2(B_n)\cap\Cc(\Bar{B}_n)$.
It is clear by the strong maximum principle that $\varphi_{\alpha,n}$
is positive in $B_n$.
For $v\in\Usm^*$, let
$V_v\in\Sobl^{2,p}(\Rd)$, $p>d$, $V_v(0)=1$, be a positive solution of
\begin{equation}\label{PP1.3H}
\Lg_{v} V_v(x) + \Bar{c}_{v}(x)\,V_v(x)
\;=\; \lm\,V_v(x)\qquad \text{a.e.\ in\ }\Rd\,.
\end{equation}
Writing \eqref{PP1.3H} as
\begin{equation*}
\Lg_v V_v(x) + (\Bar{c}_{v}(x)-\lm-\alpha)\,V_v(x)
\;=\; -\alpha\,V_v(x)\qquad\text{a.e.\ }x\in\Rd\,,
\end{equation*}
and using It\^o's formula and Fatou's lemma we obtain
\begin{equation*}
V_v(x) \;\ge\; \Exp_x\Bigl[\E^{\int_0^{\uptau_n}
[\Bar{c}_v(X_s) - \lm - \alpha]\,\D{s}}\,V_v(X_{\uptau_n})\Bigr]
+ \alpha\,\Exp_x\biggl[\int_0^{\uptau_n}
\E^{\int_0^{t}[\Bar{c}_v(X_s) - \lm - \alpha]\,\D{s}}\,V_v(X_t)\,\D{t}\biggr]
\end{equation*}
for any $\alpha\ge0$,
and we use this as in the proof of Theorem~\ref{T2.8} to deduce that
\begin{equation*}
\varphi_{\alpha,n}\;\le\;
\frac{\zeta_{\alpha}}{\alpha}\,
\Bigl(\inf_{\Rd}\,V_v\Bigr)V_v \qquad\forall\,n\in\NN\,,
\end{equation*}
and for all $v\in\Usm^*$.
As explained in Theorem~\ref{T2.8}, this allows us to pass to a limit along
some sequence
$n\to\infty$, to obtain a positive function $\varPhi_\alpha\in\Cc^2(\Rd)$ which solves
\begin{equation}\label{PP1.3I}
\min_{u\in\Act}\,\bigl[\Lg^u \varPhi_\alpha(x)+ 
\bigl(c(x,u) - \lm - \alpha\bigr)\,\varPhi_\alpha(x)\bigr]
\;=\; -\zeta_{\alpha}\qquad\forall\,x\in\Rd\,.
\end{equation}

By It\^o's formula we obtain from \eqref{PP1.3G} that
\begin{multline}\label{PP1.3J}
\varphi_{\alpha,n}(x) \;\le\;  \Exp_x\Bigl[\E^{\int_0^{\uuptau_r\wedge T}
[\Bar{c}_v(X_s) - \lm - \alpha]\,\D{s}}\,\varphi_{\alpha,n}(X_{\uuptau_r\wedge T})\,
\Ind_{\{\uuptau_r\wedge T<\uptau_n\}}\Bigr]\\[5pt]
+\zeta_\alpha\Exp_x\biggl[\int_0^{\uuptau_r\wedge T\wedge\uptau_n}
\E^{\int_0^{t}[\Bar{c}_v(X_s) - \lm - \alpha]\,\D{s}}\,\D{t}\biggr]
\qquad\forall\,(T,x)\in\RR_+\times (B_n\setminus B_r)\,,
\end{multline}
and for all $v\in\Usm^*$, where we use
the property that $\varphi_{\alpha,n}=0$ on $\partial B_n$.
Since
\begin{equation*}
\Exp_x\Bigl[\E^{\int_{0}^{\uptau_r\wedge T}[\Bar{c}_v(X_s) - \lm]\,\D{s}}\Bigr]
\;\le\; \Bigl(\inf_{\Rd}\,V_v\Bigr)^{-1}\,
V_v(x)\qquad\forall\,x\in B_n\setminus B_r\,,\ \forall\,r, T\,>0\,,
\end{equation*}
using dominated convergence for the first term on the right hand side
of \eqref{PP1.3J}, and monotone convergence for the second term,
we first take limits as $T\to\infty$, and then as $n\to\infty$ to
obtain
\begin{equation}\label{PP1.3K}
\varPhi_\alpha(x)\;\le\;
\Exp_x^v\Bigl[\E^{\int_{0}^{\uuptau_r}
[\Bar{c}_{v}(X_s) - \lm - \alpha]\,\D{s}}\,\varPhi_\alpha(X_{\uuptau_r})\Bigr]
+\zeta_\alpha\,
\Exp_x^v\biggl[\int_{0}^{\uuptau_r} \E^{\int_{0}^{t}
[\Bar{c}_{v}(X_s) - \lm - \alpha]\,\D{s}}\,\D{t}\biggr]
\end{equation}
for all $x\in B_r^c$, $r>0$, and $v\in\Usm^*$.

We next show that $\inf_{\Rd}\,\varPhi_\alpha\ge M$ for some constant
$M>0$ and all $\alpha>0$ sufficiently small.
Let $\theta>0$ be such that
$c$ is near monotone relative to $\Lambda(f)+2\theta$,
and let $\sK_\theta$ be as in in Definition~\ref{D1.1}.
If $\Hat{v}\in\Usm$ is a selector from the minimizer of \eqref{PP1.3I},
then by It\^o's formula and Fatou's lemma we obtain
\begin{equation}\label{PP1.3Xa}
\varPhi_\alpha(x)\;\ge\; \zeta_\alpha\,
\Exp_x^{\Hat{v}}\biggl[\int_0^{\infty}
\E^{\int_0^{t}[\Bar{c}_{\Hat{v}}(X_s) - \lm - \alpha]\,\D{s}}\,\D{t}\biggr]
\;\ge\;  \frac{\zeta_\alpha}{\lm +\alpha}\qquad\forall\,x\in\Rd\,,
\end{equation}
since the running cost $c$ is nonnegative.
From now on, we select $\zeta_\alpha$ so that $\varPhi_\alpha(0)=1$.
Thus $\zeta_\alpha\le \lm +\alpha$ by \eqref{PP1.3Xa}.
Since $\zeta_\alpha$ is bounded, the pde in \eqref{PP1.3I} satisfies
the assumptions for the Harnack inequality for a class of
superharmonic functions in \cite{AA-Harnack}.
This implies that if $\sB$ is some fixed ball containing $\sK_\theta$, then
\begin{equation}\label{PP1.3L}
C_{H}^{-1} \;\le\; \varPhi_\alpha(x) \;\le\; C_{H}\,,\qquad\forall\, x\in \Bar\sB\,,
\quad\forall\,\alpha\in(0,1)\,,
\end{equation}
for some constant $C_{H}$.
We leave it to the reader to verify that the assertions in Lemma~\ref{L2.1}
are true for any supersolution of \eqref{E1-Pois}.
Therefore, since $\inf_{\Rd}\, \varPhi_\alpha>0$, it follows from \eqref{PP1.3I} that
$\inf_\Rd\, \varPhi_\alpha=\inf_\sK\, \varPhi_\alpha\ge C_{H}^{-1}$
for all $\alpha\in(0,\theta)$.

On the other hand, as in \eqref{ET2.8K}, for any $v\in\Usm^*$, we obtain
\begin{equation}\label{PP1.3M}
\Exp_x^{v}\biggl[\int_{0}^{\uuptau} \E^{\int_{0}^{t}
[\Bar{c}_{v}(X_s) - \lm]\,\D{s}}\,\D{t}\biggr]
\;\le\; \Bigr(\theta\,\inf_{\partial \sB}\;V_v\Bigr)^{-1} V_v(x)
\qquad\forall\,x\in \sB^c\,,
\end{equation}
where $\sB$ is the ball in the preceding paragraph,
and $V_v$ is as in \eqref{PP1.3H}.
It then follows by \eqref{PP1.3K}, \eqref{PP1.3L}, and \eqref{PP1.3M}
that the collection $\{\varPhi_\alpha\,,\;\alpha\in(0,1)\}$ is
locally bounded, and thus also
relatively
weakly compact in $\Sob^{2,p}(B_R)$ for any $p\ge1$ and $R>0$, by \eqref{ET2.8G}.
Thus passing to the limit in
\eqref{PP1.3I} as $\alpha\searrow0$ along some sequence, we obtain
a positive $\overline{V}\in\Cc^2(\Rd)$, and a constant $\Bar\zeta$
solving
\begin{equation*}
\min_{u\in\Act}\,\bigl[\Lg^u \overline{V}(x)+ 
\bigl(c(x,u) - \lm \bigr)\,\overline{V}(x)\bigr]
\;=\; -\Bar\zeta\qquad\forall\,x\in\Rd\,.
\end{equation*}
Then $\inf_\Rd \Bar{V} \ge C_{H}^{-1}$.
Thus, the assumption that
$\lm(c+h)>\lm(c)$ for all $h\in\sCo$, then implies that $\Bar\zeta=0$.

It follows by \eqref{PP1.3K}--\eqref{PP1.3M}, that
\begin{equation*}
\overline{V}(x) \;\le\;  \Exp_x^{v}\Bigl[\E^{\int_{0}^{\uuptau}
[\Bar{c}_v(X_s) - \lm]\,\D{s}}\,\overline{V}(X_{\uuptau})\Bigr]
\qquad\forall\,x\in \sB^c\,,\quad\forall\,v\in\Usm^*\,.
\end{equation*}
The rest follows exactly as in the proof of Theorem~\ref{T1.2}.
\qed\end{pP1.3}

\begin{remark}\label{R3.2}
The reader might have noticed not only the different approach in
the proof of the existence results in Propositions~\ref{P1.1} and \ref{P1.3},
but also the difference between the method of proof of Theorem~\ref{T1.2} and
that of the analogous statement in Proposition~\ref{P1.3}.
This needs some explanation.
Consider the following definitions of eigenvalues.
\begin{align*}
\Hat\Lambda^* &\;\df\;
\inf\;\bigl\{\lambda\in\RR\,\colon \exists\, \varphi\in\Sobl^{2,d}(\Rd)\,,
\ \varphi>0\,,\ \min_{u\in\Act}\,[\Lg^u\varphi + (c(x,u)-\lambda)\varphi]
\le 0\ \text{a.e. in\ }\Rd\bigr\}\,,
\\[5pt]
\Hat\Lambda_{\mathrm{m}}^{*} &\;\df\; \inf_{v\in\Usm}\,
\inf\;\bigl\{\lambda\in\RR\,\colon \exists\, \varphi\in\Sobl^{2,d}(\Rd)\,,
\ \varphi>0\,,\ \Lg_v\varphi
+ (\Bar{c}_v-\lambda)\varphi \le 0\ \text{a.e. in\ }\Rd\bigr\}\,.
\end{align*}
Also define $\doublehat\Lambda^*$ and $\doublehat\Lambda_{\mathrm{m}}^{*}$
in direct analogy to these but with $\varphi>0$ in
the qualifier replaced by $\inf_{\Rd}\,\varphi>0$.
It is straightforward to verify that
$\Hat\Lambda^*=\Hat\Lambda_{\mathrm{m}}^{*}\le\doublehat\Lambda_{\mathrm{m}}^{*}$.
On the other hand since $c$ is near monotone relative to
$\Lambda^*$, then by Lemma~\ref{L2.1} and the proof of Theorem~\ref{T1.4}
we have $\doublehat\Lambda^*=\doublehat\Lambda_{\mathrm{m}}^{*}=\lm$,
so we obtain
\begin{equation}\label{ER3.2A}
\Hat\Lambda^*\;=\;\Hat\Lambda_{\mathrm{m}}^{*}
\;\le\;\Lambda^*\;\le\;
\doublehat\Lambda^*\;=\;\doublehat\Lambda_{\mathrm{m}}^{*}\;=\;\lm\,.
\end{equation}
Recall from the proof of Theorem~\ref{T3.4} that $\lambda^*$ denotes the limit,
as $n\to\infty$,
of the Dirichlet eigenvalues $\Hat\lambda_n$ defined in Lemma~\ref{L3.1}.
It is evident that $\lambda^*\le\Hat\Lambda^*$, and since $\lambda^*$
satisfies \eqref{ET3.4B}, we have in fact equality.
We have also shown that $\lambda^*=\Lambda^*$.
Note then that under Assumption~\ref{A1.1} we have
$\Hat\Lambda_{\mathrm{m}}^{*}=\doublehat\Lambda_{\mathrm{m}}^{*}$, so that
all the quantities in \eqref{ER3.2A} are equal.

However, in the absence of Assumption~\ref{A1.1}, we might
have $\Hat\Lambda_{\mathrm{m}}^{*}<\doublehat\Lambda_{\mathrm{m}}^{*}$,
which implies $\lambda^*<\doublehat\Lambda_{\mathrm{m}}^{*}$, so a limit point of
the Dirichlet eigensolutions will not in general yield a solution
to \eqref{PP1.3F} under the assumptions of Proposition~\ref{P1.3}.

The situation with the proof of Theorem~\ref{T1.2} is more subtle.
If, in the absence of Assumption~\ref{A1.1}, we consider the
Dirichlet problems in \eqref{PT1.2A} but with $\Lambda^*$ replaced
by $\lm$, then we indeed obtain a positive solution $\Phi$ of
\eqref{PT1.2B}, but we cannot argue that $\Phi$ is inf-compact.
Therefore, we cannot use the method in the proof of Lemma~\ref{L2.11}
to conclude that the
assumption $\lm(c+h)>\lm(c)$ for all $h\in\sCo$ implies $\alpha=0$
in \eqref{PT1.2B}, and the argument breaks down.
Instead, we use a more elaborate method for the
proof of the analogous statement in Proposition~\ref{P1.3}.
\end{remark}

%%%%%%%%%%%%%%%%%%%%%%%%%%%%%%%%%%%%%%%%%%%%%%%%%%%%%%%%%%%%%%%%%%%%%%%%%%%%%%%%
\section*{Acknowledgement}

We are indebted to the anonymous referee for his helpful comments, and
especially for discovering an error in the first version of this paper.
His/her review helped us to improve the paper substantially.

The research of Ari Arapostathis
was supported in part by the Office of Naval
Research through grant N00014-14-1-0196,
and in part by the Army Research Office through grant W911NF-17-1-001.
The research of Anup Biswas was supported in part by INSPIRE faculty
fellowship No. IFA13/MA-32.

%%%%%%%%%%%%%%%%%%%%%%%%%%%%%%%%%%%%%%%%%%%%%%%%%%%%%%%%%%%%%%%%%%%%%%%%%%%%%%%%
%\section*{References}

\def\polhk#1{\setbox0=\hbox{#1}{\ooalign{\hidewidth
  \lower1.5ex\hbox{`}\hidewidth\crcr\unhbox0}}}

\end{document}